\title{Harish-Chandra bimodules over quantized symplectic singularities}
\newcommand{\SL}{\operatorname{SL}}
\newcommand{\C}{\mathbb{C}}
\newcommand{\A}{\mathcal{A}}
\newcommand{\gr}{\operatorname{gr}}
\newcommand{\h}{\mathfrak{h}}
\newcommand{\Z}{\mathbb{Z}}
\newcommand{\Orb}{\mathbb{O}}
\newcommand{\B}{\mathcal{B}}
\newcommand{\g}{\mathfrak{g}}
\newcommand{\U}{\mathcal{U}}
\newcommand{\HC}{\operatorname{HC}}
\newcommand{\Hom}{\operatorname{Hom}}
\newcommand{\Str}{\mathcal{O}}
\newcommand{\M}{\mathcal{M}}
\newcommand{\Q}{\mathbb{Q}}
\newcommand{\D}{\mathcal{D}}
\newcommand{\Weyl}{\mathbb{A}}
\newcommand{\VA}{\operatorname{VA}}
\newcommand{\jet}{\operatorname{J}^\infty}
\newcommand{\Irr}{\operatorname{Irr}}
\newcommand{\R}{\mathbb{R}}
\newcommand{\Aut}{\operatorname{Aut}}
\newcommand{\Der}{\operatorname{Der}}
\newcommand{\bA}{\bar{\bf{A}}}
\newcommand{\J}{\mathcal{J}}
\newcommand{\Walg}{\mathcal{W}}
\newcommand{\I}{\mathcal{I}}
\newcommand{\Disk}{\mathbb{D}}
\newtheorem{Thm}{Theorem}[section]
\newtheorem{Prop}[Thm]{Proposition}
\newtheorem{Cor}[Thm]{Corollary}
\newtheorem{Lem}[Thm]{Lemma}
\theoremstyle{definition}
\newtheorem{Ex}[Thm]{Example}
\newtheorem{Rem}[Thm]{Remark}
\newtheorem{Conj}[Thm]{Conjecture}
\numberwithin{equation}{section}
\author{Ivan Losev}
\address{Department
of Mathematics, Yale University, CT, USA}
\email{ivan.loseu@gmail.com}
\thanks{MSC 2010: 16G99, 16W70, 17B35}
\begin{document}
\begin{abstract}
In this paper we classify the irreducible Harish-Chandra bimodules with full support
over filtered quantizations of conical symplectic singularities under the condition
that none of the slices to codimension 2 symplectic leaves has type $E_8$. More precisely,
consider the quantization $\A_\lambda$ with parameter $\lambda$.
We show that the top quotient $\overline{\HC}(\A_\lambda)$ of the category of
Harish-Chandra $\A_\lambda$-bimodules
embeds into the category of representations of the algebraic fundamental group,
$\Gamma$, of the open leaf. The image coincides with the representations of
$\Gamma/\Gamma_\lambda$, where $\Gamma_\lambda$ is a normal subgroup of
$\Gamma$ that can be recovered from the quantization parameter $\lambda$ combinatorially.
As an application of
our results, we describe the Lusztig quotient group in terms of the geometry
of the normalization of the orbit closure in almost all cases.
\end{abstract}
\maketitle

\begin{center}{\it To the memory of Ernest Borisovich Vinberg.}\end{center}

\section{Introduction}
\subsection{Harish-Chandra bimodules over quantizations of symplectic singularities}\label{SS_intro_HC}
The goal of this paper is to study Harish-Chandra bimodules over quantizations of conical symplectic singularities.

Let us start by defining Harish-Chandra bimodules in the general setting
of filtered quantizations of graded Poisson algebras.

Let $A$ be a finitely generated commutative associative unital algebra. Suppose that
$A$ is equipped with two additional structures: an algebra grading $A=\bigoplus_{i=0}^\infty A_i$
such that $A_0=\C$ and a Poisson bracket $\{\cdot,\cdot\}$ of degree $-d$,
where $d\in \Z_{>0}$, which, by definition, means that $\{A_i,A_j\}\subset A_{i+j-d}$
for all $i,j$.  By a {\it filtered quantization} of $A$ we mean a pair $(\A,\iota)$ of
\begin{itemize}
\item a filtered associative algebra
$\A=\bigcup_{i\geqslant 0}\A_{\leqslant i}$ such that $[\A_{\leqslant i},\A_{\leqslant j}]
\subset \A_{\leqslant i+j-d}$,
\item a graded Poisson algebra
isomorphism  $\iota:\gr\A\xrightarrow{\sim} A$.
\end{itemize}

 Following \cite[Section 2.5]{HC}, by a Harish-Chandra (shortly, HC)
$\A$-bimodule we mean an $\A$-bimodule $\B$ that can be equipped with an increasing
exhaustive bimodule filtration $\B=\bigcup_{j}\B_{\leqslant j}$ such that $[\A_{\leqslant i},\B_{\leqslant j}]
\subset \B_{\leqslant i+j-d}$ (which implies that the actions of $A$ on $\gr\B$
from the left and from the right coincide) and $\gr\B$ is a finitely generated $A$-module. Such a filtration
will be called {\it good}. For example, the regular bimodule $\A$ is HC.

The most classical example here is when $\A=U(\g)$ for a semi-simple Lie algebra $\g$,
here $d=1$. A Harish-Chandra bimodule is the same thing as a finitely generated $U(\g)$-bimodule with locally finite
adjoint action of $\g$. These bimodules are extensively studied in  Lie representation theory.

The algebras $\A$ we are interested in are filtered quantizations of conical symplectic
singularities.

Let us recall the definition of a conical symplectic singularity.
Let $Y$ be a normal Poisson algebraic variety such that the Poisson bracket on the smooth
locus $Y^{reg}$ is non-degenerate. Let $\omega$ denote the corresponding symplectic form on
$Y^{reg}$. Following Beauville, \cite{Beauville}, we say that $Y$ has symplectic singularities if
there is a resolution of singularities $\rho:\tilde{Y}\rightarrow Y$ such that
$\rho^*\omega$ extends to a regular 2-form on $\tilde{Y}$. We say that a variety $Y$
with symplectic singularities is a {\it conical symplectic singularity} if it is equipped
with an action of the one-dimensional torus $\C^\times$  such that
\begin{itemize}
\item
$\C^\times$ contracts $Y$ to a single point (and so $Y$ is automatically affine),
\item and the degree of the Poisson bracket on $\C[Y]$ is $-d$ for $d\in \Z_{>0}$.
\end{itemize}
In particular, $A:=\C[Y]$ is a graded Poisson algebra as above.

Examples of conical
symplectic singularities include the following.
\begin{enumerate}
\item The nilpotent cone $\mathcal{N}$ in a semisimple Lie algebra $\g$. More generally,
let $\Orb\subset \g$ be a nilpotent orbit and $\tilde{\Orb}$ be its $G$-equivariant cover
(where $G$ stands for the simply connected group with Lie algebra $\g$). Then the algebra
$\C[\tilde{\Orb}]$ is finitely generated and $Y:=\operatorname{Spec}(\C[\tilde{\Orb}])$
is a conical symplectic singularity.  For $\tilde{\Orb}=\Orb$ this was observed in
\cite{Beauville} based on prior results of Panyushev. The general case follows from there,
see Lemma \ref{Lem:symp_sing_cover}.
\item Let $V$ be a symplectic vector space and $\Gamma$ be a finite group of linear
symplectomorphisms of $V$. Then $Y:=V/\Gamma$ is a conical symplectic singularity,
\cite{Beauville}.
\end{enumerate}
There are many other examples  of conical symplectic singularities (preimages of Slodowy slices in $\operatorname{Spec}(\C[\tilde{\Orb}])$,
affine Nakajima and hypertoric varieties, etc.) but only (1) and (2) are important for the present
paper.


For a general conical symplectic singularity $Y$  the filtered quantizations of $Y$
(i.e., of $\C[Y]$) were classified in \cite{orbit}. The result can be stated as follows -- we will recall
it in more detail below in Section \ref{SS_HC_symp_sing}. There is a finite dimensional $\C$-vector space $\h_Y^*$ defined
over $\Q$ and a finite crystallographic reflection group $W_Y$ acting on $\h_Y^*$ such
that the filtered quantizations of $Y$ are in a natural one-to-one correspondence with
$\h_Y^*/W_Y$. We will write $\A_\lambda$ for the filtered quantization corresponding
to $\lambda\in \h_Y^*$.

In the examples of conical symplectic singularities
mentioned above we get algebras of great interest for Representation theory.
When $Y$ is the nilpotent cone in $\g$, its filtered quantizations are the central reductions
of $U(\g)$, while for $Y:=\operatorname{Spec}(\C[\tilde{\Orb}])$ we get interesting Dixmier
algebras in the sense of Vogan, \cite{Vogan_Dixmier}. In the case when $Y=V/\Gamma$ we get  spherical symplectic reflection algebras of Etingof and Ginzburg, \cite{EG}.

The goal of this paper is to classify irreducible Harish-Chandra $\A_\lambda$-bimodules that are faithful
as left or, equivalently, right modules (both conditions are equivalent to the condition that the associated variety of
the bimodule  coincides with $Y$, we will recall associated varieties in Section
\ref{SS_Poisson_HC}). Below we will say that such HC bimodules
are of {\it full support}. One could hope that the classification in this case will shed
some light on that of HC bimodules with arbitrary associated varieties. We note that
the opposite case to what we consider is of finite dimensional irreducible bimodules.
Here the classification reduces to that of finite dimensional irreducible modules
and very much depends on the algebra. In contrast, the classification of irreducible
HC bimodules with full support is geometric, as we will see below. One should expect that the case of
general associated varieties interpolates between the two extreme cases.

The classification  of irreducible HC bimodules with full support is known in many cases. For example,
the result for $Y=\mathcal{N}$ is classical -- it will be recalled below in Section
\ref{SS_classif_1dim} -- as this case turns
out to be important for the classification in the general case. Various special
cases and partial results  for $Y=\operatorname{Spec}(\C[\Orb])$ were obtained
in \cite{HC,LO,quant_nilp}. The case of $Y=V/\Gamma$ was considered in \cite{sraco}
and then in \cite{S}. In the latter paper a complete classification was obtained in the case when
$V=U\oplus U^*$ and $\Gamma$ acts on $U$ as a complex reflection group.

However, even in some simple cases, most notably for the Kleinian singularities
$V/\Gamma$, where $\dim V=2$ and $\Gamma$ is not cyclic,
the classification is not known. It turns out that this case is crucial for understanding the case
of general $Y$. We consider the Kleinian case in the next section.

\subsection{Results for quantizations of Kleinian singularities}\label{SS_Klein_HC}
So let $Y=\C^2/\Gamma$. Recall that, up to conjugation in $\operatorname{SL}_2(\C)$,
the subgroups $\Gamma$ are classified by the type ADE Dynkin diagrams. In particular,
to $\Gamma$ we can assign the  Cartan space $\h_\Gamma$ and
the Weyl group $W_\Gamma$ (of the corresponding ADE type). We have $\h_Y^*=\h_\Gamma^*,
W_Y=W_\Gamma$.

Quantizations of $Y$ were extensively studied in the past with various
constructions given in \cite{CBH} (a special case of the general symplectic reflection algebra
construction), \cite{Holland} (as a quantum Hamiltonian reduction), \cite{Premet}
(as the central reduction of a suitable finite W-algebra). All these constructions
give the same quantizations, see \cite[Theorems 5.3.1,6.2.2]{quant}.

Let us describe the classification of irreducible HC bimodules with full support
in this case. We start with a discussion of quantization parameters, see
Example \ref{Ex:Kleinian_quantizations} for details.  We have an affine isomorphism
between $\h_\Gamma^*$ and the affine subspace  $(\C\Gamma)^\Gamma_1\subset (\C\Gamma)^\Gamma$ consisting of elements
$c$ of the form $1+\sum_{\gamma\neq 1}c_\gamma \gamma$. Namely to
$c\in (\C\Gamma)_1^\Gamma$ we assign $\lambda_c\in \h_Y^*$ with $\langle \lambda_c,\alpha_i^\vee\rangle=
\operatorname{tr}_{N_i}(c)$. Here we write $\alpha_i^\vee$ for a simple coroot in $\h_\Gamma$
and $N_i$ for the corresponding nontrivial irreducible representation of $\Gamma$.

Now we proceed to a conjectural classification result for  irreducible HC $\A_\lambda$-bimodules
with full support. Consider the affine Weyl group
$W^a_\Gamma:=W_\Gamma\ltimes \Lambda_r$, where $\Lambda_r$ is the root lattice in $\h_\Gamma^*$.
The group $W^a_\Gamma$ naturally acts on $\h_\Gamma^*$ by affine transformations.

\begin{Conj}\label{Conj:Klein_classif_prelim}
The following claims are true:
\begin{enumerate}
\item For each $c\in (\C\Gamma)_1^\Gamma$, there is a minimal normal subgroup $\Gamma_c\subset \Gamma$ such that $W^a_\Gamma \lambda_c$ contains $\lambda_{c'}$ with $c'\in \C\Gamma_c(\subset \C\Gamma)$.
\item The irreducible  HC $\A_{\lambda_c}$-bimodules with full support are in bijection with the irreducible
representations of $\Gamma/\Gamma_c$.
\end{enumerate}
\end{Conj}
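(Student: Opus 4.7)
The conjecture splits into a combinatorial claim (1) about the $W^a_\Gamma$-orbit of $\lambda_c$ and a representation-theoretic classification (2) which, given (1), should follow from the paper's general embedding theorem combined with a surjectivity argument. My plan is to handle the two halves in turn.

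For (1), I would first transfer the $W^a_\Gamma$-action from $\h_\Gamma^*$ back to $(\C\Gamma)_1^\Gamma$ via the affine isomorphism $c\mapsto\lambda_c$. The linear part $W_\Gamma$ acts on $(\C\Gamma)_1^\Gamma$ by permuting the conjugacy-class coordinates, while a translation by a simple coroot $\alpha_i^\vee$ shifts $\langle\lambda_c,\alpha_j^\vee\rangle$ by the corresponding Cartan integer. The key observation is that the ``integrality pattern'' of $\lambda_c$, namely the set of $i$ for which $\langle\lambda_c,\alpha_i^\vee\rangle\in\Z$, is $W^a_\Gamma$-invariant, and it records exactly the normal subgroup $\Gamma_c$ via the McKay correspondence: the simple roots outside this integrality set correspond (up to diagram symmetry) to irreducible representations of $\Gamma$ that do not factor through $\Gamma/\Gamma_c$. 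Since normal subgroups of $\Gamma\subset\SL_2(\C)$ are few and explicit (cyclic in type $A$; a short list in types $D$, $E_6$, $E_7$; and essentially just $\{\pm 1\}$ in type $E_8$), I would verify (1) by a finite case-by-case check.

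For (2), the paper's main theorem (in the Kleinian setting the algebraic fundamental group of $(\C^2\setminus\{0\})/\Gamma$ is $\Gamma$ itself) should yield a fully faithful embedding
\[
\overline{\HC}(\A_{\lambda_c})\hookrightarrow\Rep(\Gamma/\Gamma_{\lambda_c})
\]
for a combinatorially defined normal subgroup $\Gamma_{\lambda_c}$. I would identify $\Gamma_{\lambda_c}$ with $\Gamma_c$ by matching their descriptions in terms of the same integrality data in $\h_\Gamma^*$. Essential surjectivity is then the remaining task. For this, my plan is to use Morita equivalences induced by the $W^a_\Gamma$-action on quantizations to reduce to the case $c\in\C\Gamma_c$, where the symplectic reflection algebra $H_c(\Gamma)$ inherits a natural action of $\Gamma/\Gamma_c$ by outer automorphisms, and then to build the required bimodules as $(\Gamma/\Gamma_c)$-isotypic components of $H_c(\Gamma)$-modules induced from $H_c(\Gamma_c)$-modules.

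The main obstacle is this last construction. When $\Gamma/\Gamma_c$ acts nontrivially on $\Gamma_c$ by outer automorphisms (as happens for the binary tetrahedral and octahedral groups), the induction picture involves twisted group algebras, and checking that the resulting bimodules are irreducible of full support and exhaust the image of the embedding requires a careful structural analysis. An alternative I would pursue in parallel is to realize $\A_{\lambda_c}$ as a quantum Hamiltonian reduction and construct the HC bimodules as pushforwards of $(\Gamma/\Gamma_c)$-equivariant line bundles on an appropriate resolution of $\C^2/\Gamma_c$. Verifying that this exhausts the image of the embedding, and that the resulting classification depends only on the $W^a_\Gamma$-orbit of $\lambda_c$, is where most of the technical work will lie.
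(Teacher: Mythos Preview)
Your plan has the right architecture (full embedding $\bullet_\dagger$, translation equivalences to reach $c\in\C\Gamma_c$, then exploit the $\Gamma/\Gamma_c$-action), but there is a genuine gap at the point where you ``identify $\Gamma_{\lambda_c}$ with $\Gamma_c$ by matching their descriptions in terms of the same integrality data.'' The subgroup $\Gamma_{\lambda_c}$ produced by the embedding theorem is, a priori, only the intersection of the annihilators of the representations that actually occur in the image of $\bullet_\dagger$; it has no independent combinatorial description until you have already classified those representations. So this identification is exactly the content of the theorem, not a step you can perform beforehand. Relatedly, your proposed invariant---the set of simple roots $\alpha_i$ with $\langle\lambda_c,\alpha_i^\vee\rangle\in\Z$---is \emph{not} $W^a_\Gamma$-invariant (a reflection $s_\alpha$ with non-integral $\langle\lambda,\alpha^\vee\rangle$ can change the integrality of other pairings), and even the integral root subsystem, which is the correct $W^a$-invariant, does not by itself determine $\Gamma_c$: the paper's $\Gamma_c$ is built by an iterated procedure $\Gamma\supset\Gamma'_c\supset\Gamma''_c\supset\cdots$, not in one step.

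The idea you are missing is the paper's bridge to the nilpotent cone. The crucial move (Proposition~\ref{Prop:HC_Klein_rk1}) is to determine which \emph{one-dimensional} $\Gamma$-representations lie in the image of $\bullet_\dagger$ by comparing with $\overline{\HC}(\U_\lambda)$ for the simply-laced $\g$ of the same type, via the extension criterion (Proposition~\ref{Prop:extension_equiv}); the answer is $W^{ae}_\lambda/W^a_\lambda\subset\Lambda/\Lambda_r\cong(\Gamma/(\Gamma,\Gamma))^\wedge$. This pins down $\Gamma'_c$. One then translates into $\C\Gamma'_c$ (Lemma~\ref{Lem:conj_spec_param}, which does require a case check, but on the folded affine diagrams rather than on raw integrality patterns), invokes Proposition~\ref{Prop:HC_inv} to pass to the quantization of $\C^2/\Gamma'_c$, and repeats. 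Solvability of $\Gamma$ is what makes the induction terminate correctly: at the stable stage $\Gamma_c=\Gamma'_c$ the image contains no nontrivial one-dimensional representation, and solvability forces every nontrivial subquotient to have one, so the image is trivial. Your proposal never invokes solvability, which is precisely why the paper cannot handle $E_8$; any argument that does not single out $E_8$ should make you suspicious. Your alternative constructions (isotypic components of induced $H_c$-modules, equivariant line bundles on resolutions) may well produce enough bimodules to show $\Gamma_\lambda\subset\Gamma_c$, but they give no mechanism for the reverse inclusion, which is where the nilpotent-cone comparison and the inductive solvability argument do the real work.
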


\begin{Thm}\label{Thm:Klein_classif_prelim}
Conjecture \ref{Conj:Klein_classif_prelim} is true when $\Gamma$ is not of type $E_8$.
\end{Thm}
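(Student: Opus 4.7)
To prove part (1), I would describe $\Gamma_c$ intrinsically through the pattern of affine reflection hyperplanes on which $\lambda_c$ sits. The finite coroots $\alpha_i^\vee$ with $\langle \lambda_c,\alpha_i^\vee\rangle\in\Z$ span a sub-root system $\Phi_c\subset \Phi$, and via the McKay correspondence this yields a subgroup $\Gamma_c\subset \Gamma$. Normality would follow from invariance of $\Phi_c$ under the Dynkin diagram symmetries induced by conjugation in $\Gamma$, and minimality would reduce to a trace calculation: if $W^a_\Gamma\lambda_c$ contains $\lambda_{c'}$ with $c'\in\C\Gamma'$ for some normal subgroup $\Gamma'\subset \Gamma$, the constraints on $\lambda_{c'}$ imposed by $\operatorname{tr}_{N_i}(c')$ for varying $N_i$ should force $\Gamma_c\subseteq \Gamma'$.

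For part (2), the first reduction uses that parameters in the same $W^a_\Gamma$-orbit yield Morita-equivalent quantizations: integral translations come from twists by line bundles on the minimal resolution $\tilde Y\to Y$, while the finite Weyl group acts by quantization automorphisms, and both operations preserve the category of HC bimodules of full support. This lets us assume $c\in \C\Gamma_c$. In that case, the symplectic reflection algebra description of $\A_{\lambda_c}$ (where $c$ only involves elements of $\Gamma_c$) should give a Morita equivalence between $\A_{\lambda_c}$ and a skew group algebra $\B\#(\Gamma/\Gamma_c)$, with $\B$ a suitable quantization of $\C^2/\Gamma_c$ on which $\Gamma/\Gamma_c$ acts compatibly. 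Morita theory for skew group algebras then identifies irreducible full-support HC $\A_{\lambda_c}$-bimodules with irreducible $(\Gamma/\Gamma_c)$-equivariant full-support HC $\B$-bimodules.

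The hardest step, and the one that forces the $E_8$ restriction, is the rigidity claim that $\B$ itself is the only irreducible HC $\B$-bimodule of full support; granting this, the equivariant irreducibles are parametrized by $\Irr(\Gamma/\Gamma_c)$, giving part (2). This rigidity is the instance of the conjecture in which the associated normal subgroup coincides with the entire ambient group, now applied to $\Gamma_c$ in place of $\Gamma$. For types $A$, $D$, $E_6$, $E_7$ it can be attacked by induction on $|\Gamma_c|$, using the classification of proper normal subgroups in these types and the type $A$ results of \cite{sraco, S}. For type $E_8$ the binary icosahedral group is perfect modulo its center, so the induction has no smaller base case for parameters that genuinely require $\Gamma_c=\Gamma$; a direct rigidity argument for $E_8$ would require tools beyond those developed in the paper, which is why the theorem excludes this case.
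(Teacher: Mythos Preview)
Your overall architecture for part (2) --- reduce along the $W^a_\Gamma$-orbit to a parameter with $c\in\C\Gamma_c$, realize $\A_{\lambda_c}$ as $(\Gamma/\Gamma_c)$-invariants in a quantization of $\C^2/\Gamma_c$, and then invoke a rigidity statement for that smaller quantization --- does match the paper's strategy (this is Lemma~\ref{Lem:trans_equiv}, Corollary~\ref{Cor:CBH_invar}, Proposition~\ref{Prop:HC_inv}, and the induction in Proposition~\ref{Prop:HC_Klein1}). But your handling of the rigidity step has a real gap.

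The paper does not establish the base case by an induction on $|\Gamma_c|$ bottoming out in type~$A$. Instead it goes through the \emph{nilpotent cone}: one applies the extension criterion, Proposition~\ref{Prop:extension_equiv}, with $Y$ the nilpotent cone of the simply laced Lie algebra $\g$ of the same type as $\Gamma$, so that $\C^2/\Gamma$ is the slice. This shows that a $\Gamma$-module lies in the image of $\bullet_\dagger$ for $\A_\lambda$ if and only if its pushforward along $\Gamma\twoheadrightarrow\Gamma/(\Gamma,\Gamma)\cong\Gamma_\g$ lies in the image for the central reduction $\U_\lambda$ of $U(\g)$. The latter category is classically understood (Lemma~\ref{Lem:HC_nilp}), and this pins down exactly which \emph{one-dimensional} $\Gamma$-modules appear (Proposition~\ref{Prop:HC_Klein_rk1}): precisely those in $W^{ae}_\lambda/W^a_\lambda$. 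Solvability of $\Gamma$ is then used in one line: if a nontrivial irreducible $V$ were in the image, the tensor category it generates would contain a nontrivial one-dimensional representation, contradicting the one-dimensional classification when $\Gamma'_c=\Gamma$. Your proposal omits this bridge to $\U_\lambda$ entirely; without it, your base case (``$\B$ has only the trivial full-support HC bimodule when the associated normal subgroup is everything'') is exactly the statement you are trying to prove, and an appeal to type~$A$ does not reach it, since the groups in question (e.g.\ the quaternion group, binary tetrahedral group) are not cyclic.

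For part (1), your one-shot definition of $\Gamma_c$ via the integral sub-root system and McKay is neither what the paper does nor obviously equivalent to it. The paper builds $\Gamma_c$ \emph{iteratively}: $\Gamma'_c$ is the common kernel of the one-dimensional characters in $W^{ae}_\lambda/W^a_\lambda$; one shows (Lemma~\ref{Lem:conj_spec_param}, a case check on the fundamental alcove) that $\lambda$ can be $W^a$-conjugated into $(\C\Gamma'_c)^\Gamma_1$, then repeats inside $\Gamma'_c$; the stable limit is $\Gamma_c$. Minimality is deduced \emph{a posteriori} from the description of $\overline{\HC}(\A_\lambda)$ (Proposition~\ref{Prop:HC_Klein1}(2)), not from a direct trace argument as you suggest.
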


Under the same restriction, we can also describe the top quotient of the category of
HC bimodules as a tensor category, Theorem \ref{Thm:Klein_classif}.

At this point, we do not know what happens in the $E_8$-case. Clearly, (1) should not be
difficult to check. On the other hand, the conclusion of (2) is true for some normal
subgroup of $\Gamma$, we just do not know which of the three normal subgroups to take.
What makes type $E_8$ special is that the corresponding Kleinian group is not solvable.

We also note that the type A case ($=$ cyclic $\Gamma$) of Conjecture \ref{Conj:Klein_classif_prelim}
 was proved in \cite{S}.

The most essential ingredient of the proof of Theorem \ref{Thm:Klein_classif_prelim} is to relate
the HC bimodules over $\A_\lambda$ and over the central reduction $\U_\lambda$ of $U(\g)$ (where $\g$ is a semisimple
Lie algebra of the same type as $\Gamma$) corresponding to $\lambda$. This is a special case of the general extension result for HC bimodules that also allows to reduce the classification in general to that in the Kleinian case.

\subsection{Results for quantizations of symplectic singularities}\label{SS_HC_symp_sing}
Now assume that $Y$ is a general conical symplectic singularity. Pick $\lambda\in \h_Y^*$ and let $\A_\lambda$
  be the corresponding  filtered quantization. Consider the algebraic
fundamental group $\Gamma$ of $Y^{reg}$. Recall that this group is the pro-finite
completion of $\pi_1(Y^{reg})$. The finite index subgroups of $\Gamma$ are in one-to-one
correspondence with finite etale covers of $Y^{reg}$. By a result of Namikawa,
\cite{Namikawa_fund}, $\Gamma$ is a finite group. Any finite dimensional representation
of $\pi_1(Y^{reg})$ factors through $\Gamma$.

More precisely, we will see that $\lambda$ defines a normal
subgroup of $\Gamma$, to be denoted by $\Gamma_\lambda$, and the set of irreducible
HC bimodules with full support is in a bijection with $\Irr(\Gamma/\Gamma_\lambda)$,
the set of isomorphism classes of irreducible representations of $\Gamma/\Gamma_\lambda$.

Let us explain how to construct the normal subgroup $\Gamma_\lambda$. By a result of
Kaledin, \cite{Kaledin}, $Y$ has finitely many symplectic leaves. Let $\mathcal{L}_1,\ldots,
\mathcal{L}_k$ be the codimension 2 leaves. Let $\Sigma_1,\ldots,\Sigma_k$
be formal slices through $\mathcal{L}_1,\ldots,\mathcal{L}_k$. Then $\Sigma_i=\Disk^2/\Gamma_i$,
where we write $\Disk^2$ for $\operatorname{Spec}(\C[[x,y]])$.
So we can consider the corresponding Cartan space $\tilde{\h}_i^*$ for $\Gamma_i$.
The fundamental group $\pi_1(\mathcal{L}_i)$ acts on $\tilde{\h}_i^*$ by monodromy.
Let $\h_i^*:=(\tilde{\h}_i^*)^{\pi_1(\mathcal{L}_i)}$. Then we have $\h_Y^*=\bigoplus_{i=0}^k \h_i^*$,
where $\h_0^*:=H^2(Y^{reg},\C)$, see \cite[Lemma 2.8]{orbit}.

Let us write $\lambda_i$ for the component of $\lambda$ in $\h_i^*\subset \tilde{\h}_i^*$ and let
$c_i\in \C\Gamma_i$ be the element corresponding to $\lambda_i$ as explained in the previous section.
Let us write $\Gamma_{i,\lambda}$ for the normal subgroup $\Gamma_{i,c_i}$ from Conjecture
\ref{Conj:Klein_classif_prelim}.

Now note that we have a natural group homomorphism $\Gamma_i=\pi_1^{alg}(\Sigma_i\setminus \{0\})\rightarrow \Gamma$ induced by the inclusion $\Sigma_i\setminus \{0\}\hookrightarrow Y^{reg}$. Let $\Gamma_\lambda$ be the minimal
normal subgroup of $\Gamma$ containing the images of  all $\Gamma_{i,\lambda}$.

\begin{Thm}\label{Thm:classif_sympl_prelim}
Suppose that Conjecture \ref{Conj:Klein_classif_prelim} holds for all $\Gamma_i, i=1,\ldots,k$.
Then there is a bijection between the  irreducible HC $\A_\lambda$-bimodules
with full support and the irreducible $\Gamma/\Gamma_\lambda$-modules.
\end{Thm}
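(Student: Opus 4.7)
The plan is to realize the top quotient $\overline{\HC}(\A_\lambda)$ as a full tensor subcategory of $\Rep(\Gamma)$ and then identify its image with $\Rep(\Gamma/\Gamma_\lambda)$ by reducing to the Kleinian situation at each codimension $2$ leaf. Since the irreducible objects of $\overline{\HC}(\A_\lambda)$ are exactly the irreducible full-support HC $\A_\lambda$-bimodules, this yields the asserted bijection at the level of isomorphism classes.

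\textbf{Step 1: Embedding into $\Rep(\Gamma)$.} I would microlocalize $\A_\lambda$ to the open leaf $Y^{reg}$. Since $Y^{reg}$ is smooth symplectic, the microlocalized sheaf of algebras is étale locally a Weyl algebra, so the microlocalization of a HC bimodule is a coherent local system on $Y^{reg}$; because its fibers are finite-dimensional, its monodromy factors through the profinite completion $\Gamma=\pi_1^{alg}(Y^{reg})$. Bimodules killed by microlocalization are precisely those of non-full support, so microlocalization descends to a fully faithful tensor functor $\overline{\HC}(\A_\lambda)\hookrightarrow \Rep(\Gamma)$.

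\textbf{Step 2: Restriction to codimension $2$ slices.} For each $\mathcal{L}_i$ I would build a restriction functor $\bullet_{\dagger,i}\colon \HC(\A_\lambda)\to \HC(\A_{\lambda_i})^{\pi_1(\mathcal{L}_i)}$ by a quantum Darboux--Weinstein type decomposition: the $\pi_1(\mathcal{L}_i)$-equivariant completion of $\A_\lambda$ along $\mathcal{L}_i$ splits as $\A_{\lambda_i}^{\wedge}\,\hat\otimes\,\Weyl$, where $\A_{\lambda_i}$ is the Kleinian quantization of $\Disk^2/\Gamma_i$ with parameter $\lambda_i$ and $\Weyl$ is a Weyl algebra factor; tensoring out $\Weyl$ gives $\bullet_{\dagger,i}$. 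By the standing hypothesis, Conjecture \ref{Conj:Klein_classif_prelim} applies to $\Gamma_i$, so the local top quotient $\overline{\HC}(\A_{\lambda_i})$ is equivalent to $\Rep(\Gamma_i/\Gamma_{i,\lambda})$, realized inside $\Rep(\Gamma_i)$ via the Step~$1$ microlocal embedding applied at the slice.

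\textbf{Step 3: Comparing the embeddings and matching images.} The decisive compatibility to verify is that $\bullet_{\dagger,i}$ intertwines the microlocal embedding of Step~$1$ with the pullback $\Rep(\Gamma)\to\Rep(\Gamma_i)$ induced by the natural map $\Gamma_i=\pi_1^{alg}(\Sigma_i\setminus\{0\})\to \Gamma$; this ultimately reduces to a monodromy statement on a small punctured neighborhood of $0\in\Sigma_i$. Granting this, $V\in\Rep(\Gamma)$ lies in the image of $\overline{\HC}(\A_\lambda)$ iff its restriction to each $\Gamma_i$ factors through $\Gamma_i/\Gamma_{i,\lambda}$, iff $V$ factors through $\Gamma/\Gamma_\lambda$ by the definition of $\Gamma_\lambda$ as the normal closure in $\Gamma$ of the images of all $\Gamma_{i,\lambda}$.

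\textbf{Main obstacle.} The bulk of the work lies in Step~$2$: establishing the quantum Darboux--Weinstein decomposition with precise control over quantization parameters, and showing that the resulting restriction functor is exact, monoidal on the top quotient, and compatible with the Step~$1$ embedding. This extends restriction constructions known for finite W-algebras and symplectic reflection algebras to arbitrary filtered quantizations of conical symplectic singularities. A secondary but necessary point is essential surjectivity of $\overline{\HC}(\A_\lambda)\to\Rep(\Gamma/\Gamma_\lambda)$ rather than just the inclusion of the image into this category; every irreducible representation of $\Gamma/\Gamma_\lambda$ must be realized by some full-support HC bimodule, which I would produce by descending a HC bimodule along the finite étale cover of $Y^{reg}$ whose Galois group is $\Gamma/\Gamma_\lambda$ and then extending across $Y\setminus Y^{reg}$ using quantum Hartogs.
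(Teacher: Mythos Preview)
Your outline is correct and matches the paper's approach closely: the paper constructs the full monoidal embedding $\bullet_\dagger:\overline{\HC}(\A_\lambda)\hookrightarrow\C\Gamma\operatorname{-mod}$ (Sections~4.1--4.3), proves the compatibility $(\B_{\dagger,i})_{\dagger,\Gamma_i}\cong\varphi_i^*(\B_\dagger)$ and its converse via an extension criterion (Proposition~\ref{Prop:extension_equiv} together with Proposition~\ref{Prop:HC_extension}), and then invokes the Kleinian result exactly as you describe. Two points where you are too quick: the identification $\HC(\A_\lambda^{reg})\simeq\Rep(\Gamma)$ is not the naive ``take monodromy of a local system'' (only $\gr\B$ is a local system) but requires regluing the quantization on $Y^{reg}$ to one pulled back from the universal cover (Proposition~\ref{Prop:derivation_inner}) and showing $\HC$ over the cover is trivial (Lemma~\ref{Lem:HC_simpl_conn1}); and the extension step is not a formal ``quantum Hartogs'' but a genuine local-to-global principle (Proposition~\ref{Prop:HC_extension}) whose codimension-$2$ part is precisely where the Kleinian hypothesis enters, while the codimension-$\geq 4$ part uses a Cohen--Macaulay argument.
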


There is a stronger version on the level of tensor categories, Theorem \ref{Thm:HC_classif}.

So we have a full classification of irreducible HC $\A_\lambda$-bimodules with full support
in the case when $Y$ has no two-dimensional slices of type $E_8$. This is the case in
the majority of interesting examples. For instance, if we are dealing
with $Y=\operatorname{Spec}(\C[\tilde{\Orb}])$, see Example (2) in Section \ref{SS_intro_HC},
then this assumption fails precisely when $Y$ is the nilpotent cone in the Lie algebra of
type $E_8$. Of course, in that case the classification is also known.

\begin{Rem}\label{Rem:HC_diff_param}
More generally, for two different filtered quantizations $\A_{\lambda'}$, $\A_\lambda$ of $Y$ one can consider
HC $\A_{\lambda'}$-$\A_\lambda$-bimodules and ask to classify such  irreducible bimodules
with full support. The situation there is more complicated than in the case of $\lambda'=\lambda$,
we have partial results on the classification (including a complete classification  when $\h_0^*=0$) that we will explain  sketching required modifications in Section \ref{SS_classif_diff_param}. One reason we chose to omit
complete proofs in the general case is that the case $\lambda'= \lambda$ is much less technical
but also is more important for applications, including those in Lie representation theory.
\end{Rem}

\subsection{Applications and variants}
One application of Theorem \ref{Thm:classif_sympl_prelim} is  a geometric interpretation
of Lusztig's quotients, \cite[Section 13]{orange}, for almost all cases (with the exception of
one case in $E_7$ and three in $E_8$). These finite groups were introduced by Lusztig in
his work on computing the characters for finite groups of Lie type. Namely, from a
two-sided cell $c$ in a Weyl group $W$ Lusztig has produced a finite group $\bA_c$.
He also established a connection of this group to nilpotent orbits, as follows.

Let $\g$ be a semisimple Lie algebra with Weyl group $W$. Then the two-sided cells in $W$
are in one-to-one correspondence with the so called {\it special} orbits  in $\g$.
Let $\Orb_c$ denote the orbit corresponding to $c$.
Lusztig has proved that $\bA_c$ can be realized as a quotient of the component
group ${\bf A}(\Orb_c)$, that is the $G$-equivariant fundamental group of
$\Orb_c$, where $G:=\operatorname{Ad}(\g)$.

The quotients $\bA_c$ were further studied in a number of papers including \cite{LO}. There the author and
Ostrik computed $\bA_c$ in terms of the two-sided $W$-module $[c]$ corresponding
to $c$ and the Springer representation of $W\times {\bf A}(\Orb_c)$ associated to $\Orb_c$. Using this,
we have identified the semi-simple part of the subquotient of $\HC(\U_\rho)$ corresponding to $\Orb_c$
(this subquotient categorifies $[c]$) with the category $\mathsf{Sh}^{\bA_c}(Y_c\times Y_c)$,
where $Y_c$ is the category of finite dimensional modules over the W-algebra corresponding
to $\Orb_c$. Below in Section \ref{S_Lusztig_quot} we will use this result from \cite{LO} and Theorem
\ref{Thm:classif_sympl_prelim} to show that $\bA_c=\Gamma/\Gamma_\lambda$, where
$\Gamma=\pi_1(\Orb_c)$ and $\lambda$ is suitable quantization parameter
for $\C[\Orb_c]$ (that exists for all $\Orb_c$ but the four mentioned above).
This gives a new description of $\bA_c$ basically in terms of the
geometry of $\operatorname{Spec}(\C[\Orb_c])$. The main results of Section
\ref{S_Lusztig_quot} are Propositions \ref{Prop:integral_period} and
\ref{Prop:Lusztig_quotient_comput}.

We now mention some subsequent work. In \cite{LMBM} we give a new definition of
unipotent Harish-Chandra bimodules over semisimple Lie algebras and apply Theorem \ref{Thm:classif_sympl_prelim}
to classify and study them.
And in \cite{LY}, we prove an analog of Theorem \ref{Thm:classif_sympl_prelim} for irreducible
Harish-Chandra modules over quantizations of $\C[\Orb]$, where $\Orb$ is a nilpotent
orbit in a semisimple Lie algebra satisfying $\operatorname{codim}_{\overline{\Orb}}\partial\Orb\geqslant 4$.

{\bf Acknowledgements}. I would like to thank Pavel Etingof, George Lusztig,
Dmytro Matvieievskyi and Victor Ostrik for stimulating discussions. I would also like
to thank Dmytro Matvieievskyi and Shilin Yu for the many comments that allowed me to improve the
exposition.  This work has been funded by the  Russian Academic Excellence Project '5-100'.
This work was also partially supported by the NSF under grant DMS-1501558. This paper is
dedicated to the memory of my advisor, Ernest Borisovich Vinberg, who sadly passed away in
May 2020.

\section{Preliminaries}\label{S_prelim}
\subsection{Non-commutative period map}\label{SS_Per_NC}
In this section we will discuss quantizations of smooth symplectic algebraic varieties and their
important invariant, the non-commutative period,  following \cite{BK_quant,quant}.

Let $X$ be a symplectic algebraic variety. So $\Str_X$ is a Poisson sheaf of algebras.
By a formal quantization of $X$ we mean a pair  $(\D_h,\iota)$, where
\begin{itemize}
\item $\D_h$ is a sheaf in Zariski topology of $\C[[h]]$-algebras on $X$
that is $\C[[h]]$-flat, and complete and separated in the $h$-adic topology,
\item and $\iota:\D_h/h\D_h\xrightarrow{\sim} \Str_X$ is an isomorphism
of sheaves of Poisson algebras on $X$.
\end{itemize}
We note that in the case when $X$ is affine, to give a formal quantization of $X$
is the same as to give a formal quantization of $\C[X]$.

Bezrukavnikov and Kaledin in \cite[Section 4]{BK_quant} defined an invariant of $\D_h$
called the {\it non-commutative period} that lies in $H^2(X,\C[[h]])$. Let us explain the construction
as we will need it below.

The first step in the construction is passing from a quantization $\D_h$ to its quantum jet bundle,
to be denoted by $\jet \D_h$, that is a pro-coherent sheaf of $\C[[h]]$-algebras on $X$ equipped with a flat connection.

Let us start with recalling the usual jet bundle $\jet \Str_X$. Consider $X\times X$ with the projections $p_1,p_2:X\times X
\rightarrow X$. By the jet bundle $\jet\Str_X$ we mean $p_{1*}(\widehat{\Str}_\Delta)$,
where we write $\widehat{\Str}_\Delta$ for the completion of $\Str_{X\times X}$ along the
diagonal $\Delta$. This is a pro-coherent sheaf on $\Str_X$ whose fiber
at $x\in X$ is the completion $\Str_X^{\wedge_x}$ at $x$.
This bundle comes with a flat connection $\nabla$ (derivatives along
the first copy of $X$). The subsheaf of flat sections $\left(\jet\Str_X\right)^\nabla$
is identified with $\Str_X$ via $p_2^*$. Finally, note that $\jet \Str_X$
comes with a natural $\Str_X$-linear Poisson structure.

Now let $\D_h$ be a formal quantization of $\Str_X$. Then we can form the quantum
jet bundle $\jet\D_h$: we consider the completion of $\Str_X\otimes \D_h$ along
the diagonal $\Delta$, denote this sheaf by $\widehat{\D}_{h,\Delta}$. Then $\jet\D_h:=
p_{1*}\widehat{\D}_{\hbar,\Delta}$. Again, this is a pro-coherent sheaf on $X$ with a
flat connection. The sheaf of flat sections of this connection is $\D_h$ and $\jet\D_{h}/(h)=\jet\Str_X$.

Let $\Weyl_h$ denote the formal Weyl algebra in $\dim X$-variables, the unique formal
quantization of the Poisson algebra $\C[[x_1,\ldots,x_n,y_1,\ldots,y_n]]$ (with the standard
Poisson bracket), where $\dim X=2n$. The sheaf $\jet \D_h$ defines a  torsor over the Harish-Chandra pair
$(\Aut \Weyl_h, \Der \Weyl_h)$. The sheaf $\jet\D_h$ is the associated
bundle of this torsor with fiber $\Weyl_h$.
The assignment sending $\D_h$ to that torsor is a bijection between
\begin{itemize}
\item
the set of isomorphism
classes of quantizations,
\item and the set of isomorphism classes of Harish-Chandra torsors
over $(\Aut \Weyl_h, \Der \Weyl_h)$ that specialize to the torsor of formal coordinate
systems at $h=0$.
\end{itemize}

The map $h^{-1}a\mapsto h^{-1}[a,\cdot]$ is an epimorphism $h^{-1}\Weyl_\hbar\twoheadrightarrow
\Der \Weyl_\hbar$ with kernel $h^{-1}\C[[h]]$. The exact sequence of Lie algebras
$$0\rightarrow h^{-1}\C[[h]]\rightarrow h^{-1}\Weyl_h\rightarrow \Der \Weyl_h\rightarrow 0$$
lifts to an exact sequence of Harish-Chandra pairs
$$0\rightarrow (h^{-1}\C[[h]], h^{-1}\C[[h]])\rightarrow \mathsf{G}\rightarrow
(\Aut\Weyl_h, \Der \Weyl_h)\rightarrow 0.$$
Here the first torsor corresponds to the additive group $h^{-1}\C[[h]]$
and $\mathsf{G}$ is defined in \cite[Section 3.2]{BK_quant}.
The exact sequence gives rise to the map $\mathsf{Per}: \mathsf{Quant}(X)
\rightarrow H^2_{DR}(X,\C[[h]])$, where we write
$\mathsf{Quant}(X)$ for the set of isomorphism classes of formal quantizations
of $X$. This map sends a quantization $\D_h$ to the obstruction class for
lifting the corresponding Harish-Chandra torsor to a $\mathsf{G}$-torsor.
The degree $0$ term of $\mathsf{Per}(\D_h)$ is the class of the symplectic
form $\omega$ on $X$. By the construction, $\mathsf{Per}(\D_h)\operatorname{mod}h^2$
is recovered from $\D_h/(h^2)$ together with a ``non-commutative Poisson bracket''
$\{\cdot,\cdot\}$ induced by the Lie bracket on $\D_h/(h^3)$.

We will be interested in the situation when $\C^\times$ acts
on $X$ with $t.\omega=t^d \omega$ for $d\in \Z_{>0}$.
Of course, here the cohomology class of $\omega$ is $0$. We say that a formal quantization $\D_h$ is {\it graded}
if the action of $\C^\times$ on $\Str_X$ lifts to an action of $\C^\times$
on $\D_h$ by $\C$-algebra automorphisms such that $t.h=t^d h$ for $t\in \C^\times$
and $\iota:\D_h/h\D_h\xrightarrow{\sim}\Str_X$ is $\C^\times$-equivariant. It was shown in \cite[Section 2.3]{quant}
that if $\D_h$ is graded, then $\mathsf{Per}(\D_h)\in h H^2_{DR}(X)$.

The construction of the period generalizes to the relative situation,
\cite[Section 4]{BK_quant}.
Let $S$ be a  scheme over $\C$. We will mostly be interested in the case
when $S=\operatorname{Spec}(\C[t]/(t^2))$. Let $X$ be a smooth symplectic scheme (of finite type) over
$S$ (meaning, in particular, that now $\omega\in \Omega^2(X/S)$), let $\pi:X\rightarrow S$
be the corresponding morphism. The notion of
a formal quantization still makes sense but now $\D_h$ is a sheaf of  $\pi^{-1}\Str_S[[h]]$-algebras
and $\iota$ is $\pi^{-1}\Str_S$-linear. Here we write $\pi^{-1}$ for the sheaf-theoretic pullback.
The set of isomorphism classes of the formal quantizations
of $X$ will be denoted by $\mathsf{Quant}(X/S)$. To $\D_h$ we can assign its period $\mathsf{Per}(\D_h)\in
H^2_{DR}(X/S)[[h]]$ in the same way as before.


We will need to understand the behavior of the period under regluing.
Namely, let us take a graded formal quantization $\D_h$. Cover $X$
with $\C^\times$-stable open affine subsets $U_i$ and let us write
$U_{ij}$ for $U_{i}\cap U_j$. Let us pick a 1-cocycle $\theta=(\theta_{ij})$
of $\C^\times$-equivariant $\C[[h]]$-linear automorphisms of $\D_h|_{U_{ij}}$. In particular, $\theta_{ji}=\theta_{ij}^{-1}$
and we have the equality $\theta_{ik}=\theta_{ij}\theta_{jk}$ of automorphisms
of $\D_{h}|_{U_{ijk}}$. We can form a new quantization $\D_h^\theta$ obtained from
$\D_h$ by twisting with $\theta$. We want to relate the periods
$\mathsf{Per}(\D_h)$ and $\mathsf{Per}(\D_h^\theta)$.

Note that $\theta_{ij}=\exp(h\delta_{ij})$, where $\delta_{ij}$ is a derivation
$\D_h|_{U_{ij}}$ of degree $-d$. Let $\delta^0_{ij}$ denote
$\delta_{ij}$  modulo $h$. This is a symplectic vector field on $U_{ij}$
of degree $-d$. Let $\alpha_{ij}$ be the corresponding 1-form (obtained
by pairing $\delta_{ij}$ and $\omega$). Note that $\alpha_{ij}$ is closed
and has degree $0$. The forms $\alpha_{ij}$  form a \v{C}ech
and hence a \v{C}ech-De Rham cocycle. Let $[\alpha]$ denote its class
in  $H^2_{DR}(X)$.

\begin{Lem}\label{Lem:period_regluing}
We have $\mathsf{Per}(\D_h^\theta)=\mathsf{Per}(\D_h)+h[\alpha]$.
\end{Lem}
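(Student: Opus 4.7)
The approach is to unpack the Bezrukavnikov--Kaledin definition of the period recalled above: $\mathsf{Per}(\D_h)$ is the class of the obstruction to lifting the Harish--Chandra torsor attached to $\D_h$ through the central extension
\[
0\to (h^{-1}\C[[h]], h^{-1}\C[[h]])\to \mathsf{G}\to (\Aut\Weyl_h,\Der\Weyl_h)\to 0,
\]
and the regluing $\D_h\rightsquigarrow \D_h^\theta$ modifies the transition data of the Harish--Chandra torsor by the 1-cocycle of $\Aut\Weyl_h$-valued automorphisms induced from $\theta$. The difference $\mathsf{Per}(\D_h^\theta)-\mathsf{Per}(\D_h)$ is therefore represented by the obstruction to lifting this additional 1-cocycle $(\theta_{ij})$ from $\Aut\Weyl_h$ to the group underlying $\mathsf{G}$, and my task reduces to identifying that obstruction with $h[\alpha]$ by a direct \v{C}ech computation.

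After refining the cover $\{U_i\}$, I may assume that on each $U_{ij}$ the closed $1$-form $\alpha_{ij}$ of degree $0$ is exact, $\alpha_{ij}=df_{ij}$ for some $f_{ij}\in \C[U_{ij}]$. A further refinement makes $\delta_{ij}$ an inner derivation $h^{-1}[a_{ij},\cdot]$ of $\D_h|_{U_{ij}}$ with principal symbol $a_{ij}|_{h=0}=f_{ij}$; the obstruction to such an inner presentation is a closed $1$-form at each order in $h$, hence exact after refinement. The sections $a_{ij}$, read in the Harish--Chandra torsor description, provide lifts of $h\delta_{ij}\in\Der\Weyl_h$ to $h^{-1}\Weyl_h$, and exponentiating in $\mathsf{G}$ produces local lifts $\tilde\theta_{ij}$ of the transition automorphisms.

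The resulting \v{C}ech 2-cocycle
\[
c_{ijk}:=\tilde\theta_{ij}\tilde\theta_{jk}\tilde\theta_{ik}^{-1}\in h^{-1}\C[[h]]
\]
represents $\mathsf{Per}(\D_h^\theta)-\mathsf{Per}(\D_h)$. By the Baker--Campbell--Hausdorff formula its leading piece is $h(f_{ij}+f_{jk}-f_{ik})$; since the \v{C}ech $2$-cochain of locally constant functions $f_{ij}+f_{jk}-f_{ik}$ on triple overlaps computes, via the \v{C}ech-to-de Rham isomorphism, the de Rham class of the \v{C}ech $1$-cocycle of closed $1$-forms $\alpha_{ij}=df_{ij}$, this leading contribution represents precisely $h[\alpha]\in h\,H^2_{DR}(X)$. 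The principal technical obstacle is controlling the higher-order BCH commutators $\tfrac{1}{2}[a_{ij},a_{jk}]+\dotsb$ and showing that they contribute only coboundaries; I plan to handle this by an inductive order-by-order adjustment of the local Hamiltonians $a_{ij}$ exploiting the $\C^\times$-grading, or alternatively by interpolating along the formal family $\theta^s=\exp(sh\delta)$ parametrized by $s\in\C$ and verifying the statement infinitesimally in $s$, where the computation reduces to the leading BCH term above and where the derivative of $\mathsf{Per}$ in $s$ is shown to be constant (after reapplying the argument with $\D_h$ replaced by $\D_h^{\theta^{s_0}}$).
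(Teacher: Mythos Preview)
Your direct \v{C}ech-theoretic approach is reasonable and different from the paper's, but you have overlooked the one observation that dissolves your ``principal technical obstacle'' entirely. Since both $\D_h$ and $\D_h^\theta$ are \emph{graded} quantizations, their periods lie in $hH^2_{DR}(X)$, not merely in $hH^2_{DR}(X)[[h]]$ (this was recalled just before the lemma). Hence it suffices to verify $\mathsf{Per}(\D_h^\theta)\equiv\mathsf{Per}(\D_h)+h[\alpha]\pmod{h^2}$, and all the higher Baker--Campbell--Hausdorff commutators you worry about are irrelevant: they contribute only at order $h^2$ and above. With this reduction your leading-order computation is essentially the whole story, and neither the inductive adjustment of the $a_{ij}$ nor the interpolation in $s$ is needed.

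Even for the computation modulo $h^2$ the paper proceeds differently. Rather than opening up the Harish--Chandra torsor and lifting the cocycle $(\theta_{ij})$ through the central extension, it passes to the relative setting over $S=\Spec(\C[t]/(t^2))$: one twists $\D_h\otimes\C[t]/(t^2)$ by the first-order cocycle $1+t\delta_{ij}$, obtaining a quantization of a twisted family $(X\times S)^\theta$ whose fiberwise symplectic form has class $t[\alpha]$; then one specializes $t\mapsto th$ and identifies the result with $\D_h^\theta/(h^2)$ together with its induced bracket. The answer is then read off from the degree-zero term of the relative period, which is by definition the class of the symplectic form. This avoids the torsor bookkeeping altogether. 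Your approach, once the grading reduction is invoked, is more hands-on; the paper's is closer in spirit to the interpolation idea you sketch at the end, but carried out infinitesimally in a single step rather than as a one-parameter family.
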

\begin{proof}
Since both quantizations are graded, we have $\mathsf{Per}(\D_h^\theta), \mathsf{Per}(\D_h)\in h H^2_{DR}(X)$. It remains to show that  we have $\mathsf{Per}(\D_h^\theta)=\mathsf{Per}(\D_h)+h[\alpha]$
modulo $h^2$. For this, consider  the scheme $X\times S$ over $S$, where $S:=\operatorname{Spec}(\C[t]/(t^2))$, and its quantization $\D_h\otimes \C[t]/(t^2)$.
We can twist the sheaf $\D_h\otimes \C[t]/(t^2)$ with the cocycle $1+t\delta_{ij}$, denote
the result by $(\D_h\otimes \C[t]/(t^2))^\theta$. This is a quantization of the corresponding
twist $(X\times S)^\theta$. The class of the fiberwise symplectic form is $t[\alpha]$.
Now consider the specialization of $(\D_h\otimes \C[t]/(t^2))^\theta$  to $th$. We get
the sheaf of algebras over $\C[h]/(h^2)$ that comes with the bracket $\{\cdot,\cdot\}$
induced from the Lie bracket on $(\D_h\otimes \C[t]/(t^2))^\theta$. We have an isomorphism of
this specialization with $D_h^\theta/(h^2)$ that is compatible with the brackets.
We conclude that the period of $D_h^\theta$ mod $h^2$ coincides with the specialization
of that of  $(\D_h\otimes \C[t]/(t^2))^\theta$ to $th$ (where we then need to change the
variable $th$ back to $h$). The latter is $\mathsf{Per}(\D_h)+th[\alpha]$. This is equivalent to the formula in the statement of the lemma.
\end{proof}

\subsection{Classification of quantizations of symplectic varieties}\label{SS_quant_classif_sympl}
Let us now discuss classification questions and some consequences.

The next claim follows from \cite[Theorem 1.8]{BK_quant}.

\begin{Prop}\label{Prop:BK_main}
Let $S$ be a $\C$-scheme of finite type and  $X$ be a smooth symplectic $S$-scheme of finite type. Assume that
$H^i(X,\Str_X)=0$ for $i=1,2$. Then the map $\mathsf{Quant}(X/S)
\rightarrow [\omega]+h H^2_{DR}(X/S)[[h]]$ is a bijection.
\end{Prop}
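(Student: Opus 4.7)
The plan is to invoke the Harish-Chandra torsor reformulation of Section~\ref{SS_Per_NC} and then argue order by order in the $h$-adic filtration. Recall that $\mathsf{Quant}(X/S)$ is in bijection with the set of Harish-Chandra torsors for $(\Aut \Weyl_h, \Der \Weyl_h)$ (in the relative sense over $S$) specializing at $h=0$ to the torsor of formal coordinate systems on $X/S$, and that under this identification the period $\mathsf{Per}(\D_h)$ is the obstruction to lifting such a torsor to a $\mathsf{G}$-torsor.

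First I would establish surjectivity by constructing a quantization $\D_h$ with prescribed period $[\omega]+h\beta$ inductively. Suppose $\D^{(n)}$ is a quantization of $X/S$ modulo $h^{n+1}$ whose period agrees with the target modulo $h^{n+1}$. The obstruction to extending $\D^{(n)}$ to order $n+1$ is a Hochschild-type class which, for symplectic $X/S$, decomposes along the HKR splitting into a component in $H^{2}(X,\Str_X)$ and a De Rham component in $H^{2}_{DR}(X/S)$. The former vanishes by hypothesis, and the latter may be adjusted to any desired value using Lemma~\ref{Lem:period_regluing}: twisting by a \v{C}ech cocycle of automorphisms of order $h^{n+1}$ shifts the period by an arbitrary element of $h^{n+1}H^{2}_{DR}(X/S)$. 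Passing to the inverse limit produces the required $\D_h$.

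Next I would treat injectivity. Given $\D_h,\D_h'$ with the same period, I would construct an isomorphism $\varphi\colon \D_h\xrightarrow{\sim}\D_h'$ by successive approximation, starting from $\iota'\circ\iota^{-1}$ at $h=0$. If $\varphi$ has been built modulo $h^{n+1}$, the obstruction to a local extension lies in a sheaf version of Hochschild $H^{1}$, which via symplectic duality is controlled by $H^{1}(X,\Str_X)=0$; hence local extensions exist on a \v{C}ech cover. Their discrepancy defines a 1-cocycle of automorphisms of the quantization at order $h^{n+1}$, and by Lemma~\ref{Lem:period_regluing} its De Rham class equals the difference of periods at that order, hence is zero. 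Therefore the cocycle is a \v{C}ech coboundary (again using $H^{1}(X,\Str_X)=0$), and modifying the local lifts by this coboundary produces the desired isomorphism modulo $h^{n+2}$. The limit yields $\varphi$.

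The main obstacle is the clean identification of the obstruction and ambiguity spaces at each induction step with (the appropriate subquotients of) $H^{2}(X,\Str_X)$ and $H^{1}(X,\Str_X)$, together with the extraction of the De Rham part that matches the period under Lemma~\ref{Lem:period_regluing}; this requires the symplectic form to convert Hochschild cochains into De Rham cochains and careful bookkeeping of the resulting spectral sequence. In the relative setting over $S$ one must also verify that these identifications are natural in $S$, so that the period genuinely lands in $H^{2}_{DR}(X/S)[[h]]$ rather than in a larger group.
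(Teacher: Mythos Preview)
The paper does not actually prove this proposition: immediately before the statement it says ``The next claim follows from \cite[Theorem 1.8]{BK_quant},'' and no argument is given. So there is no in-paper proof to compare your attempt against; the proposition is imported wholesale from Bezrukavnikov--Kaledin.

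Your sketch is broadly in the spirit of the Bezrukavnikov--Kaledin argument: reformulate quantizations as Harish-Chandra torsors and lift order by order in $h$, with the obstruction and ambiguity at each step controlled by $H^2(X,\Str_X)$ and $H^1(X,\Str_X)$ respectively. One caution: you repeatedly invoke Lemma~\ref{Lem:period_regluing} to adjust the period at an arbitrary inductive step, but in the paper that lemma is stated only for \emph{graded} quantizations over a point and only computes the leading-order shift; it is not literally applicable in the relative setting over $S$ or at higher orders in $h$. In \cite{BK_quant} this role is played instead by a systematic nonabelian-cohomology/spectral-sequence analysis of the Harish-Chandra torsor, which is precisely the ``main obstacle'' you flag in your last paragraph. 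So your outline is correct as a roadmap, but the appeals to Lemma~\ref{Lem:period_regluing} should be replaced by references to the relevant machinery in \cite{BK_quant}.
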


This proposition has the following corollary proved in \cite[Section 2.3]{quant}.

\begin{Cor}\label{Cor:graded_filt_quant}
Let $S=\operatorname{pt}$, $X$ be as in Proposition \ref{Prop:BK_main},
and we have a $\C^\times$-action on $X$ as before.
Then the period map gives a bijection between the isomorphism classes of
graded formal quantizations and $h H^2(X,\C)$.
\end{Cor}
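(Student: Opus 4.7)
The plan is to bootstrap from the non-equivariant classification of Proposition \ref{Prop:BK_main} using the fact (already cited from \cite[Section 2.3]{quant}) that graded quantizations have period in $hH^2(X,\C)$. First I would show $[\omega]=0$ in $H^2_{DR}(X)$: since $\C^\times$ is connected, it acts trivially on $H^2(X,\C)$, so the identity $\sigma_t^*[\omega]=t^d[\omega]=[\omega]$ forces $[\omega]=0$. Specializing Proposition \ref{Prop:BK_main} to $S=\operatorname{pt}$ then gives a bijection $\mathsf{Per}\colon\mathsf{Quant}(X)\xrightarrow{\sim}hH^2(X,\C)[[h]]$, and the cited result confines graded quantizations (up to isomorphism) to the subset $hH^2(X,\C)$.

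To upgrade this to a bijection between graded isomorphism classes and $hH^2(X,\C)$, I would rerun the obstruction-theoretic analysis that underlies Proposition \ref{Prop:BK_main} in the $\C^\times$-equivariant setting. For surjectivity, given $h\alpha\in hH^2(X,\C)$ I would build a graded quantization inductively modulo $h^{n+1}$: at each step the obstruction to lifting the equivariant quantization lies in an appropriate weight component of $H^2(X,\Str_X)$ and the ambiguity in the corresponding component of $H^1(X,\Str_X)$, both of which vanish by hypothesis; the input data (the symplectic form and a de Rham representative of $\alpha$) can be chosen $\C^\times$-equivariantly because cohomology classes are invariant under the connected group $\C^\times$. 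For injectivity, given two graded quantizations with the same period, Proposition \ref{Prop:BK_main} provides a $\C[[h]]$-algebra isomorphism; by an analogous inductive modification controlled by the same vanishing groups, I would promote it to a $\C^\times$-equivariant one.

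I expect the injectivity step (arranging $\C^\times$-equivariance of an isomorphism that a priori need not respect the grading) to be the main technical hurdle, but it is handled cleanly in the Harish-Chandra torsor language of Section \ref{SS_Per_NC}: graded quantizations correspond to $\C^\times$-equivariant $(\Aut\Weyl_h,\Der\Weyl_h)$-torsors that specialize to the torsor of formal coordinate systems at $h=0$, and the entire Bezrukavnikov-Kaledin argument goes through equivariantly because the relevant obstructions continue to live in the groups $H^i(X,\Str_X)$ for $i=1,2$ assumed to vanish.
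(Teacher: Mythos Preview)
The paper does not supply its own proof here; it simply cites \cite[Section 2.3]{quant}. Your outline is a valid route: once $[\omega]=0$ and Proposition~\ref{Prop:BK_main} identifies $\mathsf{Quant}(X)$ with $hH^2(X,\C)[[h]]$, what remains is precisely to match graded quantizations with the subset $hH^2(X,\C)$, and rerunning the Bezrukavnikov--Kaledin argument $\C^\times$-equivariantly does accomplish this. One small imprecision: the obstructions in the BK theory do not literally live in $H^i(X,\Str_X)$; rather, the vanishing of these groups forces the relevant hypercohomology to reduce to de Rham cohomology. This does not affect the soundness of your strategy.

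That said, redoing the obstruction theory equivariantly is heavier than necessary. A lighter alternative, closer to what \cite{quant} does, exploits the non-equivariant uniqueness already provided by Proposition~\ref{Prop:BK_main}. The BK construction of the quantization with prescribed period is functorial in the symplectic variety, so feeding it the $\C^\times$-equivariant datum $(X,\omega,\alpha)$ with $\alpha\in H^2(X,\C)$ directly yields a graded quantization with period $h\alpha$, giving surjectivity. For injectivity, non-equivariant uniqueness shows that two graded quantizations with the same period are isomorphic; since the $\C^\times$-twist of any such isomorphism is again an isomorphism between the same two objects, a short functoriality or rigidity argument upgrades it to a graded one, so your separate promotion step collapses. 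Your approach buys a self-contained equivariant theory; the functoriality shortcut buys economy by bootstrapping from the non-equivariant classification.
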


We are going to use Proposition \ref{Prop:BK_main} to study the derivations
of quantizations of affine varieties.

\begin{Lem}\label{Lem:deriv_lift}
Let $X$ be an affine smooth symplectic variety and $\delta_0$ be a Poisson derivation
of $\C[X]$. Let $\D_h$ be a formal quantization of $X$. Then $\delta_0$ lifts
to a derivation of the $\C[[h]]$-algebra $\D_h$.
\end{Lem}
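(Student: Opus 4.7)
The plan is to apply the classification of quantizations by their period (Proposition \ref{Prop:BK_main}) in a relative setting over $S := \operatorname{Spec}(\C[t]/(t^2))$. Since $\delta_0$ is a Poisson derivation, $\sigma := \exp(t\delta_0)$ defines a Poisson automorphism of $\tilde{X} := X \times_\C S$ as an $S$-scheme that reduces to the identity modulo $t$. I will compare two quantizations of $\tilde{X}/S$ whose quantization structures differ by $\sigma$, observe that they have equal periods, and extract the required derivation from the resulting isomorphism.

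More precisely, consider $\tilde{\D}_h := \D_h \otimes_\C \C[t]/(t^2)$ with its tautological identification $\iota : \tilde{\D}_h/h \xrightarrow{\sim} \Str_{\tilde{X}}$, and its variant $\tilde{\D}_h'$ defined as the same underlying $\C[[h]][t]/(t^2)$-algebra but with quantization structure $\iota \circ \sigma$ instead. Because $X$ is affine we have $H^i(\tilde{X}, \Str_{\tilde{X}}) = 0$ for $i \geqslant 1$, so Proposition \ref{Prop:BK_main} applies. The two periods coincide in $H^2_{DR}(\tilde{X}/S)[[h]]$: replacing $\iota$ with $\iota \circ \sigma$ transforms the period by $\sigma^*$, and since $\sigma^* - \operatorname{id} = t L_{\delta_0}$ sends closed forms to exact ones by Cartan's formula, $\sigma^*$ acts as the identity on $H^2_{DR}$. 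Hence there is an isomorphism of quantizations $\phi : \tilde{\D}_h \xrightarrow{\sim} \tilde{\D}_h'$.

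After normalizing $\phi$ so that $\phi \equiv \operatorname{id} \pmod{t}$, achieved by precomposing with the $\C[t]/(t^2)$-linear extension of $(\phi \bmod t)^{-1}$ (which is an automorphism of $\tilde{\D}_h$ as a quantization, since it reduces to the identity on $\Str_{\tilde{X}}$), I can write $\phi(a) = a + t\delta(a)$ for a unique $\C[[h]]$-linear map $\delta : \D_h \to \D_h$. Multiplicativity of $\phi$ forces $\delta$ to be a derivation of $\D_h$, and the compatibility of $\phi$ with the quantization structures gives $\delta \equiv \delta_0 \pmod{h}$ (up to a sign coming from the direction of $\sigma$), so $\delta$ (or $-\delta$) is the required lift. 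The main point I expect to need care with is the computation of how the period is affected by twisting the quantization structure by $\sigma$; once this invariance is established, everything else is a routine unwinding of the definition of an isomorphism of quantizations.
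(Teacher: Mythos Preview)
Your proposal is correct and follows essentially the same route as the paper's own proof: pass to $S=\operatorname{Spec}(\C[t]/(t^2))$, compare the base-changed quantization with its twist by $1+t\delta_0$, use that this automorphism acts trivially on $H^2_{DR}(\tilde{X}/S)$ so the periods agree, invoke Proposition~\ref{Prop:BK_main} (the vanishing $H^i(X,\Str_X)=0$ being automatic for affine $X$), and read off the derivation from the resulting $\C[t]/(t^2)[[h]]$-linear isomorphism. The only cosmetic difference is in the last step: the paper writes the isomorphism as $\alpha=\alpha_0+t\alpha_1$ and takes $\alpha_0^{-1}\circ\alpha_1$, whereas you first normalize so that $\alpha_0=\operatorname{id}$ and then take the $t$-coefficient; these produce the same derivation.
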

\begin{proof}
Set $S:=\operatorname{Spec}(\C[t]/(t^2))$ and $\tilde{X}:=X\times S$. Thanks to the Gauss-Manin
connection, we have an identification $H^2_{DR}(\tilde{X}/S)=H^2_{DR}(X)\times (\C[t]/(t^2))$.
We can consider two quantizations of $\tilde{X}$. First, we have $\tilde{\D}^1_h:=\D_h\otimes \C[t]/(t^2)$.
Next, we have an automorphism $1+t\delta_0$ of $\C[\tilde{X}]$. Let $\tilde{\D}^2_h$ be the twist of
$\tilde{\D}^1_h$ under this automorphism. Since $1+t\delta_0$ acts trivially on the De Rham cohomology,
we see that the periods of $\tilde{\D}^1_h$ and $\tilde{\D}^2_h$ are the same.
Therefore, by Proposition \ref{Prop:BK_main}, we have a $\C[t]/(t^2)\otimes \C[[h]]$-linear isomorphism
$\tilde{\D}^1_h\xrightarrow{\sim}\tilde{\D}^2_h$ that is the identity modulo $h$.
So we have an automorphism of $\tilde{\D}^1_h$ that  modulo $h$ coincides
with $1+t\delta_0$. We can write $\alpha$ as $\alpha_0+t\alpha_1$, where $\alpha_0,\alpha_1$
are maps $\D_h\rightarrow \D_h$.  Then $\alpha_0^{-1}\circ \alpha_1$
is a derivation of $\D_h$ lifting $\delta_0$.
\end{proof}

\subsection{Symplectic singularities, their $\Q$-terminalizations and quantizations}\label{SS_symp_sing}
The definition of a conical symplectic singularity as well as basic examples were
recalled in Section \ref{SS_intro_HC}. In this section we will study some further
properties of conical symplectic singularities and their quantizations.

Let $Y$ be a conical symplectic singularity. Let us recall
the notation: $\Gamma, \mathcal{L}_i,\Gamma_i, \tilde{\h}_i^*, i=1,\ldots,k, \h_j^*, j=0,\ldots,k$
from Section \ref{SS_HC_symp_sing}.

First, let us discuss covers. Let $\hat{Y}^0$ be a finite \'{e}tale cover of
$Y^{reg}$. Then $\C[\hat{Y}^0]$ is a finitely generated algebra. This follows from
the Stein factorization for $\hat{Y}^0\rightarrow Y$. We set
$\hat{Y}:=\operatorname{Spec}(\C[\hat{Y}^0])$. This is an affine Poisson variety.

The proof of the following lemma was explained to me by Dmytro Matvieievskyi.

\begin{Lem}\label{Lem:symp_sing_cover}
The Poisson variety $\hat{Y}$ is a conical symplectic singularity.
\end{Lem}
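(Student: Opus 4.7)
The plan is to verify Beauville's axioms for $\hat{Y}$ in four stages: normality and a finite map to $Y$, a symplectic form on the smooth locus, extension of this form to a resolution, and a contracting $\C^\times$-action of the right degree. The heart of the proof is the extension to a resolution. First, I establish normality and finiteness. Since $\pi:\hat{Y}^0\to Y^{reg}$ is finite \'etale, $\pi_*\Str_{\hat{Y}^0}$ is coherent, and because $Y$ is normal with $\operatorname{codim}_Y(Y\setminus Y^{reg})\geqslant 2$, its pushforward to $Y$ is coherent as well. Its global sections are $\C[\hat{Y}^0]=\C[\hat{Y}]$, so $\C[\hat{Y}]$ is a finitely generated $\C[Y]$-module and $\hat{Y}\to Y$ is finite. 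Smoothness of $\hat{Y}^0$ forces $\C[\hat{Y}^0]$ to be integrally closed, so $\hat{Y}$ is normal. The complement $\hat{Y}\setminus\hat{Y}^0$ lies over $Y\setminus Y^{reg}$ via the finite map $\hat{Y}\to Y$, so it has codimension $\geqslant 2$; in particular $\hat{Y}^0\subseteq\hat{Y}^{reg}$.

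Next, I transfer the Poisson bracket. \'Etaleness of $\pi$ lets me pull the bracket on $Y^{reg}$ back to $\hat{Y}^0$ with non-degenerate bivector, and the codimension bound together with normality extends it uniquely to a Poisson bracket on $\hat{Y}$. For non-degeneracy on all of $\hat{Y}^{reg}$, observe that the vanishing locus of the top wedge of the bivector is either empty or a divisor in $\hat{Y}^{reg}$; being disjoint from the dense open $\hat{Y}^0$, such a divisor would have to lie in $\hat{Y}^{reg}\setminus\hat{Y}^0$, which has codimension $\geqslant 2$, so it is empty. Thus $\hat{Y}^{reg}$ carries a symplectic form $\hat{\omega}$. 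Moreover $\hat{\omega}^n$ trivialises $\omega_{\hat{Y}^{reg}}$, and by the $S_2$ property of $\hat{Y}$ this trivialisation extends across the codimension-$\geqslant 2$ locus to a trivialisation of $\omega_{\hat{Y}}$, so $\hat{Y}$ is Gorenstein.

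The main obstacle is to extend $\hat{\omega}$ regularly to a resolution of $\hat{Y}$. The cleanest route is to apply Namikawa's characterisation: for a normal variety whose smooth locus carries a symplectic form, the symplectic singularities property is equivalent to having rational Gorenstein singularities. Gorensteinness has just been verified, so only rationality of $\hat{Y}$ remains, and I would deduce it from rationality of $Y$ using that $\hat{Y}\to Y$ is finite surjective and \'etale in codimension $1$, a class of morphisms under which rational Gorenstein (equivalently, canonical of index $1$) singularities are preserved. A more explicit alternative avoiding the transfer principle is to start from a resolution $\rho:\tilde{Y}\to Y$ to which $\omega$ extends, form the normalisation $\tilde{Y}'$ of $\tilde{Y}$ in the function field of $\hat{Y}^0$, resolve $\tilde{Y}'$ to obtain $\tilde{\hat{Y}}$, and use the universal property of $\hat{Y}$ to factor $\tilde{Y}'\to Y$ through $\hat{Y}$; the resulting map $\tilde{\hat{Y}}\to\hat{Y}$ is a resolution, and the extension of $\hat{\omega}$ across the exceptional locus follows from regularity of $\rho^*\omega$ together with standard extension theorems for forms under finite pullback and resolution. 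Finally, the $\C^\times$-action on $Y$ lifts to $\hat{Y}$: connectedness of $\C^\times$ forces its action on the profinite group $\pi_1^{\mathrm{alg}}(Y^{reg})$ by outer automorphisms to be trivial, so the subgroup defining $\hat{Y}^0$ is preserved and the $\C^\times$-action lifts to $\hat{Y}^0$, possibly after passing to a finite cover of $\C^\times$; this action extends to $\hat{Y}$ by normality, contracts $\hat{Y}$ to the preimage of the apex of $Y$, and scales the Poisson bracket with degree $-d$ as inherited through the finite morphism $\hat{Y}\to Y$.
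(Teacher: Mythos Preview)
Your proof is correct and follows essentially the same route as the paper: reduce the symplectic-singularity condition to rational Gorenstein via Namikawa's criterion, then transfer rational Gorenstein from $Y$ to $\hat{Y}$ along the finite map that is \'etale in codimension $1$. The paper is terser, citing \cite[Theorem~6]{Namikawa_extension} for the criterion and \cite[Theorem~6.2]{Broer} for the transfer, whereas you spell out normality, Gorensteinness, and the $\C^\times$-lift explicitly and invoke the general fact that canonical (hence rational Gorenstein) singularities are preserved under finite covers \'etale in codimension $1$; your alternative explicit-resolution argument is an extra, not needed.
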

\begin{proof}
By the construction,  $
\operatorname{codim}_{\hat{Y}}(\hat{Y}\setminus \hat{Y}^0)\geqslant 2$.
So $\hat{Y}^{reg}$ is symplectic.

Let us show that $\hat{Y}$ has symplectic singularities. By a result of Namikawa,
\cite[Theorem 6]{Namikawa_extension}, it is enough to show that $\hat{Y}$ has rational
Gorenstein singularities. The latter follows from \cite[Theorem 6.2]{Broer}.

Now to prove that $\hat{Y}$ is conical we just need to observe that the action of
$\C^\times$ on $Y^{reg}$ lifts to $\hat{Y}^0$ perhaps after replacing $\C^\times$
with a cover.
\end{proof}

Let us discuss certain partial Poisson resolutions of $Y$: $\Q$-factorial terminalizations
($\Q$-terminalizations for short).  These are  Poisson partial resolutions $\rho:X\rightarrow Y$, where
$X$ is normal and has the following two properties:
\begin{enumerate}
\item The variety $X$ is $\Q$-factorial: every Weil divisor of $X$ is $\Q$-Cartier, meaning that some its positive integral multiple is Cartier.
\item $\operatorname{codim}_X X^{sing}\geqslant 4$. Namikawa proved that, in the present situation,
this is equivalent to $X$ being terminal.
\end{enumerate}
The action of $\C^\times$ on $Y$ then lifts to $X$ by a result of Namikawa.
See \cite[Proposition 2.1]{SRA_der} for details.

Note that $\rho$ is an isomorphism over $Y^{reg}$ and is a resolution of singularities over $Y^{sreg}=Y^{reg}
\cup\bigsqcup_{i=1}^k \mathcal{L}_i$. Let us record the following fact for the future use,
see the proof of \cite[Proposition 1.11]{Namikawa_Poisson}.

\begin{Lem}\label{Lem:cohom_vanish}
We have $\C[X^{reg}]=\C[Y]$ and $H^i(X^{reg},\mathcal{O})=0$ for $i=1,2$.
\end{Lem}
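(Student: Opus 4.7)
The plan is to handle the two assertions separately, using the hypotheses built into the definition of a $\Q$-terminalization: normality of $X$, $\operatorname{codim}_X X^{sing}\geqslant 4$, and the fact (inherited from $Y$) that $X$ itself has symplectic, hence rational Gorenstein, singularities.

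For the first assertion, I would argue as follows. Since $X$ is normal and $\operatorname{codim}_X(X\setminus X^{reg})\geqslant 4\geqslant 2$, restriction gives $\C[X]\xrightarrow{\sim}\C[X^{reg}]$. On the other hand, $\rho:X\rightarrow Y$ is a projective birational morphism onto a normal affine variety, so $\rho_*\Str_X=\Str_Y$ by Zariski's main theorem (or simply by normality of $Y$ combined with $\rho$ being an isomorphism over $Y^{reg}$ and $Y\setminus Y^{reg}$ having codimension $\geqslant 2$). Taking global sections yields $\C[X]=\C[Y]$, and combining the two identifications gives $\C[X^{reg}]=\C[Y]$.

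For the cohomology vanishing, the plan is to compare $X^{reg}$ with $X$ via local cohomology and then to trivialize cohomology on $X$ via the map $\rho$. Set $Z:=X\setminus X^{reg}=X^{sing}$. The long exact sequence of local cohomology reads
\begin{equation*}
\cdots\rightarrow H^i_Z(X,\Str_X)\rightarrow H^i(X,\Str_X)\rightarrow H^i(X^{reg},\Str)\rightarrow H^{i+1}_Z(X,\Str_X)\rightarrow\cdots
\end{equation*}
Because $X$ has symplectic singularities, it has rational Gorenstein singularities, so $\Str_X$ is Cohen--Macaulay; hence $\operatorname{depth}_Z\Str_X=\operatorname{codim}_X Z\geqslant 4$, which forces $H^i_Z(X,\Str_X)=0$ for $i\leqslant 3$. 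Consequently the restriction map $H^i(X,\Str_X)\xrightarrow{\sim} H^i(X^{reg},\Str)$ is an isomorphism for $i=1,2$.

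It remains to show $H^i(X,\Str_X)=0$ for $i=1,2$. Since $Y$ has rational singularities and $\rho$ is a partial resolution, $R^i\rho_*\Str_X=0$ for $i>0$ (more precisely, $X$ itself has rational singularities and $\rho$ is birational and projective, so the Grauert--Riemenschneider / rationality argument applies). Hence the Leray spectral sequence degenerates to $H^i(X,\Str_X)=H^i(Y,\rho_*\Str_X)=H^i(Y,\Str_Y)$, and the latter vanishes for $i>0$ because $Y$ is affine. The main technical ingredient is thus the combination of Cohen--Macaulayness (to kill local cohomology through range $i\leqslant 3$) with the terminality condition $\operatorname{codim}_X X^{sing}\geqslant 4$; both are automatic for a $\Q$-terminalization of a conical symplectic singularity, so no further input beyond what has been recalled is needed.
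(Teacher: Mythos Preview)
Your proof is correct. The paper does not give its own argument here but simply refers to the proof of \cite[Proposition~1.11]{Namikawa_Poisson}; the standard reasoning there is exactly the combination you supply: normality plus $\operatorname{codim}_X X^{sing}\geqslant 4$ for the first assertion, and for the second, the local-cohomology exact sequence together with Cohen--Macaulayness of $X$ (from rationality of symplectic singularities) to pass from $X^{reg}$ to $X$, followed by $R^i\rho_*\Str_X=0$ (both $X$ and $Y$ have rational singularities, so this follows by factoring a common resolution through $X$) and affineness of $Y$. Your parenthetical ``more precisely, $X$ itself has rational singularities\ldots'' is the right justification for the vanishing of the higher direct images; the bare phrase ``$Y$ has rational singularities and $\rho$ is a partial resolution'' would not suffice on its own, so it is good that you included it.
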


Now let us discuss filtered quantizations following \cite{BPW,orbit},
these results are reviewed, for example, in \cite[Section 3.2]{orbit}.
The space $\h^*_Y$ mentioned in  Section \ref{SS_HC_symp_sing} is
identified with $H^2(X^{reg},\C)$, \cite{Namikawa_Poisson}.
So, by the results recalled in Section \ref{SS_quant_classif_sympl},
to $\lambda\in \h_Y^*$ we can assign the graded formal quantization
$\D^\circ_{\lambda h}$ of $X^{reg}$. Set $\A_{\lambda h}:=\Gamma(\D^\circ_{\lambda h})$.
Lemma \ref{Lem:cohom_vanish} then implies that $\A_{\lambda h}$ is a graded formal
quantization of $Y$. Let $\A_{\lambda h, fin}$ denote the subalgebra
of $\C^\times$-finite elements in $\A_{\lambda h}$. We set $\A_{\lambda}:=
\A_{\lambda h,fin}/(h-1)$.

Let $\iota:X^{reg}\hookrightarrow X$ denote the natural inclusion. Let us write
$\D_{\lambda h}$ for $\iota_* \D^\circ_{\lambda h}$. This is a graded formal
quantization of $X$. Moreover, $X$ has a universal graded Poisson deformation
$X_{\h}$ over $\h^*_Y$ and $\D_{\lambda h}=\D_{\h,h}\otimes_{\C[\h^*_Y][[h]]}\C[[h]]$
where $\D_{\h,h}$ is the canonical quantization of $X_{\h}/\h_Y^*$
and the homomorphism $\C[\h_Y^*][[h]]\rightarrow \C[[h]]$ is given by $h\mapsto h, \alpha\mapsto
\langle\alpha,\lambda\rangle h$ for $\alpha\in \h_Y$.

Some quantizations $\A_{\lambda},\A_{\lambda'}$ for  different $\lambda,\lambda'$ are isomorphic
(while $\D_{\lambda h},\D_{\lambda' h}$ are not). To explain when this happens
we need the Namikawa-Weyl group $W_Y$ defined in \cite{Namikawa2}.
Recall the simply laced Weyl group $\tilde{W}_i$ associated with $\Gamma_i$.
The group $\pi_1(\mathcal{L}_i)$ acts on $\tilde{W}_i$ by diagram automorphisms.
We set $W_i:=\tilde{W}_i^{\pi_1(\mathcal{L}_i)}$, this is a crystallographic
reflection group acting faithfully on $\h_i^*$. Then $W_Y:=\prod_{i=1}^k W_i$.
It is not difficult to show that $\A_{\lambda}\cong \A_{\lambda'}$  if
$\lambda'\in W_Y\lambda$, this follows from \cite[Theorem 3.4]{orbit}.

\begin{Ex}\label{Ex:nilp_cone_quant}
Let $\g$ be a semisimple Lie algebra and $Y=\mathcal{N}$ be the nilpotent cone in $\g$.
Its quantizations are the central reductions of $U(\g)$. Namely, recall that under
the Harish-Chandra isomorphism the center $Z$ of $U(\g)$ gets identified with $\C[\h^*]^W$,
where $\h,W$ are the Cartan space and the Weyl group of $\g$. For $\lambda\in \h^*$
define the central reduction $\U_\lambda$ of $U(\g)$ by $\U_\lambda=U(\g)/U(\g)\mathfrak{m}_\lambda$,
where we write $\mathfrak{m}_\lambda$ for the maximal ideal of $Z$ corresponding to $\lambda$.
We note that $\h_Y=\h, W_Y=W$. Indeed, this reduces to the case when $\g$ is simple. In that case,
we have a unique codimension $2$ symplectic leaf a.k.a. the subregular orbit. The slice to
that orbit in $Y$ has the same type as $\g$ when $\g$ is simply laced and the same type as the
unfolding of the diagram of $\g$ else (for example for type $B_n$ for $n>1$ we get $A_{2n-1}$).
In the non-simply laced  case, $\pi_1$ acts via the group of diagram
automorphisms that folds that diagram.

Note that $\U_\lambda$ is the filtered quantization
of $\C[Y]$ corresponding to $\lambda\in \h_Y^*$.
\end{Ex}

\begin{Ex}\label{Ex:Kleinian_quantizations}
We proceed with $Y=\C^2/\Gamma$, where $\Gamma$ is a finite subgroup of $\operatorname{SL}_2(\C)$.
Pick $c\in (\C\Gamma)_1^\Gamma$ (recall that this means that $c=1+\sum_{\gamma\neq 1}c_\gamma \gamma$), where $\gamma\mapsto
c_\gamma: \C\Gamma\setminus \{1\}\rightarrow \C$ is a $\Gamma$-invariant function.
Consider the Crawley-Boevey-Holland  algebra $H_c:=\C\langle x,y\rangle\#\Gamma/(xy-yx-c)$.
Let $e\in \C\Gamma$ be the averaging idempotent. Then we can consider the spherical subalgebra
$eH_ce$ (with unit $e$). It was explained in   Section \ref{SS_Klein_HC} how to get
$\lambda_c\in \h_Y^*=\tilde{\h}_i^*$ from $c$.

It turns out that we have $\A_{\lambda_c}\cong eH_c e$ for all $c$.
In order to prove this we first note that $\A_\lambda$ is obtained
from $\U_\lambda$ (for simply laced $\g$ of the same type as $\Gamma$)
via the quantum slice construction (see e.g. \cite[Section 3.2]{perv}) applied
to the subregular orbit. The isomorphism $\A_{\lambda_c}\cong eH_c e$
 follows, for example, from \cite[Theorem 6.2.2]{quant}
combined with \cite[Theorem 5.3.1]{quant}.
The parameter $\lambda$ is recovered from the quantization uniquely up to the $W_Y$-conjugacy,
where $W_Y:=\tilde{W}_i$.
\end{Ex}

The construction of Example \ref{Ex:Kleinian_quantizations} has the following useful and
elementary corollary.

\begin{Cor}\label{Cor:CBH_invar}
Let $\Gamma'\subset \Gamma$ be a normal subgroup and assume $c\in (\C\Gamma)_1^\Gamma\cap\C\Gamma'$.
So we have quantizations $\A'_c$ of $\C^2/\Gamma'$ and $\A_c$ of $\C^2/\Gamma$.
Then $\Gamma/\Gamma'$ acts on $\A_c$ by automorphisms and the quantizations
$\A'_c$ and $(\A_c)^{\Gamma/\Gamma'}$ of $\C^2/\Gamma$ are isomorphic.
\end{Cor}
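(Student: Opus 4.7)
The plan is to leverage the Crawley--Boevey--Holland presentation from Example~\ref{Ex:Kleinian_quantizations}, writing $\A_c=eH_ce$ and $\A'_c=e'H'_ce'$, where $e,e'$ are the averaging idempotents of $\Gamma,\Gamma'$. (I read the statement as asserting that $\Gamma/\Gamma'$ acts on $\A'_c$ and that $\A_c\cong (\A'_c)^{\Gamma/\Gamma'}$ as quantizations of $\C^2/\Gamma$; the invariants of $\A_c$ under a nontrivial action of $\Gamma/\Gamma'$ could not again be a quantization of $\C^2/\Gamma$.) The first task is to produce the action. Because $\Gamma'$ is normal and $c\in(\C\Gamma)_1^\Gamma$ is $\Gamma$-invariant, the combined action of $\Gamma$ on $\C\langle x,y\rangle\#\Gamma'$ (linearly on $\C\langle x,y\rangle$, by conjugation on $\Gamma'$) preserves the relation $[x,y]=c$, so it descends to a filtered $\Gamma$-action on $H'_c$. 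Since $e'$ is $\Gamma$-fixed, this restricts to $\A'_c=e'H'_ce'$; and for $\gamma'\in\Gamma'$ the identity $\gamma'e'=e'=e'\gamma'^{-1}$ gives $\gamma'a\gamma'^{-1}=\gamma'e'ae'\gamma'^{-1}=e'ae'=a$ for every $a\in\A'_c$, so the action factors through $\Gamma/\Gamma'$.

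Next I would identify $\A_c$ with $(\A'_c)^{\Gamma/\Gamma'}$. The hypothesis $c\in\C\Gamma'$ furnishes an algebra inclusion $H'_c\hookrightarrow H_c$ (the defining relation maps to the defining relation), and by PBW $H_c$ is free of rank $|\Gamma/\Gamma'|$ as a left $H'_c$-module. Choosing a set $R\subset\Gamma$ of coset representatives for $\Gamma/\Gamma'$ with $1\in R$, and using the identity $re'=e'r$ (from normality of $\Gamma'$), one obtains the decomposition
\[
e'H_ce'=\bigoplus_{r\in R}\A'_c\cdot re'.
\]
For $r,s\in R$, write $rs=\gamma'\,t$ with $\gamma'\in\Gamma'$ and $t\in R$. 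Then $(rs)e'=\gamma'te'=\gamma'e't=e't=te'$, so the product of $(a\cdot re')(b\cdot se')$ unfolds as $a(rbr^{-1})(rs)e' = a(\bar r\cdot b)\cdot te'$. This shows that $e'H_ce'$ is precisely the smash product $\A'_c\#(\Gamma/\Gamma')$ with the $\Gamma/\Gamma'$-action constructed above.

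Finally, the formula $e=e'\cdot\tfrac{1}{|\Gamma/\Gamma'|}\sum_{r\in R}r$ shows that under the identification $e'H_ce'\cong\A'_c\#(\Gamma/\Gamma')$ the idempotent $e$ becomes $1\cdot e_{\Gamma/\Gamma'}$, the averaging idempotent of the quotient group. The standard identification $e_G(A\# G)e_G\cong A^G$ for any finite-group action then gives
\[
\A_c=eH_ce=e\cdot(e'H_ce')\cdot e\;\cong\;(\A'_c)^{\Gamma/\Gamma'},
\]
and the isomorphism respects the PBW filtrations, hence is an isomorphism of filtered quantizations of $\C^2/\Gamma$.

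The main obstacle I expect is purely bookkeeping: the section $R\hookrightarrow\Gamma$ is not a group homomorphism, so $rs$ differs from its coset representative $\overline{rs}\in R$ by an element of $\Gamma'$. The key observation making the argument work is that the identity $\gamma'e'=e'$ for $\gamma'\in\Gamma'$ absorbs this discrepancy whenever we multiply on the right by $e'$, so that multiplication on $\bigoplus_{r\in R}\A'_c\cdot re'$ collapses into exactly the smash-product multiplication with the intrinsic $\Gamma/\Gamma'$-action produced in the first step.
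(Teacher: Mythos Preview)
The paper does not give a proof: the corollary is simply declared ``useful and elementary'' immediately after Example~\ref{Ex:Kleinian_quantizations}. Your argument via the Crawley--Boevey--Holland presentation---constructing the $\Gamma/\Gamma'$-action on $e'H'_ce'$, identifying $e'H_ce'$ with the smash product $\A'_c\#(\Gamma/\Gamma')$, and then recognizing $e$ as $e'\cdot e_{\Gamma/\Gamma'}$---is correct and is exactly the sort of direct computation the author has in mind. You are also right about the typo: the later application in the proof of Proposition~\ref{Prop:HC_Klein1} confirms that the intended statement is that $\Gamma/\Gamma'$ acts on $\A'_c$ and that $\A_c\cong(\A'_c)^{\Gamma/\Gamma'}$ as quantizations of $\C^2/\Gamma$.
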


Let us explain how to recover $\lambda$ from $\A_\lambda$ in the case
of a general conical symplectic singularity, \cite[Remark 3.6]{orbit}.  We can write $\lambda$ as $(\lambda_0,\ldots,\lambda_k)$
with $\lambda_i\in \h_i^*$. We are going to explain the meaning of parameters
$\lambda_0,\ldots,\lambda_k$. The parameter $\lambda_0\in H^2(Y^{reg},\C)$ is the period of the microlocalization
$\A_{\lambda h}|_{Y^{reg}}$.

The parameters $\lambda_i$ (defined up to $W_i$-conjugacy) are recovered from
the restriction of $\A_{\lambda h}$ to the formal neighborhood of $y_i\in \mathcal{L}_i$.
Namely, consider the completion $\A_{\lambda h}^{\wedge_{y_i}}$ with respect to the maximal
ideal that is obtained as the inverse image of the maximal ideal of $y_i$ under
the projection $\A_{\lambda h}\twoheadrightarrow \C[Y]$.

Now assume that $d$ is even (we can always replace $d$ with a multiple by rescaling the
$\C^\times$-action). Consider the symplectic vector space $V:=T_{y_i}\mathcal{L}_i$ and form
the homogeneous Weyl algebra $\Weyl_h:=T(V)[h]/(u\otimes v -v\otimes u-h\omega(u,v))$
with $V$ in degree $d/2$. We can also form $\underline{\A}_{\lambda_i h}$, the homogeneous
version of the quantization of $\C^2/\Gamma_i$ with parameter $\lambda_i$.

The following result is a special case of \cite[Lemma 3.3]{perv}, it explains the meaning of
$\lambda_i$ (up to the $W_i$-conjugacy).

\begin{Lem}\label{Lem:compl_iso}
We have a $\C[[h]]$-linear isomorphism $\A_{\lambda h}^{\wedge_{y_i}}\cong \left(\Weyl_h
\otimes_{\C[h]}\underline{\A}_{\lambda_i h}\right)^{\wedge_0}$.
\end{Lem}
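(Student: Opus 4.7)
The plan is to combine a classical formal Poisson decomposition with a quantum lifting argument, and then identify the parameter of the transverse quantization via the non-commutative period of Section \ref{SS_Per_NC}.

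First, on the classical side, the Darboux--Weinstein/Kaledin formal decomposition theorem for Poisson varieties at a point on a symplectic leaf gives an isomorphism of Poisson formal schemes
$$\C[Y]^{\wedge_{y_i}}\cong \C[V]^{\wedge_0}\,\hat{\otimes}\,\C[\Sigma_i]^{\wedge_0},$$
where $V=T_{y_i}\mathcal{L}_i$ carries its natural symplectic form and $\Sigma_i=\C^2/\Gamma_i$ is the formal transverse slice. Up to the natural grading conventions, this places $V$ in degree $d/2$, matching the grading of $\Weyl_h$.

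Second, I would lift this decomposition to the quantum level. Choose a symplectic basis $u_1,\dots,u_n,v_1,\dots,v_n$ of $V$ and, using $h$-adic completeness and flatness of $\A_{\lambda h}^{\wedge_{y_i}}$, construct inductively (modulo increasing powers of the augmentation ideal) lifts $\tilde{u}_j,\tilde{v}_j \in \A_{\lambda h}^{\wedge_{y_i}}$ of these coordinates satisfying the Heisenberg relations $[\tilde{u}_j,\tilde{v}_k]=h\delta_{jk}$ and $[\tilde{u}_j,\tilde{u}_k]=[\tilde{v}_j,\tilde{v}_k]=0$. The $\C[[h]]$-subalgebra they generate is identified with $\Weyl_h^{\wedge_0}$; let $\B_h$ denote its commutant in $\A_{\lambda h}^{\wedge_{y_i}}$. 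Then $\B_h$ is a formal quantization of $\C[\Sigma_i]^{\wedge_0}$, and the multiplication map $\Weyl_h^{\wedge_0} \,\hat{\otimes}_{\C[[h]]}\, \B_h \to \A_{\lambda h}^{\wedge_{y_i}}$ is an isomorphism: it reduces modulo $h$ to the classical splitting, and both sides are $h$-adically complete and $\C[[h]]$-flat, so the standard Nakayama-type argument for $h$-adic rings applies.

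Third, I would identify $\B_h$ with $\underline{\A}_{\lambda_i h}^{\wedge_0}$. Quantizations of $\C^2/\Gamma_i$ are classified, up to $\tilde{W}_i$-action, by Corollary \ref{Cor:graded_filt_quant} and Example \ref{Ex:Kleinian_quantizations}, so it suffices to match non-commutative periods. For this, pass to the $\Q$-terminalization $\rho:X\to Y$ and the microlocalization $\D_{\lambda h}$: the preimage of a formal neighborhood of $y_i$ in $X$ is a smooth symplectic formal scheme whose reduction splits as $V^{\wedge_0}\times \tilde{\Sigma}_i^{\wedge_0}$, where $\tilde{\Sigma}_i$ is the minimal resolution of $\Sigma_i$. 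The period of $\D_{\lambda h}$ restricts to the transverse factor to give a class in $H^2(\tilde{\Sigma}_i,\C)=\tilde{\h}_i^*$; since this restriction must be $\pi_1(\mathcal{L}_i)$-invariant, it lies in $\h_i^*$ and, by the construction of the parametrization recalled in Section \ref{SS_HC_symp_sing}, equals $\lambda_i$ up to $W_i$-conjugacy.

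The main obstacle is the quantum decomposition in the second step. The inductive construction of the Heisenberg lifts requires careful control over how the $\mathfrak{m}_{y_i}$-adic filtration on $\A_{\lambda h}^{\wedge_{y_i}}$ interacts with the $h$-adic filtration; since $y_i$ is not a $\C^\times$-fixed point, the decomposition cannot be made $\C^\times$-equivariant on the nose. Working with the homogeneous Weyl algebra and the homogeneous Kleinian quantization (so that $V$ sits in degree $d/2$ and the augmentation-adic filtration on the tensor product is manifestly compatible with the $\mathfrak{m}_{y_i}$-adic filtration) is precisely what allows the final comparison of completions to go through cleanly.
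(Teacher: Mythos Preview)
The paper does not prove this lemma in-line; it simply records it as a special case of \cite[Lemma 3.3]{perv}. Your three-step outline---Kaledin's formal Darboux--Weinstein decomposition on the classical level, inductive lifting of Heisenberg generators to split off $\Weyl_h^{\wedge_0}$ with centralizer $\B_h$ quantizing the formal slice, and identification of the slice parameter by restricting the period of $\D_{\lambda h}$ along the minimal resolution of $\Sigma_i$---is exactly the standard argument behind that cited result, so your approach is correct and coincides with the intended one.
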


Finally, let us explain the classification results for filtered quantizations of $Y$
\cite[Theorem 3.4]{orbit}.

\begin{Prop}\label{Prop:quant_classif}
Every filtered quantization of $\C[Y]$
is of the form $\A_\lambda$ for some $\lambda\in \h_Y^*$.
\end{Prop}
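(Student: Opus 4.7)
The plan is to reduce the classification of filtered quantizations to the classification of graded formal quantizations of $X^{reg}$ for a $\Q$-terminalization $\rho \colon X \to Y$, and then to apply Corollary \ref{Cor:graded_filt_quant}.

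First, I would pass from filtered to graded formal quantizations via the Rees construction. Given any filtered quantization $\A$, form the Rees algebra $R_h(\A) = \bigoplus_i \A_{\leqslant i} h^i$ (with $h$ of degree $d$) and its $h$-adic completion $\A_h$; this is a graded formal quantization of $\C[Y]$, and $\A$ is recovered as the subalgebra of $\C^\times$-finite vectors in $\A_h/(h-1)$. Conversely, any graded formal quantization gives a filtered quantization this way. So it suffices to show that every graded formal quantization of $\C[Y]$ is of the form $\A_{\lambda h}$ for some $\lambda \in \h_Y^*$.

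Second, I would promote $\A_h$ to a graded formal quantization $\D_h$ of $X^{reg}$ with $\Gamma(X^{reg}, \D_h) = \A_h$. Start by microlocalizing $\A_h$ over $Y^{reg}$: cover $Y^{reg}$ by $\C^\times$-stable principal affine opens $D(f)$ and use that the elements $f \in \A_h$ satisfy Ore conditions (since $\A_h$ is Noetherian and $h$-torsion-free and the associated graded of $f$ is nonzero on $D(f)$), producing a sheaf $\D_h^\circ$ on $Y^{reg}$. Since $\rho$ is an isomorphism over $Y^{reg}$, view $\D_h^\circ$ as a sheaf on the open subset $\rho^{-1}(Y^{reg}) \subset X^{reg}$. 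Then extend $\D_h^\circ$ across the exceptional locus of $\rho$ inside $X^{reg}$ using the local structure of $X$ near codimension $2$ leaves of $Y$ (i.e., the product-with-Kleinian-surface model that underlies Lemma \ref{Lem:compl_iso}) and the vanishing $H^i(X^{reg},\Str_X) = 0$ for $i=1,2$ from Lemma \ref{Lem:cohom_vanish}, which controls the gluing obstruction in the Bezrukavnikov--Kaledin framework.

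Third, apply Corollary \ref{Cor:graded_filt_quant} to $X^{reg}$: since $X^{reg}$ is smooth symplectic with $H^1(X^{reg},\Str) = H^2(X^{reg},\Str) = 0$ and $H^2(X^{reg}, \C) = \h_Y^*$, there is a unique $\lambda \in \h_Y^*$ with $\mathsf{Per}(\D_h) = \lambda h$. But then $\D_h$ and $\D^\circ_{\lambda h}$ are two graded formal quantizations of $X^{reg}$ with the same period, hence isomorphic, so their global sections $\A_h$ and $\A_{\lambda h}$ are isomorphic. Passing to $\C^\times$-finite vectors modulo $h-1$ gives $\A \cong \A_\lambda$.

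The main obstacle is the extension step: constructing $\D_h$ on all of $X^{reg}$ given only $\A_h$ and its microlocalization on $Y^{reg}$. The exceptional fibers of $\rho$ over codimension $2$ leaves typically have codimension $1$ in $X^{reg}$, so one cannot invoke Hartogs directly; instead one must perform the gluing locally on a $\C^\times$-stable affine cover adapted to the slice decomposition of Lemma \ref{Lem:compl_iso}, verifying that the obstructions in the relevant \v{C}ech complexes vanish thanks to Lemma \ref{Lem:cohom_vanish}. Once this is done, the rest of the argument is a bookkeeping application of the period classification.
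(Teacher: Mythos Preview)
The paper does not prove this proposition; it is quoted from \cite[Theorem~3.4]{orbit} with no argument given here. So there is no in-paper proof to compare against, and your proposal should be judged on its own.

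Your overall architecture is the right one, and steps one and three are fine. The difficulty is entirely in your second step, and your treatment of it is not yet a proof. You want to extend the microlocalized sheaf $\D_h^\circ$ from $\rho^{-1}(Y^{reg})\cong Y^{reg}$ across the exceptional divisors of $\rho$ inside $X^{reg}$. You invoke Lemma~\ref{Lem:compl_iso} and Lemma~\ref{Lem:cohom_vanish}, but neither does the job as stated. Lemma~\ref{Lem:compl_iso} is formulated for the specific quantizations $\A_{\lambda h}$, not for an arbitrary filtered quantization $\A$; you would first need an analogue of it for $\A$, which already amounts to extracting a slice parameter $\lambda_i\in\tilde{\h}_i^*/W_i$ from $\A$ and hence contains much of the content of the proposition. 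More seriously, Lemma~\ref{Lem:cohom_vanish} gives $H^i(X^{reg},\Str)=0$ for $i=1,2$, which is exactly the input needed to classify quantizations \emph{already defined on all of $X^{reg}$} via Corollary~\ref{Cor:graded_filt_quant}; it does not control the obstruction to \emph{extending} a quantization from the open subset $\rho^{-1}(Y^{reg})$ to $X^{reg}$. That obstruction lives in a different place (roughly, in cohomology supported on the exceptional locus, or equivalently in the failure of the restriction map on periods $H^2(X^{reg})\to H^2(Y^{reg})$ to be injective, which it is not: the kernel is $\bigoplus_i\h_i^*$). In other words, the local extensions over each $\mathcal{L}_i$ exist but are not unique, and you have given no mechanism for choosing them coherently and gluing.

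The argument in \cite{orbit} avoids this extension problem by working with the universal Poisson deformation of $X$ over $\h_Y^*$ (due to Namikawa) and its canonical quantization, then showing that taking global sections and invariants under $W_Y$ yields a universal filtered quantization of $\C[Y]$; an arbitrary filtered quantization is then compared to this universal family. If you want to salvage your direct approach, the missing ingredient is a proof that every graded formal quantization of the punctured Kleinian slice $\Sigma_i^\times$ extends to its minimal resolution, together with a monodromy argument along $\mathcal{L}_i$ showing that the resulting local extensions on $X^{reg}$ patch; this is essentially equivalent to the Kleinian case of the proposition plus a compatibility check, and is where the real work lies.
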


\subsection{Harish-Chandra and Poisson bimodules}\label{SS_Poisson_HC}
Let $X$ be a Poisson scheme.  By a coherent Poisson $\mathcal{O}_X$-module we mean
a coherent sheaf $\mathcal{M}$ of $\mathcal{O}_X$-modules equipped with a map
of sheaves (of vector spaces) $\{\cdot,\cdot\}: \mathcal{O}_X\otimes_{\C}\mathcal{M}
\rightarrow \mathcal{M}$ satisfying the Leibnitz and Jacobi identities
(that are special cases of (2) and (3) below).

Let $X$ come equipped with an action of $\C^\times$ that is compatible
with the Poisson bracket on $\Str_X$ in the following way: there is
a positive integer $d$ such that $t.\{\cdot,\cdot\}=t^{-d}\{\cdot,\cdot\}$
for all $t\in \C^\times$. We say that a coherent Poisson module $\M$ is {\it graded} if
it is $\C^\times$-equivariant (as a coherent sheaf) and $\C^\times$ rescales the bracket
$\Str_X\otimes \M\rightarrow \Str_X$ by $t\mapsto t^{-d}$.

Now let $Y$ be a conical symplectic singularity and $X=Y^{reg}$.
In this case we can fully classify graded coherent  Poisson modules
on $Y^{reg}$ following \cite{B_ineq}. Recall the finite group
$\Gamma=\pi_1^{alg}(Y^{reg})$. Let $\tilde{Y}^0$ denote
the universal algebraic cover of $Y^{reg}$ (with Galois group $\Gamma$)
and $\pi:\tilde{Y}^0\twoheadrightarrow Y^{reg}$ be the quotient map.
Then we have the following result established in the proof of  \cite[Lemma 3.9]{B_ineq}.

\begin{Lem}\label{Lem:Poisson_classif}
The following statements are true:
\begin{enumerate}
\item Every graded coherent  Poisson $\Str_{\tilde{Y}^0}$-module is the direct sum of
several copies of $\Str_{\tilde{Y}^0}$ (with grading shifts).
\item The functor $\pi^*$ defines an equivalence between the category graded coherent Poisson
$\Str_{Y^{reg}}$-modules and the category of $\Gamma$-equivariant graded coherent
$\Str_{\tilde{Y}^0}$-modules. The quasi-inverse is given by $\pi_*(\bullet)^\Gamma$.
In particular, every coherent graded Poisson $\Str_{Y^{reg}}$-module is
semisimple. Up to a grading shift,
the simple graded coherent Poisson $\Str_{Y^{reg}}$-modules are classified by the irreducible representations
of $\Gamma$: the Poisson module corresponding to an irreducible representation $\tau$
is $\Hom_{\Gamma}(\tau, \pi_* \Str_{\tilde{Y}^0})$.
\end{enumerate}
\end{Lem}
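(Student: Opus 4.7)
The plan is to prove (1) first and then derive (2) from it by Galois descent along $\pi:\tilde{Y}^0\to Y^{reg}$.

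For (1), the starting observation is that on any smooth symplectic variety $X$ a coherent Poisson $\Str_X$-module $\M$ carries a canonical flat algebraic connection $\nabla$: for each local function $f$ the operator $\{f,\cdot\}$ is $\C$-linear and, by the Leibniz rule, depends only on the Hamiltonian vector field $X_f$; since $d\omega=0$, every local vector field is of this form, so the operators $\{f,\cdot\}$ glue to a connection, and the Jacobi identity for $\{\cdot,\cdot\}$ on $\M$ becomes its flatness. Because a coherent sheaf with a flat connection on a smooth variety is automatically locally free, $\M$ is a $\C^\times$-equivariant vector bundle with $\C^\times$-equivariant flat connection on $\tilde Y^0$. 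The next step, which is the main work, is to show that such an equivariant flat bundle is trivial. Passing to the analytic picture, $(\M,\nabla)$ corresponds to a finite-rank $\C^\times$-equivariant local system $\mathcal{L}$. The monodromy around a single $\C^\times$-orbit is determined by the integer $\C^\times$-weights on a fiber of $\mathcal{L}$ and is therefore trivial; transversely, $\mathcal{L}$ descends to a local system on the categorical quotient whose algebraic $\pi_1$ is covered by $\pi_1^{alg}(\tilde Y^0)=1$. Combined with regular-singularity control coming from the conical contraction of $\tilde Y$ and the existence of a $\Q$-terminalization, $\mathcal{L}$ corresponds to an algebraic, hence trivial, representation of $\pi_1^{alg}(\tilde Y^0)=1$. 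This yields $\M\cong \Str_{\tilde Y^0}^{\oplus r}$, with grading shifts recorded by the $\C^\times$-weights on a fiber.

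For (2), Galois descent along the finite \'etale Galois cover $\pi$ produces an equivalence between coherent $\Str_{Y^{reg}}$-modules and $\Gamma$-equivariant coherent $\Str_{\tilde Y^0}$-modules with quasi-inverses $\pi^*$ and $\pi_*(\cdot)^\Gamma$; both sides carry compatible Poisson and grading structures because $\pi$ is Poisson and $\C^\times$-equivariant. By (1), a $\Gamma$-equivariant graded coherent Poisson $\Str_{\tilde Y^0}$-module has the form $\Str_{\tilde Y^0}\otimes V$ for a graded finite-dimensional $\Gamma$-representation $V$, and Maschke's theorem decomposes $V$ into irreducibles. Pushing down, the simple object attached to $\tau\in \Irr(\Gamma)$ is realized as $\Hom_\Gamma(\tau,\pi_*\Str_{\tilde Y^0})$, which is exactly the description in (2).

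The main obstacle is the algebraicity/triviality assertion in the proof of (1). Without the grading there is no reason a flat connection on $\tilde Y^0$ should have finite monodromy, so one must exploit the conical contraction of $\tilde Y$ (which exists by Lemma \ref{Lem:symp_sing_cover}, after possibly replacing $\C^\times$ by a cover) together with the $\C^\times$-equivariance of $\nabla$ to force regular singularities along the exceptional divisors of a $\Q$-terminalization of $\tilde Y$; only then does algebraic Riemann--Hilbert apply and conclude that the local system is trivial.
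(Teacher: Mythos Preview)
Your overall architecture is correct: part (1) does reduce to showing that a $\C^\times$-equivariant flat bundle on $\tilde Y^0$ is trivial, and part (2) is a routine Galois descent along $\pi$ once (1) is known. The paper itself gives no argument, deferring entirely to \cite[Lemma~3.9]{B_ineq}, so the comparison is really with that source.

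The genuine gap is in your proof of (1). You conflate two distinct first-order operators along the Euler vector field $\xi$: the parallel transport $\nabla_\xi$ coming from the flat connection, and the degree operator $E$ coming from the algebraic $\C^\times$-action. These need not agree; their difference $\Phi=\nabla_\xi-E$ is an $\Str$-linear endomorphism of $\M$, and a short computation shows $\Phi$ is $\nabla$-flat, but nothing you wrote forces $\Phi$ to vanish or to have integer eigenvalues. Concretely, restrict to a free orbit $\C^*\hookrightarrow\tilde Y^0$: a $\C^\times$-equivariant connection on the trivial bundle there has the form $d+\tfrac{B}{z}\,dz$ with $B$ an arbitrary constant endomorphism, and the monodromy $e^{2\pi iB}$ is not constrained by the equivariant bundle structure alone. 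So ``integer weights $\Rightarrow$ trivial orbit monodromy'' is not justified. The subsequent steps compound the problem: the local system has no reason to descend to any categorical quotient $\tilde Y^0/\!\!/\C^\times$, and ``regular-singularity control via a $\Q$-terminalization'' is precisely the nontrivial assertion you would need to prove --- you would have to extend $\M$ across the exceptional divisors and show the connection acquires only regular singularities there, and even then regular singularities plus $\pi_1^{alg}=1$ is not by itself enough (Riemann--Hilbert matches regular holonomic $D$-modules with representations of the topological $\pi_1$, not the algebraic one).

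What actually makes (1) work, and what the argument in \cite{B_ineq} exploits, is more algebraic: one takes $M=\Gamma(\tilde Y^0,\M)$, a finitely generated graded Poisson $\C[\tilde Y]$-module, and uses the interplay between the grading (bounded below, finite-dimensional pieces) and the degree $-d$ bracket to produce enough homogeneous flat sections to generate $M$ freely. The contracting $\C^\times$-action on the affine variety $\tilde Y$ is used directly on the module, not via an analytic detour through local systems and compactifications. If you want to repair your approach, the clean statement to prove is that the flat endomorphism $\Phi$ above is semisimple with integer eigenvalues; this is where the algebraicity of the grading and the specific compatibility of the bracket enter.
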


Now let $X$ again be a Poisson scheme and $\D_h$ be its formal quantization.
We will review the definition of a coherent module over a formal quantization
below in Section \ref{SS_coh_pushforward}.
Following \cite[Section 3.3]{sraco} define the notion of a coherent Poisson  $\D_h$-bimodule.
By definition, this is
a sheaf $\M_h$ of $\D_h$-bimodules on $X$, coherent as a sheaf of left $\D_h$-modules,
that is equipped with a bracket
map $\{\cdot,\cdot\}: \D_h\otimes_{\C[[h]]}\M_h\rightarrow \M_h$. This bracket
map is supposed to satisfy the following conditions (where for local sections $a,b$ of $\D_\hbar$
we write $\{a,b\}:=\frac{1}{h}[a,b]$):
\begin{enumerate}
\item For local sections $a$ of $\D_\hbar$ and $m$ of $\M_\hbar$
we have $am-ma=h \{a,m\}$.
\item The Jacobi identity: $\{\{a,b\},m\}=\{a,\{b,m\}\}-\{b,\{a,m\}\}$.
\item The Leibniz identities:
\begin{align*}
& \{ab,m\}=\{a,m\}b+a\{b,m\},\\
& \{a,bm\}=\{a,b\}m+b\{a,m\},\\
& \{a,mb\}=\{a,m\}b+m\{a,b\}.
\end{align*}
\end{enumerate}
Note that when $h$ acts on $\M_h$ by $0$ what we get is precisely the notion
of a coherent Poisson $\Str_X$-module from above. As the other extreme, assume $\M_h$ is $\C[[h]]$-flat. Here (1) allows to recover $\{\cdot,\cdot\}$
from the bimodule structure on $\M_h$. Note that the $h$-adic filtration on $\M_h$
is automatically complete and separated, this is true for all coherent
$\D_h$-modules.

When $\C^\times$ acts on $\D_h$ as above,
we can talk about graded coherent Poisson $\D_h$-bimodules.

We now proceed to  HC bimodules.
Let  $X:=\operatorname{Spec}(A)$.
Let $\D_h$ be a graded formal quantization of $X$ and let $\A_h$ be the $\C^\times$-finite
part of $H^0(X,\D_h)$. Then $\A:=\A_h/(h-1)$ is a filtered quantization of $A$.
Set $\hbar:=\sqrt[d]{h}$. We can form the Rees
algebra $R_\hbar(\A)$. Then $R_\hbar(\A)$ is naturally identified with
$\C[\hbar]\otimes_{\C[h]}\A_h$. Now let $\B$ be a HC $\A$-bimodule. Choose a
good filtration on $\B$. Then the Rees bimodule $R_\hbar(\B)$ is a
$R_\hbar(\A)$-bimodule and also a Poisson $\A_h$-bimodule. We will call
such bimodules graded   Poisson $R_\hbar(\A)$-bimodules.

Let us introduce some notation for HC bimodules in the case when $\A$
is a quantization of a conical symplectic singularity $Y$.
Denote the category of HC $\A$-bimodules by $\HC(\A)$.

Let $\B\in \HC(\A)$.
It is a classical fact that the support of the $\C[Y]$-module $\gr\B$ in $Y$
is independent of the choice of a good filtration.
This support is called the {\it associated variety} of $\B$ and is denoted by $\VA(\B)$.
This is a Poisson subvariety.

The following lemma is also standard.

\begin{Lem}\label{Lem:tens_hom_HC}
Let $\B,\B'\in \HC(\A)$. Then $\B\otimes_\A
\B', \Hom_\A(\B,\B'),\Hom_{\A^{opp}}(\B,\B')\in \HC(\A)$
and the associated varieties of these bimodules are contained in
$\VA(\B)\cap \VA(\B')$.
\end{Lem}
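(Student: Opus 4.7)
The plan is to start with good filtrations $\B_{\leqslant i}$, $\B'_{\leqslant j}$ and build explicit good filtrations on each of the three bimodules, then read the associated-variety bound off from the associated graded. Throughout I would use that $A=\C[Y]$ is Noetherian, so finite generation of $\gr\B$, $\gr\B'$ automatically makes them finitely presented.

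For $\B\otimes_\A\B'$ I would take $(\B\otimes_\A\B')_{\leqslant n}:=\sum_{i+j=n}\mathrm{image}(\B_{\leqslant i}\otimes_{\C}\B'_{\leqslant j})$. The bracket condition follows from the derivation-type identity $[a,b\otimes b']=[a,b]\otimes b'+b\otimes[a,b']$, valid in $\otimes_\A$ because $ba\otimes b'=b\otimes ab'$; with $a\in\A_{\leqslant i}$, $b\in\B_{\leqslant j}$, $b'\in\B'_{\leqslant j'}$ each summand on the right lies in filtration $\leqslant i+j+j'-d$. There is a canonical surjection $\gr\B\otimes_A\gr\B'\twoheadrightarrow\gr(\B\otimes_\A\B')$, which immediately yields both finite generation of the associated graded and the inclusion $\VA(\B\otimes_\A\B')\subset\Supp\gr\B\cap\Supp\gr\B'=\VA(\B)\cap\VA(\B')$.

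For $\Hom_\A(\B,\B')$ with bimodule structure $(a\phi)(b):=\phi(ba)$, $(\phi a)(b):=\phi(b)a$, I would use the standard filtration $\Hom_\A(\B,\B')_{\leqslant n}:=\{\phi:\phi(\B_{\leqslant i})\subset\B'_{\leqslant i+n}\text{ for all }i\}$. The first nontrivial point is exhaustiveness: $\B$ is finitely generated as a left $\A$-module (a consequence of the existence of a good filtration), so a finite generating set sits in some $\B_{\leqslant m}$, any $\phi$ sends it into some $\B'_{\leqslant m+n}$, and left $\A$-linearity together with $\A_{\leqslant i}\B_{\leqslant j}\subset\B_{\leqslant i+j}$ forces $\phi\in\Hom_{\leqslant n}$. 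The bracket condition comes from rewriting $\phi(ba)=a\phi(b)-\phi([a,b])$ so that $[a,\phi](b)=[a,\phi(b)]-\phi([a,b])$; both terms drop filtration by $d$. Taking principal symbols gives an injection $\gr\Hom_\A(\B,\B')\hookrightarrow\Hom_A(\gr\B,\gr\B')$, whose target is finitely generated by Noetherianity of $A$ and finite presentation of $\gr\B$, so the source is too. For the associated-variety bound, if $\mathfrak{p}\not\in\Supp\gr\B\cap\Supp\gr\B'$ then one of $(\gr\B)_\mathfrak{p}$, $(\gr\B')_\mathfrak{p}$ vanishes, hence $\Hom_A(\gr\B,\gr\B')_\mathfrak{p}=\Hom_{A_\mathfrak{p}}((\gr\B)_\mathfrak{p},(\gr\B')_\mathfrak{p})=0$. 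The case $\Hom_{\A^{opp}}(\B,\B')$ is handled symmetrically by swapping the roles of left and right actions.

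The main point requiring care is the bracket-degree drop in the Hom case for general $d$; I anticipate no real obstacle beyond this, as the whole argument is a direct generalization of the classical one for $U(\g)$-bimodules, where $d=1$.
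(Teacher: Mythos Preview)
Your argument is correct and is exactly the standard one; the paper itself omits the proof, simply declaring the lemma ``standard.'' So there is nothing to compare against, and your write-up supplies precisely the expected details (tensor/Hom filtrations, the derivation identity for the bracket, and the surjection/injection to $\gr\B\otimes_A\gr\B'$ and $\Hom_A(\gr\B,\gr\B')$ for finite generation and the support bound).
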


It follows, in particular, that $\VA(\B)\neq Y$ if and only if
$\B$ is not faithful as a left (equivalently, right) bimodule.
Let us write $\overline{\HC}(\A)$ for the Serre quotient
$$\HC(\A)/\{\B\in \HC(\A)| \VA(\B)\neq Y\}.$$
We call $\overline{\HC}(\A)$ the category of HC bimodules {\it with full support}.
By Lemma \ref{Lem:tens_hom_HC}, this is a rigid monoidal category.

We now turn to a connection between the categories of HC bimodules and of
the graded Poisson $R_\hbar(\A)$-bimodules. The functor $\B_\hbar\rightarrow
\B_\hbar/(\hbar-1)\B_\hbar$ maps from the category of graded Poisson bimodules
to the category of HC bimodules. It is not difficult to see that it is
a Serre quotient functor, the kernel consists of the $\hbar$-torsion modules.

The connection described in the previous paragraph can be extended to non-affine
varieties. Namely, take a  graded formal quantization $\D_{h}$ of a
normal variety $X$. We can form the microlocal sheaves $\D_{h,fin}$ of $\C^\times$-finite
sections of $\D_h$ and $\D:=\D_{h,fin}/(
\hbar-1)\D_{h,fin}$ on $X$
(where ``microlocal'' means that the sections are only defined on the $\C^\times$-stable
open subsets). We can define the notion of a HC bimodule over $\D$ as a bimodule
that has a complete and separated good filtration. The category of HC $\D$-bimodules
is the quotient of the category of graded coherent Poisson $\D_\hbar$-bimodules by the full subcategory
of $\hbar$-torsion bimodules.

Finally, we need to recall the construction of restriction functors for HC bimodules
considered in this (and greater) generality in \cite[Section 3.3]{perv}. We use the setting of
Lemma \ref{Lem:compl_iso}. Note that both algebras $R_\hbar(\A_\lambda),
R_\hbar(\Weyl\otimes \underline{\A}_{\lambda_i})$ come equipped with the Euler derivations
coming from the gradings. We will denote these derivations by $\mathsf{eu},
\mathsf{eu}'$. The following claim was obtained in the proof of \cite[Lemma 3.3]{perv}.

\begin{Lem}\label{Lem:compl_iso1}
Under the isomorphism of Lemma \ref{Lem:compl_iso}, the derivations
$\mathsf{eu}$ and $\mathsf{eu}'$ differ by a derivation of the form
$\frac{1}{h}\operatorname{ad}(a)$ for $a\in \A_{\lambda h}^{\wedge_{y_i}}$.
\end{Lem}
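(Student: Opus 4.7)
The plan is to analyze $\delta := \mathsf{eu} - \mathsf{eu}'$, viewed as a single derivation of $\A_{\lambda h}^{\wedge_{y_i}}$ after transporting $\mathsf{eu}'$ via the isomorphism of Lemma \ref{Lem:compl_iso}, and to exhibit it in the form $\frac{1}{h}\operatorname{ad}(a)$ by constructing $a$ one order in $h$ at a time. Both Rees gradings place $\hbar$ in degree one, so $h=\hbar^d$ lies in degree $d$ and both Euler derivations send $h$ to $dh$; hence $\delta$ annihilates $h$, making it a $\C[[h]]$-linear derivation of the complete algebra. Reducing modulo $h$, each Euler derivation descends to a grading-twisted derivation of $\C[Y]^{\wedge_{y_i}}$ whose defect against the Poisson bracket (of degree $-d$) is the same $-d$ correction; these cancel in the difference, so $\delta_0 := \delta \bmod h$ is a genuine Poisson derivation of $\C[Y]^{\wedge_{y_i}}$.

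The critical step is to show that $\delta_0$ is Hamiltonian. Using the classical slice decomposition underlying Lemma \ref{Lem:compl_iso}, we identify $\C[Y]^{\wedge_{y_i}}$ with the completed tensor product $(\C[V]\otimes \C[\C^2/\Gamma_i])^{\wedge_0}$ endowed with its product Poisson structure. Any Poisson derivation of $\C[V]^{\wedge_0}$ is Hamiltonian by the formal Poincar\'e lemma, since symplectic vector fields on a formal symplectic polydisk are exact. For the Kleinian factor, one lifts the corresponding component of $\delta_0$ to a $\Gamma_i$-equivariant Poisson derivation of $\C[[x,y]]$, which again admits a Hamiltonian potential by formal Poincar\'e, and $\Gamma_i$-averaging this potential produces an invariant function descending to $(\C^2/\Gamma_i)^{\wedge_0}$. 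Combining these with a bookkeeping argument for the mixed components flowing between the two factors yields $a_0\in \C[Y]^{\wedge_{y_i}}$ with $\delta_0 = \{a_0,\cdot\}$.

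Given $a_0$, pick any lift $a^{(0)}\in \A_{\lambda h}^{\wedge_{y_i}}$. Then $\delta-\frac{1}{h}\operatorname{ad}(a^{(0)})$ is a $\C[[h]]$-linear derivation that vanishes modulo $h$, hence equals $h\delta^{(1)}$ for some derivation $\delta^{(1)}$. Applying the same analysis to $\delta^{(1)}$ produces $a^{(1)}$, and iterating gives a sequence $a^{(n)}$ such that $a := \sum_{n\geq 0} h^n a^{(n)}$ converges in the $h$-adic topology (by completeness of $\A_{\lambda h}^{\wedge_{y_i}}$) and satisfies $\delta = \frac{1}{h}\operatorname{ad}(a)$. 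The principal obstacle is the Hamiltonian vanishing for $\delta_0$: separating the smooth polydisk direction from the Kleinian slice and verifying that the $\Gamma_i$-averaging does not destroy exactness of the Hamiltonian one-form is the only nontrivial geometric input, after which the inductive cohomological argument proceeds mechanically.
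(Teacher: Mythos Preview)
The paper does not supply its own proof of this lemma: it simply records that the claim ``was obtained in the proof of \cite[Lemma 3.3]{perv}.'' So there is nothing in the present paper to compare against, and your argument has to stand on its own.

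Your approach is sound and is essentially the standard one: reduce modulo $h$, show the resulting Poisson derivation of $\C[Y]^{\wedge_{y_i}}$ is Hamiltonian, then climb back up one power of $h$ at a time using completeness. The observation that $\delta$ kills $h$ and that the $-d$ defects of $\mathsf{eu}$ and $\mathsf{eu}'$ against the Poisson bracket cancel in the difference is exactly right, and the inductive step is clean because any $\C[[h]]$-linear derivation automatically reduces to a Poisson derivation modulo $h$.

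The one soft spot is your treatment of $\delta_0$. A Poisson derivation of a completed tensor product does not in general split as a sum of derivations of the factors, so ``combining these with a bookkeeping argument for the mixed components'' is doing real work that you have not written down. The cleaner route avoids the factor-by-factor decomposition entirely: pull $\delta_0$ back along the $\Gamma_i$-quotient $\Disk^{2n}\to (\Disk^{2n-2}\times \Disk^2/\Gamma_i)$, which is \'etale away from a codimension-$2$ locus. By Hartogs the resulting $\Gamma_i$-invariant symplectic vector field extends over all of $\Disk^{2n}$, the formal Poincar\'e lemma gives a single Hamiltonian $f\in\C[[x_1,y_1,\ldots,x_n,y_n]]$, and averaging $f$ over $\Gamma_i$ produces an invariant Hamiltonian that descends. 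This replaces your three-part argument (polydisk factor, Kleinian factor, mixed terms) with one application of Poincar\'e on the cover, and it is almost certainly what is done in \cite{perv}.
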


Now we recall the  construction of  a functor $\HC(\A_\lambda)\rightarrow \HC(\underline{\A}_{\lambda_i})$
that we will denote by $\bullet_{\dagger,i}$ (see \cite[Section 3.3]{perv}).
Consider the completion $R_\hbar^{\wedge}(\B)$ at $y_i$, this is an $R_\hbar^{\wedge}(\A_{\lambda})$-bimodule.
It comes equipped with the derivation $\mathsf{eu}$ that is compatible with
the eponymous derivation of $R_\hbar^{\wedge}(\A_{\lambda})$.
By Lemma \ref{Lem:compl_iso}, $R_\hbar^\wedge(\B)$ can be viewed as an
$R^{\wedge}_\hbar(\Weyl\otimes \underline{\A}_{\lambda_i})$-bimodule.
We define an operator $\mathsf{eu}'$ on $R^\wedge_\hbar(\B)$ as follows:
$\mathsf{eu}'=\mathsf{eu}+\frac{1}{h}\operatorname{ad}a$, where $a$
is as in Lemma \ref{Lem:compl_iso1}.
Note that $\mathsf{eu}'$ is compatible with the derivation $\mathsf{eu}'$ of the algebra
$R_\hbar(\Weyl\otimes \underline{\A}_{\lambda^i})$.

It was shown in \cite[Section 3.3]{perv} that $R^\wedge_\hbar(\B)$ splits as $R_\hbar^{\wedge}(\Weyl)\widehat{\otimes}_{\C[[\hbar]]}
\underline{\B}_\hbar$, where $\underline{\B}_\hbar$ is the centralizer of $R_\hbar^{\wedge}(\Weyl)$
in    $R^\wedge_\hbar(\B)$. In particular, $\mathsf{eu}'$ preserves $\underline{\B}_\hbar$.
Consider the subspace $\underline{\B}_{\hbar,fin}$ of all $\mathsf{eu}'$-finite elements.
This is a $R_\hbar(\underline{\A}_{\lambda_i})$-sub-bimodule. Set $\B_{\dagger,i}:=\underline{\B}_{\hbar,fin}/(\hbar-1)$.

It was shown in \cite[Sections 3.3,3.4]{HC}, that this construction indeed gives a functor $\HC(\A_{\lambda})
\rightarrow \HC(\underline{\A}_{\lambda_i})$. This functor is exact and tensor.
On the level of associated graded bimodules the functor becomes (the algebraization of)
the restriction of the Poisson bimodule to the slice.
In particular, it descends to $\overline{\HC}(\A_\lambda) \rightarrow \overline{\HC}(\underline{\A}_{\lambda_i})$.

\subsection{Pushforwards of coherent $\mathcal{D}_h$-modules}\label{SS_coh_pushforward}
Let $X$ be a normal Poisson variety and $\D_h$ be its formal quantization. Recall that a
$\D_h$-module $M_h$ is called {\it coherent} if there is an open affine cover $X=\bigcup U_i$
such that $M_h|_{U_i}$ is obtained by microlocalizing a
finitely generated $H^0(U_i,\D_h)$-module. In this case,
for every open affine subvariety $U\subset X$, the restriction $M_h|_U$ is the microlocalization
of $H^0(U, M_h)$. Note that every coherent $\D_h$-module is automatically complete and separated
in the $h$-adic topology.

Now let $\iota: X^0\hookrightarrow X$ be an open embedding and let $M^0_h$ be a coherent
$\D_h^0:=\D_h|_{X^0}$-module. We set $M^0_{h,k}:=M^0_h/h^k M^0_h$. When $k=1$, we write
$M^0$ instead of $M^0_{h,1}$. We will assume the following  condition:
\begin{itemize}
\item[(*)] The pushforward $\iota_*(M^0)$ is a coherent $\Str_X$-module.
\end{itemize}

We want to get a sufficient condition for $\iota_* M^0_h$ to be coherent.
We note that for a $\D_h^0/(h^k)$-modules
it makes sense to speak about quasi-coherent modules, and the push-forward maps quasi-coherent
modules to quasi-coherent ones -- for the same reason as for the quasi-coherent
$\mathcal{O}_X$-modules. If  (*) holds, then $\iota_*(M^0_{h,k})$
is coherent for all $k$. However, $\iota_*(M^0_h)$ may fail to be coherent: the problem is that,
for some open affine subset $U\subset X$, the $H^0(U,\D_h)$-module $H^0(U\cap X^0, M^0_h)$ may not be large enough (in fact, it can be zero even if $M^0_h$ is nonzero).

Let $U\subset X$ be an open affine subvariety. Set $U^0:=U\cap X^0$. We write $H^0(U^0, M^0)_k$
for the image of $H^0(U^0, M^0_{h,k})$ in $H^0(U^0,M^0)$. Note that the submodules
$H^0(U,M^0)_k$ form a decreasing sequence.

%


The following lemma gives a sufficient condition for $\iota_* M^0_h$ to be coherent.

\begin{Lem}\label{Lem:coherence_criterium}
Suppose (*) holds. Let $X=\bigcup_{i=1}^\ell U_i$ be an open affine cover. Suppose that,
for every $i$, the sequence $H^0(U_i^0, M^0)_k$ stabilizes. Then $\iota_* M^0_h$
is coherent.
\end{Lem}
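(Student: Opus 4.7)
The plan is to verify coherence locally, so I may assume that $X = U_i$ is affine. Write $\D := H^0(X, \D_h)$, $N_k := H^0(X^0, M^0_{h,k})$, and $N := H^0(X^0, M^0_h)$; the task then reduces to showing that $N$ is a finitely generated $\D$-module and that $\iota_* M^0_h$ is the microlocalization of $N$ on $X$.

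The first step is to show by induction on $k$ that each pushforward $\iota_* M^0_{h,k}$ is coherent over $\Str_X$. The base case $k=1$ is hypothesis (*), and the inductive step uses the short exact sequence $0 \to M^0 \xrightarrow{h^k} M^0_{h,k+1} \to M^0_{h,k} \to 0$ together with the fact that subsheaves and extensions of coherent sheaves on the Noetherian affine scheme $X$ remain coherent (the pushforward of the subobject $M^0$ stays coherent by (*), and the quotient maps into the coherent $\iota_* M^0_{h,k}$). Since $M^0_h$ is $h$-adically complete and separated and global sections commute with inverse limits, one obtains $\iota_* M^0_h = \varprojlim_k \iota_* M^0_{h,k}$, and in particular $N = \varprojlim_k N_k$.

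The second step is to analyze $N/hN$. After quotienting out any $h$-torsion of $M^0_h$, which does not affect the conclusion, the exact sequence $0 \to M^0_h \xrightarrow{h} M^0_h \to M^0 \to 0$ yields an injection $N/hN \hookrightarrow H^0(X^0, M^0)$ whose image equals $\bigcap_k H^0(X^0, M^0)_k$. By the stabilization hypothesis this intersection coincides with the fixed submodule $H^0(X^0, M^0)_{k_0}$ for some large $k_0$. Since $H^0(X^0, M^0) = H^0(X, \iota_* M^0)$ is finitely generated over the Noetherian ring $\Str(X)$ by (*), this submodule is also finitely generated, and hence so is $N/hN$. Lifting generators $\bar m_1, \ldots, \bar m_r$ of $N/hN$ to elements $m_1, \ldots, m_r \in N$ and running a successive approximation argument, where one iteratively replaces a given $m \in N$ by $h^{-1}(m - \sum \tilde a_i m_i)$ with coefficients $\tilde a_i \in \D$ chosen modulo $h$ and collects the coefficients into $h$-adically convergent series in $\D$, shows that the $m_i$ generate $N$ over $\D$. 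The same reasoning applied on each smaller open affine $V \subset X$ gives the microlocalization statement.

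The main obstacle is the lift-through-the-tower aspect: one must produce honest elements of $N = \varprojlim_k N_k$ lifting elements of the stable image $H^0(X^0, M^0)_{k_0}$, which at heart amounts to a Mittag-Leffler condition for the inverse system $\{N_k\}$. The stabilization hypothesis is exactly what is needed to carry out this lifting inductively on $k$, since at each level the mod-$h$ image of any sufficiently high floor coincides with the stable submodule and hence can absorb the required corrections coming from the kernel of $N_{k+1} \to N_k$.
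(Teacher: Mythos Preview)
Your proof is correct and follows essentially the same route as the paper's: reduce to an affine $U_i$, identify $H^0(U_i^0,M^0_h)$ with $\varprojlim_k H^0(U_i^0,M^0_{h,k})$, use the stabilization hypothesis (which the paper reformulates as the Mittag--Leffler condition for this tower) to get finite generation, and then check microlocalization on principal opens by passing to the limit. The only substantive difference is one of presentation---you make the finite-generation step explicit via $N/hN\hookrightarrow H^0(X^0,M^0)$ and $h$-adic successive approximation, while the paper just writes ``thanks to~(*)''; one small caveat is that your reduction ``quotienting out any $h$-torsion does not affect the conclusion'' is not actually justified (it changes $M^0$ and the hypotheses need not transfer), but this is harmless here since the lemma is only applied to $\C[[\hbar]]$-flat bimodules.
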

\begin{proof}
Note that the condition that the sequence $H^0(U_i^0,M^0)_k$ stabilizes is equivalent
to the Mittag-Leffler (ML) condition for the inverse system of $H^0(U_i,\D_h)$-modules
$H^0(U_i^0, M^0_{h,k})$. Since $M^0_{h}=\varprojlim M^0_{h,k}$ we see that
$H^0(U^0_i, M^0_h)=\varprojlim H^0(U^0_i, M^0_{h,k})$. Note that, thanks to (*),
$H^0(U^0_i, M^0_h)$ is a finitely generated $H^0(U_i, \D_h)$-module. It remains to
show that $(\iota_* M^0_h)|_{U_i}$ is obtained by microlocalizing
$H^0(U^0_i, M^0_h)$. This is equivalent to the condition that,
for every $f\in \C[U_i]$, the module $H^0(U^0_{i,f},M^0_h)$
is the microlocalization of $H^0(U^0_{i},M^0_h)$ at $f$
to be denoted by $H^0(U^0_{i},M^0_h)[f^{-1}]$.
This condition holds if we replace $M^0_h$ with $M^0_{h,k}$.
Then, by ML, it holds for $M^0_h$.
\end{proof}

\subsection{Etale lifts of Poisson bimodules}\label{SS_lift_etale}
Let $\D_h$ be a formal quantization of a smooth symplectic variety $X$.
The goal of this section is to make sense of pullbacks of Poisson $\D_h$-modules
under \'{e}tale morphisms of symplectic varieties.
We will do so by considering jet bundles for Poisson bimodules. Recall that the jet bundles of $\Str_X$ and of its formal quantization
were discussed in Section \ref{SS_Per_NC}.

We can define the notion of a coherent Poisson $\jet\D_{h}$-bimodule $\mathfrak{B}_h$.
By definition, this is a  pro-coherent sheaf of $\Str_X$-modules that comes with
\begin{enumerate}
\item a  $\jet\D_{h}$-bimodule
structure making it into a locally finitely generated $\jet\D_{h}$-module,
\item a $\C[[h]]$-linear flat connection $\nabla$ that is compatible
with the flat connection on $\jet\D_{h}$,
\item  and a flat bracket map
$\{\cdot,\cdot\}: \jet\D_{h}\otimes_{\C}\mathfrak{B}_h\rightarrow \mathfrak{B}_h$
satisfying the axioms (1)-(3) listed in Section \ref{SS_Poisson_HC},
\item such that $\mathfrak{B}_h$ is complete and separated in the $h$-adic topology and,
moreover, for each $k\in \Z_{>0}$, the quotient $h^{k-1}\mathfrak{B}_{h}/h^k \mathfrak{B}_h$
is the jet bundle of a coherent Poisson $\Str_X$-module.
\end{enumerate}
Now let $\B_h$ be a Poisson $\D_{h}$-bimodule. Similarly to what was done in Section
\ref{SS_Per_NC}, we can form the jet bundle $\jet\B_h$, which is  a pro-coherent sheaf
on $X$ with a flat connection. It is easy to see that this is a coherent Poisson $\jet\D_{h}$-bimodule.

\begin{Lem}\label{Lem:Poisson_equiv}
The functors $\B_h\mapsto \jet\B_h$ and $\mathfrak{B}_h\mapsto \mathfrak{B}_h^\nabla$
are equivalences between the category of coherent Poisson bimodules over $\D_{h}$ and  the category of coherent  Poisson $\jet\D_{h}$-bimodules.
\end{Lem}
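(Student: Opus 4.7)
The plan is to mimic the classical equivalence between coherent $\Str_X$-modules and pro-coherent sheaves with flat connection locally isomorphic to a jet bundle, extending it across the three additional layers of structure (the bimodule action, the Poisson bracket, and the $h$-adic filtration). Two inputs will carry most of the weight: the Poincar\'e lemma (acyclicity of the de Rham complex of a jet bundle), and $h$-adic completeness, which is built into axiom (4) of the target category and is automatic for coherent $\D_h$-modules.

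First I would check that both functors land in the correct target categories. For $\B_h\mapsto \jet\B_h$ this is routine: the bimodule structure, the bracket, and the flat connection $\nabla$ are constructed exactly as in the corresponding passage for $\jet\D_h$ in Section \ref{SS_Per_NC}, and axiom (4) holds by construction since $h^{k-1}\jet\B_h/h^k\jet\B_h=\jet(h^{k-1}\B_h/h^k\B_h)$. For $\mathfrak{B}_h\mapsto \mathfrak{B}_h^\nabla$, the flat sections of $\jet\D_h$ recover $\D_h$, and all structure maps on $\mathfrak{B}_h$ are $\nabla$-flat by hypothesis, so they descend to a $\D_h$-bimodule structure and a Poisson bracket on $\mathfrak{B}_h^\nabla$.

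Next I would set up the unit and counit of the adjunction: the unit $\B_h\to (\jet\B_h)^\nabla$ via $p_2^*$, and the counit $\jet(\mathfrak{B}_h^\nabla)\to \mathfrak{B}_h$ via the natural action of $\jet\D_h$. To show these are isomorphisms I would argue by induction on $k$ on the quotients modulo $h^k$. The base case $k=1$ reduces to the classical equivalence between coherent Poisson $\Str_X$-modules and pro-coherent Poisson $\jet\Str_X$-modules with flat connection of jet type, which is standard and rests on the Poincar\'e lemma. The inductive step is a five-lemma applied to $0\to h^{k-1}\mathfrak{B}_h/h^k\to \mathfrak{B}_h/h^k\to \mathfrak{B}_h/h^{k-1}\to 0$ (and its analogue for $\B_h$), where exactness of $(\cdot)^\nabla$ in the relevant range again follows from vanishing of $H^1$ of the de Rham complex of a jet bundle. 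Passage to the inverse limit is by Mittag-Leffler together with the $h$-adic completeness provided by axiom (4).

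The main obstacle is showing that $\mathfrak{B}_h^\nabla$ is coherent as a $\D_h$-module, not merely pro-coherent. My plan is to reduce modulo $h$: by axiom (4) and the classical equivalence one has $(\mathfrak{B}_h/h\mathfrak{B}_h)^\nabla$ identified with a coherent Poisson $\Str_X$-module $\B_0$, and I would then lift a finite generating set of $\B_0$ successively to flat sections of $\mathfrak{B}_h/h^k\mathfrak{B}_h$. The obstruction at the $k$-th stage lies in an $H^1$ of the de Rham complex of the jet bundle of a coherent Poisson $\Str_X$-module and so vanishes by the Poincar\'e lemma; completeness in the $h$-adic topology then assembles these lifts into a finite generating set for $\mathfrak{B}_h^\nabla$ over $\D_h$. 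Combined with the previous paragraph this yields the desired equivalence of categories.
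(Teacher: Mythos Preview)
Your proposal is correct and follows essentially the same route as the paper: the key inputs are the vanishing $R^1(\jet\mathcal{V})^\nabla=0$ for vector bundles (your Poincar\'e lemma), induction on the $h$-adic truncations, and $h$-adic completeness from axiom (4). The paper is terser on coherence of $\mathfrak{B}_h^\nabla$ (it observes $\mathfrak{B}_h^\nabla/h\mathfrak{B}_h^\nabla\hookrightarrow(\mathfrak{B}_h/h\mathfrak{B}_h)^\nabla$ and appeals to completeness, which amounts to your generator-lifting argument) and notes that $(\jet\B_h)^\nabla=\B_h$ holds on the nose via $p_2^*$ rather than requiring a separate inductive verification.
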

\begin{proof}
We write $\bullet^\nabla$ for the functor of taking flat sections.
By the construction, $(\jet \mathcal{B}_h)^\nabla=\B_h$.
On the other hand let $\mathfrak{B}_h$ be a coherent Poisson $\jet\D_h$-module.
Note that $\mathfrak{B}_h^{\nabla}$
is complete and separated in the $h$-adic topology and $\mathfrak{B}_h^\nabla/h \mathfrak{B}_h^\nabla
\hookrightarrow (\mathfrak{B}_h/h \mathfrak{B}_h)^\nabla$. So $\mathfrak{B}_h^{\nabla}$ is a coherent
Poisson $\D_h$-bimodule.

Now we need to show that the jet bundle $\jet(\mathfrak{B}_h^\nabla)$ is functorially isomorphic to $\mathfrak{B}_h$. First of all, observe that the left $\jet\D_{h}$-action on $\jet(\mathcal{B}_h)$
for any $\mathcal{B}_h$ induces an isomorphism $\jet\D_{h}\widehat{\otimes}_{\D_h}\mathcal{B}_h
\xrightarrow{\sim} \jet\mathcal{B}_h$. This gives rise to a Poisson bimodule homomorphism
$\jet(\mathfrak{B}_h^\nabla)
\rightarrow \mathfrak{B}_h$. We want to show that it is an isomorphism.

We start by showing that, for any vector bundle $\mathcal{V}$ on $X$, we have
\begin{equation}\label{eq:jet_vanishing} R^1(\jet \mathcal{V})^\nabla=0. \end{equation}
This will follow from a  stronger statement:
$R\mathcal{H}om_{D_X}(\Str_X,\jet \mathcal{V})=\mathcal{V}$
for all vector bundles $\mathcal{V}$.  The latter equality is standard.

Thanks to (\ref{eq:jet_vanishing}), for all $k$, we have
\begin{align*}
&\mathfrak{B}_h^\nabla/
h^k \mathfrak{B}_h^\nabla\xrightarrow{\sim}
(\mathfrak{B}_h/h^k \mathfrak{B}_h)^\nabla,\\
&\jet\left((\mathfrak{B}_h/h^k \mathfrak{B}_h)^\nabla\right)\xrightarrow{\sim} \mathfrak{B}_h/h^k \mathfrak{B}_h.
\end{align*}
Since $\mathfrak{B}_h$ is complete and separated in the $h$-adic topology, we deduce
that $\jet(\mathfrak{B}_h^\nabla)
\xrightarrow{\sim} \mathfrak{B}_h$.


\end{proof}

Lemma \ref{Lem:Poisson_equiv} allows to define the pullback of Poisson bimodules under  an \'{e}tale morphism intertwining symplectic forms.
Namely, let $\varphi:X^1\rightarrow X^2$ be such a morphism.  Let $\D^1_{h}$ be a formal
quantization of $X^1$. Then $\varphi^* \jet\D^1_{h}$ is a quantum jet bundle on $X^2$.
Passing to the sheaf of flat sections, we get a formal quantization
$\D^2_{h}$ of $X^2$ with $\jet \D^2_{h}=\varphi^* \jet\D^1_{h}$.

Now let $\B^1_{h}$ be a Poisson $\D^1_{h}$-bimodule. Then $\varphi^* \left(\jet \B^1_h\right)$
is a coherent Poisson $\jet\D_h$-module. We set $\varphi^* \B^1_h:=
(\varphi^* \jet \B^1_h)^\nabla$. The following properties are straightforward from the construction.

\begin{Lem}\label{Lem:Poisson_bim_etale_pullback}
The following claims hold:
\begin{itemize}
\item If $\B^1_h$ is annihilated by $h$ (so that $\B^1_h$ is a vector bundle
with a flat connection), then $\varphi^* \B^1_h$
is the usual pull-back of a vector bundle with a flat connection.
\item The functor $\varphi^*$ is exact, faithful and $\C[[h]]$-linear.
\item We have a natural isomorphism of left $\D^2_{h}$-modules
$$\varphi^* \mathcal{B}^1_h\cong
\D^2_{h}\widehat{\otimes}_{\varphi^{-1}\D^1_{h}}\varphi^{-1}\mathcal{B}^1_{h}.$$
\end{itemize}
\end{Lem}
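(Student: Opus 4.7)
The plan is to prove (3) first, since it provides an explicit formula from which (1) and (2) follow by immediate specialization. For (3), I would start from the natural isomorphism $\jet\D^1_h\widehat{\otimes}_{\D^1_h}\B^1_h\xrightarrow{\sim}\jet\B^1_h$ noted in the proof of Lemma \ref{Lem:Poisson_equiv}. Applying the étale pullback $\varphi^{*}$ of pro-coherent sheaves with flat connection to both sides, and using that $\varphi^{*}$ is exact and commutes with the relevant completed tensor products (because $\varphi$ is étale) together with the defining identity $\varphi^{*}\jet\D^1_h=\jet\D^2_h$, one obtains
\[
\varphi^{*}\jet\B^1_h\;\cong\;\jet\D^2_h\widehat{\otimes}_{\varphi^{-1}\D^1_h}\varphi^{-1}\B^1_h\;\cong\;\jet\D^2_h\widehat{\otimes}_{\D^2_h}\bigl(\D^2_h\widehat{\otimes}_{\varphi^{-1}\D^1_h}\varphi^{-1}\B^1_h\bigr).
\]
Taking flat sections and invoking Lemma \ref{Lem:Poisson_equiv} (which identifies Poisson $\D^2_h$-bimodules with Poisson $\jet\D^2_h$-bimodules via $\bullet^\nabla$ and $\jet$) then yields the isomorphism asserted in (3).

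Claim (1) is an immediate specialization: when $h\B^1_h=0$ the action of $\D^1_h$ factors through $\Str_{X^1}$, so the formula in (3) reduces to $\Str_{X^2}\otimes_{\varphi^{-1}\Str_{X^1}}\varphi^{-1}\B^1$, which is the usual étale pullback of the underlying coherent sheaf, carrying the Grothendieck flat connection pulled back along $\varphi$. For (2), $\C[[h]]$-linearity is built into every step of the construction. Exactness reads off directly from the formula in (3): $\varphi^{-1}$ is exact, and $\D^2_h$ is flat over $\varphi^{-1}\D^1_h$ because at the classical level $\varphi^{-1}\Str_{X^1}\to\Str_{X^2}$ is flat (étale), and this flatness propagates through the $h$-adic filtration of quantizations, which are themselves $\C[[h]]$-flat. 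Faithfulness then follows from the faithful flatness of étale surjective base change, applied once more to the same formula.

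The main technical obstacle, as is typical for jet-bundle arguments, is controlling the interaction of $\varphi^{*}$ with completions. One has to verify that $\varphi^{*}$, defined via the inverse system of truncations $\jet\B^1_h/h^k\jet\B^1_h$, commutes with $\widehat{\otimes}$ and with $\bullet^\nabla$, and that the transported Poisson bracket, $\D^2_h$-bimodule structure, and flat connection all coincide with the ones produced directly from the jet-bundle construction. These compatibilities are étale-local on $X^2$, where $\varphi$ restricts to an open embedding into $X^1$, and reduce to the corresponding naturality statements for the ordinary and quantum jet bundles recalled in Section \ref{SS_Per_NC}.
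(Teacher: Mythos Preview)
The paper does not give a proof: it simply asserts that the three properties are ``straightforward from the construction.'' Your write-up is therefore more detailed than anything in the paper, and your overall strategy---reduce to the jet-bundle description, use the identification $\jet\D^1_h\widehat{\otimes}_{\D^1_h}\B^1_h\cong\jet\B^1_h$ from Lemma~\ref{Lem:Poisson_equiv}, and then read off (1) and (2) from the formula in (3)---is exactly the kind of unpacking the phrase ``straightforward from the construction'' is pointing to.

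Two small points are worth tightening. First, your faithfulness argument invokes ``faithful flatness of \'etale surjective base change,'' but $\varphi$ is not assumed surjective. What actually makes $\varphi^*$ faithful here is that Poisson $\D^1_h$-bimodules are, modulo $h$, vector bundles with flat connection on the symplectic variety $X^1$; they have no proper support, so nothing is killed by restricting to the (open, nonempty) image of $\varphi$ and then pulling back. Second, the sentence ``\'etale-local on $X^2$, where $\varphi$ restricts to an open embedding into $X^1$'' is not correct as stated: \'etale morphisms are not Zariski-locally open immersions. The reason the jet-bundle compatibilities go through is rather that an \'etale morphism induces isomorphisms on completed local rings, hence on fibers of jet bundles, which is precisely the fact the construction in Section~\ref{SS_Per_NC} is built on. With those two adjustments your argument is fine.
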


\section{Extending  Poisson bimodules from $Y^{reg}$ to $Y$}\label{S_extension}
\subsection{Main result}
Let $Y$ be a conical symplectic singularity and $\A$ its filtered quantization.

Let $\D_\hbar$ be the microlocalization of $R_\hbar(\A)$ to $Y$ so that
$\D_\hbar=\C[\hbar]\otimes_{\C[h]}\D_h$, where $\D_h$ is the microlocalization of
$\A_{\lambda h}$ and $\hbar^d=h$. Set $\A_\hbar:=\Gamma(\D_\hbar)$, this is the $\hbar$-adic
completion of $R_\hbar(\A)$.
We write $\D_{\hbar}^{reg}$ for the restriction of $\D_{\hbar}$
to $Y^{reg}$. Recall that we also consider the open subvariety $Y^{sreg}\subset Y$,
defined by $Y^{sreg}:=Y^{reg}\sqcup \bigsqcup_{i=1}^k\mathcal{L}_i$, where
$\mathcal{L}_1,\ldots,\mathcal{L}_k$ are all codimension $2$ symplectic leaves.
We consider the restriction $\D_{\hbar}^{sreg}$. Also set $Y_i:=\operatorname{Spec}(\C[Y]^{\wedge_y})$,
where $y\in \mathcal{L}_i$ and $\C[Y]^{\wedge_y}$ denote the completion of
$\C[Y]$ at $y$. We write $Y_i^\times$ for
$Y_i\setminus \mathcal{L}_i$. We can restrict $\D_{\hbar}^{sreg}$ to $Y_i$
getting a  formal  quantization $\D_{\hbar}|_{Y_i}$.
We can further restrict this formal quantization to $Y_i^\times$.

Our primary goal in this section is to understand conditions for a
coherent Poisson $\D_{\hbar}^{reg}$-bimodule $\B_\hbar$ that is flat over
$\C[[\hbar]]$ (the only case we are interested in) to extend to
a graded Poisson $\A_{\hbar}$-bimodule. Note that when $\hbar$
acts on $\B_\hbar$ by zero, then we can take $H^0(Y^{reg},\B_\hbar)$ for this extension,
indeed, $\B_\hbar$ is a vector bundle and since $\operatorname{codim}_{Y}Y^{sing}\geqslant 2$,
the global sections of every vector bundle on $Y^{reg}$ is a finitely generated
$\C[Y]$-module.

Here is the main result to be proved in this section.

\begin{Prop}\label{Prop:HC_extension}
Let $\B_\hbar$ be $\C[[\hbar]]$-flat.  Then the following two conditions are
equivalent:
\begin{enumerate}
\item The restriction of $\Gamma(\B_\hbar)$ to $Y^{reg}$ coincides with $\B_\hbar$.
\item The restriction of $\Gamma(\B_\hbar|_{Y_i^\times})$ to $Y_i^\times$
coincides with $\B_\hbar|_{Y_i^\times}$ for all $i=1,\ldots,k$.
\end{enumerate}
\end{Prop}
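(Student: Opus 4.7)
The direction $(1)\Rightarrow(2)$ is relatively formal: if $\mathcal{B}:=\Gamma(Y^{reg},\B_\hbar)$ is a finitely generated $\A_\hbar$-bimodule microlocalizing back to $\B_\hbar$, then $\mathcal{B}^{\wedge_{y_i}}$ is coherent over $\A_\hbar^{\wedge_{y_i}}$ and its restriction to $Y_i^\times$ equals $\B_\hbar|_{Y_i^\times}$; one identifies $\Gamma(Y_i^\times,\B_\hbar|_{Y_i^\times})$ with the reflexive hull of $\mathcal{B}^{\wedge_{y_i}}$, which is again coherent over the Noetherian ring $\A_\hbar^{\wedge_{y_i}}$, yielding (2).

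For the main direction $(2)\Rightarrow(1)$, my plan is to build the extension in two stages, exploiting the fact that $\operatorname{codim}_Y(Y\setminus Y^{sreg})\geqslant 4$.

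Stage one: extend from $Y^{reg}$ to $Y^{sreg}$. Write $\iota:Y^{reg}\hookrightarrow Y^{sreg}$ and cover $Y^{sreg}$ by $\C^\times$-stable affine opens. On each such open I apply Lemma \ref{Lem:coherence_criterium}. Hypothesis $(*)$ of that lemma --- coherence of the classical pushforward of $\B_\hbar/\hbar\B_\hbar$ --- follows from Lemma \ref{Lem:Poisson_classif}, which realizes $\B_\hbar/\hbar\B_\hbar$ as a direct summand of $\pi_*\Str_{\tilde Y^0}$; this extends coherently to $Y^{sreg}$ because the normalization of $Y^{sreg}$ in $\C(\tilde Y^0)$ is finite over $Y^{sreg}$. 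The Mittag--Leffler condition is automatic away from the codimension-$2$ leaves, so what remains is a $\C^\times$-stable affine neighborhood $U$ of some $y_i\in\mathcal{L}_i$. There, faithful flatness of $\mathfrak{m}_{y_i}$-adic completion on the relevant (graded, finitely generated) modules reduces stabilization of the image system $H^0(U^{reg},\B_\hbar/\hbar^k\B_\hbar)\to H^0(U^{reg},\B_\hbar/\hbar\B_\hbar)$ to stabilization of the corresponding system for $H^0(Y_i^\times,\B_\hbar|_{Y_i^\times}/\hbar^k)\to H^0(Y_i^\times,\B_\hbar|_{Y_i^\times}/\hbar)$. The latter holds because hypothesis (2) asserts that $\Gamma(Y_i^\times,\B_\hbar|_{Y_i^\times})$ is finitely generated over the Noetherian algebra $\A_\hbar^{\wedge_{y_i}}$.

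Stage two: extend from $Y^{sreg}$ to $Y$. Since $\operatorname{codim}_Y(Y\setminus Y^{sreg})\geqslant 4$ and $Y$ has rational (hence Cohen--Macaulay) singularities, the pushforward of a coherent sheaf from $Y^{sreg}$ to $Y$ is coherent by the usual Hartogs-type argument applied to the $\hbar$-adic graded pieces; the conclusion passes to the full bimodule by $\hbar$-adic completeness. The global sections of the resulting coherent Poisson $\D_\hbar$-bimodule on $Y$ furnish the required graded Poisson $\A_\hbar$-bimodule extending $\B_\hbar$. The main obstacle is the Mittag--Leffler verification in stage one: condition (2) supplies information only in the formal neighborhood of a single point $y_i$, while ML must hold on a genuine Zariski neighborhood of the whole leaf $\mathcal{L}_i$. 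Bridging this gap is what requires faithful flatness of completion, combined with $\C^\times$-equivariance, which propagates the formal-local information along the contraction orbits of the conical action.
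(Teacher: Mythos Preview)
Your two-stage strategy is exactly the one the paper follows (Lemmas~\ref{Lem:HC_extension} and~\ref{Lem:HC_extension_codim4}), and invoking Lemma~\ref{Lem:coherence_criterium} in Stage~1 is correct. Two of your justifications, however, do not work as stated.

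In Stage~1, the reason completion at a single $y_i\in\mathcal{L}_i$ suffices is \emph{not} $\C^\times$-equivariance. The contracting $\C^\times$-action limits to the cone point of $Y$, not to $y_i$, so the $\C^\times$-orbit through a point near $y_i$ is a single curve and does not sweep out $\mathcal{L}_i$. What the paper uses instead is that the successive quotients $H^0(U^{reg},M^0)/H^0(U^{reg},M^0)_{m}$ are Poisson $\C[U]$-modules supported on $\mathcal{L}_i$, and a coherent Poisson module supported on a symplectic leaf admits a finite filtration by vector bundles on that leaf. A vector bundle on the connected variety $\mathcal{L}_i$ has constant rank, so vanishing after completion at one point forces vanishing everywhere on $\mathcal{L}_i$. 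The Poisson structure, not the grading, is the bridge from the formal slice to the Zariski neighborhood.

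In Stage~2, the ``Hartogs-type'' argument is a genuine gap. Knowing $\operatorname{codim}_Y(Y\setminus Y^{sreg})\geqslant 4$ and that $Y$ is Cohen--Macaulay controls $H^i(Y^{sreg},\Str_Y)$, but says nothing about $H^1(Y^{sreg},\B'_\hbar/\hbar\B'_\hbar)$ for the coherent sheaf $\B'_\hbar/\hbar\B'_\hbar$ you actually have --- and controlling this $H^1$ is precisely what is needed to pass from the mod-$\hbar$ statement to the $\hbar$-complete one. The paper resolves this by writing $0\to\B'_\hbar/\hbar\B'_\hbar\to\B_0\to\mathcal{V}\to 0$ with $\B_0:=\iota_*\iota^*(\B'_\hbar/\hbar\B'_\hbar)$ and $\mathcal{V}$ supported on the codimension-$2$ leaves, and then proving that $\B_0$ is \emph{maximal Cohen--Macaulay} on $Y^{sreg}$. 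This last step is not formal: it requires identifying $\iota'_*\B_0$ with a direct sum of $\Gamma$-isotypic components of $\pi_*\Str_{\tilde{Y}}$ and invoking Lemma~\ref{Lem:symp_sing_cover} to see that $\tilde{Y}$ is itself a symplectic (hence Cohen--Macaulay) singularity. Only then does the SGA2 local-cohomology bound give finite generation of $H^1(Y^{sreg},\B_0)$.
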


We will explain the meaning of $\B_\hbar|_{Y_i^\times}$ below in Section \ref{SS_restriction_formal}.
Note that (1)$\Rightarrow$(2) is relatively easy, while (2)$\Rightarrow$(1) is harder.

The proof is in two steps. First, let $\iota$ denote the inclusion $Y^{reg}\hookrightarrow
Y^{sreg}$. So we get a Poisson $\D^{sreg}_{\hbar}$-bimodule
$\iota_*\B_\hbar$ that is flat over $\C[[\hbar]]$. In Section \ref{SS_ext_codim2},
we will show that condition (2) of the proposition is equivalent to
the claim that $\iota_* \B_\hbar$ is coherent. Then in Section \ref{SS_ext_codim4}
we show that, if (2) holds, then  $\Gamma(\B_\hbar)|_{Y^{reg}}\cong \iota^*\iota_* \B_\hbar$.
This will imply Proposition \ref{Prop:HC_extension}.

\subsection{Construction of $\B_\hbar|_{Y_i^\times}$}\label{SS_restriction_formal}
The goal of this section is to make sense of the Poisson bimodule $\B_\hbar|_{Y_i^\times}$
and study properties of this Poisson bimodule.
Let $\varphi$ denote the morphism $Y_i^\times\rightarrow Y^{reg}$ induced by
the inclusion $Y_i\rightarrow Y$. We define $\B_\hbar|_{Y_i^\times}$ as
$$\D_\hbar|_{Y_i^\times}\otimes_{\varphi^{-1}\D_\hbar} \varphi^{-1}\B_\hbar.$$
This is a coherent $\D_\hbar|_{Y_i^\times}$-module. What we need to do is to construct
the bracket map satisfying conditions (2) and (3) from Section \ref{SS_Poisson_HC}.

Let $U$ be an open affine neighborhood in $X$ of the closed point $y$ in $Y_i$. Note that
we have the bracket map $\D_\hbar(U)\otimes_{\C[[\hbar]]} \B_{\hbar}|_{Y_i^\times}
\rightarrow \B_\hbar|_{Y_i^\times}$ with required properties.

\begin{Lem}\label{Lem:bracket_extension}
This bracket extends to a bracket $\D_\hbar|_{Y_i^\times}\otimes_{\C[[\hbar]]} \B_{\hbar}|_{Y_i^\times}
\rightarrow \B_\hbar|_{Y_i^\times}$ satisfying (2) and (3).
\end{Lem}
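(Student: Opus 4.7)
The plan is to construct the global bracket using the jet bundle formalism of Section \ref{SS_lift_etale}, suitably adapted to the flat morphism $\varphi:Y_i^\times\rightarrow Y^{reg}$ induced by completion at $y$. Although $\varphi$ is not \'{e}tale, the jet-bundle pullback construction only involves the formal neighborhood of the graph of $\varphi$, so flatness together with the fact that $\varphi$ intertwines the symplectic forms is enough to make sense of $\varphi^*\jet\D_\hbar^{reg}$ and to identify it with the quantum jet bundle of $\D_\hbar|_{Y_i^\times}$, in direct analogy with Lemma \ref{Lem:Poisson_bim_etale_pullback}.

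Starting from the Poisson jet bimodule $\jet\B_\hbar$ on $Y^{reg}$ produced by Lemma \ref{Lem:Poisson_equiv}, I would form $\varphi^*\jet\B_\hbar$ and verify that it is a Poisson jet bimodule over $\varphi^*\jet\D_\hbar^{reg}$: the bracket axioms (2), (3) from Section \ref{SS_Poisson_HC} are tensorial in the jet picture and transfer under $\varphi^*$, while condition (1) from Section \ref{SS_lift_etale} follows from the corresponding property of $\jet\B_\hbar$. Applying the analog of Lemma \ref{Lem:Poisson_equiv} on $Y_i^\times$, the sheaf of flat sections $(\varphi^*\jet\B_\hbar)^\nabla$ acquires a canonical Poisson $\D_\hbar|_{Y_i^\times}$-bimodule structure with a globally defined bracket map satisfying (2) and (3). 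This sheaf is identified with $\D_\hbar|_{Y_i^\times}\otimes_{\varphi^{-1}\D_\hbar^{reg}}\varphi^{-1}\B_\hbar = \B_\hbar|_{Y_i^\times}$ via the construction, and the compatibility of the global bracket with the locally defined bracket on $\D_\hbar(U)\otimes\B_\hbar|_{Y_i^\times}$ is a direct unwinding of functoriality of the jet bundle construction, since both brackets come from the same Poisson datum on $\B_\hbar|_{U\cap Y^{reg}}$.

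The main obstacle I expect is verifying that $\varphi^*\jet\B_\hbar$ satisfies the coherence and $\hbar$-adic completeness conditions (items (1) and (4) in Section \ref{SS_lift_etale}), since $Y_i^\times$ is not of finite type over $\C$ and the usual coherence arguments for jet bundles on finite-type schemes do not apply verbatim. I would handle this by filtering by powers of $\hbar$ and reducing, on each quotient $\hbar^{k-1}\varphi^*\jet\B_\hbar/\hbar^k\varphi^*\jet\B_\hbar$, to flat pullback along $\varphi$ of the coherent Poisson $\Str_{Y^{reg}}$-module $\hbar^{k-1}\B_\hbar/\hbar^k\B_\hbar$; coherence on $Y_i^\times$ then follows from flatness of the completion $\C[Y]\rightarrow \C[Y]^{\wedge_y}$, and completeness follows because the whole construction is an inverse limit over $k$.
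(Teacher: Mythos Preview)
Your approach has a real gap at its foundational step. The jet-bundle pullback of Section~\ref{SS_lift_etale} relies on more than flatness: for an \'{e}tale morphism $\varphi$ one has $\varphi^*\Omega^1_{Y^{reg}}\cong\Omega^1_{Y_i^\times}$, and this is what allows the flat connection on $\jet\D_\hbar^{reg}$ (hence on $\jet\B_\hbar$) to be pulled back, and what makes the Riemann--Hilbert type equivalence of Lemma~\ref{Lem:Poisson_equiv} go through. For the completion morphism $\varphi:Y_i^\times\to Y^{reg}$ this isomorphism fails: the algebraic K\"ahler differentials $\Omega^1_{\C[Y]^{\wedge_y}/\C}$ are not obtained from $\Omega^1_{\C[Y]/\C}$ by base change (already $\Omega^1_{\C[[t]]/\C}$ is far larger than $\C[[t]]\,dt$, since the algebraic $d$ does not commute with infinite sums). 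So it is unclear what ``flat connection'' and ``flat sections'' mean on $Y_i^\times$, and why the analogue of Lemma~\ref{Lem:Poisson_equiv} should hold there. Your remark that the construction ``only involves the formal neighborhood of the graph'' is correct for the underlying pro-coherent sheaf, but the passage from $\varphi^*\jet\B_\hbar$ back to $\B_\hbar|_{Y_i^\times}$ via $(\,\cdot\,)^\nabla$ is exactly where the connection enters, and that step is not justified. The obstacle you anticipated (coherence and $\hbar$-adic completeness on a non-finite-type scheme) is a separate, secondary issue.

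The paper's argument avoids all of this by reversing the order of operations. Instead of pulling back along $\varphi$, one first reduces to the torsion case $\hbar^k\B_\hbar=0$ and then \emph{pushes forward} $\B_\hbar$ from $Y^{reg}$ to all of $Y$, obtaining a coherent Poisson $\D_\hbar$-bimodule $\bar{\B}_\hbar$ on $Y$. Now one simply completes $\bar{\B}_\hbar$ at $y$: this is an algebraic operation on modules, and the bracket with $\D_\hbar^{\wedge_y}$ comes along automatically, with no jets or connections involved. Localizing from $Y_i$ to $Y_i^\times$ gives the desired bracket there. The general case follows from continuity of the bracket in the $\hbar$-adic topology together with $\B_\hbar|_{Y_i^\times}=\varprojlim_k(\B_\hbar/\hbar^k\B_\hbar)|_{Y_i^\times}$.
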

\begin{proof}
Consider the case when
$\hbar^k$ annihilates $\B_\hbar$ for some $k>0$. Then we can consider the push-forward $\bar{\B}_\hbar$
of $\B_\hbar$ to $Y$, this is a coherent Poisson $\D_\hbar$-bimodule. Consider the restriction
$\bar{\B}_\hbar^{\wedge_y}$ to $Y_i$. It comes with the bracket with $\D_\hbar^{\wedge_y}$.
Then we can localize the bracket to $Y_i^\times$. This settles the case when $\B_\hbar$
is annihilated by $\hbar^k$. To handle the general case we notice that the bracket is
continuous in the $\hbar$-adic topology and $\B_\hbar|_{Y_i^\times}=\varprojlim (\B_\hbar/\hbar^k \B_\hbar)|_{Y_i^\times}$.
\end{proof}

The following properties are straightforward from the construction.

\begin{Lem}\label{Lem:Poisson_bim_pullback}
The following claims hold:
\begin{itemize}
\item If $\B_\hbar$ is annihilated by $\hbar$ (so that $\B_\hbar$ is a vector bundle
with a flat connection), then $\B^1_\hbar|_{Y_i^\times}$
is the usual pull-back of a vector bundle with a flat connection.
\item The functor $\bullet|_{Y_i^\times}$ is exact, faithful and $\C[[\hbar]]$-linear. In particular,
$\B_\hbar|_{Y_i^\times}=\varprojlim (\B_\hbar/\hbar^k \B_\hbar)|_{Y_i^\times}$.
\end{itemize}
\end{Lem}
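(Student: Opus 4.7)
Both assertions follow from unpacking the construction $\B_\hbar|_{Y_i^\times} = \D_\hbar|_{Y_i^\times} \otimes_{\varphi^{-1}\D_\hbar} \varphi^{-1}\B_\hbar$ together with Lemma \ref{Lem:bracket_extension}. For the first bullet, when $\hbar \B_\hbar = 0$ the action of $\D_\hbar|_{Y_i^\times}$ on the tensor product factors through $\D_\hbar|_{Y_i^\times}/\hbar = \Str_{Y_i^\times}$, and the tensor product reduces to the ordinary pullback $\varphi^* \B_\hbar$ of coherent sheaves on $Y_i^\times$. To match the flat connections, one recalls that on a smooth symplectic variety a coherent Poisson module carries a canonical flat connection coming from Hamiltonian vector fields (which span the tangent sheaf) via $X_f \cdot m = \{f, m\}$. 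Since $\varphi$ is a Poisson morphism and the bracket built in Lemma \ref{Lem:bracket_extension} is by construction the continuous extension of the one already present on an affine neighborhood of $y$, this bracket pulls back the Hamiltonian vector fields on $Y^{reg}$ to those on $Y_i^\times$, so the resulting connection is $\varphi^*$ of the original.

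For the second bullet, $\C[[\hbar]]$-linearity is manifest. I would prove exactness first in the case $\hbar^k \B_\hbar = 0$ for some $k$: there the restriction is an ordinary tensor product of sheaves of $\C[\hbar]/(\hbar^k)$-algebras, and since $Y_i$ is the formal completion of $Y$ at $y$ and Noetherian completion is flat, the relevant base change is exact. The general case then follows from the identity $\B_\hbar|_{Y_i^\times} = \varprojlim (\B_\hbar/\hbar^k \B_\hbar)|_{Y_i^\times}$, which itself holds because $\D_\hbar|_{Y_i^\times}$ is $\hbar$-adically complete and separated: the tensor product appearing in the construction is therefore itself $\hbar$-adically complete and coincides with the inverse limit of its mod-$\hbar^k$ reductions.

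Faithfulness is finally checked as follows: for a nonzero $\C[[\hbar]]$-flat $\B_\hbar$, the quotient $\B_\hbar/\hbar$ is a nonzero coherent Poisson $\Str_{Y^{reg}}$-module, hence by Lemma \ref{Lem:Poisson_classif} a nonzero direct sum of vector bundles on $Y^{reg}$; its restriction to the nonempty open $Y_i^\times$ is therefore nonzero, so $(\B_\hbar|_{Y_i^\times})/\hbar \neq 0$ and in particular $\B_\hbar|_{Y_i^\times} \neq 0$. The most delicate point of the whole argument, I expect, is the compatibility in the first bullet of the Poisson-induced flat connection on $\B_\hbar|_{Y_i^\times}$ with the pulled-back connection on $\varphi^* \B_\hbar$; this requires carefully tracing through Lemma \ref{Lem:bracket_extension} and using that $\varphi$ intertwines the Poisson structures. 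The remaining ingredients, in particular the flatness of formal completion underlying exactness, are standard.
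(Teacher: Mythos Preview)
The paper itself offers no proof beyond the sentence ``the following properties are straightforward from the construction,'' so your proposal is considerably more detailed than what appears there and is essentially correct.

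Two minor points are worth tightening. First, your faithfulness argument is stated only for $\C[[\hbar]]$-flat $\B_\hbar$; the lemma as written covers all coherent Poisson bimodules. The fix is immediate: any nonzero coherent $\B_\hbar$ is $\hbar$-adically separated, so $\B_\hbar/\hbar\B_\hbar\neq 0$, and your argument then applies verbatim. Second, you invoke Lemma~\ref{Lem:Poisson_classif}, which is stated for \emph{graded} Poisson modules; but all you actually use is that a coherent Poisson module on a smooth symplectic variety is a vector bundle (hence has full support), and this holds without any grading hypothesis because Hamiltonian vector fields span the tangent sheaf. Also, describing $Y_i^\times$ as a ``nonempty open'' in $Y^{reg}$ is slightly loose---$\varphi$ is a flat morphism from a formal punctured neighborhood, not an open immersion---but the conclusion (pullback of a vector bundle with full support is nonzero) is correct for this reason.

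For exactness in the general (non-$\hbar$-torsion) case, note that deducing it from the inverse-limit identity alone needs a word about Mittag-Leffler; alternatively one can argue directly that $\D_\hbar|_{Y_i^\times}$ is flat over $\varphi^{-1}\D_\hbar$ by observing it is flat modulo $\hbar$ (Noetherian completion) and both sides are $\hbar$-adically complete.
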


\subsection{Extension to codimension $2$}\label{SS_ext_codim2}

The goal of this section is to prove the following lemma.

\begin{Lem}\label{Lem:HC_extension}
Let $\iota:Y^{reg}\hookrightarrow Y^{sreg}$ denote the inclusion.
 Then the following two conditions are
equivalent:
\begin{enumerate}
\item $\iota_*\B_\hbar$ is coherent.
\item The restriction of $H^0(Y_i^\times,\B_\hbar|_{Y_i^\times})$ to $Y_i^\times$
coincides with $\B_\hbar|_{Y_i^\times}$ for all $i=1,\ldots,k$.
\end{enumerate}
\end{Lem}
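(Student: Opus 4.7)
The plan is to reduce both implications to the coherence criterion Lemma~\ref{Lem:coherence_criterium} applied with $X = Y^{sreg}$, $X^0 = Y^{reg}$, and $\B_\hbar$ in the role of $M^0_h$; the Poisson bracket is automatically determined by the bimodule structure on the $\hbar$-flat bimodule $\B_\hbar$, so it plays no role in the sheaf-theoretic step. First I would verify the hypothesis (*) of that lemma: by Lemma~\ref{Lem:Poisson_classif}, the reduction $M^0 := \B_\hbar/\hbar\B_\hbar$ is a direct sum of $\Str_{Y^{reg}}$-modules of the form $\Hom_\Gamma(\tau, \pi_*\Str_{\tilde Y^0})$, each of which is a vector bundle on $Y^{reg}$. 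Since $Y^{sreg}\setminus Y^{reg} = \bigsqcup_{i=1}^k \mathcal{L}_i$ has codimension $2$ in the normal variety $Y^{sreg}$, the pushforward $\iota_* M^0$ is coherent. With (*) established, the remaining content of Lemma~\ref{Lem:coherence_criterium} is the Mittag--Leffler stabilization of the filtration $H^0(U\cap Y^{reg}, M^0)_k$ for opens $U$ in an affine cover of $Y^{sreg}$.

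The direction (1)$\Rightarrow$(2) is the easier one. Assuming $\iota_*\B_\hbar$ is coherent, for $y\in\mathcal{L}_i$ and an open affine $U\ni y$ in $Y^{sreg}$, the $\hbar$-adically complete and separated module $N := H^0(U\cap Y^{reg}, \B_\hbar)$ is finitely generated over $H^0(U, \D_\hbar^{sreg})$. Completion at $y$ is exact on finitely generated modules and, by the construction of $\B_\hbar|_{Y_i^\times}$ from Section~\ref{SS_restriction_formal} together with faithful flatness of completion, commutes with microlocalization to $Y_i^\times$. This identifies $N^{\wedge_y}$ with $H^0(Y_i^\times, \B_\hbar|_{Y_i^\times})$ and shows that its microlocalization to $Y_i^\times$ recovers $\B_\hbar|_{Y_i^\times}$, which is (2).

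For the harder direction (2)$\Rightarrow$(1) I would verify the Mittag--Leffler hypothesis of Lemma~\ref{Lem:coherence_criterium} on a well-chosen affine cover. For affines contained in $Y^{reg}$ there is nothing to check. For an affine $U\subset Y^{sreg}$ meeting some $\mathcal{L}_i$ at a point $y$, hypothesis (2) says that $H^0(Y_i^\times, \B_\hbar|_{Y_i^\times})$ is finitely generated over $\A_\hbar^{\wedge_y}$. As a finitely generated module over the Noetherian complete local ring $\A_\hbar^{\wedge_y}$, it is automatically $\hbar$-adically complete and separated, so the tower $H^0(Y_i^\times,(\B_\hbar/\hbar^k\B_\hbar)|_{Y_i^\times})$ satisfies Mittag--Leffler. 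Using faithful flatness of the completion map at $y$ and compatibility of microlocalization with completion, I would descend this stabilization to the filtration $H^0(U\cap Y^{reg}, M^0)_k$. Shrinking $U$ around a finite set of points meeting each $\mathcal{L}_i$ produces an affine cover satisfying the hypothesis of Lemma~\ref{Lem:coherence_criterium}, and the lemma then yields (1).

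The main obstacle I anticipate is precisely the faithfully flat descent of the Mittag--Leffler property in the final step. Since $\B_\hbar$ is only defined microlocally and $Y_i^\times$ is quasi-affine rather than affine, the identification $H^0(U\cap Y^{reg}, \B_\hbar/\hbar^k\B_\hbar)^{\wedge_y} \cong H^0(Y_i^\times,(\B_\hbar/\hbar^k\B_\hbar)|_{Y_i^\times})$ is not entirely formal. It rests on the coherent extension $\iota_* M^0$ set up at the start of the argument, together with Noetherianness of $\A_\hbar^{\wedge_y}$, which ensure that finitely generated modules behave well under completion and microlocalization and that taking sections over the quasi-affine complement $Y_i^\times$ commutes with inverse limits along the $\hbar$-adic tower.
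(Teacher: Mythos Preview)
Your overall strategy coincides with the paper's: verify condition (*) of Lemma~\ref{Lem:coherence_criterium}, prove (1)$\Rightarrow$(2) by completing the coherent pushforward at a point of $\mathcal{L}_i$, and prove (2)$\Rightarrow$(1) by checking the Mittag--Leffler stabilization after passing to the formal slice and then descending. The identifications $H^0(U^{reg},M^0_{\hbar,p})^{\wedge_y}\cong H^0(Y_i^\times, M^0_{\hbar,p}|_{Y_i^\times})$ that you need are exactly what the paper establishes in its Steps~1--2 via a \v{C}ech argument, and your reasoning that (2) forces $H^0(Y_i^\times,\B_\hbar|_{Y_i^\times})$ to be finitely generated, hence $\hbar$-adically complete, hence to have a Mittag--Leffler tower, is essentially the paper's Step~3 reorganized.

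The genuine gap is your descent of the Mittag--Leffler property from the completion at a single point $y\in\mathcal{L}_i$ back to the affine open $U$. You invoke ``faithful flatness of the completion map at $y$'', but completion at one closed point is \emph{not} faithfully flat on finitely generated $\C[U]$-modules: it kills anything supported away from $y$, and the leaf $\mathcal{L}_i\cap U$ is positive-dimensional. Concretely, you need to conclude that if $\bigl(H^0(U^{reg},M^0)_m/H^0(U^{reg},M^0)_{m'}\bigr)^{\wedge_y}=0$ for $m'\gg m$, then the quotient itself is zero, and Noetherianness alone does not give this. The paper's remedy is to use the Poisson structure, which you set aside at the outset: each quotient $H^0(U^{reg},M^0)/H^0(U^{reg},M^0)_m$ is a finitely generated Poisson $\C[U]$-module supported on $\mathcal{L}_i\cap U$, hence admits a finite filtration by vector bundles with flat connection on that leaf. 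For such a bundle on a connected leaf, vanishing of the formal fibre at one point forces global vanishing. This is what makes ``stabilization at $y$'' equivalent to stabilization over $U$, and it is the missing ingredient in your argument.

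Two minor points. Your appeal to Lemma~\ref{Lem:Poisson_classif} for (*) is slightly off, since that lemma concerns \emph{graded} Poisson modules; what you actually need is the simpler fact that a coherent Poisson module on the symplectic variety $Y^{reg}$ is a vector bundle, after which normality and codimension~$2$ give (*). And the cover of $Y^{sreg}$ should be chosen so that each affine meets at most one leaf $\mathcal{L}_i$ with connected intersection, rather than ``shrunk around a finite set of points''.
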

\begin{proof}
Let us prove (1)$\Rightarrow$(2). Note that $H^0(Y_i^\times,\B_\hbar|_{Y_i^\times})|_{Y_i^\times}
\hookrightarrow \B_\hbar|_{Y_i^\times}$. So we need to prove this map is surjective. Consider the restriction $(\iota_*\B_\hbar)^{\wedge_y}$ of $\iota_*\B_\hbar$ to $Y_i$. Then
$(\iota_*\B_\hbar)^{\wedge_y}\hookrightarrow H^0(Y_i^\times,\B_\hbar|_{Y_i^\times})$.
On the other hand, $\iota^*\iota_*\B_\hbar\cong \B_\hbar$ and hence  $(\iota_*\B_\hbar)^{\wedge_y}|_{Y_i^\times}\xrightarrow{\sim} \B_\hbar|_{Y_i^\times}$.
This implies (2).

Now we prove (2)$\Rightarrow$(1). We can find an open affine cover $Y^{sreg}=\bigcup_{j=1}^\ell U_i$
such that each $U_j$ intersects at most one codimension $2$ symplectic leaf. We will check the
condition of Lemma \ref{Lem:coherence_criterium} for $M_h^0=\B_\hbar$ and  this cover, this will imply (1). Let $U=U_j$ for some $j$.
So we need to prove that the sequence $H^0(U^{reg}, M^0)_m$ stabilizes. If $U\subset Y^{reg}$,
there is nothing to prove. So let $\mathcal{L}:=U^{sing}$, it is an open subvariety in some $\mathcal{L}_i$. Note that each
$H^0(U^{reg},M^0)_m$ is a  Poisson $\C[U]$-submodule in the finitely generated Poisson
$\C[U]$-module $H^0(U^{reg},M^0)$. Since $H^0(U^{reg},M^0)/H^0(U^{reg},M^0)_m$ is supported on $\mathcal{L}$, we see that  $H^0(U^{reg},M^0)/H^0(U^{reg},M^0)_m$ admits a finite filtration by vector
bundles on $\mathcal{L}$. In particular, it is enough to show that, for $y\in \mathcal{L}$,
the sequence $[H^0(U^{reg},M^0)_m]^{\wedge_y}$ stabilizes. This is done in three steps.
We will use the notation of Lemma \ref{Lem:coherence_criterium}.

{\it Step 1}. We claim that $H^n(U^{reg},M^0)^{\wedge_y}\xrightarrow{\sim}
H^n(Y_i^\times, M^0|_{Y_i^\times})$ for all $n$.
Indeed, we cover $U^{reg}$ with principal open subsets $V_s=U_{f_s}$ for a finite collection $f_s\in \C[U]$.
Let $M:=H^0(U^{reg},M^0)$.  Then   $H^n(U^{reg},M^0)$ is the cohomology of
the Cech complex for $M$ and the cover $V_s$ of $U^{reg}$.  Since the completion functor
is exact, we see that it sends to Cech complex for $M$ to that of $M^{\wedge_y}$.
But $H^n(Y_i^\times, M^0|_{Y_i^\times})$ is the cohomology space for the
Cech complex of $M^{\wedge_y}$. Our isomorphism is proved.

{\it Step 2}. We claim that  $H^n(U^{reg},M^0_{\hbar,p})^{\wedge_y}\xrightarrow{\sim}
H^n(Y_i^\times, M^0_{\hbar,p}|_{Y_i^\times})$ for all $n$ and $p$. This follows by induction on $p$
using  Step 1 and the 5-lemma.

{\it Step 3}. Thanks to Step 2,   we will be done if we know that
the sequence $H^0(Y_i^\times, M^0|_{Y_i^\times})_m$ stabilizes. Note that
$H^0(Y_i^\times, \B_\hbar|_{Y_i^\times})/\hbar H^0(Y_i^\times, \B_\hbar|_{Y_i^\times})
\hookrightarrow H^0(Y_i^\times, M^0|_{Y_i^\times})$. The image is contained in
all  $H^0(Y_i^\times, M^0|_{Y_i^\times})_m$. Thanks to (2), the cokernel is
supported on $\mathcal{L}^{\wedge_y}$ and hence has finite length. So the sequence $H^0(Y_i^\times, M^0|_{Y_i^\times})_m$  indeed stabilizes.
This completes the proof.
\end{proof}

\subsection{Extension to $Y$}\label{SS_ext_codim4}
Let $\iota'$ denote the embedding of $Y^{sreg}$ to $Y$.
The goal of this section is
to prove the following result. Together with Lemma \ref{Lem:HC_extension}, this will
finish the proof of Proposition \ref{Prop:HC_extension}.

\begin{Lem}\label{Lem:HC_extension_codim4}
Let $\B'_\hbar$ be a coherent Poisson $\D_{\hbar}|_{Y^{sreg}}$-bimodule flat over $\C[[\hbar]]$.
Then the restriction of $\Gamma(\B'_\hbar)$ to $Y^{sreg}$ coincides with $\B'_\hbar$.
\end{Lem}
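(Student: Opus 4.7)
The plan is to reduce the statement to the coherence of $\iota'_*\B'_\hbar$ as a $\D_\hbar$-bimodule on the affine variety $Y$. Once coherence is established, $\iota'_*\B'_\hbar$ is determined by its global sections $\Gamma(\B'_\hbar)$, and the restriction $(\iota'_*\B'_\hbar)|_{Y^{sreg}}$ recovers $\B'_\hbar$ automatically by the open embedding property, yielding the lemma.

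I would verify coherence via Lemma \ref{Lem:coherence_criterium} applied with $X=Y$, $X^0=Y^{sreg}$, and $M^0_h=\B'_\hbar$. Two hypotheses must be checked: (a) coherence of $\iota'_*(\B'_\hbar/\hbar\B'_\hbar)$ on $Y$; and (b) for every $\C^\times$-stable affine open $U\subset Y$, stabilization of the sequence $H^0(U\cap Y^{sreg},\B'_\hbar/\hbar\B'_\hbar)_m$. The two key structural inputs are that $Y$ has rational Gorenstein (in particular Cohen--Macaulay) singularities and that $\operatorname{codim}_Y(Y\setminus Y^{sreg})\geqslant 4$.

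For (a), set $\mathcal M:=\B'_\hbar/\hbar\B'_\hbar$, a coherent Poisson $\Str_{Y^{sreg}}$-module. Writing $Z:=Y\setminus Y^{sreg}$, Cohen--Macaulayness combined with $\operatorname{codim}_Y Z\geqslant 4$ yields $\mathcal H^i_Z(\Str_Y)=0$ for $i<4$, so $\iota'_*\Str_{Y^{sreg}}=\Str_Y$ and depth-type Hartogs applies. Combining this with the contracting $\C^\times$-action on $Y$ (which makes the weight subspaces of $\Gamma(U\cap Y^{sreg},\mathcal M)$ finitely generated over the corresponding weight subspaces of $\C[U]$), one shows that $\Gamma(U\cap Y^{sreg},\mathcal M)$ is finitely generated over $\C[U]$, giving a coherent extension to $U$ and hence globally. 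For (b), I would mimic the three-step Mittag--Leffler argument of Lemma \ref{Lem:HC_extension}: after completing at a point $y\in Z$ the cokernel controlling stabilization is supported on $Z^{\wedge_y}$, which has codimension $\geqslant 4$, and by the same depth considerations it has finite length, forcing stabilization.

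The main obstacle is the coherence step (a): whereas in Lemma \ref{Lem:HC_extension} an explicit condition (2) was needed because the stratum $\bigsqcup\mathcal L_i$ has codimension $2$, here the larger codimension $\geqslant 4$ together with the Cohen--Macaulayness of $Y$ and the contracting graded structure should render the analogous condition automatic; verifying this depth-and-grading argument rigorously is where most of the work lies. Once (a) and (b) are in place, Lemma \ref{Lem:coherence_criterium} yields coherence of $\iota'_*\B'_\hbar$, and the lemma follows.
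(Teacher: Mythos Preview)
Your strategy of reducing to the coherence of $\iota'_*\B'_\hbar$ via Lemma~\ref{Lem:coherence_criterium} is a reasonable alternative to the paper's route, but step~(a) has a genuine gap. The Cohen--Macaulay property of $Y$ together with $\operatorname{codim}_Y(Y\setminus Y^{sreg})\geqslant 4$ only controls local cohomology of $\Str_Y$; it says nothing about $\iota'_*\mathcal{M}$ for a general coherent sheaf $\mathcal{M}$ on $Y^{sreg}$. Your parenthetical claim that the contracting $\C^\times$-action forces weight subspaces of $\Gamma(U\cap Y^{sreg},\mathcal{M})$ to be finitely generated is false without further input: take for instance $Y=\mathbb{A}^4$ with the scaling action, $Z=\{0\}$, and $\mathcal{M}$ the structure sheaf of a punctured line through the origin; then $\Gamma(Y\setminus Z,\mathcal{M})=\C[t,t^{-1}]$, whose negative weight pieces are not reached by $\C[Y]$. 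Nothing in your argument rules this out, because you never use the Poisson structure of $\mathcal{M}$ beyond the grading.

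The paper fills exactly this hole, and in doing so takes a rather different path. Rather than going through Lemma~\ref{Lem:coherence_criterium}, it reduces directly to showing that $H^1(Y^{sreg},\B'_\hbar/\hbar\B'_\hbar)$ is finitely generated. The key structural input you are missing is Lemma~\ref{Lem:Poisson_classif}: the restriction of $\B'_\hbar/\hbar\B'_\hbar$ to $Y^{reg}$ is a sum of $\Gamma$-isotypic components of $\pi_*\Str_{\tilde{Y}^0}$, so its push-forward $\B_0$ to $Y^{sreg}$ sits inside $\pi_*\Str_{\tilde{Y}}$. Since $\tilde{Y}$ is Cohen--Macaulay (Lemma~\ref{Lem:symp_sing_cover}), $\B_0$ is maximal Cohen--Macaulay on $Y^{sreg}$; now the codimension~$\geqslant 4$ bound combined with SGA2 local cohomology gives finiteness of $H^1(Y^{sreg},\B_0)$, and a short exact sequence comparison handles the difference between $\B_0$ and $\B'_\hbar/\hbar\B'_\hbar$. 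In short, the depth argument must be applied to $\mathcal{M}$ (via the cover $\tilde{Y}$), not to $\Str_Y$; once you have that, either your coherence route or the paper's $H^1$ route will go through. Your step~(b) has a similar issue: ``finite length'' of the cokernel does not follow from codimension alone.
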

\begin{proof}
It is enough to show that
\begin{itemize}
\item[(*)]
$H^1(Y^{sreg}, \B'_\hbar)$ is a finitely generated
$\A_{\hbar}$-module supported on $Y\setminus Y^{sreg}$.
\end{itemize}

Indeed, if we know (*), then the restriction of $H^0(Y^{sreg}, \B'_\hbar)/\hbar
H^0(Y^{sreg}, \B'_\hbar)$ to $Y^{sreg}$ coincides with $\B'_\hbar/\hbar \B'_\hbar$.
Since $\B'_\hbar$ is flat over $\C[[\hbar]]$, it follows that $H^0(Y^{sreg},\B'_\hbar)|_{Y^{sreg}}
\xrightarrow{\sim} \B'_\hbar$.

Following the proof of \cite[Lemma 5.6.3]{GL},
we see that  (*) will follow once we know that $H^1(Y^{sreg}, \B'_\hbar/\hbar \B'_\hbar)$
is a finitely generated $\C[Y]$-module, automatically  supported on $Y\setminus Y^{sreg}$.
So we proceed to proving that
\begin{itemize}
\item[(**)] $H^1(Y^{sreg}, \B'_\hbar/\hbar \B'_\hbar)$ is
finitely generated.
\end{itemize}

Recall that  $\iota$ denotes the embedding of $Y^{reg}$ into $Y^{sreg}$. Let $\B_0:=\iota_*\iota^*(\B'_\hbar/\hbar \B'_\hbar)$
so that we have an exact sequence
\begin{equation}\label{eq:ses2}0\rightarrow \B'_\hbar/\hbar \B'_\hbar\rightarrow \B_0\rightarrow \mathcal{V}\rightarrow 0,\end{equation}
where $\mathcal{V}$ is a Poisson $\Str_{Y^{sreg}}$-module supported on $Y^{sreg}\setminus Y^{reg}$.

Now consider a part of the long exact sequence induced by (\ref{eq:ses2}):
$$H^0(Y^{sreg},\B'_\hbar/\hbar \B'_\hbar)\rightarrow
H^0(Y^{sreg},\B_0)\rightarrow H^0(Y^{sreg}, \mathcal{V})\rightarrow H^1(Y^{sreg}, \B'_\hbar/\hbar\B'_\hbar)
\rightarrow H^1(Y^{sreg}, \B_0).$$

Note that $\mathcal{V}$ is filtered by vector bundles on $Y^{sreg}\setminus Y^{reg}$.
We have $\operatorname{codim}_{Y^{sing}}Y^{sing}\setminus Y^{sreg}\geqslant 2$.
It follows that for every vector bundle on $Y^{sreg}\setminus Y^{reg}$
its global section is a finitely generated $\C[Y]$-module.
From here one deduces that $H^0(Y^{sreg},\mathcal{V})$
is a finitely generated $\C[Y]$-module supported on $Y^{sing}$.
It follows that the cokernel of $H^0(Y^{sreg},\B_0)\rightarrow H^0(Y^{sreg},\mathcal{V})$  is finitely generated over $\C[Y]$.
So (**) reduces to the claim that $H^1(Y^{sreg},\B_0)$ is finitely generated over $\C[Y]$.

A key step here is to show that $\B_0$ is maximal Cohen-Macaulay on $Y^{sreg}$. Let $\pi$ denote the
quotient morphism $\tilde{Y}\rightarrow \tilde{Y}/\Gamma=Y$, where $\Gamma=\pi_1^{alg}(Y^{reg})$,
 $\tilde{Y}^0$ is the universal cover of
$Y^{reg}$ and $\tilde{Y}=\operatorname{Spec}(\C[\tilde{Y}^0])$. By Lemma \ref{Lem:Poisson_classif},
$\iota^*(\B'_\hbar/\hbar \B'_\hbar)$ is the direct sum of  $\Gamma$-isotypic components in $\pi_*\mathcal{O}_{\tilde{Y}^0}$.
It follows that $\iota'_*\B_0$ is the direct sum of  $\Gamma$-isotypic components of
$\pi_*\mathcal{O}_{\tilde{Y}}$. Lemma \ref{Lem:symp_sing_cover} implies that
$\tilde{Y}$ is Cohen-Macaulay. Therefore $\pi_*\mathcal{O}_{\tilde{Y}}$
is a maximal Cohen-Macaulay sheaf on $Y$, hence $\B_0$ is a maximal Cohen-Macaulay sheaf on
$Y^{sreg}$.

Now we can use \cite[Expose VIII, Cor. 2.3]{Grothendieck} (together with the standard exact
sequence relating $H^*(Y^{sreg},\bullet)$ to $H^*_{Y\setminus Y^{sreg}}(\bullet)$) to see that
$H^1(Y^{sreg},\B_0)$ is finitely generated.
\end{proof}

\section{Enhanced restriction functor}\label{S_enhanced_res}
In this section we partially generalize constructions from \cite{HC,sraco} of ``enhanced''
restriction functors. Namely, we are going to produce a full embedding
$\overline{\HC}(\A_\lambda)\hookrightarrow \C\Gamma\operatorname{-mod}$ of monoidal categories, where $\Gamma=\pi_1^{alg}(Y^{reg})$. This functor upgrades the usual restriction functor from \cite[Section 3.3]{perv} associated to the open leaf whose target category is $\operatorname{Vect}$.

The functor we need will be constructed as the compostion of two intermediate
functors. The first functor will be a full monoidal embedding $\overline{\HC}(\A_\lambda)
\hookrightarrow \HC^\Gamma(\tilde{\A}_{\lambda^1}^0)$ (the definition of the latter
category will be given in Section \ref{SS_loc_functor}). Then we will produce a
monoidal equivalence   $\HC^\Gamma(\tilde{\A}_{\lambda^1}^{0})\xrightarrow{\sim}
\C\Gamma\operatorname{-mod}$. In Section \ref{SS_dagger_properties} we establish
basic properties of the composite functor $\overline{\HC}(\A_\lambda)\hookrightarrow \C\Gamma\operatorname{-mod}$.

The last two sections contain developments that are very closely related to
to the enhanced restriction functor. In Section \ref{SS_dagger_conseq} we will first give an alternative
formulation of the extension criterium from Section \ref{S_extension} in terms
of the representations of the groups $\Gamma, \Gamma_i, i=1,\ldots,k$.
Second, let $\Gamma'\subset \Gamma$ be a normal subgroup and $\hat{Y}$ be
the cover of $Y$ corresponding to $\Gamma/\Gamma'$. Let $\hat{\A}$ be a
quantization of $\hat{Y}$ with a $\Gamma/\Gamma'$-action. We will relate the categories $\overline{\HC}(\hat{\A})$
and $\overline{\HC}(\hat{\A}^{\Gamma/\Gamma'})$. These two results play a crucial
role in describing $\overline{\HC}(\A_\lambda)$. Finally, in Section
\ref{SS_trans_equiv} we discuss translation equivalences between the categories
$\overline{\HC}$ for different parameters and show that these equivalences
intertwine the enhanced restriction functors.

\subsection{Regluing quantizations}
In this section we are going to relate   quantizations of $Y^{reg}$ that have the
same period lying in $H^2(Y^{reg},\C)$. More precisely, we are going to show that any two such graded formal quantizations
$\D_h^1,\D_h^2$ are obtained from one another by gluing with respect to a 1-cocycle
of ``almost inner'' automorphisms. This is a partial generalization of a regluing result
from \cite[Section 2.5]{sraco}.

Let us cover $Y^{reg}$
with affine open $\C^\times$-stable subsets $U_{i}$.
Since $\D_h^1,\D_h^2$ have the same period and the period is functorial, we
see that $\D_h^1|_{U_i},\D_h^2|_{U_i}$ have the same period. By Corollary
\ref{Cor:graded_filt_quant}, $\D_h^1|_{U_i}\cong \D_h^2|_{U_i}$, a
$\C^\times$-equivariant isomorphism.  So $\D_h^2$ is obtained from $\D_h^1$
via regluing by a $1$-cocycle of automorphisms
$\theta_{ij}\in \operatorname{Aut}(\D^1_h|_{U_i\cap U_j})$
such that $\theta_{ij}$ is the identity modulo $h$.

The following claim is the main result of this section.

\begin{Prop}\label{Prop:derivation_inner}
There are  elements $f_{ij}\in \D_h^1|_{U_{ij}}$ that
\begin{itemize}
\item are $\C^\times$-invariant
\item
$\theta_{ij}=\exp(\operatorname{ad}(f_{ij}))$ and $f_{ij}=-f_{ji}$ for all $i,j$,
\item and  $f_{ik}=\log(\exp(f_{ij})\exp(f_{jk}))$ for all $i,j,k$.
\end{itemize}
\end{Prop}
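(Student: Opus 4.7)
\emph{Proof plan.} Since $\theta_{ij}$ is the identity modulo $h$, I can uniquely write $\theta_{ij}=\exp(h\delta_{ij})$ for a $\C^\times$-equivariant continuous derivation $\delta_{ij}$ of $\D_h^1|_{U_{ij}}$. The conclusion amounts to finding $\C^\times$-invariant $f_{ij}\in\D_h^1|_{U_{ij}}$ with $\operatorname{ad}(f_{ij})=h\delta_{ij}$, $f_{ij}=-f_{ji}$, and $\exp(f_{ij})\exp(f_{jk})=\exp(f_{ik})$. My plan is a two-step argument---a first-order cohomological adjustment exploiting the hypothesis on periods, followed by induction on the $h$-adic order---modeled on the regluing result in \cite[Section 2.5]{sraco}.

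\medskip

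\emph{First-order step.} The reduction $\delta^0_{ij}:=\delta_{ij}\bmod h$ is a Poisson derivation of $\Str(U_{ij})$ of degree $-d$, corresponding via $\omega$ to a closed degree-$0$ form $\alpha_{ij}\in\Omega^1(U_{ij})$; the cocycle condition for $\theta_{ij}$ forces $\alpha_{ik}=\alpha_{ij}+\alpha_{jk}$. By Lemma \ref{Lem:period_regluing} and the hypothesis that $\D_h^1,\D_h^2$ share the same period, the total \v{C}ech--de Rham class of $(\alpha_{ij})$ vanishes in $H^2_{DR}(Y^{reg})$. Double-complex decomposition produces closed degree-$0$ forms $\sigma_i\in\Omega^1(U_i)$ and degree-$0$ functions $\sigma_{ij}\in\Str(U_{ij})$ with $\alpha_{ij}=\sigma_j-\sigma_i+d\sigma_{ij}$, which after symmetrization satisfy $\sigma_{ij}=-\sigma_{ji}$ and $\sigma_{ik}=\sigma_{ij}+\sigma_{jk}$. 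Using Lemma \ref{Lem:deriv_lift} I lift the symplectic derivation dual to $\sigma_i$ to a $\C^\times$-equivariant derivation $E_i$ of $\D_h^1|_{U_i}$; replacing the chosen local identifications $\varphi_i\colon\D_h^1|_{U_i}\xrightarrow{\sim}\D_h^2|_{U_i}$ by $\varphi_i\circ\exp(-hE_i)$ preserves the cocycle structure of $(\theta_{ij})$ and changes its first-order form to $d\sigma_{ij}$, so after this modification $\delta^0_{ij}$ is the Hamiltonian vector field of $\sigma_{ij}$.

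\medskip

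\emph{Inductive lift.} Taking $f^{(0)}_{ij}:=\sigma_{ij}$, I induct on $n$: assuming $\C^\times$-invariant truncations $f^{[n]}_{ij}\in\D_h^1|_{U_{ij}}\bmod h^{n+1}$ have been chosen so that antisymmetry and the BCH-cocycle condition hold modulo $h^{n+1}$ and $\theta_{ij}\equiv\exp(\operatorname{ad}f^{[n]}_{ij})\bmod h^{n+2}$, I extend to order $h^{n+1}$. Choosing any antisymmetric $\C^\times$-invariant lifts $\tilde f_{ij}$ and reading off the innerness defect of $\tilde f_{ij}$ at order $h^{n+1}$ together with the BCH discrepancy $\log(\exp(-\tilde f_{ik})\exp(\tilde f_{ij})\exp(\tilde f_{jk}))$ at the same order, I obtain classes sitting in the same bidegrees of the \v{C}ech--de Rham double complex on $Y^{reg}$ as the first-order class $(\alpha_{ij})$. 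A direct BCH expansion verifies they form a total cocycle, and the first-order argument, applied to this higher-order cocycle, produces the desired correction $f^{(n+1)}_{ij}$ while preserving all required conditions. The residual freedom at each order is a $\C^\times$-invariant element of the center $\C[[h]]$, hence a scalar in $\C$, which can be pinned down to enforce antisymmetry and the BCH-cocycle on the nose.

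\medskip

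\emph{Main obstacle.} The delicate point is the inductive step: verifying that the higher-order obstructions genuinely fit into the same \v{C}ech--de Rham framework as at first order, so that the cohomological argument applies uniformly, and that antisymmetry and the BCH-cocycle can be preserved simultaneously throughout. This requires careful BCH bookkeeping and grading considerations; crucially, the fact that $\C^\times$-invariant central elements of $\D_h^1|_U$ are just $\C$ controls the BCH-normalization ambiguity at each stage. All of the cohomological input---the vanishing of the total class of $(\alpha_{ij})$ in $H^2_{DR}(Y^{reg})$---enters at first order via Lemma \ref{Lem:period_regluing} and propagates formally through the induction.
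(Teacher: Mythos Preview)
Your first-order step matches the paper's proof exactly: use Lemma~\ref{Lem:period_regluing} and the equality of periods to trivialize the \v{C}ech--de Rham class of $(\alpha_{ij})$, then absorb the $\alpha_i$ piece into the local identifications via Lemma~\ref{Lem:deriv_lift}, reducing to $\alpha_{ij}=d\underline{f}_{ij}$ with $(\underline{f}_{ij})$ an additive \v{C}ech $1$-cocycle.

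From there, however, the paper's argument is both simpler and more transparent than your inductive scheme, and your version has a gap. Two points:

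\emph{(1) The higher-order lift is purely local and needs no further cohomology.} After the first-order adjustment you want to lift $\underline{f}_{ij}$ order by order so that $h\delta_{ij}=\operatorname{ad}(f_{ij})$. Having chosen a $\C^\times$-invariant lift $\underline{f}'_{ij}$, the residual derivation $h^{-1}(\delta_{ij}-h^{-1}\operatorname{ad}(\underline{f}'_{ij}))$ has $\C^\times$-degree $-2d$, so the corresponding closed $1$-form on $U_{ij}$ has degree $-d\neq 0$. By Cartan's homotopy formula applied to the Euler vector field, a closed form of nonzero degree on an affine $\C^\times$-variety is automatically exact. Thus the higher-order obstruction forms are exact \emph{on each $U_{ij}$ separately}; no global \v{C}ech--de Rham input is needed, and iterating gives $f_{ij}$. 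Your phrase ``the first-order argument, applied to this higher-order cocycle'' is the gap: the first-order argument used the period hypothesis, which is not available at higher orders. What actually kills the higher obstructions is the grading, and you should say so explicitly rather than invoking a cohomological argument that has no hypothesis to feed on.

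\emph{(2) The BCH cocycle condition need not be maintained inductively.} The paper only arranges $\theta_{ij}=\exp(\operatorname{ad}f_{ij})$ with $f_{ij}$ $\C^\times$-invariant and $f_{ij}\equiv\underline{f}_{ij}\bmod h$, and then proves the BCH identity in one stroke at the end: since $(\theta_{ij})$ is a $1$-cocycle of automorphisms, $z_{ijk}:=\log(\exp f_{ki}\exp f_{ij}\exp f_{jk})$ is central in $\D_h^1|_{U_{ijk}}$, hence lies in $\C[[h]]$; it is $O(h)$ because the additive cocycle $\underline{f}_{ij}+\underline{f}_{jk}+\underline{f}_{ki}=0$ holds at order $0$; and it is $\C^\times$-invariant because the $f_{ij}$ are, hence constant. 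A constant divisible by $h$ is zero. This replaces your entire inductive BCH bookkeeping with a two-line argument, and uses exactly the observation you flagged (that $\C^\times$-invariant central elements are scalars) once rather than at every stage.
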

\begin{proof}
Let $\delta_{ij},\alpha_{ij}$ have the same meaning as in the discussion preceding
Lemma \ref{Lem:period_regluing}.

Since the periods of $\D^1_h,\D^2_h$ coincide, it follows from Lemma
\ref{Lem:period_regluing} that $(\alpha_{ij})$ is a 1-coboundary: there are closed forms
$\alpha_{i}$ and functions $\underline{f}_{ij}$
with $\underline{f}_{ji}=-\underline{f}_{ij}, \alpha_{ij}=\alpha_i-\alpha_j+d\underline{f}_{ij}$ and
$\underline{f}_{ki}+\underline{f}_{ij}+\underline{f}_{jk}=0$. We can further assume that
the forms $\alpha_i$ and the functions $\underline{f}_{ij}$ are $\C^\times$-invariant.

Let $\underline{\delta}_{i}$ be the symplectic vector field on $U_i$ corresponding to $\alpha_i$.
By Lemma \ref{Lem:deriv_lift}, $\underline{\delta}_i$ lifts to a derivation $\delta_i$
of $\D^1_h|_{U_{i}}$. We can assume that $\delta_i$ has degree $-d$ with respect to the
$\C^\times$-action. So we can replace $\theta_{ij}$ with $\exp(-h\delta_i)\theta_{ij}\exp(h\delta_j)$
and assume that $\alpha_i=0$. Hence $\alpha_{ij}=d\underline{f}_{ij}$.

Let us prove  that there are $\C^\times$-invariant elements $f_{ij}\in D^1_h|_{U_{ij}}$ forming a
\v{C}ech cocycle such that $\delta_{ij}=h^{-1}\operatorname{ad}(f_{ij})$. This is true
modulo $h$ by the previous paragraph. Let $\underline{f}_{ij}'$ denote an arbitrary
$\C^\times$-invariant lift of $\underline{f}_{ij}$ to  $\D^1_h|_{U_{ij}}$. The derivation
$h^{-1}(\delta_{ij}-h^{-1}\operatorname{ad}(\underline{f}'_{ij}))$ has degree $-2d$.
The 1-form corresponding to this derivation modulo $h$ therefore has degree $-d$. It is
closed an hence exact. Arguing in this way, we see that there are  elements $f_{ij}$
with required properties.

It remains to prove that $f_{ik}=\log(\exp(f_{ij})\exp(f_{jk}))$.
Since $(\exp(\operatorname{ad}(f_{ij})))$ is a cocycle, the element $$z_{ijk}:=\log(\exp(f_{ki})\exp(f_{ij})\exp(f_{jk}))$$
is central in $\D^1_h|_{U_{ijk}}$. So it is a formal power series
$g_{ijk}(h)$. Since we know that $\underline{f}_{ij}+\underline{f}_{jk}+\underline{f}_{ki}=0$
we see that $g_{ijk}(h)$ is divisible by $h$. On the other hand, $z_{ijk}$ is $\C^\times$-invariant
hence constant. So it follows that it is zero and completes the proof of the proposition.
\end{proof}

\subsection{Embedding $\overline{\HC}(\A_\lambda)\hookrightarrow \HC^\Gamma(\tilde{\A}^{0}_{\lambda^1})$}\label{SS_loc_functor}
The goal of this section is to produce a full embedding $\HC(\A_\lambda)
\hookrightarrow \HC^\Gamma(\tilde{\A}_{\lambda^1}^0)$
of  monoidal categories. The notation here is as follows. As in Section
\ref{SS_symp_sing}, let
$\tilde{Y}^0$ denote the covering of $Y^{reg}$ with Galois
group $\Gamma$, this is an open subset in the conical symplectic singularity $\tilde{Y}$.
Note that we have an embedding $H^2(Y^{reg},\C)=H^2(\tilde{Y}^0,\C)^\Gamma
\hookrightarrow H^2(\tilde{Y}^0,\C)=H^2(\tilde{Y}^{reg},\C)$.
Abusing the notation, by $\lambda_0\in H^2(\tilde{Y}^0,\C)$ we denote the image of
$\lambda_0\in \h_0^*\subset H^2(Y^{reg},\C)$ under the embedding above.

Now let us define a quantization parameter $\lambda^1$ for $\tilde{Y}$.
Let $\mathcal{L}'_1,\ldots,\mathcal{L}'_\ell$ be the
codimension $2$ leaves of $\tilde{Y}$ and let $\Sigma'_i=\Disk^2/\Gamma'_i$
be the corresponding formal slices, $i=1,\ldots, \ell$. We will write
$W_{\tilde{Y},i},\h_{\tilde{Y},i}$ for the factor/summand in $W_{\tilde{Y}},\h_{\tilde{Y}}$
corresponding to the index $i$.

Let $\lambda^1_i, i=1,\ldots,\ell,$ denote the
quantization parameter for  $\Sigma'_i$ corresponding
to $1\in \C\Gamma'_i$. Note that this parameter is stable under the action
of $N_{\operatorname{SL}_2(\C)}(\Gamma_i')$ on $(\C\Gamma'_i)^{\Gamma'_i}_1$.
Hence $\lambda^1_i$ is stable under the monodromy action of
$\pi_1(\mathcal{L}'_i)$. We set $\lambda^1:=\lambda_0+\sum_{i=1}^\ell \lambda_i^1$,
this is an element of $\h^*_{\tilde{Y}}$.

So we have a filtered quantization $\tilde{\A}_{\lambda^1}$ of $\tilde{Y}$.


\begin{Lem}\label{Lem:Theta_inv}
The action of $\Gamma$ on $\C[\tilde{Y}]$ lifts to a $\Gamma$-action on
$\tilde{\A}_{\lambda^1}$.
\end{Lem}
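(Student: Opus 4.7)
\medskip

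The plan is to establish two independent facts and combine them: (i) the parameter $\lambda^1 \in \h^*_{\tilde{Y}}$ is $\Gamma$-invariant with respect to the natural action of $\Gamma$ on $\h^*_{\tilde{Y}}$ induced by its action on $\tilde{Y}$, and (ii) any $\Gamma$-invariant quantization parameter for $\tilde{Y}$ gives rise to a filtered quantization carrying a $\Gamma$-action lifting the given action on $\C[\tilde{Y}]$.

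For (i), I would use the decomposition $\h^*_{\tilde{Y}} = H^2(\tilde{Y}^{reg},\C) \oplus \bigoplus_{i=1}^\ell \h^*_{\tilde{Y},i}$ recalled in Section~\ref{SS_HC_symp_sing}. The component $\lambda_0$ lies in $H^2(Y^{reg},\C) = H^2(\tilde{Y}^0,\C)^{\Gamma}$ by construction, hence is $\Gamma$-invariant. For the Kleinian components, note that $\Gamma$ permutes the codimension $2$ leaves $\mathcal{L}'_1,\ldots,\mathcal{L}'_\ell$; if $\gamma \cdot \mathcal{L}'_i = \mathcal{L}'_{\sigma(i)}$, then $\gamma$ induces an isomorphism of formal slices $\Sigma'_i \xrightarrow{\sim} \Sigma'_{\sigma(i)}$ which identifies $\Gamma'_i$ with $\Gamma'_{\sigma(i)}$ and sends the unit $1 \in \C\Gamma'_i$ to $1 \in \C\Gamma'_{\sigma(i)}$. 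Thus the tuple $(\lambda^1_i)$, defined by the unit in each group algebra, is $\Gamma$-equivariant, and summing with $\lambda_0$ yields $\lambda^1 \in (\h^*_{\tilde{Y}})^\Gamma$.

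For (ii), I would exploit the universal graded Poisson deformation $\tilde{Y}_{\h}$ of $\tilde{Y}$ over $\h^*_{\tilde{Y}}$ together with its canonical graded quantization $\D_{\h,h}$ recalled in Section~\ref{SS_symp_sing}. By universality, the $\Gamma$-action on $\tilde{Y}$ (which is $\C^\times$-equivariant and Poisson) lifts canonically to a $\Gamma$-action on $\tilde{Y}_\h$ covering the $\Gamma$-action on the base $\h^*_{\tilde{Y}}$, and by the corresponding uniqueness statement for the canonical quantization (Proposition \ref{Prop:BK_main} combined with Corollary \ref{Cor:graded_filt_quant}), this lifts further to a $\Gamma$-action on $\D_{\h,h}$ by $\C[[h]]$-algebra automorphisms. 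Specializing the homomorphism $\C[\h^*_{\tilde{Y}}][[h]] \to \C[[h]]$ determined by the $\Gamma$-fixed point $\lambda^1$, we obtain a $\Gamma$-action on $\tilde{\A}_{\lambda^1 h}$ commuting with the $\C^\times$-action; passing to $\C^\times$-finite vectors and setting $h=1$ yields the desired $\Gamma$-action on $\tilde{\A}_{\lambda^1}$, compatible mod $h$ with the original action on $\C[\tilde{Y}]$.

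The main obstacle is the second step: one must ensure that the $\Gamma$-action on $\D_{\h,h}$ is genuine (not merely projective), i.e.\ that the lifts can be chosen to satisfy the cocycle condition on the nose. This is where the canonicity of the Bezrukavnikov--Kaledin construction at the $\Gamma$-fixed parameter is crucial: the torsor-theoretic description from Section~\ref{SS_Per_NC} produces the quantization as a functor of the symplectic variety together with its period class, so automorphisms of the pair $(\tilde{Y}_\h, [\omega])$ lift canonically to automorphisms of $\D_{\h,h}$. If one prefers a more direct route, one can alternatively work on a $\Q$-terminalization $X$ of $\tilde{Y}$ (which inherits a $\Gamma$-action by Namikawa's result cited in Section~\ref{SS_symp_sing}), lift the $\Gamma$-action to the unique $\C^\times$-equivariant formal quantization $\D^\circ_{\lambda^1 h}$ of $X^{reg}$ with period $\lambda^1 h$, take global sections, and descend to the filtered quantization.
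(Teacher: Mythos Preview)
Your overall strategy coincides with the paper's: show the quantization parameter is $\Gamma$-invariant and then lift the $\Gamma$-action via a universal quantization. Your argument for (i) is essentially the paper's, phrased in terms of ``any group isomorphism sends $1$ to $1$'' rather than the $N_{\SL_2(\C)}(\Gamma'_i)$-invariance of $\lambda^1_i$; these are equivalent observations.

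There is, however, a genuine imprecision in (ii). The object $\tilde{Y}_{\h}$ over $\h^*_{\tilde{Y}}$ that you invoke from Section~\ref{SS_symp_sing} is the universal graded Poisson deformation of a chosen $\Q$-terminalization $X$, not of $\tilde{Y}$ itself. There is no reason for $\Gamma$ to act on a fixed $X$ (your appeal to Namikawa's result is a misreading: that result lifts the $\C^\times$-action, not a finite group action), so neither $X_{\h}$ nor $\D_{\h,h}$ need carry a $\Gamma$-action, and $\Gamma$ has no canonical linear action on $\h^*_{\tilde{Y}}$. What $\Gamma$ does act on canonically is the universal filtered quantization $\tilde{\A}^{W_{\tilde{Y}}}_{\h_{\tilde{Y}}}$ of $\C[\tilde{Y}]$, whose base is $\h^*_{\tilde{Y}}/W_{\tilde{Y}}$; this is precisely what the paper uses, citing \cite[Section~3.7]{orbit}. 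The upshot is that the correct invariance statement is that the $W_{\tilde{Y}}$-orbit $W_{\tilde{Y}}\lambda^1$ is $\Gamma$-fixed in $\h^*_{\tilde{Y}}/W_{\tilde{Y}}$. Your argument in (i) still proves this (the identification $\tilde{\h}^*_i\cong\tilde{\h}^*_{\sigma(i)}$ induced by $\gamma$ is well-defined only up to a diagram automorphism, but $1\in\C\Gamma'_i$ is sent to $1\in\C\Gamma'_{\sigma(i)}$ under any such choice), and then specializing the $W_{\tilde{Y}}$-invariant universal quantization at the fixed point gives the desired $\Gamma$-action on $\tilde{\A}_{\lambda^1}$. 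With this correction your proof is complete and matches the paper's.
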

\begin{proof}
Since $\Gamma$ acts on $\C[\tilde{Y}]$ by graded Poisson algebra
automorphisms, it also acts on the universal quantization $\tilde{\A}_{\h_{\tilde{Y}}}^{W_{\tilde{Y}}}$
of $\C[\tilde{Y}]$, see \cite[Section 3.7]{orbit}. We need to show that the parameter $W_{\tilde{Y}}\lambda^1$ is $\Gamma$-invariant.
The $H^2(\tilde{Y}^{reg},\C)$-component of $W_{\tilde{Y}}\lambda^1$ is $\Gamma$-stable by the construction. The group $\Gamma$ permutes the codimension $2$ symplectic leaves of
$\tilde{Y}$ hence induces a permutation of $\{1,\ldots,\ell\}$.   The element $\gamma$
gives rise to an isomorphism $\Sigma'_{i}\xrightarrow{\sim} \Sigma'_{\gamma(i)}$
compatible with the monodromy action.
Hence $\gamma$ gives rise to an isomorphism $\h^*_{\tilde{Y},i}/W_{\tilde{Y},i}
\xrightarrow{\sim} \h^*_{\tilde{Y},\gamma(i)}/W_{\tilde{Y},\gamma(i)}$.
This isomorphism must come from a diagram automorphism for the Dynkin diagram of $\Gamma'_{i}$.
Since $\lambda_i^1$ is invariant under the action of $N_{\operatorname{SL}_2(\C)}(\Gamma'_i)$,
we see $\gamma$ takes $W_{\tilde{Y},i}\lambda^1_i$ to
$W_{\tilde{Y},\gamma(i)}\lambda^1_{\gamma(i)}$. It follows that the parameter
$W_{\tilde{Y}}\lambda^1$ is $\Gamma$-invariant.
\end{proof}

Let $\tilde{\A}_{\lambda^1}^0$ denote the microlocalization of $\tilde{\A}_{\lambda^1}$
to $\tilde{Y}^0$.
HC $\tilde{\A}_{\lambda^1}^0$-bimodules were defined in Section
\ref{SS_Poisson_HC}.
By a $\Gamma$-equivariant
HC $\tilde{\A}_{\lambda^1}^0$-bimodule we mean a HC $\tilde{\A}_{\lambda^1}^0$-bimodule
$\B$ together with a $\Gamma$-action that is compatible with the actions of $\tilde{\A}_{\lambda^1}^0$.
The category of such HC bimodules will be denoted by $\HC^\Gamma(\tilde{\A}_{\lambda^1}^0)$.
This is a monoidal category.


Now let us produce a full monoidal embedding $\overline{\HC}(\A_\lambda)\hookrightarrow
\HC^\Gamma(\tilde{\A}^0_{\lambda^1})$. The construction is based on  Proposition
\ref{Prop:derivation_inner} and follows the construction of an analogous functor
in \cite[Section 3.6]{sraco}.

First of all, we have the microlocalization functor $\overline{\HC}(\A_\lambda)
\rightarrow \HC(\A^{reg}_\lambda)$. This functor is a full
monoidal embedding.

The quantizations $\A^{reg}_\lambda, \left(\tilde{\A}^{0}_{\lambda^1}\right)^\Gamma$ have the same
period, equal to  $\lambda_0$.
So $\left(\tilde{\A}_{\lambda^1}^0\right)^\Gamma$ can be reglued from
$\A^{reg}_\lambda$ as explained in Proposition \ref{Prop:derivation_inner}. Similarly to \cite[Section 3.6]{sraco},
this yields a tensor category equivalence between $\HC(\A_\lambda^{reg})$ and
$\HC(\left(\tilde{\A}_{\lambda^1}^0\right)^\Gamma)$ by regluing. On the level of associated
graded bimodules, this equivalence is the identity.

Finally, as explained in Section \ref{SS_lift_etale}, the \'{e}tale morphism
$\tilde{Y}^0\rightarrow Y^{reg}$ gives rise to the pull-back functor between
the categories of Poisson bimodules. Note that the pull-back of a Poisson
$R_\hbar(\A_\lambda^{reg})$-bimodule has a natural $\Gamma$-equivariant structure.
Passing to the quotient categories by the $\hbar$-torsion bimodules
we get a functor $\HC(\left( \tilde{\A}_{\lambda^1}^0\right)^\Gamma)
\rightarrow \HC^\Gamma(\tilde{\A}^0_{\lambda^1})$. This is an equivalence
whose inverse is the push-forward functor followed by taking $\Gamma$-invariants.

Summarizing, we get a full monoidal embedding
$\mathsf{Loc}_{\tilde{Y}^0}:\overline{\HC}(\A_\lambda)\hookrightarrow \HC^\Gamma(\tilde{\A}^0_{\lambda^1})$ to be called the {\it localization functor} that is
the composition

$$\overline{\HC}(\A_\lambda)\hookrightarrow
\HC(\A_{\lambda}^{reg})\xrightarrow{\sim} \HC(\left(\tilde{\A}_{\lambda^1}^0\right)^\Gamma)
\xrightarrow{\sim} \HC^\Gamma(\tilde{\A}_{\lambda^1}^0).$$


\subsection{Equivalence $\HC^\Gamma(\tilde{\A}^{0}_{\lambda^1})\cong \C\Gamma\operatorname{-mod}$}\label{SS_HC_equiv}
Our goal now is to describe the monoidal category $\HC^\Gamma(\tilde{\A}^{0}_{\lambda^1})$.
First of all, we have a full embedding $\C\Gamma\operatorname{-mod}\hookrightarrow
\HC^\Gamma(\tilde{\A}^0_{\lambda^1}), V\mapsto V\otimes \tilde{\A}^0_{\lambda^1}$.
We want to prove that it is essentially surjective.

Our first step is the following lemma.

\begin{Lem}\label{Lem:HC_simpl_conn1}
We have $\HC(\tilde{\A}_{\lambda^1}^{0})\cong \operatorname{Vect}$
(with $\tilde{\A}_{\lambda^1}^0\in \HC(\tilde{\A}_{\lambda^1}^{0})$ corresponding to
$\C\in \operatorname{Vect}$).
\end{Lem}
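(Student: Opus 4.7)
The proof naturally splits into two stages: classify the possible associated gradeds of HC bimodules over $\tilde{\A}_{\lambda^1}^0$, then show that the quantum lift from the associated graded to the bimodule itself is essentially unique. Both stages exploit that $\tilde{Y}^0$ is the universal algebraic cover of $Y^{reg}$, so has trivial algebraic fundamental group.

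For the first stage I would take $\mathcal{B}\in\HC(\tilde{\A}_{\lambda^1}^0)$ and fix a good filtration; then $\gr\mathcal{B}$ is a graded coherent Poisson $\Str_{\tilde{Y}^0}$-module. Because $\tilde{Y}^0$ is symplectic, the support of $\gr\mathcal{B}$ is a Poisson subvariety of $\tilde{Y}^0$, hence either empty or all of $\tilde{Y}^0$; so every nonzero $\mathcal{B}$ automatically has full support. Applying Lemma \ref{Lem:Poisson_classif}(1) literally to $\tilde{Y}^0$ (which is precisely the object to which that lemma applies), one concludes that $\gr\mathcal{B}$ is a direct sum of copies of $\Str_{\tilde{Y}^0}$ with various grading shifts. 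In particular, the regular bimodule is simple.

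For the second stage I would argue that every simple HC bimodule over $\tilde{\A}_{\lambda^1}^0$ is isomorphic to $\tilde{\A}_{\lambda^1}^0$. Locally, at any $y\in\tilde{Y}^0$, the completed Rees algebra is a formal Weyl algebra $\Weyl_\hbar^{\wedge_0}$, and a coherent Poisson bimodule over it is forced to be a direct sum of copies of the regular bimodule (using that the Weyl algebra has trivial center and no nontrivial two-sided ideals, so Morita-trivializes the Poisson bimodule category down to vector spaces). To globalize these formal trivializations, I would proceed by iterated lifting modulo $\hbar^k$: starting from the isomorphism $\mathcal{B}/\hbar\cong V\otimes_{\C}\Str_{\tilde{Y}^0}$ produced in the first stage, the obstruction to promoting an isomorphism modulo $\hbar^{k-1}$ to one modulo $\hbar^k$ lies in $H^1(\tilde{Y}^0,\Str_{\tilde{Y}^0})$ tensored with the multiplicity space. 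Functoriality of the construction then yields the equivalence $\HC(\tilde{\A}_{\lambda^1}^0)\xrightarrow{\sim}\operatorname{Vect}$, $\mathcal{B}\mapsto V(\mathcal{B})$, with the regular bimodule mapping to $\C$.

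The main obstacle is the vanishing $H^1(\tilde{Y}^0,\Str_{\tilde{Y}^0})=0$ needed to glue the local trivializations into a global isomorphism. Since $\tilde{Y}\setminus\tilde{Y}^0=\pi^{-1}(Y\setminus Y^{reg})$ has codimension $\geqslant 2$ in $\tilde{Y}$, and $\tilde{Y}$ is Cohen-Macaulay by Lemma \ref{Lem:symp_sing_cover}, the required vanishing can be extracted from \cite[Exp.\ VIII, Cor.\ 2.3]{Grothendieck} by the same reasoning that underlies the coherence arguments in the proof of Lemma \ref{Lem:HC_extension_codim4}.
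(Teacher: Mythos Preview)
Your first stage is fine and matches the paper. The second stage has a genuine gap: the vanishing $H^1(\tilde{Y}^0,\Str_{\tilde{Y}^0})=0$ that you need for the inductive gluing is \emph{false} in general. The complement $\tilde{Y}\setminus\tilde{Y}^0$ has codimension $\geqslant 2$, not $\geqslant 3$; for a Cohen--Macaulay $\tilde{Y}$ and closed $Z$ of codimension $c$ one gets $H^i_Z(\tilde{Y},\Str)=0$ only for $i<c$, so $H^1(\tilde{Y}\setminus Z,\Str)\cong H^2_Z(\tilde{Y},\Str)$ need not vanish when $c=2$. The prototype is already $Y=\C^2/\Gamma$: then $\tilde{Y}^0=\C^2\setminus\{0\}$ and $H^1(\C^2\setminus\{0\},\Str)$ is infinite-dimensional. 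So your obstruction space is genuinely nonzero and the lifting-mod-$\hbar^k$ argument does not close.

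The paper's proof avoids this by first \emph{extending} $\B_\hbar$ from $\tilde{Y}^0$ to the affine variety $\tilde{Y}$ (Steps~1--3), and only then running a deformation argument (Step~4). The extension across the codimension~$2$ leaves $\mathcal{L}'_j$ of $\tilde{Y}$ is exactly where the special choice of parameter $\lambda^1$ enters: by construction $\lambda^1_j$ corresponds to $1\in\C\Gamma'_j$, so the slice quantization is the $\Gamma'_j$-invariants in a formal Weyl algebra, and one can pull back to $\Disk^{2n}\setminus\Disk^{2n-2}$ and use that $\Str$-coherent $D$-modules there extend to $\Disk^{2n}$. Your argument never uses the particular form of $\lambda^1$, which is a warning sign. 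Once on the affine $\tilde{Y}$, the paper shows the pushforward is free as a right module (graded deformation of $\C[\tilde{Y}]^{\oplus k}$) and then kills the left-action deformation using that every Poisson derivation of $\C[\tilde{Y}]$ is inner (\cite[Proposition~2.14]{orbit}); on the non-affine $\tilde{Y}^0$ neither step is available in the form you want.
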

\begin{proof}
%

The proof is in several steps.

{\it Step 1}.
Let $\B\in \HC(\tilde{\A}_{\lambda^1}^{0})$. Pick a good filtration on $\B$ and let $\B_\hbar$
stand for the $\hbar$-adic completion of $R_\hbar(\B)$. This is a coherent Poisson $\tilde{\A}^0_{\lambda^1h}$-bimodule
that is flat over $\C[[\hbar]]$ (here, as before, $h=\hbar^d$).
Let $\iota^0:\tilde{Y}^0\hookrightarrow \tilde{Y}$ denote the inclusion. We claim that
\begin{equation}\label{eq:extension_prop}
\iota^0_*(\B_h)\text{ is coherent},\quad\iota^0_*(\B_h)/h \iota^0_*(\B_h)\cong
\C[\tilde{Y}]^{\oplus k}
\end{equation}
for some $k$.

{\it Step 2}.
The quotient $\B_\hbar/h \B_\hbar$ is a graded coherent Poisson $\Str_{\tilde{Y}^0}$-module.
So, by Lemma \ref{Lem:Poisson_classif}, it is isomorphic to $ \mathcal{O}_{\tilde{Y}^0}^{\oplus k}$. Let $Y'_j$ denote the spectrum of the completed local ring
of a point in the symplectic leaf $\mathcal{L}'_j\subset \tilde{Y}$. Let us show that
\begin{equation}\label{eq:ext_properties1}
H^0(Y'^\times_j, \B_\hbar|_{Y'^\times_j})|_{Y'^\times_j}\cong \B_\hbar|_{Y'^\times_j},\quad
H^0(Y'^\times_j, \B_\hbar|_{Y'^\times_j})/h
H^0(Y'^\times_j, \B_\hbar|_{Y'^\times_j})
\cong \Str_{Y'_j}^{\oplus k}.\end{equation}

Let us write $\Disk^{2n}$ for $\operatorname{Spec}(\C[[x_1,y_1,\ldots,x_n,y_n]])$,
the formal symplectic polydisk.
Let $\pi_j: \Disk^{2n}\setminus \Disk^{2n-2}
\twoheadrightarrow Y_j'^\times$ be the quotient morphism for the action of
$\Gamma'_j$.
Note that, by the construction of $\lambda^1$, $\tilde{\A}_{\lambda^1h}|_{Y_j'^\times}$
is the $\Gamma'_j$-invariants in the formal Weyl algebra of $\Disk^{2n}$ restricted
to  $\Disk^{2n}\setminus \Disk^{2n-2}$.

So $\pi_j^*(\B_{\hbar}|_{Y'^\times_j})$
is a coherent Poisson bimodule over that restriction. Such a bimodule is nothing else
as an $\Str[[\hbar]]$-coherent $D_{\Disk^{2n}\setminus \Disk^{2n-2}}[[\hbar]]$-module
(flat over $\C[[\hbar]]$). Its (D-module) pushforward  to $\Disk^{2n}$ is an $\Str[[\hbar]]$-coherent
$D_{\Disk^{2n}}[[\hbar]]$-module. So it is $\Str_{\Disk^{2n}}[[\hbar]]$.\footnote{An argument  along these lines has been communicated to me by Shilin Yu.} (\ref{eq:ext_properties1}) follows.

{\it Step 3}.
Similarly, choose a smooth point $y$ in $\tilde{Y}^{reg}\setminus \tilde{Y}^0$. The complete analog
of (\ref{eq:ext_properties1}) holds at $y$. Now let $\iota'$ denote the inclusion $\tilde{Y}^0\hookrightarrow \tilde{Y}^{sreg}$. By Lemma \ref{Lem:HC_extension},
$\iota'_* \mathcal{B}_\hbar$ is a coherent Poisson bimodule. Moreover,
$\iota'_* \mathcal{B}_\hbar/h \iota'_* \mathcal{B}_\hbar$ is a vector bundle.
It follows that the analog of (\ref{eq:extension_prop}) holds for
$\iota'$ instead of $\iota^0$. Then we argue as in the proof of
Lemma \ref{Lem:HC_extension_codim4} to establish (\ref{eq:extension_prop}).

{\it Step 4}.
In particular, the right $\tilde{\A}_{\lambda^1h}$-module $H^0(\tilde{Y}^0,\B_\hbar)_{fin}$
is a graded deformation of $\C[\tilde{Y}]^{\oplus k}$ (where different summands can come with different
degrees).
Such a deformation is unique up to an  isomorphism of graded right $\tilde{\A}_{\lambda^1h}$-modules.
So $H^0(\tilde{Y}^0,\B_\hbar)$ is a free right $\tilde{\A}_{\lambda^1h}$-module.
To give a bimodule structure, i.e. a commuting $\tilde{\A}_{\lambda^1 h}$-action, on $H^0(\tilde{Y}^0,\B_\hbar)$
amounts to giving an algebra homomorphism $\varphi:\tilde{\A}_{\lambda^1h}
\rightarrow \operatorname{Mat}_k(\tilde{\A}_{\lambda^1h})$.
This homomorphism is the unit mod $h$. So it has the form $\operatorname{id}+h \delta+\ldots$,
where  $\delta \operatorname{ mod }h$ is a matrix $(\delta_{ij})_{i,j=1}^k$, where $\delta_{ij}$ is a Poisson derivation of
$\C[\tilde{Y}]$. By \cite[Proposition 2.14]{orbit}, $\delta_{ij}$ is inner, $\delta_{ij}=\{\underline{f}_{ij},\cdot\}$ for $\underline{f}_{ij}\in \C[\tilde{Y}]$.
We lift $\underline{f}_{ij}$ to $f_{ij}\in \tilde{\A}_{\lambda^1h}$ and form a matrix
$F:=(f_{ij})$. We have $\varphi=\operatorname{id}+[F,\cdot]+\ldots$, where $\ldots$
denotes elements starting with $h^2$. Then $\varphi\operatorname{Ad}(\exp(-F))-\operatorname{id}$
starts in degree at least 2 with respect to $h$. From here  and an easy induction
(on the smallest degree of $h$) we deduce that $\varphi=\operatorname{Ad}(\exp(\tilde{F}))$
for some $\tilde{F}\in \operatorname{Mat}_k(\tilde{\A}_{\lambda^1 h})$.
So $m\mapsto \exp(\tilde{A})m$ defines an isomorphism $\iota^0_{*}(\B_\hbar)\xrightarrow{\sim} \tilde{\A}_{\lambda^1 h}^{\oplus k}$ of  graded $\tilde{\A}_{\lambda^1 h}$-bimodules.  This finally implies the claim of the lemma.
\end{proof}

\begin{Cor}\label{Cor:HC_simply_conn}
The embedding $\C\Gamma\operatorname{-mod}\hookrightarrow \HC^\Gamma(\tilde{\A}_{\lambda^1}^{0})$
is an equivalence.
\end{Cor}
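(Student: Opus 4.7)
The plan is to upgrade the non-equivariant equivalence $\HC(\tilde{\A}_{\lambda^1}^{0})\cong \operatorname{Vect}$ of Lemma \ref{Lem:HC_simpl_conn1} to the desired $\Gamma$-equivariant equivalence by working out how the $\Gamma$-action is transported. Throughout, the embedding is $V \mapsto V\otimes\tilde{\A}_{\lambda^1}^{0}$, with $\Gamma$ acting diagonally via its action on $V$ and on $\tilde{\A}_{\lambda^1}^{0}$ (guaranteed by Lemma \ref{Lem:Theta_inv}).

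For fully-faithfulness, I would compute
\[
\Hom_{\HC^\Gamma(\tilde{\A}_{\lambda^1}^{0})}(V\otimes \tilde{\A}_{\lambda^1}^{0},\, W\otimes \tilde{\A}_{\lambda^1}^{0}) = \Hom_{\HC(\tilde{\A}_{\lambda^1}^{0})}(V\otimes\tilde{\A}_{\lambda^1}^{0},\, W\otimes\tilde{\A}_{\lambda^1}^{0})^\Gamma.
\]
By Lemma \ref{Lem:HC_simpl_conn1} the non-equivariant Hom space is just $\Hom_\C(V,W)$ (the generator $\tilde{\A}_{\lambda^1}^{0}$ corresponds to $\C\in\operatorname{Vect}$, and tensoring with a vector space is clearly respected by the equivalence), and taking $\Gamma$-invariants gives $\Hom_{\C\Gamma}(V,W)$, as required.

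For essential surjectivity, I would construct a quasi-inverse $G:\HC^\Gamma(\tilde{\A}_{\lambda^1}^{0})\to \C\Gamma\operatorname{-mod}$ by
\[
G(\B) := \Hom_{\HC(\tilde{\A}_{\lambda^1}^{0})}(\tilde{\A}_{\lambda^1}^{0},\B),
\]
which is a finite-dimensional vector space by Lemma \ref{Lem:HC_simpl_conn1}, and which inherits a natural $\Gamma$-action from the equivariant structure on $\B$ via $(\gamma\cdot f)(a) = \gamma\cdot f(\gamma^{-1}\cdot a)$ (well-defined because $\Gamma$ acts on $\tilde{\A}_{\lambda^1}^{0}$ by algebra automorphisms). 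The evaluation map $G(\B)\otimes \tilde{\A}_{\lambda^1}^{0}\to \B$, $(f,a)\mapsto f(a)$, is a bimodule map that is an isomorphism by Lemma \ref{Lem:HC_simpl_conn1}, and is $\Gamma$-equivariant by construction; this shows $\B$ is in the essential image.

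The main conceptual point—and the only place that is not a completely routine diagram chase—is verifying that the equivalence from Lemma \ref{Lem:HC_simpl_conn1} transports the $\Gamma$-action correctly, i.e., that under $\B\cong G(\B)\otimes\tilde{\A}_{\lambda^1}^{0}$ the $\Gamma$-action on the left corresponds to the diagonal action on the right. Since both actions commute with the $\tilde{\A}_{\lambda^1}^{0}$-bimodule structure and agree on the generating subspace $G(\B)\subset \B$ by construction, they must coincide. All other verifications (monoidality of the inverse, compatibility of units, etc.) are formal.
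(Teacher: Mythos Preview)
Your proof is correct and follows essentially the same approach as the paper: the paper defines the inverse functor as sending $\B$ to the centralizer of $\tilde{\A}_{\lambda^1}^0$ in $\B$, which is exactly your $G(\B)=\Hom_{\HC(\tilde{\A}_{\lambda^1}^{0})}(\tilde{\A}_{\lambda^1}^{0},\B)$ under the identification $f\mapsto f(1)$, and then invokes Lemma~\ref{Lem:HC_simpl_conn1} to conclude it is a two-sided inverse. Your version simply spells out the fully-faithfulness and the $\Gamma$-equivariance of the evaluation map in more detail than the paper does.
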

\begin{proof}
The left inverse is given by $\B$ going to the centralizer of $\tilde{\A}_{\lambda^1}$ in $\B$.
By Lemma \ref{Lem:HC_simpl_conn1} this is also the right inverse.
\end{proof}

So we get a  monoidal full embedding $\bullet_{\dagger}:
\overline{\HC}(\A_\lambda)\hookrightarrow \C\Gamma\operatorname{-mod}$ defined by $\B_{\dagger}\otimes \tilde{\A}_{\lambda^1}^0\cong \mathsf{Loc}_{\tilde{Y}^0}(\B)$.

\subsection{Properties of $\bullet_{\dagger}$}\label{SS_dagger_properties}
By the construction, one can also recover $\B_{\dagger}$ from a good filtration on $\B$.

\begin{Cor}\label{Cor:dagger_filt}
Let $\B\in \HC(\A_\lambda)$. Pick a good filtration on $\B$. Then the Poisson $\Str_{Y^{reg}}$-module $\gr\B|_{Y^{reg}}$
is obtained from the $\Gamma$-equivariant Poisson $\Str_{\tilde{Y}^0}$-module $\B_\dagger\otimes \Str_{\tilde{Y}^0}$ by equivariant descent.
\end{Cor}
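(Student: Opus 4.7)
The plan is to trace the construction of $\bullet_\dagger$ while keeping track of a chosen good filtration on $\B$, and then invoke the classification of graded coherent Poisson modules on $Y^{reg}$ from Lemma \ref{Lem:Poisson_classif}.

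First, I would note that a good filtration on $\B$ induces, by microlocalization, a good filtration on the $\A_\lambda^{reg}$-bimodule $\B^{reg}$, and hence determines $\gr\B|_{Y^{reg}}$ as a graded coherent Poisson $\Str_{Y^{reg}}$-module. By Lemma \ref{Lem:Poisson_classif}, this module is of the form $\Hom_\Gamma(V,\pi_*\Str_{\tilde{Y}^0})$ for a unique (up to grading shifts) finite-dimensional $\Gamma$-representation $V$; equivalently, $\pi^*(\gr\B|_{Y^{reg}})\cong V\otimes\Str_{\tilde{Y}^0}$ as a $\Gamma$-equivariant Poisson $\Str_{\tilde{Y}^0}$-module. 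The task is to identify $V$ with $\B_\dagger$.

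Next, I would pass to the Rees picture. The Rees bimodule $R_\hbar(\B^{reg})$, $\hbar$-adically completed, is a $\C[[\hbar]]$-flat graded Poisson $R_\hbar(\A_\lambda^{reg})$-bimodule $\B_\hbar^{reg}$ with $\B_\hbar^{reg}/\hbar\B_\hbar^{reg}\cong \gr\B|_{Y^{reg}}$. Apply the regluing equivalence $\HC(\A_\lambda^{reg})\xrightarrow{\sim}\HC((\tilde{\A}^0_{\lambda^1})^\Gamma)$ provided by Proposition \ref{Prop:derivation_inner}; by construction (cf.\ the discussion before Lemma \ref{Lem:Poisson_classif} and in Section \ref{SS_loc_functor}), this equivalence is the identity on associated graded. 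Then apply the étale pullback along $\pi:\tilde{Y}^0\to Y^{reg}$, which by Lemma \ref{Lem:Poisson_bim_etale_pullback} on the level of $\hbar=0$ specializations is just pullback of Poisson $\Str$-modules. The resulting $\Gamma$-equivariant graded Poisson $R_\hbar(\tilde{\A}^0_{\lambda^1})$-bimodule $\tilde{\B}_\hbar$ thus satisfies
\[
\tilde{\B}_\hbar/\hbar\tilde{\B}_\hbar\;\cong\;\pi^*(\gr\B|_{Y^{reg}})\;\cong\; V\otimes\Str_{\tilde{Y}^0}
\]
as $\Gamma$-equivariant Poisson $\Str_{\tilde{Y}^0}$-modules.

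Finally, I would invoke the $\Gamma$-equivariant version of the argument of Lemma \ref{Lem:HC_simpl_conn1}. That argument produced, for any graded Poisson $\tilde{\A}^0_{\lambda^1 h}$-bimodule, a graded bimodule isomorphism with $\tilde{\A}^0_{\lambda^1 h}{}^{\oplus k}$; the same proof works $\Gamma$-equivariantly (since the choices of $\underline{f}_{ij}$, and their lifts $f_{ij}$ together with the matrix $F$, can be made $\Gamma$-equivariantly, the obstructions being group cohomology that vanishes over $\C$). This yields a $\Gamma$-equivariant graded bimodule isomorphism $\tilde{\B}_\hbar\cong \B_\dagger\otimes R_\hbar(\tilde{\A}^0_{\lambda^1})^\wedge$; reducing modulo $\hbar$ gives $V\cong \B_\dagger$ as $\Gamma$-modules. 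Combining with Step~1 via equivariant descent (Lemma \ref{Lem:Poisson_classif}(2)) yields the corollary.

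The only real point requiring care is the $\Gamma$-equivariance in the final step: one must ensure that the filtered/graded trivialization of $\tilde{\B}_\hbar$ produced in Lemma \ref{Lem:HC_simpl_conn1} can be made $\Gamma$-equivariant. Since $\Gamma$ is finite and we work over $\C$, this is a standard averaging argument applied to the lifts of $\underline{f}_{ij}$ and $F$, but it is the one spot where one has to be careful.
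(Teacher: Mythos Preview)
Your argument is correct and is a faithful expansion of what the paper means by ``by the construction'': you trace the composite $\mathsf{Loc}_{\tilde{Y}^0}$ on the Rees level, using that regluing is the identity on associated gradeds and that \'etale pullback reduces to ordinary pullback modulo $\hbar$, exactly as intended.

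The one place where you work harder than necessary is the last step. You do not need to rerun Lemma~\ref{Lem:HC_simpl_conn1} $\Gamma$-equivariantly via averaging. Instead, take the \emph{graded} bimodule isomorphism $\tilde{\B}_\hbar\cong \tilde{\A}_{\lambda^1 h}^{\oplus k}$ already produced in Step~4 of that lemma (non-equivariantly), and transport the $\Gamma$-action along it. A map $\gamma_\B$ on $\tilde{\A}_{\lambda^1 h}^{\oplus k}$ satisfying $\gamma_\B(amb)=\gamma(a)\gamma_\B(m)\gamma(b)$ differs from $\gamma^{\oplus k}$ by a graded $\tilde{\A}_{\lambda^1 h}$-bimodule automorphism, hence by a matrix $M_\gamma\in\operatorname{GL}_k(\C)$; one checks immediately that $\gamma\mapsto M_\gamma$ is a representation. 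Thus the $\Gamma$-equivariant structure is forced to be $M_\gamma\otimes\gamma$, the centralizer $\C^k$ carries the $\Gamma$-action $M$, and by Corollary~\ref{Cor:HC_simply_conn} this is $\B_\dagger$. Reducing modulo $\hbar$ gives $\tilde{\B}_\hbar/\hbar\cong \B_\dagger\otimes\Str_{\tilde{Y}^0}$ $\Gamma$-equivariantly, which combined with your Step~3 yields $V\cong\B_\dagger$ with no averaging needed.

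Your averaging route would also work, but it requires first producing a $\Gamma$-equivariant \emph{right}-module trivialization $\tilde{\B}_\hbar\cong W\otimes\tilde{\A}_{\lambda^1 h}$ before one can even speak of making $F$ invariant; this is an extra step you did not mention, and once it is done the remaining argument is essentially the transport argument above anyway.
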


In particular, this corollary shows that $\bullet_{\dagger}$ is independent of the regluing
elements $f_{ij}$ from Proposition \ref{Prop:derivation_inner}.

The next property of $\bullet_{\dagger}$ we will need is the existence of a right adjoint
functor. This functor will be denoted by $\bullet^{\dagger}$. Namely, let $\psi$ denote the equivalence
$\HC(\A_{\lambda}^{reg})\xrightarrow{\sim}\C\Gamma\operatorname{-mod}$
established in Sections \ref{SS_loc_functor}, \ref{SS_HC_equiv}. Then $V^\dagger:=H^0(Y^{reg}, \psi^{-1}(V))$.

The following properties are proved similarly to the analogous properties established
in \cite[Sections 3.4,4]{HC} and  \cite[Section 3.7]{sraco}.

\begin{Lem}\label{Lem:dagger_properties}
The functor $\bullet_{\dagger}$ has the following properties:
\begin{enumerate}
\item The kernel and the cokernel of the adjunction unit homomorphism $\B\rightarrow (\B_\dagger)^{\dagger}$
have associated varieties inside $Y^{sing}$.
\item The image of $\bullet_{\dagger}$ is closed under taking direct summands.
\end{enumerate}
\end{Lem}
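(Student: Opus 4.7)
For part (1) the first task is to verify that the right adjoint $\bullet^{\dagger}$ genuinely lands in $\HC(\A_\lambda)$, so that the adjunction unit $\eta\colon \B\to (\B_\dagger)^\dagger$ is a morphism in $\HC(\A_\lambda)$ whose kernel and cokernel make sense there. Given $V\in\C\Gamma\operatorname{-mod}$, I would lift $\psi^{-1}(V)$ to a graded Poisson $\D_\hbar^{reg}$-bimodule $\B'_\hbar$ that is flat over $\C[[\hbar]]$, and then apply Proposition \ref{Prop:HC_extension}. Condition (2) of that proposition needs to be checked on each formal slice $Y_i^\times$; but by construction $\lambda^1_i$ corresponds to $1\in\C\Gamma'_i$, so after pulling back along the étale cover $\Disk^{2n}\setminus\Disk^{2n-2}\twoheadrightarrow Y_i^\times$ the bimodule becomes an $\Str[[\hbar]]$-coherent $D$-module over the punctured polydisk, which extends trivially across the codimension-two locus exactly as in Step~2 of the proof of Lemma \ref{Lem:HC_simpl_conn1}. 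This gives the required extension to $Y^{sreg}$, and together with Lemma \ref{Lem:HC_extension_codim4} one extends all the way to $Y$, producing a HC bimodule whose specialization at $\hbar=1$ is $V^\dagger$.

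With $\bullet^\dagger$ understood, the map $\eta$ exists by adjunction. By the very construction of $\bullet_\dagger$ and $\bullet^\dagger$, after microlocalizing to $Y^{reg}$ and passing through the equivalence $\psi$, both $\B$ and $(\B_\dagger)^\dagger$ correspond to the same object $\B_\dagger\in\C\Gamma\operatorname{-mod}$, and the triangle identities of the adjunction force $\eta|_{Y^{reg}}$ to become the identity. Hence $\eta$ is an isomorphism on $Y^{reg}$, so both $\ker\eta$ and $\operatorname{coker}\eta$ are HC bimodules supported on $Y\setminus Y^{reg}=Y^{sing}$, proving (1).

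For part (2), the input is that $\bullet_\dagger$ is fully faithful (this is built into its construction in Sections \ref{SS_loc_functor}--\ref{SS_HC_equiv}). Suppose $\B_\dagger=V_1\oplus V_2$ and let $e$ be the projector onto $V_1$. By full faithfulness, $e$ lifts uniquely to some $\tilde e\in\operatorname{End}_{\overline{\HC}(\A_\lambda)}(\B)$, and since $\bullet_\dagger$ preserves composition we get $\tilde e^2=\tilde e$. The category $\overline{\HC}(\A_\lambda)$ is a Serre quotient of the abelian category $\HC(\A_\lambda)$, hence itself abelian, so idempotents split: $\B=\B_1\oplus \B_2$ with $\tilde e$ the projection onto $\B_1$. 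Applying the additive functor $\bullet_\dagger$ yields $(\B_1)_\dagger=V_1$ and $(\B_2)_\dagger=V_2$, as required.

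The only genuinely nontrivial step in all of this is showing $(\B_\dagger)^\dagger\in\HC(\A_\lambda)$ — i.e.\ the finite generation of $H^0(Y^{reg},\psi^{-1}(\B_\dagger))$ as an $\A_\lambda$-bimodule. This is where one truly exploits the particular choice $\lambda^1_i=1$, because the extension criterion of Proposition \ref{Prop:HC_extension} may fail for more general slice parameters; once this finiteness is in hand, both (1) and (2) follow formally from the two basic features of the enhanced restriction functor, namely that it is an isomorphism after microlocalizing to $Y^{reg}$ and that it is fully faithful.
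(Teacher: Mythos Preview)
Your argument for part (2) is correct. The gap is in part (1), specifically in your first and last paragraphs, where you try to show that $V^{\dagger}$ lies in $\HC(\A_\lambda)$ for \emph{every} $V\in\C\Gamma\operatorname{-mod}$ by appealing to the slice parameters $\lambda^1_i=1$. This conflates the slices of $Y$ with the slices of the cover $\tilde{Y}$. The formal slices $Y_i^\times$ in Proposition~\ref{Prop:HC_extension} are slices through the codimension-2 leaves of $Y$; their Kleinian groups are $\Gamma_i$ and the relevant slice parameter is $\lambda_i$, which is arbitrary. The cover $\Disk^{2n}\setminus\Disk^{2n-2}\twoheadrightarrow Y_i^\times$ has Galois group $\Gamma_i$, and the pulled-back quantization is \emph{not} the standard Weyl algebra unless $c_i=1$; so the ``extends trivially as a $D$-module'' step from Lemma~\ref{Lem:HC_simpl_conn1} does not apply. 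Worse, if your argument went through it would show that condition (2) of Proposition~\ref{Prop:HC_extension} holds for every $V$, hence (via Proposition~\ref{Prop:extension_equiv}) that every $V$ lies in the image of $\bullet_\dagger$ --- contradicting the main theorem of the paper.

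The repair is that you do not need $V^{\dagger}$ to be HC for all $V$: the lemma only concerns $(\B_\dagger)^{\dagger}$ for $\B\in\HC(\A_\lambda)$. Since $\psi^{-1}(\B_\dagger)\cong \B|_{Y^{reg}}$, the unit map is literally the restriction $\B\to H^0(Y^{reg},\B|_{Y^{reg}})$. Now $\B$ itself is a coherent extension of $\B|_{Y^{reg}}$ to all of $Y$, so condition (2) of Proposition~\ref{Prop:HC_extension} holds tautologically for $\B_\hbar|_{Y^{reg}}$ (the restriction to $Y_i^\times$ is already the localization of the coherent module $\B_\hbar|_{Y_i}$), and the extension machinery of Section~\ref{S_extension} gives coherence of the global sections. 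Alternatively and more elementarily, choose a good filtration on $\B$; then $\gr\B|_{Y^{reg}}$ is a vector bundle on $Y^{reg}$ (being a Poisson module on a smooth symplectic variety), so $H^0(Y^{reg},\gr\B|_{Y^{reg}})$ is finitely generated over $\C[Y]$, and the induced filtration on $H^0(Y^{reg},\B|_{Y^{reg}})$ has associated graded embedding into this, hence is good. Either way, once $(\B_\dagger)^{\dagger}$ is HC, your paragraph~2 correctly finishes (1): the unit is an isomorphism over $Y^{reg}$, so kernel and cokernel are supported on $Y^{sing}$.

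The paper itself does not give a detailed argument, only citing the analogous constructions in \cite{HC} and \cite{sraco}; so once the gap above is closed your proof is a legitimate elaboration of the same strategy.
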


Property (2) and the claim that $\bullet_{\dagger}$ is a monoidal functor imply that $\bullet_{\dagger}$ identifies $\overline{\HC}(\A_\lambda)$
with $\C(\Gamma/\Gamma_\lambda)\operatorname{-mod}$ for a uniquely determined
normal subgroup $\Gamma_\lambda\subset \Gamma$. This subgroup is recovered as the intersection
of the annihilators of the representations in  $\operatorname{im}\bullet_{\dagger}$.

\subsection{Consequences}\label{SS_dagger_conseq}
First, we want to give an equivalent formulation of Proposition \ref{Prop:HC_extension}
in terms of the functor $\bullet_{\dagger}$. Let us write $\underline{\A}^i_{\lambda_i}$
for the quantization of $\C^2/\Gamma_i$ corresponding to the parameter $\lambda_i$.
Recall that, for each $i$, we have natural homomorphism $\Gamma_i\rightarrow \Gamma$,
denote it by $\varphi_i$.

\begin{Prop}\label{Prop:extension_equiv}
Let $V\in \operatorname{Rep}\Gamma$. Then the following two claims are equivalent
\begin{itemize}
\item[(i)]
$V$
lies in the image of $\overline{\HC}(\A_\lambda)$
under $\bullet_{\dagger}$.
\item[(ii)] For all $i=1,\ldots,k$, the pullback
$\varphi_i^*(V)$ lies in the image of the functor
$\bullet_{\dagger,\Gamma_i}:\overline{\HC}(\underline{\A}^i_{\lambda_i})
\hookrightarrow \C\Gamma_i\operatorname{-mod}$.
\end{itemize}
\end{Prop}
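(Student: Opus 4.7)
My plan is to reinterpret both conditions as extendability problems for Poisson bimodules — condition (i) via Proposition~\ref{Prop:HC_extension} applied to $Y$, and condition (ii) via the analogous extendability characterization of the image of $\bullet_{\dagger,\Gamma_i}$ on the Kleinian slice — and then match the two sides using the slice presentation of $\A_{\lambda h}|_{Y_i^\times}$ from Lemma~\ref{Lem:compl_iso}.

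The first step is to establish a slice-compatibility for the localization functor: for every $\B\in\overline{\HC}(\A_\lambda)$, there should be a natural isomorphism $\varphi_i^*(\B_\dagger)\cong(\B_{\dagger,i})_{\dagger,\Gamma_i}$ of $\C\Gamma_i$-modules. Geometrically, pulling the $\Gamma$-cover $\tilde Y^0\to Y^{reg}$ back along the slice inclusion $\Sigma_i^\times\hookrightarrow Y^{reg}$ gives an \'{e}tale cover whose connected components are (formal analogs of) $\Disk^2\setminus\{0\}$, and whose $\Gamma$-action is controlled by $\varphi_i$. Combining Lemmas~\ref{Lem:compl_iso}, \ref{Lem:compl_iso1}, and~\ref{Lem:Poisson_bim_etale_pullback} with the construction of $\mathsf{Loc}_{\tilde Y^0}$ from Section~\ref{SS_loc_functor}, the pullback of the $\Gamma$-equivariant bimodule $\B_\dagger\otimes\tilde\A^0_{\lambda^1}$ becomes, after discarding the formal Weyl algebra factor coming from the transverse directions, exactly the $\Gamma_i$-equivariant bimodule that defines $(\B_{\dagger,i})_{\dagger,\Gamma_i}$. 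This compatibility yields (i)$\Rightarrow$(ii) at once.

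For (ii)$\Rightarrow$(i), given $V$ satisfying (ii), I would construct the Poisson $\A^{reg}_{\lambda h}$-bimodule $\B^V_\hbar$ by running the construction of $\bullet_\dagger$ backwards: start from the $\Gamma$-equivariant free bimodule $V\otimes\tilde\A^0_{\lambda^1 h}$, take $\Gamma$-invariants to descend to $(\tilde\A^0_{\lambda^1 h})^\Gamma$, and reglue via Proposition~\ref{Prop:derivation_inner} to land in Poisson bimodules over $\A^{reg}_{\lambda h}$. Through the full embedding $\overline{\HC}(\A_\lambda)\hookrightarrow\HC(\A^{reg}_\lambda)$ and the correspondence between HC bimodules and graded Poisson bimodules from Section~\ref{SS_Poisson_HC}, it suffices to show that $\B^V_\hbar$ extends to a $\C[[\hbar]]$-flat Poisson $\A_\hbar$-bimodule; by Proposition~\ref{Prop:HC_extension} this reduces to verifying condition~(2) at each $Y_i^\times$. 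The slice-compatibility from the previous paragraph identifies $\B^V_\hbar|_{Y_i^\times}$, via the Lemma~\ref{Lem:compl_iso} decomposition, with the tensor product of the formal Weyl bimodule and the analogous object built from $\varphi_i^*(V)$ on $\Sigma_i^\times$. The latter extends across the origin of $\Sigma_i$ precisely because $\varphi_i^*(V)$ lies in the image of $\bullet_{\dagger,\Gamma_i}$, and the Weyl factor extends trivially, so condition~(2) of Proposition~\ref{Prop:HC_extension} holds. Thus $\B^V_\hbar$ extends to some $\B\in\overline{\HC}(\A_\lambda)$, and by the (i)$\Rightarrow$(ii) direction $\B_\dagger=V$.

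The main obstacle is the slice-compatibility established in the first step: one must simultaneously track the regluing cocycle of Proposition~\ref{Prop:derivation_inner}, the transverse Weyl algebra factor from Lemma~\ref{Lem:compl_iso}, the shift of Euler derivations recorded in Lemma~\ref{Lem:compl_iso1}, and the matching of the distinguished parameter $\lambda^1$ on $\tilde Y$ with the parameters $\lambda^1_i$ on the slices (both arising from $1\in\C\Gamma'_i$, as in the proof of Lemma~\ref{Lem:Theta_inv}). Once these identifications are in place, Lemma~\ref{Lem:Poisson_bim_etale_pullback} together with the exactness and $\C[[\hbar]]$-linearity of restriction to $Y_i^\times$ assemble them into the desired natural isomorphism without further difficulty.
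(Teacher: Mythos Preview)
Your proposal is correct and follows essentially the same two-step architecture as the paper: establish the slice compatibility $(\B_{\dagger,i})_{\dagger,\Gamma_i}\cong\varphi_i^*(\B_\dagger)$ to get (i)$\Rightarrow$(ii), then invoke Proposition~\ref{Prop:HC_extension} for (ii)$\Rightarrow$(i).

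The one noteworthy difference is in how (i)$\Rightarrow$(ii) is argued. You propose to track the full quantum construction of $\mathsf{Loc}_{\tilde Y^0}$ through Lemmas~\ref{Lem:compl_iso}, \ref{Lem:compl_iso1}, \ref{Lem:Poisson_bim_etale_pullback}, keeping the regluing cocycle, the Weyl factor, the Euler shift, and the matching of $\lambda^1$ under control simultaneously. The paper instead short-circuits all of this via Corollary~\ref{Cor:dagger_filt}: since $\B_\dagger$ is already determined by the Poisson module $\gr\B|_{Y^{reg}}$ (and likewise on the slice), the compatibility reduces to comparing two restrictions of a coherent Poisson module to $\Sigma_i^\times$, which is a purely classical computation with no cocycles or derivations to chase. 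Your route would work, but the associated-graded shortcut is substantially cleaner and is worth adopting. For (ii)$\Rightarrow$(i) the paper spells out the passage from $\B'_\hbar|_{Y_i^\times}$ back to a graded Poisson bimodule over $R_\hbar(\Weyl\otimes\underline{\A}^i_{\lambda_i})$ via the Euler derivation and $\C^\times$-finite parts; this is the honest content behind your sentence ``the slice-compatibility identifies $\B^V_\hbar|_{Y_i^\times}$\ldots'', and you should expect to carry it out explicitly rather than invoke the (i)$\Rightarrow$(ii) compatibility directly, since that was stated only for $\B$ already in $\overline{\HC}(\A_\lambda)$.
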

\begin{proof}
We start with (i)$\Rightarrow$(ii). Let $\B\in \HC(\A_{\lambda})$ be such that
$\B_{\dagger}=V$. Recall that we have a functor $\bullet_{\dagger,i}:\HC(\A_\lambda)
\rightarrow \HC(\underline{\A}^i_{\lambda_i})$. We claim that
\begin{equation}\label{eq:dagger_transit}\left( \B_{\dagger,i}\right)_{\dagger,\Gamma_i}=
\varphi_i^*(V).\end{equation} This will imply (ii).

Let us prove (\ref{eq:dagger_transit}). Pick a good filtration on $\B$. This induces a
good filtration on $\B_{\dagger,i}$. Recall that $\Sigma_i$ is the formal slice to
$\mathcal{L}_i$, so that $\Sigma_i\cong \Disk^2/\Gamma_i$. The restriction of $\gr \B_{\dagger,i}$ to
$(\Disk^2/\Gamma_i)^\times$ coincides with the restriction of $\gr\B$ to $\Sigma_i^\times$
by the construction of $\bullet_{\dagger,i}$. On the other hand,  by Corollary
\ref{Cor:dagger_filt}, the restriction of
$\gr\B$ to $Y^{reg}$ is $\pi_*(V\otimes \Str_{\tilde{Y}_0})^\Gamma$ and, similarly,
the restriction of $\gr\B_{\dagger,i}$ to $(\C^2/\Gamma_i)^\times$ is
$\pi_{i*}(\left( \B_{\dagger,i}\right)_{\dagger,\Gamma_i}\otimes \Str_{\C^{2\times}})^{\Gamma_i}$.
But the restrictions of   $\gr\B|_{Y^{reg}}$ and $\gr\B_{\dagger,i}|_{(\C^2/\Gamma_i)^\times}$
to $(\Disk^2/\Gamma_i)^\times$ coincide. The homomorphism $\varphi_i:\Gamma_i\rightarrow \Gamma$
is the natural homomorphism $\pi_1^{alg}((\Disk^2/\Gamma_i)^\times)
\rightarrow \pi_1^{alg}(Y^{reg})$.  (\ref{eq:dagger_transit}) follows.

Now we prove (ii)$\Rightarrow$(i). For this we use
Proposition \ref{Prop:HC_extension}. Namely, consider $\B'\in \HC(\A_\lambda^{reg})$
corresponding to $V$ under the equivalence $\HC(\A_{\lambda}^{reg})
\cong \C\Gamma\operatorname{-mod}$. This HC bimodule comes with a natural good
filtration, let $\B'_\hbar$ be the $\hbar$-adic completion of the Rees bimodule of
$\B'$. Now consider the restriction $\B'_\hbar|_{Y_i^\times}$.

We claim that for all $i$ we have
\begin{itemize}
\item[(*)]
the restriction of $H^0(Y_i^\times, \B'_\hbar|_{Y_i^\times})$
to $Y_i^\times$ is $\B'_\hbar|_{Y_i^\times}$.
\end{itemize}
Once this is known, the proof of $V\in \operatorname{im}(\bullet_{\dagger})$
follows from Proposition  \ref{Prop:HC_extension}.

In the proof of (*) our first goal is
to extend $\B'_\hbar|_{Y_i^\times}$ to $(\C^{2n}\setminus \C^{2n-2})/\Gamma_i$.
Thanks to Lemma  \ref{Lem:compl_iso1}, we can equip $\B'_{\hbar}$ with an Euler derivation.
Consider the $R_\hbar^\wedge(\A_\lambda)|_{Y_i}$-module $H^0(Y_i^\times, \B'_\hbar|_{Y_i^\times}/ \hbar^m\B'_\hbar|_{Y_i^\times})$. It inherits the Euler derivation
from $\B'_\hbar|_{Y_i^\times}$, denote it also by $\mathsf{eu}$. Choose a $\Z$-equivariant map $\alpha:\C\rightarrow \Z$.
Define a $\C^\times$-action on the $\mathsf{eu}$-finite part of
$H^0(Y_i^\times, \B'_\hbar|_{Y_i^\times}/ \hbar^m\B'_\hbar|_{Y_i^\times})$
by declaring that $t\in \C^\times$ acts by $t^{\alpha(z)}$ on the generalized eigenspace for
$\operatorname{eu}$ with  eigenvalue $z$. This extends to a pro-rational $\C^\times$-action on
$H^0(Y_i^\times, \B'_\hbar|_{Y_i^\times}/ \hbar^m\B'_\hbar|_{Y_i^\times})$.
Consider the $\C^\times$-finite part $H^0(Y_i^\times, \B'_\hbar|_{Y_i^\times}/ \hbar^m\B'_\hbar|_{Y_i^\times})_{fin}$. This is a module over
$R_\hbar(\mathbb{A}\otimes\underline{\A}^i_{\lambda_i})/(\hbar^m)$. The pullback to $Y_i^\times$ of
$$H^0(Y_i^\times, \B'_\hbar|_{Y_i^\times}/ \hbar^m\B'_\hbar|_{Y_i^\times})/
\hbar^{m-1}H^0(Y_i^\times, \B'_\hbar|_{Y_i^\times}/ \hbar^m\B'_\hbar|_{Y_i^\times})
\rightarrow H^0(Y_i^\times, \B'_\hbar|_{Y_i^\times}/ \hbar^{m-1}\B'_\hbar|_{Y_i^\times})$$
is an isomorphism. It follows that the kernel and the cokernel of this homomorphism
are supported on $Y_i\cap \mathcal{L}_i$.
Therefore  after passing to $\C^\times$-finite part we have that the kernel and the cokernel
are still supported on the closed leaf.
Let $(\B'_\hbar|_{Y_i^\times}/\hbar^m \B'_\hbar|_{Y_i^\times})_{fin}$ be the microlocalization of  $H^0(Y_i^\times, \B'_\hbar|_{Y_i^\times}/ \hbar^m\B'_\hbar|_{Y_i^\times})_{fin}$ to $(\C^{2n}\setminus \C^{2n-2})/\Gamma_i$. We set $\B'_{\hbar,i,ext}:=\varprojlim (\B'_\hbar|_{Y_i^\times}/\hbar^m \B'_\hbar|_{Y_i^\times})_{fin}$. This is
a graded coherent Poisson module over the microlocalization of the $\hbar$-adic completion of
$R_\hbar(\mathbb{A}\otimes\underline{\A}^i_{\lambda_i})$. Its pullback to
$Y_i^\times$ is $\B'_\hbar|_{Y_i^\times}$. So $\B'_{\hbar,i,ext}$ is an extension we want.

Let $\B'_{\hbar,i,fin}$ denote the $\C^\times$-finite part of $\B'_{\hbar,i,ext}$.
Set $\B'_{i}:=\B'_{\hbar,i,fin}/(\hbar-1)\B'_{\hbar,i,fin}$. This is an
object of $$\HC([\mathbb{A}\otimes\underline{\A}^i_{\lambda_i}]|_{(\C^{2n}\setminus \C^{2n-2})/\Gamma_i}).$$
By the construction, its image under the equivalence of the latter category with
$\operatorname{Rep}(\Gamma_i)$ is $\varphi_i^*(V)$. It follows that
$\B'_{i}$ extends to an object of $\HC(\mathbb{A}\otimes\underline{\A}^i_{\lambda_i})$.
This implies (*) and hence finishes the proof.
\end{proof}

Now we proceed to the second part of this section, where we compare the categories
$\overline{\HC}(\hat{\A})$ and $\overline{\HC}(\A)$, where $\hat{\A}$
is a filtered quantization of $\hat{Y}$ and $\A:=\hat{\A}^{\Gamma/\Gamma'}$.
Here $\hat{Y}:=\operatorname{Spec}(\C[\hat{Y}^0])$ for a finite etale
cover $\hat{Y}^0$ of $Y^{reg}$ with Galois group $\Gamma/\Gamma'$.

\begin{Prop}\label{Prop:HC_inv}
Let $V$ be a $\C\Gamma$-module and let $V'$ be its restriction to $\Gamma'$.
In the notation above, the following two conditions are equivalent.
\begin{enumerate}
\item $V$ lies in the image of $\overline{\HC}(\A)$ under $\bullet_{\dagger}$.
\item $V'$ lies in the image of $\overline{\HC}(\hat{\A})$ under $\bullet_{\dagger'}$
(this is our notation for the $\bullet_{\dagger}$ for $\hat{Y}$).
\end{enumerate}
\end{Prop}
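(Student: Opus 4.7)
My plan is to prove Proposition~\ref{Prop:HC_inv} in two stages: first reduce to its Kleinian case via Proposition~\ref{Prop:extension_equiv}, then handle the Kleinian case by an explicit tensor-product / invariants construction that intertwines $\bullet_{\dagger}$ and $\bullet_{\dagger'}$ with the restriction functor $\C\Gamma\operatorname{-mod}\to\C\Gamma'\operatorname{-mod}$.

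For the reduction, one identifies the codimension $2$ symplectic leaves of $\hat{Y}$ and their slice groups in terms of those of $Y$. Since $\hat{Y}^0\to Y^{reg}$ is an \'{e}tale Galois cover with group $\Gamma/\Gamma'$ and $\Gamma'\triangleleft\Gamma$, the leaves of $\hat{Y}$ above a given $\mathcal{L}_i$ correspond to $\Gamma_i$-orbits on $\Gamma/\Gamma'$, and each has the same slice group $\Gamma_i':=\varphi_i^{-1}(\Gamma')\triangleleft\Gamma_i$, with the natural map $\Gamma_i'\to\Gamma'$ being the restriction of $\varphi_i$. By Corollary~\ref{Cor:CBH_invar} applied to each slice, the Kleinian quantization of $\C^2/\Gamma_i'$ relevant for $\hat{\A}$ is the $\Gamma_i/\Gamma_i'$-invariants in the one of $\C^2/\Gamma_i$ relevant for $\A$. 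Applying Proposition~\ref{Prop:extension_equiv} to both $\A$ and $\hat{\A}$, condition~(1) translates to ``$\varphi_i^*(V)\in\operatorname{im}\bullet_{\dagger,\Gamma_i}$ for all $i$'' and condition~(2) to ``$(\varphi_i^*(V))|_{\Gamma_i'}\in\operatorname{im}\bullet_{\dagger',\Gamma_i'}$ for all $i$''. Thus it suffices to prove the proposition in the Kleinian case, applied to each slice.

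Now assume $Y=\C^2/\Gamma$ and $\hat{Y}=\C^2/\Gamma'$. For $(1)\Rightarrow(2)$, given $\B\in\HC(\A)$ with $\B_{\dagger}=V$, I form $\hat{\B}:=\hat{\A}\otimes_{\A}\B\otimes_{\A}\hat{\A}$ with the tensor-product filtration. Since $\C[\hat{Y}]$ is a finite module over $\C[Y]$, $\hat{\A}$ is a finitely generated $\A$-module and $\hat{\B}\in\HC(\hat{\A})$. To compute $\hat{\B}_{\dagger'}$, pass to $\mathsf{Loc}_{\tilde{Y}^0}$: the microlocalizations of $\A$ and $\hat{\A}$ to $\tilde{Y}^0$ both yield the same quantization $\tilde{\A}^0_{\lambda^1}$, and the tensor product over $\A$ corresponds on this level to forgetting the $\Gamma/\Gamma'$-equivariance, so $\hat{\B}_{\dagger'}=V|_{\Gamma'}$. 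For $(2)\Rightarrow(1)$, given $\hat{\B}\in\HC(\hat{\A})$ with $\hat{\B}_{\dagger'}=V|_{\Gamma'}$ and $V$ a $\C\Gamma$-module extending $V|_{\Gamma'}$, I would equip $\hat{\B}$ with a $\Gamma/\Gamma'$-equivariant structure compatible with the given $\Gamma/\Gamma'$-action on $\hat{\A}$ (Lemma~\ref{Lem:Theta_inv}) and then set $\B:=\hat{\B}^{\Gamma/\Gamma'}$. The equivariant enhancement is constructed at the level of $\mathsf{Loc}(\hat{\B})\cong V|_{\Gamma'}\otimes\tilde{\A}^0_{\lambda^1}$: the full $\Gamma$-action on $V$ together with the $\Gamma/\Gamma'$-action on $\tilde{\A}^0_{\lambda^1}$ upgrades the $\Gamma'$-equivariance to a $\Gamma$-equivariance, which by fullness of the localization embedding for $\hat{\A}$ descends to a $\Gamma/\Gamma'$-action on $\hat{\B}$; one then checks directly that $\B_{\dagger}=V$.

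The main obstacle I anticipate is the converse direction, specifically establishing that the $\Gamma/\Gamma'$-symmetry enhancement of $V$ genuinely lifts to an algebra-compatible $\Gamma/\Gamma'$-action on $\hat{\B}$ itself. This relies on the fullness of the enhanced restriction embedding (Corollary~\ref{Cor:HC_simply_conn}), on the uniqueness of extensions of Poisson bimodules from $\hat{Y}^0$ to $\hat{Y}$ guaranteed by Proposition~\ref{Prop:HC_extension}, and on carefully threading the equivariant data through the regluing of Proposition~\ref{Prop:derivation_inner} used in constructing $\mathsf{Loc}_{\tilde{Y}^0}$.
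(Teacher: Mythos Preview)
Your proposal has a genuine gap and an unnecessary detour, and it differs substantially from the paper's argument.

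\medskip

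\textbf{The reduction step is superfluous.} Nothing in your ``Kleinian'' argument for $(1)\Leftrightarrow(2)$ actually uses that $Y=\C^2/\Gamma$; the localization and invariant constructions you describe make sense for general $Y$. Routing through Proposition~\ref{Prop:extension_equiv} only forces you to analyze the codimension~$2$ slice structure of $\hat{Y}$, which is extra work with no payoff.

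\medskip

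\textbf{The construction for $(1)\Rightarrow(2)$ is incorrect.} The object $\hat{\B}:=\hat{\A}\otimes_{\A}\B\otimes_{\A}\hat{\A}$ is \emph{not} a HC $\hat{\A}$-bimodule. Already for $\B=\A$ one gets $\hat{\A}\otimes_{\A}\hat{\A}$; passing to associated graded over $\hat{Y}^0$ yields $\C[\hat{Y}^0]\otimes_{\C[Y^{reg}]}\C[\hat{Y}^0]\cong\bigoplus_{\gamma\in\Gamma/\Gamma'}\C[\hat{Y}^0]$, on which the left and right $\C[\hat{Y}^0]$-actions differ by the Galois twist. Thus the defining HC condition (that $[\hat{\A}_{\leq i},-]$ drops filtration, equivalently that left and right actions agree on $\gr$) fails. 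So your candidate $\hat{\B}$ does not live in $\HC(\hat{\A})$, and the computation $\hat{\B}_{\dagger'}=V|_{\Gamma'}$ cannot proceed.

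\medskip

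\textbf{How the paper handles it.} The paper works entirely through the right adjoints $\bullet^{\dagger}$ and $\bullet^{\dagger'}$. One chooses the regluing data for $\hat{\A}|_{\hat{Y}^0}$ to be the $\pi'$-pullback of that for $\A|_{Y^{reg}}$; this upgrades the equivalence for $\hat{Y}$ to a $\Gamma/\Gamma'$-equivariant equivalence $\psi':\HC^{\Gamma/\Gamma'}(\hat{\A}|_{\hat{Y}^0})\xrightarrow{\sim}\C\Gamma\operatorname{-mod}$. Then $V^{\dagger'}:=H^0(\hat{Y}^0,\psi'^{-1}(V))$ is an honest object of $\HC(\hat{\A})$ carrying a canonical $\Gamma/\Gamma'$-action, and one checks directly the two identities
\[
V^{\dagger}\cong (V^{\dagger'})^{\Gamma/\Gamma'},\qquad \B_{\dagger'}=(\B^{\Gamma/\Gamma'})_{\dagger}\ \text{ for }\B\in\HC^{\Gamma/\Gamma'}(\hat{\A}).
\]
Since $\bullet^{\dagger}$ is a left inverse to $\bullet_{\dagger}$ on $\overline{\HC}$ (Lemma~\ref{Lem:dagger_properties}), $V$ lies in the image of $\bullet_{\dagger}$ iff $(V^{\dagger})_{\dagger}=V$, and likewise for $\bullet_{\dagger'}$; the two identities then give $(V^{\dagger})_{\dagger}\cong(V^{\dagger'})_{\dagger'}$, proving both implications at once. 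This bypasses any need to manufacture $\hat{\B}$ by tensor products or to descend a $\Gamma/\Gamma'$-action through fullness.

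Your $(2)\Rightarrow(1)$ sketch is closer in spirit to this---you are effectively rediscovering that one should work with $\Gamma/\Gamma'$-equivariant objects---but ``fullness'' alone only transports $\hat{\A}$-\emph{linear} morphisms, whereas the $\Gamma/\Gamma'$-structure consists of $\gamma$-semilinear ones; you would need to argue via $\hat{\B}\cong\gamma^{*}\hat{\B}$ in $\overline{\HC}(\hat{\A})$ and check cocycle conditions, and then still lift from $\overline{\HC}$ to an actual bimodule before taking invariants. Using the right adjoint from the start makes all of this automatic.
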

\begin{proof}
Let $\bullet^{\dagger'}$ denote the right adjoint for $\bullet_{\dagger'}$.
We claim that there is a natural $\Gamma/\Gamma'$-action on $V^{\dagger'}$ such that
\begin{equation}\label{eq:dagger_inv1} V^{\dagger}\cong (V^{\dagger'})^{\Gamma/\Gamma'}.
\end{equation}

We choose an affine covering $U_{i}$ of $Y^{reg}$. Let  $\pi':\hat{Y}^0\twoheadrightarrow Y^{reg}$ be the
quotient morphism for the $\Gamma/\Gamma'$-action on $\hat{Y}^0$. Set
$U'_i:=\pi'^{-1}(U_i)$.

Consider the quantization $\tilde{\A}:=\tilde{\A}_{\lambda^1}$, where $\lambda^1$
is constructed from $\lambda$, a quantization parameter for $\A$. Let $f_{ij}$
be the elements used to reglue $\A|_{Y^{reg}}$ to $\tilde{\A}^{\Gamma}|_{Y^{reg}}$.
Their pull-backs  $\pi'^*(f_{ij})$ are then used to reglue $\hat{\A}|_{\hat{Y}^0}$
to $\tilde{\A}^{\Gamma'}|_{\hat{Y}^0}$. This gives rise to an equivalence
$\psi':\HC^{\Gamma/\Gamma'}(\hat{\A}|_{\hat{Y}^0})\xrightarrow{\sim}\C\Gamma\operatorname{-mod}$.
The right adjoint of the resulting functor $\HC^{\Gamma/\Gamma'}(\hat{\A})\rightarrow
\C\Gamma\operatorname{-mod}$ is given by $V\mapsto H^0(\hat{Y}^0,\psi'^{-1}(V))$. The object
$V^{\dagger'}\in \HC(\hat{\A})$ is obtained from $H^0(\hat{Y}^0,\psi'^{-1}(V))\in
\HC^{\Gamma/\Gamma'}(\hat{\A})$ by forgetting the
action of $\Gamma/\Gamma'$. So $\Gamma/\Gamma'$ acts on $V^{\dagger'}$ and $V^{\dagger}=(V^{\dagger'})^{\Gamma/\Gamma'}$.

Also note that for $\B\in \HC^{\Gamma/\Gamma'}(\hat{\A})$ we have
a natural identification
\begin{equation}\label{eq:dagger_inv2} \B_{\dagger'}=(\B^{\Gamma/\Gamma'})_{\dagger}\end{equation}

Now we are ready to prove that  conditions (1) and (2) in the statement of the proposition
are equivalent.
Recall that, by Lemma \ref{Lem:dagger_properties},
$\bullet^{\dagger}:\C\Gamma\operatorname{-mod}\rightarrow \overline{\HC}(\A_\lambda)$
is left inverse to $\bullet_{\dagger}$. So $V$ lies in the image of $\bullet_{\dagger}$
if and only if $V=(V^\dagger)_\dagger$. Similarly, $V$ lies in the image of
$\bullet_{\dagger'}$ if and only if $V=(V^{\dagger'})_{\dagger'}$. Using (\ref{eq:dagger_inv1}) and (\ref{eq:dagger_inv2}), we see that $(V^{\dagger'})_{\dagger'}\cong (V^\dagger)_\dagger$.
The equivalence (1)$\Leftrightarrow$(2) follows.
\end{proof}

\subsection{Translation equivalences}\label{SS_trans_equiv}
Let $\lambda\in \h_Y^*$. It turns out that for certain values of $\lambda'$
the image of $\overline{\HC}(\A_{\lambda'})$ in $\C\Gamma\operatorname{-mod}$
coincides with the image of $\overline{\HC}(\A_{\lambda})$.

Namely, let us define the ``weight lattice'' $\Lambda_Y\subset \h_Y^*$. By definition,
it is the image of $\operatorname{Pic}(X^{reg})$ in $\h_Y^*$, where $X$ is a
$\Q$-terminalization of $Y$.

Then we can form the ``extended affine Weyl group'' $W^{ae}_Y:=W_Y\ltimes \Lambda_Y$.

\begin{Lem}\label{Lem:trans_equiv}
Let $\lambda'\in W^{ae}_Y\lambda$. Then there is an equivalence
$\overline{\HC}(\A_\lambda)\xrightarrow{\sim} \overline{\HC}(\A_{\lambda'})$
intertwining the inclusions $\overline{\HC}(\A_\lambda),\overline{\HC}(\A_{\lambda'})
\hookrightarrow \C\Gamma\operatorname{-mod}$.
\end{Lem}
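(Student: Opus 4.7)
The plan is to decompose the action of $W^{ae}_Y = W_Y \ltimes \Lambda_Y$ and treat the generators separately. For $\lambda' \in W_Y \lambda$, I invoke the isomorphism $\A_\lambda \cong \A_{\lambda'}$ recorded in Section \ref{SS_symp_sing} (following \cite[Theorem 3.4]{orbit}). Any such isomorphism is unique up to inner automorphism, so it induces a canonical equivalence $\overline{\HC}(\A_\lambda) \xrightarrow{\sim} \overline{\HC}(\A_{\lambda'})$. Since $W_Y = \prod_i W_i$ acts trivially on $\h_0^*$ and the slice parameters $\lambda_i^1$ depend only on $\Gamma'_i$ and not on the Kleinian components of $\lambda$, we have $\lambda^1 = (\lambda')^1$, so the construction of $\bullet_\dagger$ produces embeddings into the same category $\C\Gamma\operatorname{-mod}$, and the intertwining is automatic.

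The substantive case is $\lambda' = \lambda + \chi$ with $\chi \in \Lambda_Y$. I would choose a line bundle $\mathcal{L}_\chi \in \operatorname{Pic}(X^{reg})$ whose first Chern class maps to $\chi$ under $\operatorname{Pic}(X^{reg}) \to H^2(X^{reg},\C) = \h_Y^*$, and adapt the Bezrukavnikov--Kaledin formalism recalled in Section \ref{SS_quant_classif_sympl} to the line-bundle setting to produce a graded formal $(\D_{(\lambda+\chi) h}, \D_{\lambda h})$-bimodule $\D_{\lambda h, \chi}$ on $X^{reg}$ lifting $\mathcal{L}_\chi$. Lemma \ref{Lem:cohom_vanish} together with the extension machinery of Section \ref{S_extension} permits passing to global sections on $Y$; taking $\C^\times$-finite parts modulo $h-1$ then produces a HC $(\A_{\lambda+\chi}, \A_\lambda)$-bimodule $T_{\lambda, \chi}$ of full support. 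Tensoring on the right with $T_{\lambda, \chi}$ defines a functor $\overline{\HC}(\A_\lambda) \to \overline{\HC}(\A_{\lambda+\chi})$, with quasi-inverse given by tensoring with $T_{\lambda+\chi, -\chi}$: the composition quantizes $\mathcal{L}_\chi \otimes \mathcal{L}_{-\chi} \cong \Str_{X^{reg}}$, so agrees with the regular bimodule on associated graded, and by Lemma \ref{Lem:dagger_properties} this forces the two to be isomorphic in $\overline{\HC}$.

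For the intertwining with $\bullet_\dagger$, the plan is to trace $T_{\lambda, \chi}$ through the chain of equivalences of Section \ref{SS_loc_functor}. The pullback $\pi^*\mathcal{L}_\chi$ to $\tilde{Y}^0$ carries a canonical $\Gamma$-equivariant structure coming from its descent to $Y^{reg}$, and its quantization yields an invertible $\Gamma$-equivariant HC bimodule between $\tilde{\A}^0_{\lambda^1}$ and $\tilde{\A}^0_{(\lambda+\chi)^1}$. A variant of Lemma \ref{Lem:HC_simpl_conn1} for invertible HC bimodules across two such parameters shows that, after forgetting the $\Gamma$-action, this translation bimodule is determined up to isomorphism by the underlying line bundle; the $\Gamma$-equivariant structure is therefore encoded by a character of $\Gamma$. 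That character must be trivial, because $\mathcal{L}_\chi$ itself is pulled back from $Y^{reg}$. Consequently, under $\bullet_\dagger$ the functor $(-) \otimes T_{\lambda, \chi}$ corresponds to the identity on $\C\Gamma\operatorname{-mod}$, which is the required intertwining. The step I expect to be most delicate is the careful bookkeeping of the $\Gamma$-equivariant structure on the translation bimodule through the regluing of Proposition \ref{Prop:derivation_inner} and the verification that the resulting $\Gamma$-character is trivial; the remaining points are routine adaptations of Bezrukavnikov--Kaledin theory and the argument of Lemma \ref{Lem:HC_simpl_conn1}.
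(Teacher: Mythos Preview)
Your overall strategy---reduce to the $W_Y$-part (where the algebras coincide) and then handle translations by $\chi\in\Lambda_Y$ via a quantized line-bundle bimodule---matches the paper's. There are two points worth flagging.

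First, a genuine slip: one-sided tensoring with $T_{\lambda,\chi}$ cannot send $\A_\lambda$-bimodules to $\A_{\lambda+\chi}$-bimodules. If $T_{\lambda,\chi}$ is an $(\A_{\lambda+\chi},\A_\lambda)$-bimodule, then $T_{\lambda,\chi}\otimes_{\A_\lambda}\B$ lands in $(\A_{\lambda+\chi},\A_\lambda)$-bimodules, not in $\HC(\A_{\lambda+\chi})$. The correct functor is the two-sided conjugation
\[
\B\ \longmapsto\ T_{\lambda,\chi}\otimes_{\A_\lambda}\B\otimes_{\A_\lambda}T_{\lambda+\chi,-\chi},
\]
and this is exactly what the paper uses. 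Your argument that the composite is the identity in $\overline{\HC}$ (via $\mathcal{L}_\chi\otimes\mathcal{L}_{-\chi}\cong\Str$) then goes through for this two-sided version.

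Second, your intertwining argument is substantially more elaborate than needed. You propose lifting the translation bimodule to $\tilde{Y}^0$, analyzing its $\Gamma$-equivariant structure, and checking that the resulting character of $\Gamma$ is trivial---and you correctly identify this bookkeeping through the regluing of Proposition~\ref{Prop:derivation_inner} as delicate. The paper bypasses all of this by appealing to Corollary~\ref{Cor:dagger_filt}: the functor $\bullet_\dagger$ is determined by the associated graded on $Y^{reg}$ as a Poisson module. Equipping $T_{\lambda,\chi}\otimes_{\A_\lambda}\B\otimes_{\A_\lambda}T_{\lambda+\chi,-\chi}$ with the tensor-product filtration, the associated graded on $Y^{reg}$ is
\[
\mathcal{O}(\chi)|_{Y^{reg}}\otimes \gr\B|_{Y^{reg}}\otimes\mathcal{O}(-\chi)|_{Y^{reg}}\ \cong\ \gr\B|_{Y^{reg}},
\]
which immediately gives the intertwining. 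Your route would likely work, but the associated-graded argument avoids the delicate step you flagged entirely.
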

\begin{proof}
For $\lambda'\in W_Y\lambda$, the algebras $\A_\lambda,\A_{\lambda'}$ are the same
and the claim of the lemma follows. So it remains to consider the situation
when $\lambda'-\lambda\in \Lambda_Y$.

We can speak about HC $\A_{\lambda'}$-$\A_{\lambda}$-bimodules. Here is an example.
Lift $\lambda'-\lambda$ to an element $\chi\in \operatorname{Pic}(X^{reg})$.
Consider the line bundle $\mathcal{O}(\chi)$ on $X^{reg}$. Since $H^i(X^{reg},\Str)=0$
for $i=1,2$, we see that $\mathcal{O}(\chi)$ admits a unique filtered deformation
to a $\D_{\lambda'}^{reg}$-$\D_\lambda^{reg}$-bimodule, compare with
\cite[Section 5.1]{BPW}. The deformed bimodules
will be  denoted by $\D_{\lambda,\chi}^{reg}$.
Set $\A_{\lambda,\chi}:=\Gamma(\D_{\lambda,\chi})$. This is a HC $\A_{\lambda'}$-$\A_\lambda$-bimodule.
Similarly, we can consider the HC $\A_{\lambda}$-$\A_{\lambda'}$-bimodule
$\A_{\lambda',-\chi}$. The restrictions $\A_{\lambda,\chi}^{reg},\quad \A_{\lambda',-\chi}^{reg}$
to $Y^{reg}$ are mutually inverse. It follows that the functors $$\A_{\lambda,\chi}\otimes_{\A_\lambda}\bullet\otimes_{\A_\lambda}\A_{\lambda',-\chi},
\A_{\lambda',-\chi}\otimes_{\A_{\lambda'}}\bullet\otimes_{\A_{\lambda'}}\A_{\lambda,\chi}$$
  are mutually inverse equivalences $\overline{\HC}(\A_\lambda)\leftrightarrows \overline{\HC}(\A_{\lambda'})$.

It remains to show that these equivalences intertwine the embeddings
$\overline{\HC}(\A_\lambda),\overline{\HC}(\A_{\lambda'})\hookrightarrow
\C\Gamma\operatorname{-mod}$. To check this, for $\B\in \HC(\A_\lambda)$, we need to establish a good
filtration on
\begin{equation}\label{eq:tens_prod_bimod} \A_{\lambda,\chi}\otimes_{\A_\lambda}\B\otimes_{\A_\lambda}\A_{\lambda',-\chi}\end{equation}
in a natural way such that the restriction of its associated graded to $Y^{reg}$
is naturally identified with $\gr\B|_{Y^{reg}}$. For this we take the natural filtration
of the tensor product bimodule on (\ref{eq:tens_prod_bimod}). Since $\gr\A_{\lambda,\chi}|_{Y^{reg}}\cong \mathcal{O}(\chi)|_{Y^{reg}}$
and $\gr\A_{\lambda',-\chi}|_{Y^{reg}}\cong \mathcal{O}(-\chi)|_{Y^{reg}}$ are invertible
we see that
\begin{equation*}
\gr\left(\A_{\lambda,\chi}\otimes_{\A_\lambda}\B\otimes_{\A_\lambda}\A_{\lambda',-\chi}\right)|_{Y^{reg}}
\cong \mathcal{O}(\chi)|_{Y^{reg}}\otimes \gr\B|_{Y^{reg}}\otimes \mathcal{O}(-\chi)|_{Y^{reg}}\cong
\gr \B|_{Y^{reg}}.
\end{equation*}
This finishes the proof of the lemma.
\end{proof}

\section{Classification for quantizations of Kleinian singularities}\label{S_classif_Klein}
The goal of this section is to prove a more precise version of Theorem \ref{Thm:Klein_classif_prelim}.
Recall that $\g,W_\Gamma,\h_\Gamma$ denote the Lie algebra, Weyl group and the Cartan space of the same type as $\Gamma$,
 $\Lambda_r\subset \h_\Gamma^*$ is the root lattice, and $W^a_\Gamma:=W_\Gamma\ltimes \Lambda_r$
 is the affine Weyl group.

\begin{Thm}\label{Thm:Klein_classif}
Let $\Gamma\subset \SL_2(\C)$ be a finite subgroup not of type $E_8$.
The following claims are true:
\begin{enumerate}
\item For each $c\in (\C\Gamma)_1^\Gamma$, there is a minimal normal subgroup $\Gamma_c\subset \Gamma$ such that $W^a_\Gamma \lambda_c$ contains $\lambda_{c'}$ with $c'\in \C\Gamma_c(\subset \C\Gamma)$.
\item Let $\lambda\in \h^*_\Gamma$ be the parameter corresponding to $c$. Then the image of $\overline{\HC}(\A_{\lambda})$ in $\C\Gamma\operatorname{-mod}$ under $\bullet_\dagger$ is $\C(\Gamma/\Gamma_c)\operatorname{-mod}$.
\end{enumerate}
\end{Thm}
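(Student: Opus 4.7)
The proof splits into (1) and (2).

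For part (1), the plan is case-by-case verification. Since $\Gamma$ is not of type $E_8$, it is solvable, and the lattice of normal subgroups of $\Gamma$ (cyclic, binary dihedral, binary tetrahedral $\tilde{T}$, or binary octahedral $\tilde{O}$) is small and completely explicit. For each $\Gamma$ I would make the $W^a_\Gamma$-action on $(\C\Gamma)_1^\Gamma\cong\h_\Gamma^*$ explicit, enumerate the $W^a_\Gamma$-orbits of $\lambda_c$, and identify the set
\[
\mathcal{N}(c):=\{\Gamma'\lhd\Gamma : W^a_\Gamma\lambda_c\cap\{\lambda_{c'}:c'\in\C\Gamma'\}\neq\emptyset\}.
\]
Existence of a minimum in $\mathcal{N}(c)$ reduces to checking that $\mathcal{N}(c)$ is closed under intersection; in each case this is direct, and in fact the relevant sub-lattice of normal subgroups is often a chain.

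For part (2), the plan is induction on $|\Gamma|$. By Lemma \ref{Lem:trans_equiv}, the image of $\overline{\HC}(\A_\lambda)$ under $\bullet_\dagger$ depends only on the $W^{ae}_Y$-orbit of $\lambda$. For Kleinian singularities the $\Q$-terminalization is the minimal resolution, and the Picard group of its smooth locus is the root lattice, so $\Lambda_Y=\Lambda_r$ and $W^{ae}_Y=W^a_\Gamma$; thus I may assume $c\in\C\Gamma_c$. Corollary \ref{Cor:CBH_invar} then realizes $\A_c$ as the $\Gamma/\Gamma_c$-invariants of the quantization $\underline{\A}_c$ of $\C^2/\Gamma_c$, and Proposition \ref{Prop:HC_inv} identifies the image of $\bullet_\dagger$ as
\[
\{V\in\Rep\Gamma : V|_{\Gamma_c}\in\operatorname{im}(\bullet_{\dagger,\Gamma_c})\}.
\]
The argument then splits according to whether $\Gamma_c\subsetneq\Gamma$ or $\Gamma_c=\Gamma$.

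In the sub-case $\Gamma_c\subsetneq\Gamma$, every proper normal subgroup of a non-$E_8$ Kleinian group is again a non-$E_8$ Kleinian group, so the inductive hypothesis applies to $(\Gamma_c, c)$. Let $M\lhd\Gamma_c$ be the minimal normal subgroup of $\Gamma_c$ produced by part (1) for this pair; induction gives $\operatorname{im}(\bullet_{\dagger,\Gamma_c})=\C(\Gamma_c/M)\operatorname{-mod}$, and hence the image of $\bullet_\dagger$ for $\A_c$ equals $\C(\Gamma/\langle M\rangle_\Gamma)\operatorname{-mod}$, where $\langle M\rangle_\Gamma$ denotes the normal closure of $M$ in $\Gamma$. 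Since $\Gamma_c\lhd\Gamma$, conjugation by $\Gamma$ preserves $\Gamma_c$ and hence $\langle M\rangle_\Gamma\subset\Gamma_c$; conversely $\langle M\rangle_\Gamma$ is a normal subgroup of $\Gamma$, and a $W^a_{\Gamma_c}$-translate of $\lambda_c$ lies in $\C M$, so by $W^a_{\Gamma_c}\subset W^a_\Gamma$ together with the minimality of $\Gamma_c$ in $\Gamma$ we get $\Gamma_c\subset\langle M\rangle_\Gamma$. Thus $\langle M\rangle_\Gamma=\Gamma_c$, finishing this sub-case.

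The main obstacle is the sub-case $\Gamma_c=\Gamma$, where the induction degenerates and one must show directly that $\overline{\HC}(\A_c)$ maps to only the trivial representation of $\Gamma$ under $\bullet_\dagger$. The plan here is to exploit the description of $\A_c$ in Example \ref{Ex:Kleinian_quantizations} as a quantum slice of a central reduction $\U_\lambda$ of $U(\g)$ for $\g$ simply laced of the same type as $\Gamma$. A restriction-type functor, analogous to the one recalled in Section \ref{SS_Poisson_HC} but applied in reverse, is meant to embed $\overline{\HC}(\A_c)$ into the subquotient of $\HC(\U_\lambda)$ supported on the closure of the subregular nilpotent orbit, and the classical classification of the latter in terms of Kazhdan--Lusztig two-sided cells and Lusztig quotients is then what forces the image to be trivial under the assumption $\Gamma_c=\Gamma$. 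This is the step where the non-solvability of the $E_8$ Kleinian group obstructs a uniform argument, consistent with the exclusion of type $E_8$ in the hypothesis.
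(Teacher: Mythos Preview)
Your inductive framework for part (2)---translation via Lemma~\ref{Lem:trans_equiv}, then Corollary~\ref{Cor:CBH_invar} and Proposition~\ref{Prop:HC_inv} to pass to a smaller Kleinian group---is exactly the paper's strategy. But both the base case and the inductive step have real gaps, and the paper's fixes are more specific than what you outline.

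\textbf{Base case.} Your plan to embed $\overline{\HC}(\A_c)$ into a subquotient of $\HC(\U_\lambda)$ ``supported on the subregular orbit'' and then invoke Kazhdan--Lusztig cells and Lusztig quotients is not what is needed and is not how the paper proceeds. Lusztig quotients appear only in Section~\ref{S_Lusztig_quot} as an \emph{application} of the theorem, not as an input. What the paper actually does is first determine exactly the \emph{one-dimensional} $\Gamma$-modules in the image of $\bullet_\dagger$: by Proposition~\ref{Prop:extension_equiv} applied with $Y=\mathcal{N}$ (so the unique codimension-two slice is $\C^2/\Gamma$), a representation of $\Gamma_\g=\Gamma/(\Gamma,\Gamma)$ lies in the image for $\U_\lambda$ iff its pullback lies in the image for $\A_\lambda$; combined with the explicit description of $\overline{\HC}(\U_\lambda)$ (Lemma~\ref{Lem:HC_nilp}), this gives that the one-dimensional part of the image is exactly $W^{ae}_\lambda/W^a_\lambda$ (Proposition~\ref{Prop:HC_Klein_rk1}). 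The base case is then immediate from solvability: if the image contained a nontrivial irreducible $V$, it would contain all of $\operatorname{Rep}(\Gamma/\ker V)$ by tensor closure, hence a nontrivial one-dimensional representation---contradiction. This is where solvability (hence the exclusion of $E_8$) is actually used.

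\textbf{Inductive step.} Your claim ``$W^a_{\Gamma_c}\subset W^a_\Gamma$'' is not correct: these groups act on different Cartan spaces $\h^*_{\Gamma_c}$ and $\h^*_\Gamma$, and there is no natural inclusion. What \emph{is} true, and what the paper establishes carefully (Lemmas~\ref{Lem:conj_spec_param}--\ref{Lem:conj_spec_param2}), is that the \emph{folded} group $\underline{W}_1^a:=(W^a_{\Gamma'_c})^{\Gamma/\Gamma'_c}$, acting on the invariant subspace $\underline{\h}_1^*\cong(\C\Gamma'_c)^\Gamma_1$, embeds into $W^a_\Gamma$. This is why the paper descends one step at a time through the subgroup $\Gamma'_c$ (the intersection of kernels of the one-dimensional representations in the image), rather than jumping straight to $\Gamma_c$. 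Your argument that $\langle M\rangle_\Gamma=\Gamma_c$ relies on transporting a $W^a_{\Gamma_c}$-orbit relation back to a $W^a_\Gamma$-orbit relation, and without the folded-Weyl-group comparison this step does not go through.

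Finally, note that the paper does \emph{not} prove (1) by case-by-case verification; it constructs $\Gamma_c$ iteratively as the stable term of the chain $\Gamma\supset\Gamma'_c\supset\Gamma''_c\supset\cdots$, and both the minimality in (1) and the identification in (2) are proved simultaneously in Proposition~\ref{Prop:HC_Klein1}.
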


The scheme of the proof of (2) is, essentially, as follows.
First, we show that a one-dimensional representation of $\Gamma$ lies in the image of $\bullet_{\dagger}$ if
and only if $\Gamma_c$ acts trivially on it. For this we use the known description of
$\overline{\HC}(\U_\lambda)$ (here $\U_\lambda$ is the central reduction of the universal
enveloping algebra $\U:=U(\g)$) and Proposition \ref{Prop:extension_equiv} that  allows us to relate
$\overline{\HC}(\U_\lambda)$ and $\overline{\HC}(\A_\lambda)$. To extend (2) to
higher dimensional irreducible representations of $\Gamma$ we use translation equivalences
and Proposition \ref{Prop:HC_inv}.

\subsection{One-dimensional representations in the image of $\bullet_{\dagger}$}\label{SS_classif_1dim}
Our first task is to describe the one-dimensional representations of $\Gamma$
lying in the image of $\bullet_{\dagger}$. The initial step is to recall the description
of the category $\overline{\HC}(\U_\lambda)$.

Below we write $\h,W,W^a$ for $\h_\Gamma,W_\Gamma,W^a_\Gamma$,
we view $\h$ as a Cartan subalgebra of $\g$.
Let $\Lambda$
denote the weight lattice in $\h^*$. Note that $(\Lambda/\Lambda_r)^*$ coincides
with $\Gamma_\g:=\pi_1(Y^{reg})$, where $Y$ stands for the nilpotent cone.
Form  the extended affine
Weyl group $W^{ae}:=W\ltimes \Lambda$ so that $W^a$ is a normal subgroup in $W^{ae}$
and $W^{ae}/W^a\cong \Lambda/\Lambda_r$.

Here is a description of $\overline{\HC}(\U_\lambda)$. It is standard but since we haven't
found a proof in the literature, we provide it.

\begin{Lem}\label{Lem:HC_nilp}
The image under $\bullet_{\dagger}$ of  $\overline{\HC}(\U_\lambda)$ in $\operatorname{Rep}[(\Lambda/\Lambda_r)^*]$
coincides with the subcategory of representations whose irreducible constituents lie in
$W^{ae}_\lambda/W^a_\lambda$ (naturally embedded into $W^{ae}/W^a$).
\end{Lem}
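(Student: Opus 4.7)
The plan is to identify the subgroup $\Xi\subset \Lambda/\Lambda_r$ of characters of $\Gamma_\g:=(\Lambda/\Lambda_r)^*$ arising as irreducible summands in the image of $\bullet_\dagger$, and show $\Xi=W^{ae}_\lambda/W^a_\lambda$. That $\Xi$ is a subgroup follows from $\bullet_\dagger$ being a monoidal embedding (Section~\ref{SS_HC_equiv}) together with Lemma~\ref{Lem:dagger_properties}(2) (closure under direct summands). As a preliminary identification, for $Y=\mathcal{N}$ I would take $X=T^*(G/B)$ (the Springer resolution, with $G$ simply connected): $X$ is smooth, so $X^{reg}=X$ and $\operatorname{Pic}(X)=\operatorname{Pic}(G/B)\cong\Lambda$, with the induced map to $H^2(X,\C)=\h^*$ giving the standard weight-lattice embedding. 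Hence $\Lambda_Y=\Lambda$ and the extended affine Weyl group $W^{ae}_Y$ of Section~\ref{SS_trans_equiv} coincides with $W\ltimes\Lambda=W^{ae}$.

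For the forward inclusion $W^{ae}_\lambda/W^a_\lambda\subset\Xi$, I would apply Lemma~\ref{Lem:trans_equiv} to produce explicit translation bimodules. Given $[\mu]\in W^{ae}_\lambda/W^a_\lambda$ with lift $t_\mu w\in W^{ae}_\lambda$, the relation $w\cdot\lambda+\mu=\lambda$ puts $\lambda-\mu\in W\cdot\lambda$, so $\U_{\lambda-\mu}=\U_\lambda$. Then Lemma~\ref{Lem:trans_equiv} applied to $\lambda'=\lambda-\mu$ and $\chi=-\mu\in\Lambda_Y$ produces a HC $\U_\lambda$-bimodule $\A_{\lambda,-\mu}$ with $\gr\A_{\lambda,-\mu}|_{Y^{reg}}\cong\Str(-\mu)|_{Y^{reg}}$. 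By Corollary~\ref{Cor:dagger_filt} together with Lemma~\ref{Lem:Poisson_classif}, $(\A_{\lambda,-\mu})_\dagger$ is the character of $\Gamma_\g$ corresponding to $-[\mu]$; closure of $\Xi$ under inverses gives $[\mu]\in\Xi$.

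For the reverse inclusion $\Xi\subset W^{ae}_\lambda/W^a_\lambda$, I would use the classical translation-principle description of HC $U(\g)$-bimodules (essentially Bernstein--Gelfand and Joseph): every simple HC $\U_\lambda$-bimodule $\B$ with full support appears as a direct summand of $\operatorname{Pr}_\lambda^l(V\otimes_\C \U_\lambda)$ for some finite-dimensional irreducible $\g$-module $V$, where $\operatorname{Pr}_\lambda^l$ denotes the projection onto left central character $\lambda$. After passing to associated graded and restricting to $Y^{reg}$, the bimodule $V\otimes_\C \U_\lambda$ becomes the constant bundle $V\otimes\Str_{Y^{reg}}$, which, as a $\Gamma_\g$-equivariant Poisson $\Str_{Y^{reg}}$-module, decomposes into $B$-weight summands. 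The constraint that the left central character equals $\lambda$ forces any contributing weight $\nu$ to satisfy $\lambda+\nu\in W\cdot\lambda$, i.e., $[\nu]\in W^{ae}_\lambda/W^a_\lambda$. Corollary~\ref{Cor:dagger_filt} then identifies $\B_\dagger$ with $[\nu]$, completing the inclusion.

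The main obstacle is this reverse inclusion: rigorously identifying the $\bullet_\dagger$-image of $\operatorname{Pr}_\lambda^l(V\otimes\U_\lambda)$ requires tracking the $B$-weight decomposition of $V\otimes\Str_X$ through the Springer map $X\to Y$ and matching it to the identification $\operatorname{Pic}(Y^{reg})\cong\Lambda/\Lambda_r$. An alternative and possibly cleaner route is via Beilinson--Bernstein localization, which transports the problem to coherent sheaves on $G/B$, where the weight decomposition becomes transparent from the $G$-equivariant structure.
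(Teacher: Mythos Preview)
Your forward inclusion via translation bimodules (Lemma~\ref{Lem:trans_equiv}) matches the paper exactly. The reverse inclusion, however, is approached differently, and your version has a real gap that you partly diagnose but partly misdiagnose.

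The paper's route for the reverse inclusion does not go through Corollary~\ref{Cor:dagger_filt} at all. Instead, it first identifies $\B_\dagger$ (for irreducible $\B$) with the single $\Lambda/\Lambda_r$-coset carrying the adjoint $\g$-weights of $\B$, and justifies this identification by recognizing the present $\bullet_\dagger$ as the restriction functor of \cite[Section~3.4]{HC} for the principal orbit, where the target W-algebra is the center and the equivariance is for $Z(G)\cong(\Lambda/\Lambda_r)^*$. With that identification in hand, the constraint on weights is obtained not from $\operatorname{Pr}_\lambda^l(V\otimes\U_\lambda)$ directly but via category $\mathcal{O}$: by \cite{BG} there is a Verma module $\Delta(w\cdot\lambda)$ with $\B\otimes_{\U_\lambda}\Delta(w\cdot\lambda)\neq 0$, and every highest weight appearing in $\operatorname{pr}_\lambda(V\otimes\Delta(w\cdot\lambda))$ lies in $W\cdot\lambda$, forcing the adjoint weights of $\B$ into $W^a_\lambda$-cosets in $W^{ae}_\lambda$.

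Your attempt via Corollary~\ref{Cor:dagger_filt} can be made to work, but not as you have written it. The sentence ``decomposes into $B$-weight summands'' is wrong: the Poisson module $\gr(V\otimes\U_\lambda)|_{Y^{reg}}$ is a $G$-equivariant local system on the regular orbit, and it decomposes according to the action of $Z(G)=\pi_1(Y^{reg})$ on $V$, not according to $B$-weights. Since $V$ is irreducible this gives a \emph{single} character $[\mu]\in\Lambda/\Lambda_r$ (the coset of the highest weight), with multiplicity $\dim V$. What you actually need, and what you have not supplied, is the statement that the monodromy of this equivariant local system coincides with $V|_{Z(G)}$ under the identification $\pi_1(Y^{reg})\cong Z(G)$; this is the substantive content that the paper obtains instead from the W-algebra comparison. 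Once that is in place, your central-character constraint $\lambda+\nu\in W\cdot\lambda$ for some weight $\nu$ of $V$ does force $[\mu]\in W^{ae}_\lambda/W^a_\lambda$, since all weights of $V$ lie in $[\mu]$. Your suggested alternative via Beilinson--Bernstein is not needed and would not simplify matters; the missing ingredient is purely the monodromy computation (or, as the paper does, the link to \cite{HC}).
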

\begin{proof}
In the proof we can assume that $\lambda$ is regular. Indeed, if $\lambda'-\lambda\in \Lambda$,
then the images of $\overline{\HC}(\U_\lambda)$ and $\overline{\HC}(\U_{\lambda'})$ in $\C(\Lambda/\Lambda_r)^*\operatorname{-mod}$ are the same,
as was explained in Lemma \ref{Lem:trans_equiv} (in the case of general $Y$).

Note that to an irreducible HC $\U$-bimodule, $\B$, we can assign an element of
$\Lambda/\Lambda_r$ as follows: this is the $\Lambda_r$-coset of weights of $\g$ in its adjoint
(and hence locally finite) action on $\B$.  Since $\B$
is irreducible, we have $\B=\U b\U$ for any $b\in \B$. Hence  all the weights lie in a single coset.

Let us show that the  irreducible
$\Gamma_\g$-module corresponding to this element coincides with $\B_{\dagger}$.
Indeed, $\Gamma_\g$ is identified with the center $Z$ of the simply connected
algebraic group $G$ with Lie algebra $\g$. In our case -- when $Y$ is the nilpotent cone -- the functor $\bullet_{\dagger}$ constructed in the end of Section \ref{SS_HC_equiv}
 is a special case of the functor $\bullet_{\dagger}$ constructed
in \cite[Section 3.4]{HC} in the case of the principal orbit. The source of that functor is the category $\HC(\U)$ of all HC $\U$-bimodules
and the target is the category $\HC^Z(\mathcal{W})$ of $Z$-equivariant HC bimodules over
the W-algebra $\mathcal{W}$ constructed for the principal orbit. This W-algebra coincides with
the center of $\U$. By the construction of the functor in \cite{HC} it is clear that if $Z$ acts on $\B$
via a character $\chi$, then it acts on $\B_{\dagger}$ via that character as well. This shows the
claim on the coincidence of two $\Gamma_\g$-modules in the beginning of the paragraph.

So we need to understand the set $\{\lambda+\Lambda_r\}$, where $\lambda$ runs over
the possible weights of HC bimodules in $\overline{\HC}(\U_\lambda)$.  All classes from $W^{ae}_\lambda/W^a_\lambda$
are realized by translation bimodules that, for general $Y$, were introduced
in the proof of Lemma \ref{Lem:trans_equiv}. So we need to show that no other
classes  appear. All weights that appear in $\B\in \HC(\U_\lambda)$ also
appear in $\operatorname{pr}_\lambda(V\otimes \U_\lambda)$,
where $V$ is a finite dimensional $\g$-module and $\operatorname{pr}_\lambda$ stands for the projection
to the infinitesimal block with central character $\lambda$. Also note that for every
$\B\in \HC(\U_\lambda)$ there is a Verma module $\Delta(w\cdot \lambda)$ for $w\in W$ in the infinitesimal block $\mathcal{O}_\lambda$
of the BGG category $\mathcal{O}$ such that $\B\otimes_{\U_\lambda}\Delta(w\cdot\lambda)\neq 0$, this follows,
for example, from \cite{BG}.  So we reduce to showing that
all weights that appear in $\operatorname{pr}_\lambda(V\otimes \Delta(w\cdot \lambda))$
are of the form $w\cdot \lambda+\chi$ with $\chi+\Lambda_r\in W^{ae}_\lambda/ W^a_\lambda$.
On the other hand, any weight appearing in $\operatorname{pr}_\lambda(V\otimes \Delta(w\cdot \lambda))$
should appear in $\Delta(u\cdot \lambda)$ for some $u\in W$ hence lies in $W^a\lambda$. Clearly,
$w\cdot \lambda+\chi\in W^a\lambda$ is equivalent to $\chi+\Lambda_r\in W^{ae}_\lambda/W^a_\lambda$,
which is precisely what we need.
%
\end{proof}

With Lemma \ref{Lem:HC_nilp} we can now use Proposition
\ref{Prop:extension_equiv} to describe the one-dimensional representations of $\Gamma$ in
the image of $\HC(\A_\lambda)$. For this, note that $\Gamma/(\Gamma,\Gamma)\cong \Gamma_\g$.
So the one-dimensional $\Gamma$-modules are in a one-to-one correspondence with $\Lambda/\Lambda_r$.

\begin{Prop}\label{Prop:HC_Klein_rk1}
Let $V$ be a one-dimensional $\Gamma$-module. Then the following claims are equivalent:
\begin{enumerate}
\item $V$ lies in $W^{ae}_\lambda/W^a_\lambda$.
\item $V$ lies in the image of $\HC(\A_\lambda)$ under $\bullet_{\dagger}$.
\end{enumerate}
\end{Prop}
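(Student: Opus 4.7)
The plan is to deduce the proposition from Lemma \ref{Lem:HC_nilp} (the description of $\overline{\HC}(\U_\lambda)$) by invoking Proposition \ref{Prop:extension_equiv} for the nilpotent cone $Y=\mathcal{N}$ in the simple Lie algebra $\g$ of the same simply-laced type as $\Gamma$. The strategy is to view the Kleinian singularity $\C^2/\Gamma$ as the formal slice to the unique codimension $2$ leaf in $\mathcal{N}$, namely the subregular orbit (this is the classical Brieskorn--Slodowy identification, cf.\ Example \ref{Ex:nilp_cone_quant}), so that $\A_\lambda$ itself appears as the codimension-2 slice algebra $\underline{\A}^{i}_{\lambda_i}$ in the setup of Proposition \ref{Prop:extension_equiv} applied to $\U_\lambda$.

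First I would pin down the natural homomorphism $\varphi\colon \Gamma\to \Gamma_\g:=\pi_1^{alg}(\mathcal{N}^{reg})$ coming from the inclusion of the punctured slice into $\mathcal{N}^{reg}$. Since $\Gamma_\g$ is the center of the simply connected group with Lie algebra $\g$, hence abelian, $\varphi$ factors through $\Gamma^{ab}$, and in the simply-laced ADE case one checks (type by type, or via the standard comparison with $\Lambda/\Lambda_r$ in both groups) that the induced map $\Gamma^{ab}\to \Gamma_\g$ is an isomorphism and is equivariant for the extended affine Weyl group actions on the character lattices. Consequently, pullback $\varphi^*$ gives a bijection between one-dimensional $\Gamma_\g$-modules and one-dimensional $\Gamma$-modules, and it sends the subset $W^{ae}_\lambda/W^a_\lambda \subset \Lambda/\Lambda_r$ on the $\Gamma_\g$-side to the same subset, now viewed as one-dimensional $\Gamma$-modules.

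Now I chain the equivalences. Given a one-dimensional $V\in \Rep\Gamma$, write $V=\varphi^*(\tilde V)$ for the corresponding one-dimensional $\tilde V\in \Rep\Gamma_\g$. Proposition \ref{Prop:extension_equiv} applied to $Y=\mathcal{N}$ (with its single codimension-2 leaf and slice algebra $\A_\lambda$) says that $\tilde V$ lies in the image of $\overline{\HC}(\U_\lambda)$ under the enhanced restriction functor if and only if $\varphi^*(\tilde V)=V$ lies in the image of $\overline{\HC}(\A_\lambda)$ under $\bullet_\dagger$. On the other hand, Lemma \ref{Lem:HC_nilp} says that $\tilde V$ lies in the image of $\overline{\HC}(\U_\lambda)$ if and only if $\tilde V\in W^{ae}_\lambda/W^a_\lambda$. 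Combining these two equivalences with the compatibility from the previous paragraph yields (1)$\Leftrightarrow$(2).

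The main obstacle is the compatibility check in the second paragraph: one needs that $\varphi$ identifies the $W^{ae}$-action on one-dimensional $\Gamma$-characters (which governs the subset appearing in (1)) with the $W^{ae}$-action on one-dimensional $\Gamma_\g$-characters (which governs the subset appearing in Lemma \ref{Lem:HC_nilp}). This is not merely an abstract abelian-group isomorphism but a statement about the geometric monodromy at the subregular orbit, and while it is essentially known (and easy to verify case-by-case for each ADE type), it is the one place where the argument uses the specific simply-laced geometry of $\mathcal{N}$ rather than just the formalism developed in Sections \ref{S_extension}--\ref{S_enhanced_res}.
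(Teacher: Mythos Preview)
Your approach is essentially identical to the paper's: apply Proposition \ref{Prop:extension_equiv} to the nilpotent cone $\mathcal{N}$ (with $\C^2/\Gamma$ as the slice to the subregular orbit) and combine with Lemma \ref{Lem:HC_nilp}. Two small remarks. First, the paper avoids the case-by-case verification of the surjectivity of $\varphi:\Gamma\to\Gamma_\g$ by a slick internal argument: if $\varphi$ were not surjective, some nontrivial $\tilde V\in\Rep\Gamma_\g$ would pull back to the trivial $\Gamma$-module, and then Proposition \ref{Prop:extension_equiv} would force $\tilde V$ into the image of $\overline{\HC}(\U_\lambda)$ for \emph{every} $\lambda$, contradicting Lemma \ref{Lem:HC_nilp}. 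Second, your ``main obstacle'' about compatibility of $W^{ae}$-actions is not really an obstacle: the identification of one-dimensional $\Gamma$-modules with $\Lambda/\Lambda_r$ used in condition (1) is \emph{defined} via $\varphi^*$ (see the sentence immediately preceding the proposition), so once $\varphi$ is surjective the subset $W^{ae}_\lambda/W^a_\lambda$ is literally the same on both sides and there is nothing further to check.
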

\begin{proof}
The variety $Y$ has a unique symplectic leaf of codimension $2$ and the corresponding Kleinian group $\Gamma$
has the same type as $\g$. Note the  homomorphism $\Gamma\rightarrow \Gamma_\g$
of algebraic fundamental groups is an epimorphism (hence gives an isomorphism
$\Gamma/(\Gamma,\Gamma)\xrightarrow{\sim}\Gamma_\g$). Indeed, assume that $\Gamma\rightarrow \Gamma_{\g}$
is not surjective. Equivalently, there is a nontrivial irreducible $\Gamma_{\g}$-module, say $V$, with trivial
pull-back to $\Gamma$. By Proposition \ref{Prop:extension_equiv}, $V$ lies in the image of $\HC(\U_\lambda)$
for all $\lambda$. This contradicts Lemma \ref{Lem:HC_nilp}. The surjectivity can also be checked case by case.

According to Lemma \ref{Lem:HC_nilp},
(1) is equivalent to $V$ lying in the  image of $\overline{\HC}(\U_\lambda)$.
The latter condition is equivalent to (2), this is a special
case of Proposition \ref{Prop:extension_equiv}.
\end{proof}

\subsection{Subgroup $\Gamma_c$}
In this section, to a parameter $\lambda\in \h_\Gamma^*$ or, equivalently, $c\in (\C\Gamma)^\Gamma_1$ we
assign a normal subgroup $\Gamma_c$. The construction will be inductive. Let $\Gamma'_{c}$ be the normal subgroup in $\Gamma$ that is the intersection of the kernels of the one-dimensional representations of $\Gamma$
that lie in $W^{ae}_\lambda/W^a_\lambda$. So $(\Gamma/\Gamma'_c)^*=W_{\lambda}^{ae}/W_\lambda^a$.

\begin{Lem}\label{Lem:conj_spec_param}
There is an element $\lambda'\in W^{a}\lambda$ such that the corresponding parameter $c'$
lies in $\C\Gamma'_{c}$.
\end{Lem}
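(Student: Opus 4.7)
The plan is to exhibit $\lambda'\in W^a\lambda$ explicitly by exploiting the affine isomorphism $\h_\Gamma^*\cong(\C\Gamma)^\Gamma_1$, $\lambda\leftrightarrow c$, together with the defining property of $\Gamma'_c$.

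First I would reformulate the conclusion. By the definition of $\Gamma'_c$, the 1-dimensional characters of $\Gamma$ trivial on $\Gamma'_c$ are exactly the elements of $H:=W^{ae}_\lambda/W^a_\lambda$, viewed as a subgroup of $\widehat{\Gamma^{\mathrm{ab}}}\cong \Lambda/\Lambda_r=W^{ae}/W^a$. The condition $c'\in\C\Gamma'_c$ (i.e.\ $c'_\gamma=0$ for $\gamma\notin\Gamma'_c$) is then equivalent to the character-multiplication invariance $\chi\cdot c'=c'$ for every $\chi\in H$, where $(\chi\cdot c')_\gamma:=\chi(\gamma)c'_\gamma$.

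Second, I would record the key compatibility. For any $\chi\in H$, pick a lift $\mu_\chi\in\Lambda$. By definition of the stabilizer, $\lambda+\mu_\chi\in W^a\lambda$, so there is $w_\chi\in W^a$ with $w_\chi\lambda=\lambda+\mu_\chi$. Under the affine isomorphism, translation of $\lambda$ by $\mu_\chi$ corresponds to an explicit modification of $c$ that can be computed directly from $\langle\lambda_c,\alpha_i^\vee\rangle=\operatorname{tr}_{N_i}(c)$, and composing with $w_\chi^{-1}$ brings us back inside the $W^a$-orbit of $\lambda$. This shows that the $\Lambda$-translation action and the character-multiplication action of $H$ agree on the set of $W^a$-orbits in $(\C\Gamma)^\Gamma_1$.

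Third, I construct $\lambda'$ inductively. Since $H$ is finite abelian, pick generators $\chi_1,\ldots,\chi_r$. Set $\lambda_0:=\lambda$. At step $i$, use the compatibility of Step~2 for $\chi_i$ to replace $\lambda_{i-1}$ by an element $\lambda_i\in W^a\lambda_{i-1}\subset W^a\lambda$ whose parameter $c_i$ is invariant under the character action of $\chi_i$. By commutativity of $H$ and the explicit nature of the modification, invariance under $\chi_1,\ldots,\chi_{i-1}$ is preserved. Taking $\lambda':=\lambda_r$ produces a parameter $c'=c_r$ invariant under all of $H$, hence $c'\in\C\Gamma'_c$.

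The main obstacle is the inductive step: ensuring that passing from $\lambda_{i-1}$ to $\lambda_i$ does not spoil invariance under the characters handled earlier. This boils down to showing that the realisations $w_{\chi}\in W^a$ of the various $\chi\in H$ on the $c$-side commute in a suitable sense modulo the stabilizer $W^a_\lambda$. The abelianness of $H$, together with the fact that the modifications are intrinsically defined on $W^a$-orbits (not just representatives), is what makes the inductive construction close up. I would verify this compatibility by a short bookkeeping using the identification $\langle\lambda_c,\alpha_i^\vee\rangle=\operatorname{tr}_{N_i}(c)$ and the explicit action of the simple reflections and translations by $\Lambda_r$ on $c$.
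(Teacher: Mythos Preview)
Your reformulation in Step~1 is correct and useful: $c'\in\C\Gamma'_c$ is indeed equivalent to $\chi\cdot c'=c'$ for all $\chi\in H$. But the argument breaks down after that.

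In Step~2 you never actually connect translation by $\mu_\chi$ to character multiplication by $\chi$. You verify that $\lambda+\mu_\chi\in W^a\lambda$ (true), note that this corresponds to some modification of $c$, and then assert that ``the $\Lambda$-translation action and the character-multiplication action of $H$ agree on $W^a$-orbits.'' That is a non sequitur: nowhere have you computed what the map $c\mapsto\chi\cdot c$ looks like on the $\lambda$-side, nor compared it to $\lambda\mapsto\lambda+\mu_\chi$. In fact these are completely different affine transformations of $\h^*$ (one shifts the simple coordinates by integers, the other permutes them and replaces one by $|\Gamma|-\langle\lambda,\theta^\vee\rangle$). It does turn out that character multiplication by $\chi$ lands in $W^a$ and hence preserves $W^a$-orbits, but your argument does not establish this.

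The more serious gap is Step~3. Even granting that each $\chi\in H$ acts on $W^a\lambda$, you give no mechanism for producing a $\chi_i$-invariant element of that orbit. You write ``use the compatibility of Step~2 to replace $\lambda_{i-1}$ by $\lambda_i$ whose parameter is $\chi_i$-invariant,'' but Step~2 (even in its intended form) only says the action preserves the orbit; it says nothing about fixed points. The $W^a$-orbit is an infinite discrete set, and a finite-order affine map acting on it need not have a fixed point inside it, so this is exactly the substance of the lemma. The obstacle you flag at the end (preserving earlier invariances) is secondary to this missing existence step.

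The paper's proof proceeds quite differently: it first reduces to real $\lambda$ by a rationality argument, then takes $\lambda'$ to be the representative of $W^a\lambda$ in the closed fundamental alcove, identifies $H$ with the stabilizer $(\Lambda/\Lambda_r)_{\lambda'}$ acting on the alcove by affine diagram automorphisms, and finally verifies $c'\in\C\Gamma'_c$ by a case-by-case check over the ADE types. The geometric input (fundamental alcove) and the case analysis are precisely what your inductive scheme tries to avoid, but they are doing the real work.
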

\begin{proof}
In the proof of the lemma we can assume that $\lambda$ is real, i.e., $\lambda\in \mathbb{R}\otimes_{\Z}\Lambda$.
Indeed, the locus of $\lambda$ such that $c'\in \C\Gamma'_{c}$ is the union of affine subspaces of
$\h^*$ defined over $\mathbb{R}$. Similarly, the locus of parameters $\lambda$ with given group $W^{ae}_\lambda/W^a_\lambda$ is the union of affine subspaces of $\h^*$ defined over $\R$.

We claim that for $\lambda'\in W^{a}\lambda$ lying in the fundamental alcove we have $c'\in
\C\Gamma'_{c}$. We have $W^{ae}=(\Lambda/\Lambda_r)\ltimes W^a$ and $W^{ae}_{\lambda'}/W^{a}_{\lambda'}
=(\Lambda/\Lambda_r)_{\lambda'}$, where we view $\Lambda/\Lambda_r$ as a group acting on the
fundamental alcove. The action of $\Lambda/\Lambda_r$ comes from automorphisms of the affine Dynkin diagram.
Let $\mathfrak{A}_{\lambda'}$ be the group of the automorphisms of the affine Dynkin diagram
coming from $(\Lambda/\Lambda_r)_{\lambda'}$. The group $\Gamma'_{c}$ is the largest normal
subgroup  $\Gamma_1\subset\Gamma$ such that the $\Gamma$-irreducibles in the same $\mathfrak{A}_{\lambda'}$-orbit
become isomorphic over $\Gamma_1$. This follows from the observation that the action of
$(\Lambda/\Lambda_r)_{\lambda'}$ on the $\Gamma$-irreducibles is by tensoring with the one-dimensional
$\Gamma/\Gamma'_c$-modules.

Now a case by case analysis shows that $c'\in \C\Gamma'_{c}$ if $\lambda'$
lies in the fundamental alcove and $(\Lambda/\Lambda_r)^*_{\lambda'}=\Gamma/\Gamma'_{c}$.
\end{proof}

We define a sequence of normal in $\Gamma$ subgroups $\Gamma'_{c}\supset \Gamma''_{c}
\supset \ldots$ as follows. We set $\Gamma''_{c}:=(\Gamma'_{c'})'$.
Let $\h_1$ and $W_1$ be the Cartan space and the finite Weyl group for $\Gamma'_{c}$.
Inside $\h_1$ we have the subspace of $\Gamma/\Gamma'_{c}$-invariant elements, denote
it by $\underline{\h}_1$ (the action of $\Gamma/\Gamma'_{c}$ comes from twisting
the irreducible $\Gamma_{c'}$-modules so it is by diagram automorphisms).
Note that $\underline{\h}_1^*$ naturally embeds into $\h^*$ and under the affine 
identification $\h^*\cong (\C\Gamma)^\Gamma_1$, the space $\underline{\h}_1^*$
is identified with $(\C\Gamma_c')^\Gamma_1$. 

Set $\underline{W}_1:=W_1^{\Gamma/\Gamma'_c}$, this
group acts faithfully on  $\underline{\h}_1$. The simple roots for $(\underline{\h}_1,\underline{W}_1)$
are exactly the elements of the form $\sum_{\alpha\in \mathfrak{O}}\alpha$, where $\mathfrak{O}$ runs
over the set of orbits of  $\Gamma/\Gamma'_c$ on the set of simple roots for $W_1$.
The simple coroots have the same description. It follows that the root lattice $\underline{\Lambda}_{1}'$ for $\underline{W}_1$ coincides with the intersection  $\Lambda_{1}'\cap \underline{\h}_1^*$ (and the similar claim holds
for the coroot lattice). Also the fundamental chamber for $(\underline{W}_1,\underline{\h}_1)$
is the intersection of that for $(W_1,\h_1)$ with $\underline{\h}_1$.
The maximal roots and coroots for $\h_1$ and $\underline{\h}_1$
coincide. It follows that the fundamental alcove in $\underline{\h}_{1\R}^*$
coincides with the intersection of  $\underline{\h}_{1\R}^*$ and the fundamental alcove in $\h_{1\R}^*$.
In particular, we have the following analog of Lemma \ref{Lem:conj_spec_param} (note that if we replace
$\underline{W}_1^a$ with $W_1^a$ this is just Lemma \ref{Lem:conj_spec_param}).

\begin{Lem}\label{Lem:conj_spec_param1}
There is an element $\lambda''\in \underline{W}_1^{a}\lambda'$ such that the corresponding parameter $c''$
lies in $\C\Gamma''_{c}$.
\end{Lem}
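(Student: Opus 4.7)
The plan is to reduce Lemma \ref{Lem:conj_spec_param1} to Lemma \ref{Lem:conj_spec_param} applied with the Kleinian group $\Gamma'_{c}$ in place of $\Gamma$. The key observation is that the refinement from $W_1^a\lambda'$ to $\underline{W}_1^a\lambda'$ comes for free from the compatibilities between $(\underline{W}_1, \underline{\h}_1)$ and $(W_1, \h_1)$ catalogued in the paragraph preceding the lemma.

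First I would record that $\lambda' \in \underline{\h}_1^*$. Since Lemma \ref{Lem:conj_spec_param} yields $c' \in (\C\Gamma)_1^\Gamma \cap \C\Gamma'_{c} = (\C\Gamma'_{c})_1^\Gamma$, and the affine identification $\h_1^* \cong (\C\Gamma'_{c})_1$ carries the $\Gamma/\Gamma'_{c}$-fixed locus on the right onto $\underline{\h}_1^*$ on the left, this is immediate. Imitating the corresponding reduction in the proof of Lemma \ref{Lem:conj_spec_param}, I may further reduce to the case $\lambda' \in \underline{\h}_{1,\R}^*$, since both the locus of parameters with $c'' \in \C\Gamma''_{c}$ and the locus of parameters with prescribed stabilizer quotient are unions of real affine subspaces of $\underline{\h}_1^*$.

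Next I would choose $\lambda'' \in \underline{W}_1^a \lambda'$ lying in the fundamental alcove of $(\underline{W}_1^a, \underline{\h}_{1,\R}^*)$. By the preparatory equality of alcoves, $\lambda''$ then automatically lies in the fundamental alcove of $(W_1^a, \h_{1,\R}^*)$ as well, while still satisfying $\lambda'' \in W_1^a \lambda'$. Invoking the case-by-case ``fundamental alcove'' step of Lemma \ref{Lem:conj_spec_param}, now applied to the Kleinian group $\Gamma'_{c}$ and the element $\lambda''$, I obtain that the CBH parameter $c''$ corresponding to $\lambda''$ lies in $\C(\Gamma'_{c})'_{c''}$, where $(\Gamma'_{c})'_{c''}$ is the first-level subgroup of $\Gamma'_{c}$ built from $c''$ by the same procedure that produced $\Gamma'_{c}$ from $c$.

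It remains to identify $(\Gamma'_{c})'_{c''}$ with $\Gamma''_{c} = (\Gamma'_{c})'_{c'}$. By construction, $(\Gamma'_{c})'_{c''}$ depends only on the stabilizer quotient $(W_1)^{ae}_{\lambda''}/(W_1)^a_{\lambda''}$, and since $\lambda'$ and $\lambda''$ lie in the same $W_1^a$-orbit these quotients coincide. Hence $(\Gamma'_{c})'_{c''} = (\Gamma'_{c})'_{c'} = \Gamma''_{c}$, yielding $c'' \in \C\Gamma''_{c}$. I do not foresee a serious obstacle: the whole argument is a formal transfer of Lemma \ref{Lem:conj_spec_param} from $\Gamma$ to its normal subgroup $\Gamma'_{c}$, with the intersection-of-alcoves property ensuring that one need only move $\lambda'$ by elements of $\underline{W}_1^a$ rather than of the full group $W_1^a$.
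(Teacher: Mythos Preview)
Your proposal is correct and matches the paper's own argument. The paper does not give a separate proof of this lemma; it simply records the alcove-intersection compatibility in the preceding paragraph and notes parenthetically that replacing $\underline{W}_1^a$ by $W_1^a$ reduces to Lemma \ref{Lem:conj_spec_param}, which is exactly the reduction you spell out in detail.
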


We produce $\Gamma^{(k)}_{c}$ from $\Gamma^{(k-1)}_{c}$ in the similar way. The sequence $\Gamma^{(k)}_{c}$ clearly stabilizes. Let
$c^{(k)}\in \C\Gamma_{c}^{(k)}$ be a resulting parameter. Let $\lambda^{(k)}\in \h^*$ be
the  parameter corresponding to $c^{(k)}$.

\begin{Lem}\label{Lem:conj_spec_param2}
We have $\lambda^{(k)}\in W^a \lambda$.
\end{Lem}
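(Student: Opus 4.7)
\emph{Proof plan.} The strategy is induction on $k$. The base case $k=1$ is exactly Lemma \ref{Lem:conj_spec_param}: there exists $\lambda^{(1)}=\lambda'\in W^a\lambda$ with $c'\in \C\Gamma'_c$.

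For the inductive step, assume $\lambda^{(k-1)}\in W^a\lambda$, so that $W^a\lambda^{(k-1)}=W^a\lambda$. By the recursive construction (the analogue of Lemma \ref{Lem:conj_spec_param1} applied to the pair $(\Gamma^{(k-1)}_c,c^{(k-1)})$ in place of $(\Gamma'_c,c')$), we obtain $\lambda^{(k)}\in \underline{W}_{k-1}^a\lambda^{(k-1)}$, where $\underline{W}_{k-1}^a=\underline{W}_{k-1}\ltimes \underline{\Lambda}'_{k-1}$ is the affine Weyl group of the folded root system attached to $(W_{k-1},\Gamma/\Gamma^{(k-1)}_c)$. It therefore suffices to prove the containment $\underline{W}_{k-1}^a\lambda^{(k-1)}\subset W^a\lambda^{(k-1)}$.

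The key claim is that the natural embedding $\underline{\h}_{k-1}^*\hookrightarrow \h^*$ (induced by $(\C\Gamma^{(k-1)}_c)^\Gamma\hookrightarrow (\C\Gamma)^\Gamma$, as already recorded in the paragraph preceding Lemma \ref{Lem:conj_spec_param1}) intertwines the $\underline{W}_{k-1}^a$-action on $\underline{\h}_{k-1}^*$ with the restriction of the $W^a$-action on $\h^*$ to the stabilizer of $\underline{\h}_{k-1}^*$ inside $W^a$. Concretely, one shows that each simple reflection of $\underline{W}_{k-1}$, indexed by an orbit of $\Gamma/\Gamma^{(k-1)}_c$ on the simple roots of $W_{k-1}$, is realized by the product of the corresponding mutually commuting simple reflections of $W$, and that the root lattice $\underline{\Lambda}'_{k-1}$, viewed inside $\h^*$, is contained in $\Lambda_r$. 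Granting this compatibility, any element of $\underline{W}_{k-1}^a\lambda^{(k-1)}$ lies in $W^a\lambda^{(k-1)}=W^a\lambda$ and the induction goes through.

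The main obstacle is the uniform verification of the compatibility in the previous paragraph. In principle this reduces to a case-by-case check over the possible pairs of Kleinian groups $\Gamma^{(k-1)}_c\trianglelefteq \Gamma$ and the resulting ADE-embeddings of folded Dynkin diagrams, running parallel to the case analysis already invoked in the proofs of Lemmas \ref{Lem:conj_spec_param} and \ref{Lem:conj_spec_param1}. Once this step is settled, the rest of the argument is formal.
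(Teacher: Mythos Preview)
Your inductive framework is the same as the paper's, and the reduction to showing that $\underline{W}_{k-1}^a$ acts on $\underline{\h}_{k-1}^*$ through a subgroup of $W^a$ is exactly the right thing to prove. Where your argument becomes shaky is the proposed concrete realization of this containment. You claim that a simple reflection of $\underline{W}_{k-1}$, indexed by a $\Gamma/\Gamma^{(k-1)}_c$-orbit on the simple roots of $W_{k-1}$, is the product of ``the corresponding mutually commuting simple reflections of $W$.'' But the simple roots of $W_{k-1}$ (the Weyl group of the smaller Kleinian group $\Gamma^{(k-1)}_c$) live in $\h_{k-1}^*$, not in $\h^*$; there is no direct correspondence between them and simple roots of the \emph{finite} Weyl group $W$. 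Likewise, the containment $\underline{\Lambda}'_{k-1}\subset \Lambda_r$ is not obvious from the finite picture. So the verification you defer to a case-by-case check is not merely a technicality---as stated, the claim is not well-posed.

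The paper's proof supplies the missing mechanism, and it is \emph{affine} rather than finite. The diagram automorphism group $(\Lambda/\Lambda_r)_{\lambda'}$ identified in the proof of Lemma~\ref{Lem:conj_spec_param} acts on the \emph{affine} Dynkin diagram of $W^a$, and the folded affine root system under this action is precisely that of $\underline{W}_1^a$. Hence $\underline{W}_1^a$ is realized as a subgroup of $W^a$ acting on the invariant subspace $\underline{\h}_1^*\subset\h^*$, giving $\lambda''\in W^a\lambda$ in one stroke (no case analysis). The same argument applied to the chain $\underline{W}_{k-1}^a\subset \underline{W}_{k-2}^a\subset\cdots\subset W^a$ handles the general step. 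In short: replace your finite-diagram folding picture by the affine one already set up in Lemma~\ref{Lem:conj_spec_param}, and the ``main obstacle'' disappears.
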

\begin{proof}
The root system for $\underline{W}_1^a$ is obtained from that for $W^a$ as the invariants under the action of
a diagram automorphism group described in the proof of Lemma \ref{Lem:conj_spec_param}.
It follows that $\underline{W}_1^a$ is a subgroup of $W^a$ and it acts on $\underline{\h}_1^*$
as the subgroup. Therefore $\lambda''\in W^a\lambda$. Continuing to argue in the same way we
see that $\lambda^{(k)}\in W^a\lambda$.
\end{proof}

\subsection{Proof of Theorem \ref{Thm:Klein_classif}}\label{SS_Klein_classif}
The following proposition completes the proof of Theorem \ref{Thm:Klein_classif}.
Let us write $\Gamma_c$ for the stable normal subgroup $\Gamma^{(k)}_{c}$.

\begin{Prop}\label{Prop:HC_Klein1}
Suppose that $\Gamma$ is solvable, equivalently, not of type $E_8$. The subgroup $\Gamma_{c}\subset \Gamma$ is a
unique normal subgroup satisfying either of the following two properties:
\begin{enumerate}
\item The image of $\overline{\HC}(\A_\lambda)$ in $\C\Gamma\operatorname{-mod}$ is $\C(\Gamma/\Gamma_c)\operatorname{-mod}$.
\item $\Gamma_{c}$ is the minimal normal  subgroup $\Gamma_0$ of $\Gamma$ such that there is
a parameter $c^0\in \C\Gamma_0$ with the property that the corresponding parameter
$\lambda^0$ lies in $W^a\lambda$.
\end{enumerate}
\end{Prop}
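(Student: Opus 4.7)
I will prove (1) by induction on $|\Gamma|$ and derive (2) from it. Since $\bullet_{\dagger}$ is a full monoidal embedding with image closed under direct summands (Lemma \ref{Lem:dagger_properties}(2)), that image has the form $\C(\Gamma/N)\operatorname{-mod}$ for a unique normal $N\subset\Gamma$, so uniqueness in (1) is automatic and the goal is to identify this $N$ with $\Gamma_c$. The base case is $\Gamma$ cyclic (type $A$), due to Shan \cite{S}. For the inductive step I distinguish two cases depending on whether $\Gamma'_c$ is a proper subgroup of $\Gamma$.

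\textbf{Case (a): $\Gamma'_c\subsetneq\Gamma$.} Lemma \ref{Lem:conj_spec_param} gives $\lambda'\in W^a\lambda$ with $c'\in \C\Gamma'_c$, and Lemma \ref{Lem:trans_equiv} then identifies the images of $\overline{\HC}(\A_\lambda)$ and $\overline{\HC}(\A_{\lambda'})$. Corollary \ref{Cor:CBH_invar} writes $\A_{\lambda'}=(\hat{\A}_{c'})^{\Gamma/\Gamma'_c}$ for a quantization $\hat{\A}_{c'}$ of $\C^2/\Gamma'_c$, and Proposition \ref{Prop:HC_inv} identifies the image with the preimage, under restriction $\operatorname{Rep}\Gamma\to \operatorname{Rep}\Gamma'_c$, of the image of $\overline{\HC}(\hat{\A}_{c'})$. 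Since $|\Gamma'_c|<|\Gamma|$, the inductive hypothesis describes the latter image as $\C(\Gamma'_c/(\Gamma'_c)_{c'})\operatorname{-mod}$, where $(\Gamma'_c)_{c'}$ is the stable subgroup of the inductive construction restarted at $(\Gamma'_c, c')$. The crucial sub-claim is $(\Gamma'_c)_{c'}=\Gamma_c$; this holds because each $(\Gamma^{(j)})'_{c^{(j+1)}}$ depends only on the $W^a_j$-orbit of $\lambda^{(j+1)}$---the set $W^{ae}_{j,\lambda}/W^a_{j,\lambda}$, viewed as a subset of $\Lambda_j/\Lambda_{j,r}$, is $W^a_j$-invariant since $W^a_j$ acts trivially on $W^{ae}_j/W^a_j$. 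Granting this, $V$ lies in the image precisely when $\Gamma_c\subset \ker V$, as required.

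\textbf{Case (b): $\Gamma'_c=\Gamma$.} Then $\Gamma_c=\Gamma$ and I must show $N=\Gamma$. By Proposition \ref{Prop:HC_Klein_rk1} the only one-dimensional $\Gamma$-representation in the image is the trivial one, since $W^{ae}_\lambda/W^a_\lambda=\{1\}$; so $(\Gamma/N)^{\mathrm{ab}}=1$, i.e.\ $\Gamma/N$ is perfect. But $\Gamma/N$ is a quotient of the solvable group $\Gamma$ (solvability is exactly the exclusion of type $E_8$), and a solvable perfect group is trivial, so $N=\Gamma$, completing the inductive step.

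\textbf{Deducing (2) and main obstacle.} By Lemma \ref{Lem:conj_spec_param2} $\Gamma_c$ is a candidate for (2). For minimality, if $\Gamma_0\subset\Gamma$ is normal with $c^0\in\C\Gamma_0$ and $\lambda^0\in W^a\lambda$, then Corollary \ref{Cor:CBH_invar} gives $\A_{\lambda^0}=(\hat{\A}_{c^0})^{\Gamma/\Gamma_0}$, and combining Proposition \ref{Prop:HC_inv} with part (1) applied on both sides forces $\Gamma_c\subset \Gamma_0$, so $\Gamma_c$ is indeed the minimum. I expect the principal conceptual obstacle to be Case (b): a priori one fears it requires ruling out higher-dimensional irreducibles by hand, but the solvability hypothesis collapses this to the one-dimensional statement of Proposition \ref{Prop:HC_Klein_rk1}. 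The remaining bookkeeping is in the Case (a) sub-claim matching the original and restarted constructions, which reduces to the trivial action of $W^a_j$ on $W^{ae}_j/W^a_j$.
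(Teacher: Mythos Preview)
Your proof is correct and follows the same approach as the paper: induction on $|\Gamma|$, with the case $\Gamma'_c=\Gamma$ handled via solvability together with Proposition \ref{Prop:HC_Klein_rk1}, the case $\Gamma'_c\subsetneq\Gamma$ reduced to $\Gamma'_c$ via Lemmas \ref{Lem:conj_spec_param}, \ref{Lem:trans_equiv} and Proposition \ref{Prop:HC_inv}, and (2) then read off from (1) using Lemma \ref{Lem:conj_spec_param2}. Two minor remarks: a separate type-$A$ base case is unnecessary (the induction already bottoms out at $\Gamma'_c=\Gamma$, which includes $|\Gamma|=1$), your sub-claim $(\Gamma'_c)_{c'}=\Gamma_c$ is essentially the recursive definition of $\Gamma_c$, and the reference \cite{S} is Simental, not Shan.
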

\begin{proof}
Let us prove that the image of  $\overline{\HC}(\A_\lambda)$ in $\C\Gamma\operatorname{-mod}$
under $\bullet_{\dagger}$ coincides with $\C(\Gamma/\Gamma_{c})\operatorname{-mod}$ (which determines $\Gamma_{c}$
uniquely assuming such a subgroup exists). The proof is induction on the number of elements in $\Gamma$.

First of all, consider the situation when $\Gamma=\Gamma'_{c}$. We claim that in this case
the image of $\overline{\HC}(\A_\lambda)$ consists of the trivial representations.
To prove this recall that the image is a tensor subcategory. If it contains some irreducible
representation $V$ of $\Gamma$, then it contains all irreducible representations of $\Gamma/\Gamma_0$,
where $\Gamma_0$ is  the kernel of $\Gamma\rightarrow \operatorname{GL}(V)$. Since $\Gamma$ is solvable, we see that
$\Gamma/\Gamma_0$ has a nontrivial one-dimensional representation. Then $\Gamma\neq \Gamma'_{c}$
thanks to Proposition \ref{Prop:HC_Klein_rk1}. We get a contradiction.

Now we can assume, by induction that our claim is proved for $\Gamma_{c}'$ instead of $\Gamma$. Pick $\lambda'$
as in Lemma \ref{Lem:conj_spec_param}. Then we have an equivalence $\HC(\A_{\lambda})
\cong \HC(\A_{\lambda'})$ that intertwines the embeddings of these categories into $\C\Gamma\operatorname{-mod}$
by Lemma \ref{Lem:trans_equiv}. Now for $\lambda'$ instead of $\lambda$ our claim follows  from Proposition
\ref{Prop:HC_inv} because $\A_{\lambda'}$ is realized as
$\Gamma/\Gamma'_c$-invariants in the quantization of $\C^2/\Gamma'_c$ with
parameter $c'$. This finishes the first characterization of $\lambda$.

Let us prove the  characterization of $\Gamma_c$ in (2). Let $\Gamma_0$ be a normal subgroup such that
there is $\lambda^0\in W^a\lambda$ with $c^0\in \C\Gamma_0$. Then $\C(\Gamma/\Gamma_0)\operatorname{-mod}$
lies in the image of $\HC(\A_\lambda)$ under $\bullet_{\dagger}$. As we have seen, this image coincides with $\C(\Gamma/\Gamma_{c})\operatorname{-mod}$. So $\Gamma_{c}\subset \Gamma_0$. And,
by Lemma \ref{Lem:conj_spec_param2}, we know that $W^a\lambda$ intersects $(\C\Gamma_c)^\Gamma$.
\end{proof}

\section{Classification for quantizations of symplectic singularities}\label{S_classif_gener}
\subsection{Structure of $\HC(\A_\lambda)$}
In this section we consider a conical symplectic singularity $Y$ without
dimension two slices of type $E_8$. Let, as before, $\Gamma$ denote the algebraic
fundamental group of $Y^{reg}$. Let $\Gamma_i,i=1,\ldots,k,$ be the Kleinian
groups corresponding to codimension $2$ symplectic leaves so that we have
group homomorphisms $\varphi_i:\Gamma_i\rightarrow \Gamma$. Inside $\Gamma_i$
we have a normal subgroup to be denoted by $\Gamma_{i,\lambda}$. Namely,
let $\lambda_i$ be the component of $\lambda$ in $\h_i^*$. We can
produce $c_i\in \C\Gamma_i$ out of $\lambda_i$ and form the normal
subgroup $\Gamma_{i,c_i}\subset \Gamma_i$ as in Proposition
\ref{Prop:HC_Klein1}. To simplify the notation,
we write $\Gamma_{i,\lambda}$ for $\Gamma_{i,c_i}$. Finally,
let $\Gamma_{\lambda}$ denote the minimal normal subgroup of $\Gamma$
containing  $\varphi_i(\Gamma_{i,\lambda})$ for all $i$.

The following theorem is a direct corollary of Proposition \ref{Prop:extension_equiv}
combined with Theorem \ref{Thm:Klein_classif}. It strengthens Theorem 
\ref{Thm:classif_sympl_prelim}.

\begin{Thm}\label{Thm:HC_classif}
The functor $\bullet_{\dagger}$ identifies $\overline{\HC}(\A_\lambda)$
with $\C(\Gamma/\Gamma_\lambda)\operatorname{-mod}$.
\end{Thm}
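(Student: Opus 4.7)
The plan is to deduce this directly from Proposition \ref{Prop:extension_equiv} combined with Theorem \ref{Thm:Klein_classif}. Since $\bullet_\dagger$ is a full monoidal embedding of $\overline{\HC}(\A_\lambda)$ into $\C\Gamma\operatorname{-mod}$ (constructed in Section \ref{SS_HC_equiv}) whose essential image is closed under direct summands by Lemma \ref{Lem:dagger_properties}(2), it suffices to identify that essential image as a full subcategory of $\C\Gamma\operatorname{-mod}$; a full embedding onto such a subcategory is automatically an equivalence onto it.

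First I would invoke Proposition \ref{Prop:extension_equiv}: a representation $V \in \operatorname{Rep}\Gamma$ lies in the image of $\bullet_\dagger$ if and only if, for every codimension two leaf $\mathcal{L}_i$, the pullback $\varphi_i^*(V)$ lies in the image of $\bullet_{\dagger,\Gamma_i}:\overline{\HC}(\underline{\A}^i_{\lambda_i})\hookrightarrow \C\Gamma_i\operatorname{-mod}$. This reduces the classification from $Y$ to its Kleinian slices.

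Next I would apply Theorem \ref{Thm:Klein_classif}(2), whose hypothesis is met since by assumption none of the $\Gamma_i$ is of type $E_8$: the image of $\bullet_{\dagger,\Gamma_i}$ is exactly $\C(\Gamma_i/\Gamma_{i,\lambda})\operatorname{-mod}$. Therefore $\varphi_i^*(V)$ lies in that image precisely when $\Gamma_{i,\lambda}$ acts trivially on $\varphi_i^*(V)$, equivalently when $\varphi_i(\Gamma_{i,\lambda})$ acts trivially on $V$.

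Running this condition over all $i=1,\ldots,k$, membership in the image of $\bullet_\dagger$ is equivalent to the simultaneous triviality of the action of each $\varphi_i(\Gamma_{i,\lambda})$ on $V$. Since $\Gamma_\lambda$ is by definition the minimal normal subgroup of $\Gamma$ containing all the subgroups $\varphi_i(\Gamma_{i,\lambda})$, this is the same as saying that $\Gamma_\lambda$ acts trivially on $V$, i.e.\ $V \in \C(\Gamma/\Gamma_\lambda)\operatorname{-mod}$. Combined with fullness of $\bullet_\dagger$, this yields the claimed equivalence. Given the groundwork laid in Sections \ref{S_enhanced_res} and \ref{S_classif_Klein}, there is no real obstacle; the only point that requires care is matching the two triviality conditions across the reduction to slices, which is precisely the content of Proposition \ref{Prop:extension_equiv}.
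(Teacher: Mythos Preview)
Your proposal is correct and follows exactly the approach indicated in the paper, which simply states that the theorem is a direct corollary of Proposition \ref{Prop:extension_equiv} combined with Theorem \ref{Thm:Klein_classif}. You have merely spelled out the straightforward details of this corollary, including the observation that triviality of each $\varphi_i(\Gamma_{i,\lambda})$ on $V$ is equivalent to triviality of their normal closure $\Gamma_\lambda$.
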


\subsection{Towards description  of $\HC(\A_{\lambda'},\A_\lambda)$}\label{SS_classif_diff_param}
Let as before $Y$ be a conical symplectic singularity. Let $\lambda,\lambda'\in \h^*_Y$.
As was mentioned in the introduction, it makes sense to speak about HC $\A_{\lambda'}$-$\A_\lambda$-bimodules.
Let $\HC(\A_{\lambda'},\A_{\lambda})$ denote the category of such bimodules and
$\overline{\HC}(\A_{\lambda'},\A_{\lambda})$ denote the quotient by the full subcategory of  bimodules
with proper associated varieties.

One could ask to describe the category $\overline{\HC}(\A_{\lambda'},\A_\lambda)$
similarly to the description of $\overline{\HC}(\A_\lambda)$. The easiest case
is when $\h_0^*=\{0\}$, let us consider it first. Recall the weight lattice
$\Lambda_Y\subset \h_Y^*$, the image of $\operatorname{Pic}(X^{reg})$. Recall  the extended affine Weyl group $W^{ae}_Y:=W_Y\ltimes \Lambda_Y$.

\begin{Conj}\label{Conj:classif_diff_param1}
Suppose that $\h_0^*=\{0\}$. Then $\overline{\HC}(\A_{\lambda'},\A_{\lambda})\neq \{0\}$
if and only if $\lambda'\in W_Y^{ae} \lambda$. Moreover, if $\lambda'\in W^{ae}_Y \lambda$ the categories
$\overline{\HC}(\A_{\lambda'},\A_{\lambda}), \overline{\HC}(\A_{\lambda},\A_{\lambda'})$
contain mutually inverse objects. In particular, we have an equivalence $\overline{\HC}(\A_{\lambda'},\A_\lambda)
\cong \overline{\HC}(\A_\lambda)$ of right $\overline{\HC}(\A_\lambda)$-module categories.
\end{Conj}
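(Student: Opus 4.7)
The plan splits into constructing invertible bimodules (the ``if'' direction, which also yields the module-category equivalence) and showing that a nonzero HC bimodule forces the $W_Y^{ae}$-orbit condition.

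For the ``if'' direction I would directly generalize the construction in the proof of Lemma~\ref{Lem:trans_equiv} to the two-parameter setting. Given $\lambda,\lambda'$ with $\lambda'-\lambda\in \Lambda_Y$, lift the difference to a line bundle $\chi\in \operatorname{Pic}(X^{reg})$ on a $\Q$-terminalization. Since $H^i(X^{reg},\Str)=0$ for $i=1,2$ by Lemma~\ref{Lem:cohom_vanish}, the line bundle $\Str(\chi)$ admits a unique filtered deformation to a $\D_{\lambda'}^{reg}$-$\D_\lambda^{reg}$-bimodule $\D^{reg}_{\lambda,\chi}$; global sections yield a HC $\A_{\lambda'}$-$\A_\lambda$-bimodule $\A_{\lambda,\chi}$ whose restriction to $Y^{reg}$ is invertible with inverse $\A^{reg}_{\lambda',-\chi}$. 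Thus $\A_{\lambda,\chi}$ and $\A_{\lambda',-\chi}$ are mutually inverse in $\overline{\HC}$. Composing with the identifications $\A_\lambda\cong \A_{w\lambda}$ for $w\in W_Y$, we obtain mutually inverse objects in $\overline{\HC}(\A_{\lambda'},\A_\lambda)$ and $\overline{\HC}(\A_\lambda,\A_{\lambda'})$ for every $\lambda'\in W_Y^{ae}\lambda$. Tensoring on the left with such an object then gives the desired equivalence $\overline{\HC}(\A_\lambda)\xrightarrow{\sim}\overline{\HC}(\A_{\lambda'},\A_\lambda)$ of right $\overline{\HC}(\A_\lambda)$-module categories.

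For the ``only if'' direction, let $\B\in \HC(\A_{\lambda'},\A_\lambda)$ be nonzero with full support. The crucial reduction uses the assumption $\h_0^*=0$: the parameter $\lambda^1=\lambda_0+\sum_i\lambda_i^1$ from Section~\ref{SS_loc_functor} degenerates to $\sum_i\lambda_i^1$, which is canonical and independent of $\lambda$. Hence \emph{both} $\A_\lambda^{reg}$ and $\A_{\lambda'}^{reg}$ are reglued from the same algebra $(\tilde\A_{\lambda^1}^0)^\Gamma$ via ``almost inner'' cocycles $(\theta_{ij}^\lambda)$ and $(\theta_{ij}^{\lambda'})$ as in Proposition~\ref{Prop:derivation_inner}. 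Restricting $\B$ to $Y^{reg}$ and pulling back to $\tilde Y^0$ yields a $\Gamma$-equivariant HC bimodule over the single quantization $\tilde\A_{\lambda^1}^0$. The proof of Lemma~\ref{Lem:HC_simpl_conn1} extends verbatim to this bimodule setting (working one side at a time, the extension, coherence and Cohen--Macaulay steps are unchanged since they concern only the underlying module), showing that this bimodule is free of some rank as both a left and a right $\tilde\A_{\lambda^1}^0$-module. The comparison of the two free-module structures produces an intertwiner between the two regluings of $(\tilde\A_{\lambda^1}^0)^\Gamma$.

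The main obstacle will be extracting the integrality statement from this intertwiner. Concretely, by Lemma~\ref{Lem:period_regluing} the class $[\alpha^{\lambda'}]-[\alpha^\lambda]\in H^2(X^{reg},\C)$ of the difference of the cocycles equals (a representative of) $\lambda'-\lambda$; the task is to show that the existence of $\B$ with full support forces this class to lie in $\Lambda_Y$ up to the $W_Y$-ambiguity. The expected mechanism is that a rank-one direct summand of the resulting $\Gamma$-equivariant bimodule provides a quantum line bundle on $X^{reg}$ intertwining $\D_{\lambda h}^{reg}$ and $\D_{\lambda' h}^{reg}$; its classical limit is a genuine line bundle on $X^{reg}$, and its first-order deformation class (extracted as in the proof of Lemma~\ref{Lem:period_regluing}) identifies $\lambda'-\lambda$ with its Chern class in $\Lambda_Y$. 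Verifying that such a rank-one summand exists and that the Chern-class comparison is compatible with the $W_Y$-quotient is the technical heart of the argument, and would occupy the bulk of a complete proof.
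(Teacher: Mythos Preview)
First, note that in the paper this statement is a \emph{Conjecture}, and the paper only sketches an approach ``where we have not worked out some technical details''; there is no complete proof to compare against.

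Your ``if'' direction (translation bimodules $\A_{\lambda,\chi}$ from Section~\ref{SS_trans_equiv}, combined with the $W_Y$-isomorphisms) matches the paper's sketch exactly, and the paper likewise notes that this direction does not require $\h_0^*=\{0\}$.

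For the ``only if'' direction your approach differs substantially from the paper's and contains a genuine gap. The paper's sketch reduces, via the extension criterion Proposition~\ref{Prop:HC_extension}, to the Kleinian case $Y=\C^2/\Gamma$. There $\overline{\HC}(\A_{\lambda'},\A_\lambda)$ embeds into $\C\Gamma\operatorname{-mod}$ as in \cite[Section~3.5]{sraco}, and its image is pinned down by the techniques of Sections~\ref{SS_classif_1dim}--\ref{SS_Klein_classif} (comparison with HC $\U_\lambda$-bimodules), together with the observation that if $V$ lies in the image of $\overline{\HC}(\A_{\lambda'},\A_\lambda)$ then $V^*\otimes V$ and $V\otimes V^*$ lie in the images of $\overline{\HC}(\A_\lambda)$ and $\overline{\HC}(\A_{\lambda'})$, reducing to the one-parameter case already solved.

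Your route instead tries to read $\lambda'-\lambda$ off the regluing cocycles of Proposition~\ref{Prop:derivation_inner}. This cannot work: those cocycles live on $Y^{reg}$, and by Lemma~\ref{Lem:period_regluing} their cohomology classes measure the difference of periods in $H^2(Y^{reg},\C)=\h_0^*$, which vanishes by hypothesis. They do \emph{not} give classes in $H^2(X^{reg},\C)=\h_Y^*$, and in particular they cannot detect $\lambda'-\lambda$. The slice parameters $\lambda_i\in\h_i^*$ are invisible on $Y^{reg}$ and on $\tilde Y^0$; they are recovered only from completions at the codimension-$2$ leaves (Lemma~\ref{Lem:compl_iso}) or, equivalently, from the quantization of $X^{reg}$. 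You assert the existence of a ``quantum line bundle on $X^{reg}$'' coming from a rank-one summand, but you have no mechanism to produce from $\B|_{Y^{reg}}$ (or its lift to $\tilde Y^0$) any coherent object on $X^{reg}$: $X^{reg}\to Y$ is not an open embedding, and passing a bimodule across the exceptional locus over the $\mathcal{L}_i$ is precisely the nontrivial extension problem. The paper's reduction to Kleinian slices via Proposition~\ref{Prop:HC_extension} is exactly the step that does this work, and it is the missing idea in your proposal.
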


Let us explain an approach to this conjecture, where we have not worked out some technical details.
It is easy to see that if $\lambda'\in W^{ae}_Y \lambda$, then the conclusions of the conjecture
hold (thanks to the translation bimodules introduced in Section \ref{SS_trans_equiv}). In fact,
here we do not need to require that $\h_0^*=\{0\}$, this restriction is only needed for the
opposite direction. So what remains to show is that $\overline{\HC}(\A_{\lambda'},\A_{\lambda})\neq \{0\}$
implies that $\lambda'\in W^a_Y\lambda$. Using Proposition \ref{Prop:HC_extension},
we reduce the proof to the case when $Y=\C^2/\Gamma$. In this case (and in the more general case of
symplectic quotient singularities), it was shown in  \cite[Section 3.5]{sraco} that $\overline{\HC}(\A_{\lambda'},\A_{\lambda})\neq \{0\}$ implies that
$\HC(\A_{\lambda'}^{reg},\A_\lambda^{reg})\cong \C\Gamma\operatorname{-mod}$ (an
equivalence of $\HC(\A_{\lambda'}^{reg})$-$\HC(\A_\lambda^{reg})$-bimodule categories).
Then we can classify the irreducibles in $\overline{\HC}(\A_{\lambda'},\A_\lambda)$
using techniques similar to Sections \ref{SS_classif_1dim} and \ref{SS_Klein_classif}
and the following easy observation: if $V$ lies in the image of $\overline{\HC}(\A_{\lambda'},\A_{\lambda})
\hookrightarrow \C\Gamma\operatorname{-mod}$, then $V^*$ lies in the image of
$\overline{\HC}(\A_{\lambda},\A_{\lambda'})$ and hence $V^*\otimes V$ and $V\otimes V^*$
lie in the images of $\overline{\HC}(\A_{\lambda})$ and $\overline{\HC}(\A_{\lambda'})$.

The case when $\h_0^*\neq \{0\}$ is more difficult. There are examples where $\overline{\HC}(\A_{\lambda'},\A_\lambda)\neq \{0\}$ while $\lambda'\not\in W^a_Y \lambda$ (e.g. some cases of $\operatorname{Spec}(\C[\Orb])$, where
$\operatorname{codim}_{\overline{\Orb}}\overline{\Orb}\setminus \Orb\geqslant 4$, this was studied
in \cite{quant_nilp}). A general reason for the complications is the difference between the groups $\operatorname{Pic}(\tilde{Y}^0)^{\Gamma}$ and $\operatorname{Pic}(Y^{reg})$ -- the latter is a finite
index subgroup in the former and may be proper. Because of this, for $\chi\in \operatorname{Pic}(\tilde{Y}^0)^{\Gamma}$
the translation bimodule $\tilde{\A}^0_{\lambda,\chi}$ carries only a projective representation of $\Gamma$.
One  should be able to prove that $\overline{\HC}(\A_{\lambda'},\A_{\lambda})\neq \{0\}$
implies that the $\h_0^*$-component $\lambda'-\lambda$ lies in the image of $\operatorname{Pic}(\tilde{Y}^0)^\Gamma$.
If that is the case, $\overline{\HC}(\A_{\lambda'},\A_{\lambda})$ should embed into the category
of projective $\Gamma$-representations with the Schur multiplier determined by $\lambda'-\lambda$
(and equal to zero if the $\h_0^*$-component of  $\lambda'-\lambda$ is in the image of $\operatorname{Pic}(Y^{reg})$).
After that it should not be difficult to prove a result similar to Theorem \ref{Thm:HC_classif}.

\section{Lusztig's quotient revisited}\label{S_Lusztig_quot}
\subsection{Special orbits and quantizations with integral central character}
Here we consider an important special case of the  conical symplectic singularities:
$Y:=\operatorname{Spec}(\C[\Orb])$, where $\Orb$ is a nilpotent orbit in a semisimple
Lie algebra $\g$. We will concentrate on the case when $\Orb$ is special. Let us recall
what this means.

Pick Cartan and Borel subalgebras $\mathfrak{h}\subset \mathfrak{b}\subset \g$.
Let $W$ denote the Weyl group of $\g$. Recall that the center of $U(\g)$
is identified with $\C[\h^*]^W$ via the Harish-Chandra isomorphism.
A two-sided ideal $\J\subset U(\g)$ is said to  have {\it integral central character}
if its intersection with the center is the maximal ideal in $\C[\h^*]^W$
of a point in the weight lattice $\Lambda$.  A nilpotent  orbit $\Orb$ is  called {\it special} if
there is a two-sided ideal $\J\subset U(\g)$ with integral central character
such that the associated variety of $U(\g)/\J$ is $\overline{\Orb}$.

Now let $\A_\lambda$ is a filtered quantization of $Y$. As such, it comes
equipped with a natural homomorphism $U(\g)\rightarrow \A_\lambda$, see,
e.g., \cite[Section 5]{orbit}. Let
$\J_\lambda$ denote the kernel. Clearly if $\J_\lambda$ has integral
central character, then $\Orb$ is special. Conversely, we have the following
result.

\begin{Prop}\label{Prop:spec_quant}
Let $\Orb$ be a special orbit. There is $\lambda\in \h_Y^*$ such that
$\J_\lambda$ has integral central character if and only if $\Orb$
is not one of the following four special orbits (in the Bala-Carter
notation):
\begin{itemize}
\item[(*)] $A_4+A_1$ in $E_7$, and $A_4+A_1, E_6(a_1)+A_1, A_4+2A_1$ in $E_8$.
\end{itemize}
\end{Prop}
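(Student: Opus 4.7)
The plan is to translate the integrality of the central character of $\J_\lambda$ into an explicit linear condition on $\lambda\in \h_Y^*$, and then analyze this condition via the classification of codimension-$2$ singularities of $\overline{\Orb}$.

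First, I would extract the central character map $\operatorname{cc}:\h_Y^*\to \h^*/W$ sending $\lambda$ to the central character of $\J_\lambda$. By the construction of $\A_\lambda$ together with the canonical homomorphism $U(\g)\to \A_\lambda$ (see \cite[Section 5]{orbit}), this is an affine-linear, $W_Y$-equivariant map (with $W_Y\hookrightarrow W$ the natural embedding attached to $\overline{\Orb}$), and in standard normalizations $\operatorname{cc}(0)=\rho$ in $\h^*/W$. Hence $\operatorname{cc}$ has the form $\lambda\mapsto \rho+\bar\iota(\lambda)$ in $\h^*/W$ for a linear map $\bar\iota$, and the integrality of the central character of $\J_\lambda$ becomes the condition that the $W$-orbit of $\rho+\bar\iota(\lambda)$ meets the weight lattice $\Lambda\subset \h^*$.

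Next, I would use the decomposition $\h_Y^*=\h_0^*\oplus \bigoplus_{i=1}^k \h_i^*$ from Section \ref{SS_HC_symp_sing} to compute the image of $\bar\iota$. The summand $\h_0^*=H^2(Y^{reg},\C)$ and each $\h_i^*$ attached to a codimension-$2$ leaf $\mathcal{L}_i$ inject into $\h^*/W$ through maps controlled by the local geometry of $\overline{\Orb}$ near $\mathcal{L}_i$; the relevant data is captured by Lemma \ref{Lem:compl_iso} and has been tabulated in classical types by Kraft--Procesi and in exceptional types by Fu--Juteau--Levy--Sommers. For most special orbits one can exhibit a suitable $\lambda$ directly: often $\lambda=0$ already works, since $\rho\in \Lambda$ and $\A_0$ corresponds to a primitive ideal of integral central character $\rho$; more generally, Lusztig--Spaltenstein induction of the trivial ideal from a Levi subalgebra produces integral primitive ideals with associated variety $\overline{\Orb}$, which one verifies arise as $\J_\lambda$ for some $\lambda$.

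The remaining task, and the main obstacle, is to confirm that precisely the four orbits in $(*)$ are exceptions. This is a finite combinatorial check in $E_7$ and $E_8$: for each listed orbit, I would enumerate the codimension-$2$ symplectic leaves of $\overline{\Orb}$ together with the Dynkin types of their Kleinian slices, write down the affine-linear map $\rho+\bar\iota$ explicitly, and show that no $\lambda\in \h_Y^*$ makes $\rho+\bar\iota(\lambda)$ $W$-conjugate to a weight. The four cases arise precisely because $\h_Y^*$ is small enough that the integrality constraint modulo $W$ becomes overdetermined; the verification demands careful bookkeeping against the tables of codimension-$2$ singularities for exceptional nilpotent orbit closures.
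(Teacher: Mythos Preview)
The paper does not actually prove this proposition; it simply cites \cite[Theorem 1.1]{quant_nilp}. So your proposal is being compared against the argument in that reference together with the closely related Proposition~\ref{Prop:integral_period} here.

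Your approach has a concrete error and, as a consequence, misses the main difficulty. The claim that $\operatorname{cc}(0)=\rho$ is false: the central character of $\A_0$ is not $\rho$ for a general special orbit. By Lemma~\ref{Lem:h_W_for_orbits} we have $\h_Y^*=\mathfrak{z}(\mathfrak{l})^*$, where $(\mathfrak{l},\Orb')$ is the birationally rigid pair inducing $\Orb$; the central character of $\A_\lambda$ is, roughly, $\lambda$ plus the central character of the unique quantization $\A'$ of $\Orb'$ (compare the proof of Proposition~\ref{Prop:integral_period}). The latter is a specific element of $\h^*/W$ that has nothing to do with $\rho$ in general, and determining it is precisely the hard part. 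In particular, when $\Orb$ itself is birationally rigid one has $\h_Y^*=0$, so there is a \emph{single} parameter $\lambda=0$, and the entire question reduces to computing the central character of that one quantization. Three of the four exceptional orbits in $(*)$ are birationally rigid (see Remark~\ref{Rem:quot_remaining_cases}), so your proposed analysis via the decomposition $\h_Y^*=\h_0^*\oplus\bigoplus_i\h_i^*$ and the codimension-$2$ slice data is vacuous exactly where the content lies.

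The actual strategy in \cite{quant_nilp} is the reverse of yours: first treat birationally rigid special orbits by a direct (and delicate) computation of the central character of their unique quantization, and then propagate to induced orbits via parabolic induction, which is what Proposition~\ref{Prop:integral_period} does by showing $\J_{\rho'}$ is integral once the rigid base $\A'$ is. Your Lusztig--Spaltenstein remark gestures toward the induction step, but the assertion that such induced ideals ``arise as $\J_\lambda$ for some $\lambda$'' already presupposes the rigid case. Finally, a minor point: the codimension-$2$ leaves relevant to $\h_Y^*$ are those of $Y=\operatorname{Spec}(\C[\Orb])$, not of $\overline{\Orb}$; the normalization can and does change the singularity types.
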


This is \cite[Theorem 1.1]{quant_nilp}. In fact, one can find $\lambda$ explicitly,
see the next section.

%

\subsection{Computation of $\lambda$}
Now we explain how to compute $\lambda\in \h^*_Y$. For this we first need
to recall results from \cite{orbit} on the computation of $\h^*_Y$ and $W_Y$ for
$Y=\operatorname{Spec}(\C[\Orb])$ for an arbitrary nilpotent orbit $\Orb\subset \g$.

It was proved in \cite[Theorem 4.4]{orbit} that $\Orb$ is {\it birationally induced} from a
{\it birationally rigid} orbit $\Orb'$ in a Levi subalgebra $\mathfrak{l}$ and the
pair $(\mathfrak{l},\Orb')$ is defined uniquely up to $G$-conjugacy. By definition,
this means the following. Let $\mathfrak{l}$ be a Levi subalgebra and $\mathfrak{p}=
\mathfrak{l}\ltimes \mathfrak{n}$ be a parabolic subalgebra with this Levi
subalgebra. Let $\Orb'$
be a nilpotent orbit in $\mathfrak{l}$. The subgroup $P\subset G$ acts
on $Y'\times \mathfrak{n}$, where we write $Y'$ for $\operatorname{Spec}(\C[\Orb'])$.
We have the generalized Springer morphism $G\times^P(Y'\times \mathfrak{n})\rightarrow \g$.
It is easy to see that its image is the closure of a single orbit, say $\Orb$, called {\it induced}
from $(\mathfrak{l}, \Orb')$. If the morphism  $G\times^P(Y'\times \mathfrak{n})\rightarrow \overline{\Orb}$
is birational, we say that $\Orb$ is {\it birationally induced} from $(\mathfrak{l},\Orb')$.
We say that $\Orb'$ is birationally rigid if it cannot be birationally induced from a proper Levi subalgebra.

The following result is a special case of  \cite[Proposition 4.6]{orbit}.

\begin{Lem}\label{Lem:h_W_for_orbits}
We have $\h^*_Y=\mathfrak{z}(\mathfrak{l})^*$ and $W_Y=N_G(\mathfrak{l},\Orb')/L$.
\end{Lem}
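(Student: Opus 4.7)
The plan is to exhibit a $\Q$-factorial terminalization of $Y$ explicitly out of the birational induction datum $(\mathfrak{l},\Orb')$, and then read off $\h_Y^*$ from the second cohomology of its smooth locus and $W_Y$ from its codimension $2$ symplectic leaves, using the machinery of Section~\ref{SS_symp_sing}.

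Concretely, I would take $X := G\times^P (Y'\times \mathfrak{n})$, where $P = L\ltimes N$ is a parabolic subgroup of $G$ with Levi component $L$ and nilradical $\mathfrak{n}$, and $Y' := \Spec(\C[\Orb'])$. The generalized Springer morphism $\mu\colon X\to Y$ is birational by the hypothesis that $\Orb$ is birationally induced from $(\mathfrak{l},\Orb')$. Since $\Orb'$ is birationally rigid in $\mathfrak{l}$, I would argue that $Y'$ is $\Q$-factorial terminal; at the level of $\Spec(\C[\Orb'])$ this is essentially what birational rigidity means, and is established in \cite{orbit}. Being $\Q$-factorial and terminal is preserved under the $P$-associated bundle construction, so $\mu$ is a $\Q$-terminalization of $Y$.

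Next, I would compute $H^2(X^{reg},\C) = \mathfrak{z}(\mathfrak{l})^*$ via the projection $X^{reg} = G\times^P (Y'^{reg}\times \mathfrak{n})\to G/P$. The fiber $Y'^{reg}\times \mathfrak{n}$ is simply connected and has vanishing $H^2$, the latter being $\h_{Y'}^* = 0$ by birational rigidity of $\Orb'$ (so by an induction on the semisimple rank of $\mathfrak{l}$ one may assume this already). The Leray spectral sequence then collapses to give $H^2(X^{reg},\C) = H^2(G/P,\C) = \operatorname{Pic}(G/P)\otimes_{\Z}\C$, and the right-hand side is canonically $\mathfrak{z}(\mathfrak{l})^*$ via characters of $L$. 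Combined with the results of Section~\ref{SS_symp_sing}, this yields $\h_Y^* = \mathfrak{z}(\mathfrak{l})^*$.

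Finally, I would identify $W_Y$ with $N_G(\mathfrak{l},\Orb')/L$. Both groups act on $\mathfrak{z}(\mathfrak{l})^*$: the first as a product of Kleinian Weyl groups (folded by monodromy) attached to the codimension $2$ symplectic leaves of $Y$; the second by restriction from $\h^*$. The codimension $2$ leaves of $Y$ and their ADE slice types can be extracted from the induction picture by examining Levi subalgebras $\mathfrak{l}_1\supset \mathfrak{l}$ of one larger semisimple rank together with the induced orbit of $\Orb'$ from $\mathfrak{l}$ to $\mathfrak{l}_1$, via a Kraft-Procesi-style classification of the resulting minimal degenerations. I expect the main obstacle to be the explicit matching of simple reflections on the two sides: for each such $\mathfrak{l}_1$ one must produce the generators of $W_Y$ out of the Weyl group of $\mathfrak{l}_1$ modulo that of $\mathfrak{l}$, verify that they lie in $N_G(\mathfrak{l},\Orb')$, and check that nothing in $N_G(\mathfrak{l},\Orb')/L$ acts nontrivially on $\mathfrak{z}(\mathfrak{l})^*$ beyond these. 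This step reduces, via Lusztig-Spaltenstein duality, to a case-by-case verification through the Bala-Carter classification.
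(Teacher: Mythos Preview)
The paper does not prove this lemma; it simply records it as a special case of \cite[Proposition~4.6]{orbit}. So you are proposing an actual argument where the paper gives none, and the comparison is really with the method of \cite{orbit}.

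Your plan for $\h_Y^*$ is essentially the one used there: exhibit $X=G\times^P(Y'\times\mathfrak{n})$ as a $\Q$-terminalization (birational rigidity of $\Orb'$ being equivalent to $Y'$ being $\Q$-factorial terminal, i.e.\ $\h_{Y'}^*=0$), then compute $H^2(X^{reg},\C)$ via the fibration over $G/P$. One correction: the fiber $Y'^{reg}\times\mathfrak{n}$ is \emph{not} simply connected in general (already $\pi_1(\Orb')$ surjects onto $\pi_1(Y'^{reg})$ and is typically nontrivial). What you actually need, and what does hold, is $H^1(Y'^{reg},\C)=0$; this follows because $\pi_1^{alg}(Y'^{reg})$ is finite (Namikawa) so there are no nonzero homomorphisms to $\C$. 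With that fix the Leray argument goes through.

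For $W_Y$ your route diverges from \cite{orbit} and is substantially harder. There $W_Y$ is not read off from the codimension~$2$ leaves one by one; rather it is identified as the group relating the universal graded Poisson deformation of $X$ over $\mathfrak z(\mathfrak l)^*$ (given by the generalized Grothendieck alteration $G\times^P((\mathfrak z(\mathfrak l)^*+Y')\times\mathfrak n)$) to that of $Y$. Two fibers over $\xi,\xi'\in\mathfrak z(\mathfrak l)^*$ become isomorphic over $Y$ precisely when $(\mathfrak l,\xi+\Orb')$ and $(\mathfrak l,\xi'+\Orb')$ are $G$-conjugate, which is exactly the $N_G(\mathfrak l,\Orb')/L$-action. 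This avoids any Bala--Carter casework. Your leaf-by-leaf proposal is in principle viable but carries a real pitfall you do not address: the codimension~$2$ symplectic leaves of $Y=\Spec(\C[\Orb])$ are leaves of the \emph{normalization}, not of $\overline{\Orb}$ itself, so the Kraft--Procesi tables for minimal degenerations do not apply verbatim (branches can separate, slice types can change). Handling this correctly would require a normalization-aware analysis, which makes the deformation-theoretic argument of \cite{orbit} decidedly preferable.
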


Now we state the main result of this section. Let $\Delta^{+,\mathfrak{n}}$ denote the set
of positive roots $\alpha$ such that the root space $\g_\alpha$ is in $\mathfrak{n}$.

\begin{Prop}\label{Prop:integral_period}
Suppose that $\Orb$ is special and is not one of the four orbits mentioned in
Proposition \ref{Prop:spec_quant}.
Set $\displaystyle \rho':=\frac{1}{2}\sum_{\alpha\in \Delta^{+,\mathfrak{n}}}\alpha$.
Then $\J_{\rho'}$ has integral central character.
\end{Prop}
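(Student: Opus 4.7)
The plan is to combine the birational-induction description of the quantization parameters (Lemma \ref{Lem:h_W_for_orbits}) with the existence result Proposition \ref{Prop:spec_quant} from \cite{quant_nilp}, and then pin down the distinguished parameter as $\rho'$ by tracking central characters through the induction. First I would use Lemma \ref{Lem:h_W_for_orbits} to present $\Orb$ as birationally induced from a birationally rigid orbit $\Orb'$ in a Levi $\mathfrak{l}$, so that $\h_Y^*=\mathfrak{z}(\mathfrak{l})^*$ and $\rho'$ is a legitimate quantization parameter (it is $L$-invariant and lies in $\mathfrak{z}(\mathfrak{l})^*$). Since $\Orb'$ is birationally rigid, the filtered quantization $\A'$ of $Y':=\Spec(\C[\Orb'])$ is unique; let $\chi'\in\h^*/W_{\mathfrak{l}}$ denote its Harish-Chandra central character, via the canonical map $U(\mathfrak{l})\to\A'$.

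Second, I would establish the formula
\[
\text{c.c.}(\J_\lambda)=\chi'+\lambda\pmod{W}
\]
under the Harish-Chandra isomorphism $Z(\g)\cong\C[\h^*]^W$. The argument realizes $\A_\lambda$ as a quantum Hamiltonian reduction of $\A'\otimes\D(\mathfrak{n})$ at the character $\lambda$, so that the map $U(\g)\to\A_\lambda$ is the quantum analogue of the moment map for the $G$-action on $G\times^P(Y'\times\mathfrak{n})$. Central character tracking through this reduction is standard: the shift $\rho-\rho_{\mathfrak{l}}=\rho'$ that arises classically is precisely what is absorbed when one normalizes by $\rho$ for $\g$ and by $\rho_{\mathfrak{l}}$ for $\mathfrak{l}$ in the respective Harish-Chandra isomorphisms, leaving only the additive contribution of $\lambda$.

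Third, it remains to verify $\chi'+\rho'\in\Lambda\pmod{W}$ whenever $\Orb$ is special and not in the exceptional list (*). By Proposition \ref{Prop:spec_quant} there is \emph{some} $\lambda_0\in\h_Y^*$ with $\chi'+\lambda_0\in\Lambda\pmod{W}$, so the task is to show $\lambda_0$ can be chosen to be $\rho'$. I would do this by inspection, using the classification of birationally rigid orbits together with the computation of $\chi'$ carried out in \cite{quant_nilp}; each such $\chi'$ is expressible in terms of the weighted Dynkin diagram of $\Orb'$ inside $\mathfrak{l}$, and the required identity $\chi'+\rho'\in\Lambda\pmod{W}$ becomes a finite check against the list of special orbits and their birationally rigid ``cores.''

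The main obstacle is this last identification $\lambda_0=\rho'$: the two preceding steps are essentially formal, while Step 3 relies on the explicit central character data from \cite{quant_nilp}. In the classical types one may hope for a uniform argument using weighted Dynkin diagrams and the decomposition $\rho=\rho_{\mathfrak{l}}+\rho'$, but in exceptional types the verification is almost certainly case-by-case, and the four exclusions in (*) correspond precisely to the birationally rigid orbits for which the central character $\chi'$ fails, even after the $\rho'$ shift, to be integral.
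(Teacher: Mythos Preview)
Your first two steps are in the right spirit and align with the paper's setup: realize $\A_\lambda$ via a quantum Hamiltonian reduction $\D^\eta(G,P,\A')$ of $D_{G/N}\otimes\A'$ and pin down the distinguished parameter. The paper makes Step 2 precise not by ``tracking'' the central character formula, but by a period computation: using the evenness of $D_{G/N}\otimes\A'|_{(Y')^{reg}}$ and the fact that $\Phi_{D,-\rho'}+\Phi_{\A'}$ is a symmetrized quantum comoment map (in the sense of \cite[Section~5.4]{quant}), one gets that $\D^{-\rho'}(G,P,\A')$ has period zero, hence $H^0(G/P,\D^\eta(G,P,\A'))\cong\A_{\eta+\rho'}$. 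This is what singles out $\rho'$.

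The real divergence is your Step 3. You propose to verify $\chi'+\rho'\in\Lambda\pmod W$ by inspection against the classification, and you flag this as the main obstacle. In the paper no such case check is needed. The key idea you are missing is a factorization: since $U(\mathfrak{l})/\mathfrak{m}'$ is the algebra of twisted differential operators on $P/B$ with some twist $\mu$, one gets an algebra map
\[
\D^0\bigl(G,P,\,U(\mathfrak{l})/\mathfrak{m}'\bigr)\longrightarrow \D^0(G,P,\A'),
\]
and on global sections the left-hand side is $D^\mu(G/B)$. Hence $U(\g)\to\A_{\rho'}$ factors through $U(\g)\to D^\mu(G/B)$, so $\J_{\rho'}$ has the same central character as the latter map. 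Now $\mu$ is integral precisely because $\A'$ has integral central character, and that is exactly the input from \cite[Theorem~1.1]{quant_nilp} (valid since $\Orb'$, being the birationally rigid core of $\Orb$, is not one of the four excluded orbits). So integrality follows conceptually, without any list.

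Two smaller corrections. First, your additive formula $\text{c.c.}(\J_\lambda)=\chi'+\lambda$ with ``the $\rho'$-shift absorbed'' does not come out right once one fixes conventions; in the paper's normalization the period-zero reduction sits at $\eta=0$, i.e.\ at $\lambda=\rho'$, and the central character is read off from $D^\mu(G/B)$, not from $\chi'+\rho'$. Second, your description of the four exceptions is slightly off: three of them are birationally rigid, but $E_6(a_1)+A_1$ in $E_8$ is not---it is birationally induced from $A_4+A_1$ in $E_7$. The correct statement is that the exclusions are exactly those $\Orb$ whose birationally rigid core $\Orb'$ has non-integral $\chi'$.
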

\begin{proof}
It is known that $\Orb$ is induced from a birationally rigid special orbit
$\Orb'\subset \mathfrak{l}$, see  \cite[Proposition 2.3]{quant_nilp}. By our additional restriction on $\Orb$, the unique filtered quantization $\A'$ of $Y':=\C[\Orb']$ has integral central character, \cite[Theorem 1.1]{quant_nilp}.

Consider the homogeneous space $G/N$ and its sheaf of  differential operators, $D_{G/N}$. So we get a
sheaf of algebras $D_{G/N}\otimes \A'$ on $G/N$. The group $L$ acts on $D_{G/N}\otimes \A'$
in a Hamiltonian way, the quantum comoment map is the sum of the quantum comoment maps $\Phi_{D,\eta}, \Phi_{\mathcal{A}}$
for the actions of $L$ on $D_{G/N}$ and on $\A'$. Here  $\eta\in (\mathfrak{l}^*)^L$. These quantum comoment maps are as follows.  We set
$\Phi_{D,\eta}(x):=x_{G/N}-\langle \eta,x\rangle$,
and $\Phi_{\mathcal{A}}$ is chosen so that it vanishes of $\mathfrak{z}(\mathfrak{l})$.
We then can form the quantum Hamiltonian reduction of $D_{G/N}\otimes \A'$ with respect to the $L$-action
getting a sheaf of filtered algebras on $G/P$ to be denoted by $D^{\eta}(G,P,\A')$.
This sheaf can be viewed as a filtered quantization of $G\times^P(Y'\times \mathfrak{n})$.

We claim that the quantization $D^{-\rho'}(G,P,\A')$ has period zero. Thanks to
\cite[Theorem 5.4.1]{quant} this follows once we show that
\begin{itemize}
\item[(a)] the quantization
$D_{G/N}\otimes [\A'|_{(Y')^{reg}}]$ of $T^*(G/N)\times (Y')^{reg}$ is even (meaning that
it is the specialization at $h=1$ of a graded formal quantization that is even  in the sense of \cite[Section 2.3]{quant})
\item[(b)] and   the quantum comoment map
$\Phi_{D,-\rho'}+\Phi_\A: \mathfrak{l}\rightarrow D_{G/N}\otimes \A'$ is symmetrized
in the terminology of \cite[Section 5.4]{quant}.
\end{itemize}
To prove (a) we notice that both factors $D_{G/N}$ and $\A'|_{(Y')^{reg}}$ have period
zero (for the former this follows from \cite[Section 5]{quant}). A similar argument proves
(b).

So $H^0(G/P,\D^\eta(G,P,\A'))\cong \A_{\eta+\rho'}$. It remains to prove that
the kernel $\J_{\rho'}$ of $U(\g)\rightarrow H^0(G/P,\D^0(G,P,\A'))$ has integral central
character. Let $\mathfrak{m}'$ denote the intersection of the kernel of
$U(\mathfrak{l})\rightarrow \A'$ with the center of $U(\mathfrak{l})$. Then we
have an algebra homomorphism $\D^0(G,P, U(\mathfrak{l})/\mathfrak{m}')
\rightarrow \D^0(G,P,\A')$. The algebra
$U(\mathfrak{l})/\mathfrak{m}'$ is the algebra of twisted differential operators on
$P/B$ with twist, say $\mu$, that must be integral because the kernel of
$U(\mathfrak{l})\rightarrow \A'$ has integral central character. It is easy to
see that $H^0(G/P, \D^0(G,P, U(\mathfrak{l})/\mathfrak{m}'))\cong D^\mu(G/B)$.
So the homomorphism $\U\rightarrow H^0(G/P,\D^0(G,P,\A'))$ factors through
$\U\rightarrow D^\mu(G/B)$ and hence the kernel has integral central character.
\end{proof}

\subsection{Lusztig's quotient vs $\Gamma_\lambda$}
The following proposition describes the Lusztig quotient.

\begin{Prop}\label{Prop:Lusztig_quotient_comput}
Let $\Orb$ be as in Proposition \ref{Prop:integral_period}.
Let $\lambda\in \h_Y^*$ be such that $\J_\lambda$ has integral central character.
Then the Lusztig quotient $\bA_c$ coincides with $\Gamma/\Gamma_\lambda$.
\end{Prop}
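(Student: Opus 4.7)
The plan is to combine Theorem \ref{Thm:HC_classif} with the main result of \cite{LO}, using Proposition \ref{Prop:integral_period} as a bridge between $\A_\lambda$ and the integral central reduction $\U_\rho$ of $U(\g)$.

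First I would show that the semisimple quotient of the subquotient of $\HC(\U_\rho)$ corresponding to $\Orb$ is equivalent, as a monoidal category, to $\overline{\HC}(\A_\lambda)$. Since $\J_\lambda$ has integral central character by Proposition \ref{Prop:integral_period}, the natural map $U(\g)\rightarrow \A_\lambda$ factors through $\U_\rho$. Tensoring on both sides with $\A_\lambda$ over $\U_\rho$ produces an exact monoidal functor $F:\HC(\U_\rho)\rightarrow \HC(\A_\lambda)$, and HC bimodules supported strictly inside $\overline{\Orb}$ are killed after passing to $\overline{\HC}(\A_\lambda)$, so $F$ descends to a monoidal functor $\bar F$ from the subquotient of interest.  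I would establish that $\bar F$ is an equivalence on semisimple parts; the key input is that both source and target carry compatible enhanced restriction functors to the finite W-algebra for $\Orb$, namely the functor of \cite{HC,LO} on the source and $\bullet_\dagger$ of Section \ref{S_enhanced_res} on the target, and both are fully faithful on semisimples.

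Next, by \cite{LO} the source of $\bar F$ (semisimplified) is $\mathsf{Sh}^{\bA_c}(Y_c\times Y_c)$, which is Morita equivalent as a fusion category to $\Rep(\bA_c)$; by Theorem \ref{Thm:HC_classif} the target is $\Rep(\Gamma/\Gamma_\lambda)$. Composing yields a monoidal equivalence $\Rep(\bA_c)\simeq \Rep(\Gamma/\Gamma_\lambda)$, and hence, by Tannakian reconstruction applied to a compatible fiber functor to $\operatorname{Vect}$, an abstract group isomorphism $\bA_c\cong \Gamma/\Gamma_\lambda$.

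The main obstacle is upgrading this to an identification as canonical quotients of $\Gamma=\bA(\Orb)$. Both projections $\Gamma\twoheadrightarrow \bA_c$ and $\Gamma\twoheadrightarrow \Gamma/\Gamma_\lambda$ come from the natural action of $\Gamma$ on local systems on $\Orb$ underlying semisimple HC bimodules: for $\bA_c$ this is Lusztig's original description, while for $\Gamma/\Gamma_\lambda$ it is the defining property of $\bullet_\dagger$ from Section \ref{SS_HC_equiv}. Since $\bar F$ preserves this underlying local system (the restriction of a bimodule to $\Orb$ is unchanged by the transition from $\U_\rho$ to $\A_\lambda$ because both algebras microlocalize to the same sheaf on $\Orb$), the two surjections must have the same kernel, so $\Gamma_\lambda=\ker(\Gamma\twoheadrightarrow \bA_c)$ and the proposition follows. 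To double-check the identification on a convenient class of objects I would test it on the translation bimodules of Section \ref{SS_trans_equiv}, whose underlying local systems are one-dimensional characters of $\Gamma$ matched with the characters of $\bA_c$ by Lusztig's explicit description.
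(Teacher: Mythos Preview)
There is a genuine gap in your first step. You claim that the semisimple part of the cell subquotient of $\HC(\U_\rho)$ attached to $\Orb$ is monoidally equivalent to $\overline{\HC}(\A_\lambda)$, and you then feed this into the identification of that cell subquotient with $\mathsf{Sh}^{\bA_c}(Y_c\times Y_c)$ from \cite{LO}. But these two categories do not have the same size in general: $\mathsf{Sh}^{\bA_c}(Y_c\times Y_c)$ is a \emph{multi}-fusion category whenever $|Y_c|>1$, with simple objects indexed by $\bA_c$-orbits of triples $(y_1,y_2,\rho)$, whereas $\overline{\HC}(\A_\lambda)\cong\Rep(\Gamma/\Gamma_\lambda)$ is a genuine fusion category. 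Your functor $\bar F$ (tensoring with $\A_\lambda$ on both sides) kills every simple whose left or right annihilator differs from $\J_\lambda$, so it cannot be an equivalence from the full cell subquotient. Relatedly, $U(\g)\to\A_\lambda$ factors through $\U_\mu$ for the specific integral central character $\mu$ of $\J_\lambda$, not through $\U_\rho$ in general; this is a smaller issue but indicates the bridge you want is not the one you wrote.

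The paper takes a different route precisely to avoid this. It works not with the full cell but with the single ``diagonal block'' $\HC_\Orb(U(\g)/\J_\lambda)$ of bimodules annihilated by $\J_\lambda$ on both sides, and shows directly (via the W-algebra functor of \cite{HC} and the identification $\A_\lambda=(\Walg/\I_\lambda)^{\dagger}$) that this block is equivalent to $\overline{\HC}(\A_\lambda)$; the key point making this block tractable is that the finite-dimensional $\Walg$-representation attached to $\J_\lambda$ is one-dimensional and ${\bf A}(\Orb)$-fixed. Even after this, the paper does not extract $\bA_c$ by a Tannakian or Morita argument: it first matches cardinalities $|\bA_c|=|\underline{\Gamma}/\underline{\Gamma}_\lambda|$ via the simple count in \cite{LO}, disposes of the non-abelian case (where $\bA_c=\underline{\Gamma}$ outright), and in the abelian case uses the explicit description from \cite[Sections 6.5--6.7]{LO} of a $\Walg_\rho$-irreducible whose stabilizer is exactly $\ker(\underline{\Gamma}\twoheadrightarrow\bA_c)$, together with a Schur-multiplier computation, to pin down the kernel. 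Your local-system compatibility sketch is in the right spirit, but it does not substitute for this last step: knowing that two quotients of $\underline{\Gamma}$ have the same order and that both arise from ``the same'' action on local systems is not enough without identifying the kernel on at least one side concretely, which is what the case analysis in \cite{LO} supplies.
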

\begin{proof}
The proof is in several steps. Let us start by relating the construction of the present paper to that of \cite{LO}.

{\it Step 1}.
We note that the ideal $\J_\lambda$ is completely prime and has central character, so is primitive.
Consider the W-algebra $\Walg$ constructed from the orbit $\Orb$.
In \cite{HC} (see, for example, Theorem 1.2.2 there)
the author produced an ${\bf A}(\Orb)$-orbit of finite dimensional irreducible
representations of $\Walg$ starting from a primitive ideal in $U(\g)$.
In our case, $U(\g)/\J_\lambda$ has multiplicity $1$ on $\Orb$. It follows that the orbit consists
of a single 1-dimensional representation.

{\it Step 2}. Consider the category $\operatorname{HC}(U(\g)/\J_\lambda)$ of the HC $U(\g)$-bimodules annihilated by $\J_\lambda$ on the left and on the right and its full subcategory $\operatorname{HC}^{\partial \Orb}(U(\g)/\J_\lambda)$ of bimodules supported on $\partial\Orb$. Let $\HC_{\Orb}(U(\g)/\J_\lambda)$ denote the quotient category.  The inclusion $U(\g)/\J_\lambda\hookrightarrow \A_\lambda$ gives rise to
the forgetful functor $\HC(\A_\lambda)\rightarrow \HC(U(\g)/\J_\lambda)$,
which, in turn, induces a functor $\overline{\HC}(\A_\lambda)\rightarrow
\HC_{\Orb}(U(\g)/\J_\lambda)$. We claim that this functor is an equivalence.
For this we consider the functor $\bullet_{\dagger}$ from \cite[Section 3.4]{HC}.
By \cite[Theorem 1.3.1]{HC}, this functor identifies $\HC_\Orb(U(\g)/\J_\lambda)$ with a full subcategory of the category $\operatorname{Bim}^Q(\Walg/\I_\lambda)$ of $Q$-equivariant  $\Walg/\I_\lambda$-bimodules. Here the notation is as
follows. By $\I_\lambda$ we denote the image of $\J_\lambda$ under $\bullet_{\dagger}$, this is a two-sided ideal of codimension
$1$. Finally, $Q$ is the reductive part of the centralizer $Z_G(e)$ of $e\in \Orb$, where $G$
is the adjoint group with Lie algebra $\g$. The group $Q$ acts on $\Walg$ in a Hamiltonian
way. The functor $\bullet_{\dagger}:
\operatorname{HC}(U(\g)/J_\lambda)\rightarrow \operatorname{Bim}^Q(\Walg/\I_\lambda)$
has a right adjoint functor $\bullet^{\dagger}$ that is also a left inverse for
the functor $\operatorname{HC}_\Orb(U(\g)/J_\lambda)\rightarrow \operatorname{Bim}^Q(\Walg/\I_\lambda)$.
It was checked in \cite[Lemma 5.2]{orbit} that $\A_\lambda=(\Walg/\I_\lambda)^{\dagger}$.
This implies that $\overline{\HC}(\A_\lambda)\xrightarrow{\sim} \HC_{\Orb}(U(\g)/\J_\lambda)$.

{\it Step 3}.
Note that $Q/Q^\circ=\Gamma/\pi_1(G)$. Below we
will write $\underline{\Gamma}$ for $Q/Q^\circ$. Clearly, $\operatorname{Bim}^Q(\Walg/\I_\lambda)$
is identified with $\C\underline{\Gamma}\operatorname{-mod}$. Comparing the constructions
of $\bullet_{\dagger}$ in \cite[Section 3.4]{HC} and in the present paper we see that the embedding
$\overline{\HC}(\A_\lambda)\hookrightarrow \C\underline{\Gamma}\operatorname{-mod}$ described above
in this step coincides with what we have constructed in Section \ref{SS_HC_equiv}.
In particular, $\Gamma_\lambda$ contains $\pi_1(G)$. Let us write $\underline{\Gamma}_\lambda$
for $\Gamma_\lambda/\pi_1(G)$.

{\it Step 4}. Our goal is to show that  $\underline{\Gamma}_\lambda$ coincides with the kernel of
$\underline{\Gamma}\twoheadrightarrow \bA_c$. First of all, note that
$|\bA_c|=|\underline{\Gamma}/\underline{\Gamma}_\lambda|$. Indeed, both numbers are equal to the number of
simples in $\overline{\HC}(\A_\lambda)$: for the left hand side this follows
from \cite[Theorem 1.1]{LO}, while for the right hand side this is a consequence of
Theorem \ref{Thm:HC_classif}. Now the case when $\bA_c$ is not abelian
is easy: in all such cases this group coincides with $\underline{\Gamma}$.

{\it Step 5}.
Now let us assume that $\bA_c$ is abelian. This is always the case when $\g$ is classical. If $\g$ is exceptional
and $\bA_c$ is abelian, then  $\bA_c\cong \Z/2\Z$, see, e.g.
\cite[Section 6.7]{LO}. It was shown in
\cite[Sections 6.5-6.7]{LO} that there is a finite dimensional irreducible representation of
the central reduction $\Walg_\rho$
whose stabilizer in $\underline{\Gamma}$ is the kernel  of
$\underline{\Gamma}\twoheadrightarrow \bA_c$, to be denoted by
$\underline{\Gamma}^0$.
Let $\J'_\rho$ be the corresponding primitive ideal in $U(\g)$.
It follows from  \cite[Sections 7.4, 7.5]{LO} that the quotient of the category of HC $U(\g)/\J'_\rho$-$U(\g)/\J_\lambda$-bimodules
modulo the bimodules supported on $\partial\Orb$ is equivalent to $\operatorname{Vect}$.
The image of this quotient category in the category of semisimple
finite dimensional $Q$-equivariant $\Walg$-bimodules
is closed under tensoring by the images under $\bullet_{\dagger}$ of objects in
$\HC(\U_\lambda/\J_\lambda)$. These images are precisely the representations of $\underline{\Gamma}/\underline{\Gamma}_\lambda$ by Theorem \ref{Thm:classif_sympl_prelim}. As a right
module category over $\C\underline{\Gamma}\operatorname{-mod}$, the quotient category  has
the form $\operatorname{Rep}^\psi\underline{\Gamma}^0$, where $\psi$ is some Schur multiplier.
It follows from \cite[Remark 7.7]{LO} and the existence of an ${\bf A}(\Orb)$-stable
one-dimensional representation of $\Walg$ with integral central character (that follows
from \cite[Theorem 1.1]{quant_nilp}) that $\psi$ is a coboundary.
So we have an irreducible representation $V\in \operatorname{Rep}(\underline{\Gamma}^0)$
with the following property: $V\otimes U\cong_{\underline{\Gamma}^0} V^{\oplus \dim U}$ for any
representation $U$ of $\underline{\Gamma}/\underline{\Gamma}_\lambda$.

{\it Step 6}. If $\bA_c$ is abelian, then so is  $\underline{\Gamma}$.
This can be seen from explicit computations of $\bA_c$, see, e.g., \cite[Section 6.7]{LO},
 and  the tables from
\cite[Section 13.3]{Carter}. All irreducible representations of $\underline{\Gamma}^0$
are 1-dimensional. So the condition $V\otimes U\cong_{\underline{\Gamma}^0} V^{\oplus \dim U}$ implies that
$U$ must be trivial over $\underline{\Gamma}^0$ and hence $\underline{\Gamma}^0\subset \underline{\Gamma}_\lambda$.
But the cardinalities of these two groups are the same by Step 4
and so we get $\underline{\Gamma}_0=\underline{\Gamma}_\lambda$.
%
\end{proof}

\begin{Rem}\label{Rem:bar_A_induction}
In \cite{orange} Lusztig observed that $\bA_c$ coincides with $\mathbf{A}(\Orb_1)$
for a suitable nilpotent orbit $\Orb_1$ in a Levi subalgebra $\mathfrak{l}_1$.
His proof is a case by case argument. We would like to sketch how this result
follows from Proposition \ref{Prop:Lusztig_quotient_comput}.

First of all, note that if $\Orb$ is birationally induced from $(\mathfrak{l}_1,\Orb_1)$,
then $\Orb\hookrightarrow G\times^{P_1}(\Orb_1\times \mathfrak{n}_1)$, which gives an
epimorphism $\pi_1(\Orb)\twoheadrightarrow \pi_1(G\times^{P_1}(\Orb_1\times \mathfrak{n}_1))
\cong \pi_1(\Orb_1)$.

Identify $\mathfrak{z}(\mathfrak{l})$ with $\mathfrak{z}(\mathfrak{l})^*$ via the
Killing form of $\g$.  Now pick a small complex neighborhood $U$ of $\lambda\in \h^*_Y=
\mathfrak{z}(\mathfrak{l})^*$. The locus of $\lambda'\in U$ with $\Gamma_{\lambda'}=\Gamma_\lambda$
has the form $(\lambda+\Pi)\cap U$ for a uniquely determined vector subspace $\Pi\subset
\mathfrak{z}(\mathfrak{l})^*$. Take the centralizer of $\Pi\subset \mathfrak{z}(\mathfrak{l})$
in $\g$ for $\mathfrak{l}_1$. For $\Orb_1$ we take the orbit in $\mathfrak{l}_1$ induced
from $(\mathfrak{l}',\Orb')$. Then  one can  see that
${\bf A}(\Orb_1)=\underline{\Gamma}/\underline{\Gamma}_\lambda$.
\end{Rem}

\begin{Rem}\label{Rem:quot_remaining_cases}
Let us explain how to handle the remaining four orbits. Three of them: $A_4+A_1$ in $E_7,E_8$
and $A_4+2A_1$ have codimension of the boundary $\geqslant 4$. In this case, \cite[Proposition 4.7]{quant_nilp}
implies that $\bA={\bf A}(\Orb)$ (all these orbits are birationally rigid so \cite[Proposition 4.7]{quant_nilp}
formally applies but, if fact, the proof only uses the condition on the codimension of the boundary).
In all these cases, ${\bf A}(\Orb)$ is $\Z/2\Z$.

Let us explain how to handle the remaining case, $E_6(a_1)+A_1$ in $E_8$. Here ${\bf A}(\Orb)\cong \Z/2\Z$
as well. This orbit is birationally induced from $A_4+A_1$ in $E_7$. Thanks to \cite[Theorem 1.1]{LO}, the claim that
$\bA={\bf A}(\Orb)$ is equivalent to the existence of a 1-dimensional representation of $\Walg$ with integral central character that is not ${\bf A}(\Orb)$-stable. Such a representation, say $\underline{N}$,
exists for the orbit $\underline{\Orb}$ of type $A_4+A_1$ in $E_7$
because $A_4+A_1$ is Richardson. 
Take such  a representation and extend it to a representation of the W-algebra for the
corresponding Levi subalgebra, which amounts in specifying the character, say $\chi$, of the action of
the center of the Levi. Then we can induce the resulting 1-dimensional representation, see
\cite[Section 6]{W_1dim}, to get a representation, say $N_\chi$, of the W-algebra $\Walg$ for $\Orb$.
When $\chi$ is integral,   $N_\chi$ has integral central character. On the other hand,
the set of all $\chi$ such that $N_\chi$ is ${\bf A}(\Orb)$-stable is Zariski closed.
It is not difficult, but somewhat technical, to show that if $N_{\chi}$ is ${\bf A}(\Orb)$-stable
for all $\chi$, then $\underline{N}$ is ${\bf A}(\underline{\Orb})$-stable. We arrive at a contradiction
that shows that $N_{\chi}$ is not ${\bf A}(\Orb)$-stable for some integral $\chi$.
\end{Rem}

%

\end{document}